\def\@currentlabel{2.1}\label{e:dispaa}
\def\@currentlabel{2.21}\label{e:dispau}
\def\@currentlabel{2.22}\label{e:dispav}
\def\@currentlabel{2.23}\label{e:dispaw}
\def\@currentlabel{2.24}\label{e:dispax}
\def\theequation{\thesection.\@arabic\c@equation}
\numberwithin{equation}{section}
\newcommand{\ttt}{\tilde }
\newcommand{\pp}{ {\partial} }
\newcommand{\R} {\mathbb R}
\newcommand{\cuad}{{\sqcap\kern-.68em\sqcup}}
\newcommand{\abs}[1]{\mid #1 \mid}
\newcommand{\no}{{\noindent}}
\newcommand{\bnu}{\nu_0}
\newcommand{\p}{\frac{n+2}{n-2}}
\newcommand{\mmm}{\frac{4}{n+2}}
\newcommand{\al}{\alpha}
\newcommand{\si}{\sigma}
\newcommand{\dist}{{\rm dist}\, }
\newcommand{\be}{\begin{equation}}
\newcommand{\bee}{\begin{equation*}}
\newcommand{\ee}{\end{equation}}
\newcommand{\eee}{\end{equation*}}
\newcommand{\bs}{\begin{split}}
\newcommand{\esp}{\end{split}}
\newcommand{\ssk}{\smallskip}
\newcommand{\msk}{\medskip}
\newcommand{\la}{\lambda}
\newcommand{\inti}{{\int_{-\infty}^\infty}}
\newcommand{\zz}{z}
\newcommand{\equ}[1]{(\ref{#1})}
\renewcommand{\theequation}{\thesection.\arabic{equation}}
 \newtheorem{lem}{Lemma}[section]
\newtheorem{coro}{Corollary}[section]
\newtheorem{definition}{Definition}[section]
\newtheorem{defn}{Definition}[section]
\newtheorem{theorem}{Theorem}[section]
\newtheorem{prop}{Proposition}[section]
\newtheorem{corollary}{Corollary}[section]
\newtheorem{remark}{Remark}[section]
\newtheorem{claim}{Claim}[section]
\newtheorem{notation}{Notation}[section]
\newcommand{\bremark}{\begin{remark} \em}
\newcommand{\eremark}{\end{remark} }
\title{Type II  ancient  compact  solutions to  the Yamabe flow}
\author{Panagiota Daskalopoulos, Manuel del Pino, Natasa Sesum}
\begin{document}

\maketitle

\abstract
We construct  new type II  ancient compact solutions to the  Yamabe flow.  Our solutions 
are rotationally symmetric and converge, as $t \to -\infty$,  to a tower of two spheres. 
Their  curvature operator changes sign. 
We allow two time-dependent parameters in our ansatz.  We use  perturbation  theory,  via fixed point arguments,    
based on sharp estimates on ancient solutions of the approximated linear equation 
and   careful estimation of  the error terms which allow us  to make the right choice of  parameters. 
Our technique may be viewed as a  parabolic analogue of  gluing  two  exact solutions to the rescaled equation, that is the spheres, with narrow cylindrical necks to obtain a new ancient solution to the Yamabe flow. The techniques 
in this article may be generalized to the gluing of $n$ spheres. 
\section{Introduction}
\label{sec-intro}

Let $(M,g_0)$ be a compact manifold without boundary  of dimension $n \geq 3$. If $g = v^{\frac 4{n-2}} \, g_0$
is a metric conformal to $g_0$, the scalar curvature $R$  of $g$ is given in terms of the
scalar curvature $R_0$ of $g_0$ by  
$$R= v^{-\frac {n+2}{n-2}} \, \big ( - \bar c_n \Delta_{g_0} v + R_0 \, v \big )$$
where $\Delta_{g_0}$ denotes the Laplace Beltrami operator with respect to $g_0$ and $\bar c_n =  4 (n-1)/(n-2)$.

In 1989 R. Hamilton introduced the  {\em Yamabe flow} 
\begin{equation}
\label{eq-YF}
\frac{\partial g}{\partial t} = -R\, g
\end{equation}
as an approach to solve the {\em Yamabe problem}  on manifolds of positive conformal Yamabe invariant.
It is the negative $L^2$-gradient flow of the total scalar curvature, restricted to a given conformal class. The flow may  be interpreted as deforming a Riemannian metric to a conformal metric of constant scalar curvature, when this flow converges. 

Hamilton \cite{H}  showed the existence of the  normalized Yamabe flow (which is the re-parametrization of (\ref{eq-YF}) to keep the volume fixed)   for all time; moreover, in the case when the scalar curvature of the initial metric is negative,  he showed   the exponential convergence  of the  flow  to a  metric of constant scalar curvature.

Since then, there has been a number of works on  the  convergence of the  Yamabe flow
on a compact manifold to a metric of constant scalar curvature.  Chow \cite{Ch} showed the  convergence of the flow,  under the conditions that the initial metric is locally conformally flat and of positive Ricci curvature. 
The convergence of the flow for any locally conformally flat initially metric was shown by Ye  \cite{Y}. 

More recently, Schwetlick and Struwe  \cite{SS}  obtained the convergence 
of the  Yamabe flow on a general compact manifold under 
a suitable Kazdan-Warner type of condition that rules out the formation of bubbles 
and that is verified  (via the positive mass Theorem) in dimensions $3 \leq n \leq 5$. 
  The convergence  result,   in its full generality, was established by
 Brendle  \cite{S2} and \cite{S1}   (up to a technical assumption, in dimensions $n \ge 6$,   on the rate of vanishing of Weyl tensor at the points at which it vanishes):  
starting with any smooth  metric on a compact manifold, the normalized Yamabe flow   converges to a metric of constant scalar curvature.  

In the special case where the background manifold $M_0$ is the  sphere $S^n$ and $g_0$ is the standard
spherical metric $g_{_{S^n}}$, the Yamabe flow evolving a metric $g= v^{\frac 4{n-2}}(\cdot,t) 
 \, g_{_{S^n}}$ takes (after  rescaling in time by a constant) 
	the form of the {\em  fast diffusion equation} 
\begin{equation}
\label{eq-YFS}
(v^\frac{n+2}{n-2})_t  = \Delta_{S^n} v -c_n v,  \qquad c_n = \frac{n(n-2)}{4}. 
\end{equation}
Starting with any smooth metric $g_0$ on $S^n$, it follows by the results in \cite{Ch}, \cite{Y} and \cite{DS} that
the solution of \eqref{eq-YFS} with initial data $g_0$ will become singular at some finite time $t < T$ and
$v$ becomes spherical at time $T$,  which means that after 
a normalization, the normalized flow converges to the spherical metric.  In addition, $v$ becomes extinct  at $T$.

A metric $g =  v^{\frac 4{n-2}} \, g_{_{S^n}}$ may also be expressed as a metric on $\R^n$ via  stereographic 
projection. It follows that if $g =  \bar v^{\frac 4{n-2}} (\cdot,t) \, g_{_{\R^n}}$ (where $g_{_{\R^n}}$ denotes the standard metric
on $\R^n$)  evolves by the Yamabe flow \eqref{eq-YF},  then $\bar v$ satisfies (after a rescaling in time) the fast diffusion equation on $\R^n$
\begin{equation}
\label{eq-ufd}
(\bar v^p)_t = \Delta \bar v, \quad \qquad p:= \frac{n+2}{n-2}.
\end{equation}
Observe that if $g =  \bar v^{\frac 4{n-2}} (\cdot,t) \, g_{_{\R^n}}$ represents a smooth solution when lifted on $S^n$,
then $\bar{v}(\cdot,t)$ satisfies the growth condition 
$$\bar{v}(y,t) = O  (|y|^{-(n-2)}), \qquad \mbox{as} \,\, |y| \to \infty.$$

\medskip

\begin{defn}
\label{defn-ancient}
The solution $g = v(\cdot, t)\, g_0$ to \eqref{eq-YF}  is called ancient if it exists for all time $t\in (-\infty, T)$,  
where $T < \infty$.   We will say that the ancient solution $g$ is 
of type I,  if it satisfies
$$\limsup_{t\to-\infty} \, ( |t| \, \max_{M_0} |\mbox{\em Rm}| \ \, (\cdot,t))< \infty,$$ 
(where $\mbox{\em Rm}$ is the Riemannian curvature of metric $g = v(\cdot,t) g_0$ and can be expressed in terms of $v$ and its first and second derivatives).
An ancient  solution which is not of type I,  will be called of type
II.

\end{defn}

\smallskip
\no Explicit examples of ancient solutions to the Yamabe  flow on $S^n$ are:

\smallskip
\noindent {\bf  Contracting spheres}:
They are  special solutions $v$   of \eqref{eq-YFS} which depend only on time $t$ and satisfy the ODE
$$\frac {d v^{\p}}{dt} =-c_n\, v.$$ They are given by 
 \begin{equation}
\label{eq-spheres}
v_S(p,t) = \left ( \mmm \, c_n\,  (T-t) \right )^{\frac {n-2}4}.
\end{equation}
and  represent
a sequence of round spheres shrinking to a point at time $t=T$. They are shrinking solitons and  type I
ancient solutions.  

\smallskip

\noindent {\bf King  solutions:} They were  discovered 
 by J.R. King \cite{K1}.  They can be expressed on $\R^n$ in closed from, namely $g= \bar v_K (\cdot,t) \, g_{_{\R^n}}$, where $\bar v_K$ is the radial function 
 \begin{equation}
\label{eq-king}
\bar v_{K}(r,t) = \left(\frac{A(t) }{1 + 2B(t) \, r^2 + r^4}\right)^{\frac{n-2}{4}}. 
\end{equation}
and the coefficients  $A(t)$ and $B(t)$ satisfy  a certain system of ODEs.  
The King solutions are {\em not
solitons} and  may be  visualized, as $t \to -\infty$,    as two Barenblatt self-similar solutions  ''glued'' together
to form a compact solution to the Yamabe  flow. They are type I ancient
solutions. 

\medskip
Let us make the analogy  with the Ricci flow on $S^2$. The two  explicit compact ancient solutions to the two dimensional Ricci flow are the contracting spheres   and the King-Rosenau solutions
\cite{K1}, \cite{K2}, \cite{R}. The latter ones are the analogues of the King solution (\ref{eq-king}) to the Yamabe flow. The difference is that the King-Rosenau solutions are type II ancient solutions to the Ricci flow  while
the King solution above is  of type I. 

It has been showed by Daskalopoulos, Hamilton and Sesum  \cite{DS1} that the spheres and the King-Rosenau solutions are the only compact ancient solutions to the two dimensional Ricci flow. The natural question to raise is whether the analogous statement holds  true for the Yamabe flow, that is, whether the contracting  spheres and the King solution are the only compact ancient solutions to the Yamabe flow. This occurs not to be the case as the following discussion shows.

\medskip 
In this article we will construct  ancient radially symmetric solutions of the Yamabe flow \eqref{eq-YFS}
on $S^n$ other than the contracting spheres \eqref{eq-spheres} and the King solutions
\eqref{eq-king}. Our new solutions, as $ t \to -\infty$, may be visualized as two spheres joint by a short neck. Their curvature operator changes sign and they are 
type II ancient solutions. 

\begin{remark}
Our construction can  be generalized to give ancient solutions which,   as $ t \to -\infty$, may be visualized 
as   a tower of $n$ spheres   joint by  short necks. We refer to  them as {\em moving towers of bubbles}.  
\end{remark}

\medskip

Before we present the ansatz of our construction we will perform a change of variables 
switching  to cylindrical coordinates.  Let  $g=\bar v^{\frac 4{n-2}} (\cdot,t) \, g_{_{\R^n}}$ be  a radially symmetric  solution of \eqref{eq-ufd} which becomes extinct at time $T$, namely $\bar v=\bar v(r,t)$ is a radial function on $\R^n$ that vanishes at $T$.  
One may introduce the  cylindrical change of variables
$$
 u(x,\tau )   = (T-t)^{-\frac 1{p-1}} r^{\frac 2{p-1}} \, \bar v(r,t)  , \quad r = e^x,\ t= T(1-e^{-\tau})   .
$$
In this language equation \eqref{eq-ufd}  becomes
\begin{equation}\label{eqn-vv}
(u^p)_\tau =  u_{xx} +  \alpha u^p -  \beta u, \quad \beta = \frac {(n-2)^2}4 ,\quad \alpha = \frac p{p-1} = \frac {n+2}4.
\end{equation}
From now on we will  denote the time $\tau$ by $t$. 
By  suitable scaling we can make the two constants $\alpha$ and $\beta$ in \eqref{eqn-vv} equal to 1, so that from now on
we will consider  the equation 
\begin{equation}\label{eqn-v}
(u^p)_t =  u_{xx} +  u^p -   u.
\end{equation}
The steady states of equation \eqref{eqn-v}, namely the solutions $w$  of the equation
\begin{equation}\label{eqn-w}
w_{xx} +  w^p -   w = 0, \qquad w(\pm \infty) = 0
\end{equation}
are given in closed form 
\begin{equation}\label{eqn-w1}
w(x) = \left ( \frac{k_n \, \la  \, e^{\gamma  x}}{1 +\la^2 \, e^{2\gamma \, x}} \right )^{\frac {n-2}2}
 = ( 2\, k_n\,\mbox{sech} (\gamma x+\log\la) )^{\frac {n-2}2}
 \end{equation}
 with 
 $$ \gamma = \frac 1{\sqrt{\beta}} = \frac 2{n-2} \qquad \mbox{and} \qquad   k_n = \left (\frac {4n}{n-2} \right )^{\frac 12}.$$
It is known that  $w(x)$ is the only even, positive  solution of \eqref{eqn-w}, given in cylindrical coordinates, after stereographic projection, geometrically representing the conformal metric for a sphere. Observe that 
\begin{equation}\label{eqn-w2}
w(x) = O(e^{- |x|}), \qquad \mbox{as}\,\,\  |x|\to \infty.   
\end{equation}

\medskip

We will construct   new evolving {\em ancient} compact metrics which look,  for  $t$ close to $-\infty$, 
like two spheres glued by a narrow neck. We choose our {\em ansatz}  for an ancient  solution  $u(x,t)$ of \eqref{eqn-v} to be of  the form
\begin{equation}
\label{eq-form100}
u(x,t) = (1+\eta(t))\,  z(x,t) + \psi(x,t)
\end{equation}
with
\begin{equation}
\label{eq-z100}
z(x,t) = w(x+\xi(t)) + w(x-\xi(t))
\end{equation}
for  suitable parameter functions $\eta(t), \xi(t)$. The perturbation function $\psi(x,t)$ will converge to zero,   as  
$t\to-\infty$,  in a suitable norm that  will be defined  below. 
More precisely,
$$
\xi(t) = \xi_0(t) + h(t) 
$$
for a suitable parameter function $h(t)$. Both parameter functions $h(t)$ and  $\eta(t)$ will decay 
in $|t|$,  as $t \to - \infty$.  Let 
$$\xi_0(t) := \frac{1}{2}\log(2b\,|t|)$$ be a solution to
$$\dot{\xi} + b\, e^{-2\xi} = 0, \qquad \mbox{with} \,\,\, b:= \frac{\int_0^{\infty} w^p e^{-x}\, dx}{p\, \int_{\mathbb{R}} w'^2 w^{p-1}\, dx},$$
which is the homogeneous part of the non-homogeneous equation (\ref{eq-xi}).
As we will explain below,  equation \eqref{eq-xi} is  derived  as a consequence of  adjusting  parameters $h(t)$ and $\eta(t)$ so that our solution $\psi$ satisfies   orthogonality conditions \eqref{eqn-orth1} and \eqref{eqn-orth2}.

\medskip
The main result in this article  states as follows.

\begin{theorem}
\label{thm-main}
Let $p:=(n+2)/(n-2)$  with  $n \geq 3$. There exists a  constant $t_0=t_0(n)$ and  a radially symmetric solution $u(x,t)$ to (\ref{eqn-v})
defined on   $R \times (-\infty, t_0]$, 
of the form (\ref{eq-form100})-(\ref{eq-z100}), where the functions    $\psi:=\psi(x,t)$, $\xi:= \frac{1}{2}\log(2b\,|t|)+h(t)$, with $b >0$, and $\eta:=\eta(t)$  tend to zero in the appropriate norms, 
as $t\to\infty$.  Moreover, $u$ defines a  radially symmetric ancient solution to  the Yamabe flow \eqref{eq-YF}
on $S^n$ which is of type II   (in the sense of Definition \ref{defn-ancient}) and its  Ricci curvature changes its sign. 
\end{theorem}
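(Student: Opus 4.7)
The plan is to substitute the ansatz \eqref{eq-form100}--\eqref{eq-z100} into \eqref{eqn-v} and solve simultaneously for the perturbation $\psi$ and for the parameters $h$, $\eta$ by a fixed point scheme. Writing $u = (1+\eta)\, z + \psi$ and using that each individual bump $w(x\pm\xi)$ is a steady state of \eqref{eqn-w}, one obtains an equation of the schematic form
\begin{equation*}
p\, z^{p-1}\, \psi_t + \LLL \psi = E(\xi,\eta) + \QQ(\psi),
\end{equation*}
where $\LLL\psi := -\psi_{xx} + \psi - p\, z^{p-1}\psi$ is the linearization around $z$, $E$ collects the error generated by $z$ failing to solve \eqref{eqn-v} exactly (due to the interaction between the two bumps and the time dependence of $\xi$, $\eta$), and $\QQ(\psi)$ is the nonlinear remainder, at least quadratic in $\psi$. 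First I would compute $E$ sharply: by \eqref{eqn-w2} the overlap of the two bumps is of order $e^{-2\xi}$, and the leading solvability condition for $\psi$ produces the ODE $\dot\xi_0 + b\, e^{-2\xi_0} = 0$, whose decaying solution is precisely $\xi_0(t) = \tfrac12\log(2b|t|)$, with $b>0$ because the defining integral is positive.

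In the widely separated limit $\xi\to\infty$, the operator $\LLL$ decouples into two almost independent copies of the scalar operator $L\phi := -\phi'' + \phi - p\, w^{p-1}\phi$ on $\RR$, whose spectral theory is classical: $L$ has one negative eigenvalue associated with an even eigenfunction (the dilation mode) and a one-dimensional kernel spanned by $w'$ (translation). The core analytic step is a sharp a priori estimate, in a weighted $L^\infty$ space with appropriate polynomial time decay as $t\to-\infty$, for the non-autonomous linear problem
\begin{equation*}
p\, z^{p-1}\, \psi_t + \LLL \psi = f
\end{equation*}
restricted to the subspace of $\psi(\cdot,t)$ satisfying two orthogonality conditions against approximate translation and dilation kernel elements built from $w'(x\pm\xi)$ and a generator of the dilation mode. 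Projecting the full equation onto these same modes converts the resulting obstructions into two scalar ODEs governing $h(t)$ and $\eta(t)$, whose homogeneous part for $h$ is the one displayed in the theorem statement.

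With the linear theory in hand, the construction becomes a fixed point problem. Define a map $\mathcal F:(\psi,h,\eta)\mapsto(\psi^*,h^*,\eta^*)$ by solving the projected linear equation with right-hand side $E(\xi_0+h,\eta)+\QQ(\psi)$ for $\psi^*$, and integrating the solvability ODEs with the prescribed vanishing condition at $-\infty$ to obtain $(h^*,\eta^*)$. In a weighted Banach space where, typically, $\|\psi\|$ decays like $|t|^{-1-\mu}$ and $|h|+|\eta|$ like $|t|^{-\mu}$ for some small $\mu>0$, one checks that $\mathcal F$ preserves a small ball and is a contraction, using that $\QQ(\psi)$ is at least quadratic and that $E$ is $O(|t|^{-1})$ after the correct choice of $\xi_0$. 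The \emph{main obstacle}, as emphasized in the abstract, is precisely this sharp linear estimate: the parabolic inverse of $\LLL$ must not lose any powers of $|t|$, which requires a delicate analysis of how the narrow neck couples the two bumps and of how the weighted norms propagate along the flow; the exponential smallness of the bump tails and the positivity of $b$ both enter crucially here.

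Finally, once $u$ is constructed, ancientness is automatic, and the type~II character follows from tracking the curvature through the change of variables $r=e^x$, $t=T(1-e^{-\tau})$: since $\xi(\tau)\sim \tfrac12\log(2b|\tau|)$, the maximal curvature of $g$ in the original time variable grows at a rate strictly exceeding $|t|^{-1}$, ruling out type~I in the sense of Definition~\ref{defn-ancient}. The sign change of the Ricci tensor comes from the limiting geometry: on each of the two nearly spherical bumps the Ricci is close to that of the round sphere (positive), while in the narrow neck region the profile is nearly cylindrical and pinched, and a direct computation of the conformal Ricci tensor shows that its axial component becomes negative in part of the transition region for all $|t|$ large enough.
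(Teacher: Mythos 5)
Your proposal captures the paper's strategy faithfully at the level of the plan: the same ansatz $u = (1+\eta)z + \psi$, the same identification of the bubble-interaction term $e^{-2\xi}$ as the source of the solvability condition $\dot\xi_0 + b\,e^{-2\xi_0}=0$, the same orthogonality reduction against the translation and dilation modes of the linearization $L_0$, and the same two endgame steps for type~II and sign-changing Ricci. The principal difference is technical but non-trivial: you work in weighted $L^\infty$ spaces with polynomial time weights, whereas the paper works with weighted $L^2$, $H^1$, $H^2$ and $W^{2,\sigma}$ norms (Definitions~\ref{defn1}--\ref{defn-norm}). This choice is not cosmetic. The paper's core a priori estimate (Proposition~\ref{prop-apriori}) is proved by a rescaling and compactness argument: a sequence of solutions violating the bound is normalized, translated to their peak time and shifted to the bubble frame, and a weak $L^2$-limit is extracted that solves the autonomous equation $p\,w^{p-1}\phi_t = \phi_{xx} - \phi + p\,w^{p-1}\phi$ with the orthogonality conditions; the spectral gap of $L_0$ then forces exponential growth backward in time, contradicting the bound from the energy estimates. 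That machinery is tailored to Hilbert-space structure and weak limits, and would not transfer cheaply to a weighted $L^\infty$ setting (where one would instead need barriers or pointwise heat-kernel bounds for a degenerating parabolic operator). A second item your plan elides is the non-local correction term $C(\psi,t)=d_1(\psi,t)\,z+d_2(\psi,t)\,\bar z$ inserted into the linear equation (see \eqref{eqn-psi}--\eqref{eqn-Cpsi22}): because the eigenfunctions and weight depend on $\xi(t)$, the orthogonality conditions are \emph{not} preserved by the flow of the linear equation alone, and the correction is exactly what restores this; the smallness estimate \eqref{eqn-cpsi} of $C(\psi,t)$ in terms of $\|\psi\|_{H^2}$ is what keeps the scheme closed. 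Finally, some declared rates are off: the paper takes $\|\psi\|^\nu_{*,2,\sigma,t_0}<\infty$ with $\nu\in(\tfrac12,1)$ rather than a $|t|^{-1-\mu}$ decay, and uses distinct weights for $\eta$ (decay $|t|^{-1}$) and $h$ (decay $|t|^{-\mu}$, derivative $|t|^{-1-\mu}$); these asymmetries matter in closing the contraction and in the ODE analysis of Section~\ref{sec-system}. None of these points invalidates the strategy you sketch, but the weighted $L^2$/$W^{2,\sigma}$ framework together with the compactness step and the correction term $C(\psi,t)$ are the specific ideas that make the linear estimate actually go through, and they are missing from your outline.
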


\medskip
Theorem \ref{thm-main} shows that the classification of ancient solutions to the compact Yamabe flow
on $S^n$  poses a  rather difficult,  even maybe impossible task. On the other hand, it gives a new way of constructing ancient solutions.  It shows how one may   glue two ancient solutions of a parabolic equation, in our case of equation  (\ref{eqn-v}),  to construct a new ancient solution to the same equation. This parabolic gluing becomes more and more apparent as $t\to -\infty$, since as $t\to +\infty$ it is known that our conformal factor approaches the one of the standard sphere. 

Gluing techniques relying on linearization and perturbation theory have been used in many elliptic settings, such as  gluing the manifolds of constant scalar curvature to produce another manifold of constant scalar curvature (\cite{MPP}, \cite{MPU}, \cite{S}),  gluing two constant mean  curvature surfaces to produce another 
constant mean  curvature surface (\cite{K}).  Gluing techniques have been used to construct new solutions to  elliptic semilinear equations  in \cite{DDM} and \cite{GJP}. Embedded  self similar solutions of the mean curvature flow have been constructed in \cite{N} by using gluing techniques. We use such techniques   here in the parabolic setting as well. We hope our way of constructing new ancient solutions to the Yamabe flow could be adopted to other geometric flows as well.

\medskip
We will next indicate the main steps in proving Theorem \ref{thm-main}.
\begin{itemize}
\item
We   first define  the  Banach space, which  our ancient solution $u$ to \eqref{eqn-v} belongs to  and its associated norm.  We also specify the spaces for our parameter functions  $\eta(t)$ and $h(t)$ and their associated norms. 
\item Using the ansatz  (\ref{eq-form100})-\eqref{eq-z100} for our solution $u$, we  show that the
perturbation   term $\psi$ is  a solution to the  equation
\begin{equation}\label{eqn-000}
pz^{p-1}\partial_t\psi = \psi_{xx} - \psi + p z^{p-1} \psi  + pz^{p-1}E(\psi)
\end{equation}
 where  $E(\psi)$ denotes our   error term and $z$ is given by \eqref{eq-z100}. It is well known that  $w$ and $w'$ are the eigenvectors 
 of the approximating linear operator $$L_0 \psi := - \frac 1{p w^{p-1}} \big (  \psi_{xx} - \psi + p w^{p-1} \psi \big )$$ 
  corresponding to the   eigenvalues $\lambda_{-1} < 0, \lambda_0=0$ of this operator,  respectively.   It is also well known that all the other eigenvalues
  of $L_0$ are positive. 
\item
In the first part of the article we  study the linear problem 
\begin{equation}\label{eqn-linear000}
pz^{p-1}\partial_t\psi = \psi_{xx} - \psi + p z^{p-1} \psi  + pz^{p-1}f.
\end{equation}
 Assuming certain orthogonality conditions  on $f$ with respect to  the eigenvectors  $w$ and $w'$ of $L_0$, we   establish the existence  of  an ancient solution to the linear problem \eqref{eqn-linear000}, satisfying the appropriate energy and  $L^2$ estimates. The latter means that  we can bound the  weighted $L^2$-norm of a solution  in terms of the weighted $L^2$-norm of the right hand side $f$. 
We also establish certain weighted   $W^{2,\sigma}$ estimates  for solutions to \eqref{eqn-linear000}. It follows  
that the  solution
 $\psi$  belongs  to the Banach space which is the intersection of these  $L^2$ and weighted   $W^{2,\sigma}$
  spaces.  
  
 We denote by $T$  the  linear operator between our defined Banach  spaces, so that $T(f)$ is  the  solution to the linear problem \eqref{eqn-linear000} satisfying the appropriate orthogonality conditions. 

\item 
In the second part of the article  we  study the nonlinear equation \eqref{eqn-000}.  We   apply our linear theory to the nonlinear equation to 
establish the existence of a  solution  $\psi$ to \eqref{eqn-000}, by solving the equation $T(E(\psi)) = \psi$. 
We  first show that  we can achieve this,   assuming that $E(\psi)$ satisfies our orthogonality assumptions with respect to $w$ and $w'$.  The main tool in this  proof is the  fixed point Theorem and subtle estimates of the error terms in our norms. 

\item
In the final part of our proof we  show how to  adjust  the parameters $\eta(t)$ and $h(t)$ so that 
the error term $E(\psi)$ in \eqref{eqn-000} indeed satisfies our orthogonality conditions. 
We  see that this  is equivalent to solving a certain nonlinear system of ODEs  for $\eta(t)$, $h(t)$. 
We  establish the existence of solutions to this system by the fixed point Theorem and subtle estimates.  

\end{itemize}

\section{The ansatz of the problem}

Following the discussion in the Introduction  we  look for an ancient solution $u$ to the equation 
\eqref{eqn-v}.  Since the long time existence for the Yamabe flow is well understood,
 it will be sufficient to construct a solution $u$ which is defined on $\R \times (-\infty,t_0]$
with $t_0$ sufficiently close to $-\infty$. Hence, from now on we will restrict our attention to 
equation 
\be
(u^p)_t =  u_{xx} - u + u^p, \qquad (x,t)\in  \R \times (-\infty,t_0]
\label{eq}
\ee
with exponent 
$p= \frac{n+2}{n-2} >1.$

\subsection{The set up of our construction} 
We seek  for a solution of \eqref{eqn-v} which is of the form 
$$u(x,t) = (1+\eta(t))\,  z(x,t) + \psi(x,t)$$
for a suitable parameter function $\eta(t)$, where
$$
z(x,t) =
\sum_{j=1}^2 w(x- \xi_j(t) ) = \sum_{j=1}^2 w_j
$$
and $\psi(x,t)  \to 0$ as  $t\to-\infty$ in a certain sense.  We recall that $w(x)$ is given by (\ref{eqn-w1}) and
solves the equation \eqref{eqn-w}. 
The functions $\xi_j(t)$ are given by
$$\xi_1(t)= - \xi(t) \qquad \mbox{and} \qquad \xi_2(t)=\xi(t)$$
where 
\begin{equation}\label{eqn-xit}
\xi(t) = \xi_0(t) + h(t), \qquad \xi_0(t) := \frac{1}{2}\log(2b\,|t|)
\end{equation}
for a suitable parameter function $h(t)$ and a suitable constant $b >0$.
Both  parameter functions $h(t)$ and  $\eta(t)$ will decay 
in $|t|$,  as $|t| \to  \infty$ and will be chosen in section \ref{sec-system}. 

\smallskip

Set  
\begin{equation}\label{eqn-wxt}
w_1:=w(x-\xi(t)), \qquad w_2:=w(x+\xi(t)) 
\end{equation}
and 
\begin{equation}\label{eqn-barz}
\bar z(x,t) = w'(x-\xi(t)) - w'(x+\xi(t))=\pp_x w_1 - \pp_x w_2
\end{equation}
Also set  
$$\tilde{z}(x,t) := (1+\eta(t))\,  z(x,t).$$
We notice that $z(\cdot,t)$ is an even function of $x$ and we impose the condition that 
$\psi(\cdot,t)$ is an even function of $x$ as well.  Equation \eqref{eqn-v} then becomes
$$
\pp_t \big(\tilde{z}+\psi)^p  =  \, ( \pp_x^2\psi  - \psi  +  \pp_x^2 \tilde{z}  - \tilde{z} )   
+   \big(\tilde{z}+\psi )^p.$$

\no Using that  $\pp_x^2w_{j}  - w_{j}= -w_j^p$ we obtain the equation 
$$
\pp_t \big(\tilde{z}+\psi )^p  =  \big (\, \pp_x^2\psi - \psi   
- (1+\eta(t)) \sum_{j=1}^2 w_j^p\, \big  )   +   \big(\tilde{z}+\psi )^p$$
which can be re-written as  
\begin{equation}\label{eqn-Psi}
pz^{p-1}\, \pp_t \psi  = \pp_{xx} \psi - \psi + pz^{p-1}\, \psi - z^{p-1} \, C( \psi,t)+  z^{p-1}\,   E(\psi)
\end{equation}
where  $C(\psi,t)$ is a correction term that will be chosen in \eqref{eqn-Cpsi22}. 

\no The error term $E(\psi)$ is given by 
\begin{equation}
\label{eq-Epsi} 
E(\psi) := z^{1-p} \, M + \underbrace{C(\psi,t) + z^{1-p}  \big [\,  (1-\partial_t) N(\psi)  - p\psi \, \pp_t  z^{p-1}
\, \big ]}_{Q(\psi)}
\end{equation}
where 
\be\label{eqn-M}
M := \tilde{z}^p - \big [\, (1+\eta(t))\, [ w^p(x+\xi(t)) + w^p(x-\xi(t))] \, \big ] - \pp_t \tilde{z}^p
\ee
is the error term that is independent of $\psi$,  and 
\be\label{eqn-N}
N(\psi) :=(\tilde{z}+\psi)^p -  \tilde{z}^p -  p\, \tilde{z}^{p-1} \psi + p\psi \, (\tilde{z}^{p-1} - z^{p-1}).
\ee

\medskip

\no Our goal is to construct an ancient  solution $\psi$ of the above equation  \eqref{eqn-Psi}  with the aid of the linear theory for equation \eqref{eqn-Psi}  that  will developed  in section \ref{sec-linear}.   The solution $\psi$ will be an even function in $x$ and it will
satisfy the orthogonality conditions
\begin{equation}\label{eqn-orth11}
\int_{-\infty}^\infty \psi (x-\xi(t),t) \, w'(x) \, w^{p-1} \, d x=0, \qquad a.e. \quad t < t_0
\end{equation}
and
\begin{equation}\label{eqn-orth22}
\int_{-\infty}^\infty \psi  (x-\xi(t),t) \, w( x) \, w^{p-1}  \, d  x=0, \qquad a.e. \quad t < t_0.
\end{equation}

\smallskip

\no  The {\em correction term}    $C(\psi,t)$ in equation \eqref{eqn-Psi}   will be chosen  in section \ref{sec-linear}, 
in such  a way so that the orthogonality conditions \eqref{eqn-orth11}-\eqref{eqn-orth22}  for $\psi$ are  being preserved by the equation
\eqref{eqn-Psi},  given that  $E(\psi)$ satisfies the same conditions. 

\smallskip

\no However, because in general the error  term $E(\psi)$ may not  satisfy the orthogonality conditions 
\eqref{eqn-orth11}-\eqref{eqn-orth22},   we will  first consider the auxiliary equation

\be\label{eqn-system}
\begin{split}
p z^{p-1}\, \pp_t \psi  = \pp_{xx} \psi - \psi &+ pz^{p-1}\, \psi - z^{p-1} C(\psi,t) \\
&+z^{p-1}\, [ \,  E(\psi) - (c_1(t) \, z + c_2(t) \, \bar  z)    \, ]
\end{split} 
\ee
where $c_1(t)$ and $c_2(t)$ are  chosen so that 

\be
\label{eqn-Epsi3}
\bar E(\psi):= E(\psi) - (c_1(t) \, z + c_2(t) \, \bar  z)  
\ee
\no satisfies the orthogonality conditions \eqref{eqn-orth11}-\eqref{eqn-orth22}, given that  $\psi$  satisfies them.
\smallskip

In section \ref{sec-system} we will choose  the parameter functions $h$ and $\eta$ so that $c_1(t) \equiv 0$ and
$c_2(t)  \equiv 0$. The parameter  functions $h$ and $\eta$ will decay in $t$, as $t \to -\infty$,  in certain norms that 
will be  defined in  Definition \ref{dfn-heta}.

\subsection{Norms}
\label{sec-norms}

We will  next introduce all the norms that will be used throughout the article. We will also fix 
the values of the various parameters. 
For a number $\tau < t_0-1$ we set $\Lambda_\tau = \R  \times [\tau,\tau+1]$.

We first  define the appropriate  $L^2$, $H^1$ and $H^2$ norms. 

\begin{definition}\label{defn1} [Local in time weighted $L^2$, $H^1$ and $H^2$ norms] 
Define 
$$\|\psi (\cdot,\tau) \|_{L^2} = \big (\inti  |\psi(\cdot,\tau)|^2 \, \zz^{p-1}  \, dx  \big )^{\frac 12}$$
and 
$$\|\psi \|_{L^2(\Lambda_\tau)} = \big (\iint_{\Lambda_\tau}  |\psi|^2 \, \zz^{p-1}  \, dx dt \big )^{\frac 12}$$
and 
$$
\|\psi \|_{H^1(\Lambda_\tau)} =  \|\psi \|_{L^2(\Lambda_\tau)}  + 
\| \zz^{-\frac{p-1}2} \, \psi_x  \|_{L^2(\Lambda_\tau)}$$
and
$$
\|\psi \|_{H^2(\Lambda_\tau)} = \|\psi_t \|_{L^2(\Lambda_\tau)} +  
\| \zz^{-(p-1)} \, (\psi_{xx}-\psi)  \|_{L^2(\Lambda_\tau)} +
 \| \zz^{-\frac{p-1}2} \, \psi_{xx} \|_{L^2(\Lambda_\tau)} + \|\psi \|_{H^1(\Lambda_\tau)}.$$
\end{definition}

%

\begin{definition}\label{defn2} [Global in time weighted $L^2$, $H^1$ and $H^2$ norms] For a given 
number $\nu \in [0,1)$
we define 
$$\|\psi \|_{L^2_{t_0}}^\nu  = \sup_{\tau \leq t_0-1}| \tau|^\nu \|\psi \|_{L^2(\Lambda_\tau)}$$
and
$$ \|\psi \|_{H^1_{t_0}}^\nu  = \sup_{\tau \leq t_0-1} |\tau|^\nu \|\psi \|_{H^1(\Lambda_\tau)}
$$
and
$$
\|\psi \|_{H^2_{t_0}}^\nu  = \sup_{\tau \leq t_0-1} |\tau|^\nu \|\psi \|_{H^2(\Lambda_\tau)}.
$$

Also, for any  $s < t_0-1$, we define 
$$\|\psi \|_{L^2_{s,t_0}}^\nu  = \sup_{s \leq \tau \leq t_0-1}| \tau|^\nu \|\psi \|_{L^2(\Lambda_\tau)}$$
and
$$ \|\psi \|_{H^1_{s,t_0}}^\nu  = \sup_{s \leq \tau \leq t_0-1} |\tau|^\nu \|\psi \|_{H^1(\Lambda_\tau)}
$$
and
$$
\|\psi \|_{H^2_{s, t_0}}^\nu  = \sup_{s \leq \tau \leq t_0-1} |\tau|^\nu \|\psi \|_{H^2(\Lambda_\tau)}.
$$

\no When $\nu=0$ we will omit the superscript $\nu$. 
\end{definition}

\ssk 
Set
$$ \beta :=\frac 2{n-2}=\frac{p-1}2.$$

\ssk

\no For a given number $\sigma \ge 2$, we next give  the definition of the weighted $W^{2,\sigma}$ norm and $\sigma$ will be chosen later in the text. To this end, we define the weight function
$\alpha_\sigma(x,t)$ by 
\begin{equation}\label{eqn-weight}
\alpha_\sigma(x,t)=
\begin{cases}
\zz^{n\beta-\sigma}(x,t) &\text{if  $|x|>\xi(t)$}\\
\zz^{(2\beta+\theta)\sigma }(x,t) \quad &\text{if  $|x|\leq \xi(t)$}
\end{cases}
\end{equation}
where $\theta $ is a small positive number which will be chosen sufficiently close to zero. 

\medskip

\begin{remark} 
(a) We will see in the sequel that the weight function in the outer region $|x| > \xi(t)$ is such that the solution
$\phi$ of \eqref{eqn-linear000}, or equivalently the solution $u$ of the nonlinear problem  corresponds to a smooth
solution, when lifted up to the sphere. 
However, it is necessary to change the weight function $\alpha_\sigma$ in the inner region $|x| \leq \xi(t)$
to incorporate the singularity, as $t \to -\infty$,  of the solution $u$ of our nonlinear problem in that region. 

(b) In the transition region  $x =\pm \xi(t) + O(1)$ the two weights are equivalent.

\end{remark}

\smallskip

\begin{definition} [Local in time weighted $W^{2,\sigma}$ norms] \label{defn-w2p1} For $\sigma \ge 2$ define 
$$\|\psi \|_{\sigma,\Lambda_\tau} = \big (\iint_{\Lambda_\tau}  |\psi|^\sigma \, \alpha_\sigma \, dx dt \big )^{\frac 1\sigma}$$
and
$$
\|\psi \|_{2,\sigma,\Lambda_\tau} =  \|\psi_t \|_{\sigma,\Lambda_\tau} + \|\psi \|_{\sigma,\Lambda_\tau}  +
\| \psi_x  \|_{\sigma,\Lambda_\tau} + \| \psi_{xx} \|_{\sigma,\Lambda_\tau}.$$

\end{definition}

\begin{definition} [Global in time weighted $W^{2,\sigma}$ norms] \label{defn-w2p2}  For a fixed number 
$\nu \in [0,1)$, $\sigma \ge 2$ and $\Lambda_\tau$ as above,  we  define 
$$\|\psi \|_{\sigma,t_0}^\nu  = \sup_{\tau \leq t_0-1} |\tau|^\nu \, \|\psi \|_{\sigma,\Lambda_\tau}$$
and
$$
\|\psi \|_{2,\sigma,t_0}^\nu = \sup_{\tau \leq t_0-1} |\tau|^\nu \, \|\psi \|_{2,\sigma,\Lambda_\tau}.$$

\end{definition}

\no We will next define a weighted $L^\infty$ norm and our global norm. 

\begin{definition}[Weighted $L^\infty$ norm] \label{defn-linfty} For a given  $\nu\in [0,1]$,  we  define the norm
$$\| \psi \|^\nu_{L^\infty_{t_0}} := \sup_{t \leq t_0} |\tau|^\nu  \| \psi(\cdot,\tau) \|_{L^\infty(\R)}$$
and the  weighted $L^\infty$ norm as 
$$
\| \psi \|_{\infty, t_0}^\nu := 
\| z^{-1} \, \psi \,  \chi_{\{|x| > \xi(t)\}}\|^\nu_{L^\infty_{t_0}} + \| \psi  \,  \chi_{\{|x| \leq \xi(t)\}}  \|^\nu_{L^\infty_{t_0}}. 
$$
\end{definition}

\no We  finally define the global norm for the perturbation term $\psi$.
\medskip

\begin{definition}[Global norm]\label{defn-norm}
For  $\nu \in [0,1)$ and $\sigma \ge 2$ we define 
the norms
$$\|\psi\|_{*,\sigma,t_0}^{\nu} := \|\psi\|_{L_{t_0}^2}^{\nu} + \|\psi\|_{\sigma,t_0}^{\nu}$$
and
$$\|\psi\|_{*,2,\sigma,t_0}^{\nu} := \|\psi\|_{H_{t_0}^2}^{\nu} + \|\psi\|_{2,\sigma,t_0}^{\nu} + \|\psi\|_{\infty,t_0}^{\nu}.$$
Also, for any $\tau < t_0$, we denote by
$$\| \psi\|_{*,\sigma,\Lambda_\tau}  := \|\psi \|_{L^2(\Lambda_\tau)} + \|\psi\|_{\sigma,\Lambda_\tau}.$$
\end{definition}

\no We will next define the  norms for the parameters $\eta(t)$ and $h(t)$. They are more or less determined by the choice of the global norm for $\psi$.

\begin{definition}[Weighted in time norms] \label{dfn-heta} For  $\mu \in [0,1)$  and $\sigma \ge 2$,  
and for any functions $\eta$ and $h$ defined on $(-\infty,t_0]$, 
we define   the  norms  
$$\| \eta \|^\mu_{\sigma,t_0} = \sup_{\tau \leq t_0-1} |\tau|^\mu \big ( \int_{\tau}^{\tau+1}  |\eta (t)|^\sigma \, dt \big )^{\frac 1\sigma}, \qquad \| \eta  \|^\mu_{\infty,t_0} = \sup_{\tau \leq t_0} (|\tau|^\mu |\eta (\tau)|),$$
$$\| \eta \|^\mu_{1,\sigma,t_0} := \| \eta \|^\mu_{\infty,t_0}  + \| \dot \eta  \|^\mu_{\sigma,t_0},$$
\smallskip
and
$$\|h\|^{\mu,1+\mu}_{1,\sigma,t_0} :=    \| h \|^\mu_{\infty,t_0} + \| \dot h \|^{1+\mu}_{\sigma,t_0}.$$ 

\end{definition}

%
%
%
\subsection{Outline of our construction} \label{sec-outline}

We will conclude this section by outlining the construction of the solution $u$. 

\begin{definition}\label{defn-X} We define $X$ to be  the  Banach space of all functions $\psi$   on $\R \times 
(-\infty,t_0]$ with $\| \psi\|_{*,2,\sigma,\nu} < \infty$ which also satisfy  the 
orthogonality conditions \eqref{eqn-orth11}-\eqref{eqn-orth22}.

\end{definition}

\smallskip
We  denote by $T$ the linear operator which assigns to any given $f$ with  $ \|f\|_{*,\sigma,t_0}^{\nu} < \infty$ the solution $\psi:=T(f)$ of the linear  auxiliary equation 
$$pz^{p-1}\partial_t\psi = \partial_{xx}\psi  - \psi + pz^{p-1}\psi - z^{p-1}\, C(\psi,t) + z^{p-1} f,$$
with orthogonality conditions (\ref{eqn-orth11})-(\ref{eqn-orth22}) being satisfied by $f$ and $\psi$ and with $C(\psi,t)$ given by \eqref{eqn-Cpsi22}.
The construction of such  function $\psi$ will be given in section \ref{sec-linear}. 

Going back to the nonlinear problem,   function $\psi$ is a   solution of  \equ{eqn-system} iff $\psi \in X $ solves the fixed point problem
\be\label{eqn-fp}
\psi = A(\psi)
\ee
where
$$
A(\psi) :=  T(\bar E(\psi) )
$$
and $\bar E(\psi)$ is as in \eqref{eqn-Epsi3}. 

\medskip

\no {\em Outline:} Given any parameter functions $(h,\eta)$ with $\|h\|^{\mu,1+\mu}_{1,\sigma,t_0} < \infty$
and $\| \eta \|^\mu_{\sigma,t_0} < \infty$ we will establish, in section \ref{sec-np},  the existence of a solution $\psi:=\Psi(h,\eta)$ of the
fixed point problem \eqref{eqn-fp}.  In the last section \ref{sec-system} we will choose  the parameter functions $h$ and $\eta$ so that $c_1(t) \equiv 0$ and
$c_2(t)  \equiv 0$.  We will conclude  that the solution $\psi$ of \eqref{eqn-fp} which is equivalent to \equ{eqn-system}
is actually a solution to \eqref{eqn-Psi}. Hence, $u:=(1+\eta) \, z + \psi$ will be the desired ancient solution to 
\eqref{eqn-v}. 

\subsection{Notation} We summarize now  the notation of parameters, functions and   norms 
 used  throughout the article. 

\begin{notation}[The choice of  the parameters $p, \beta, \sigma, \, \nu, \, \mu, b$ and $\theta$]\label{not-par}
i. For a given dimension $n \geq 3$, we recall that $p:= (n-2)/(n+2)$ and $\beta:=2/(n-2)=(p-1)/2.$ \\
\no ii.  \no In Theorem \ref{thm-main}, we have  $\sigma = n+ 2$.  We choose $\nu$ so  that $\frac 12 < \nu  < \min\{\nu_0,1\}$,  where  $\nu_0=\nu_0(n)$ is determined by the estimate of  Lemma \ref{lem-M}. 
We choose $0 < \mu < \min\{2\nu - 1, \gamma\}$, where $\gamma=\gamma(n) \in (0,1)$ is determined by Lemma \ref{lem-projection}. The constant $b=b(n) >0$ is defined in \eqref{eqn-ab}. \\ 
\no iii. The constant  $\theta$ in \eqref{eqn-weight}
is a small positive constant as determined in  the proof of Proposition \ref{prop-w2p}.  
The above constants are all universal depending only on the dimension $n$. 
\end{notation}

\smallskip
\begin{notation}[The choice of functions] \label{not-fun}
\no i. We denote by $w(x)$ the solution to  \eqref{eqn-w} given by \eqref{eqn-w1}.\\
\no ii. The function  $\xi_0(t)$ and the function $\xi(t)$  (for a parameter  function $h(t)$)
are  given in  \eqref{eqn-xit}. \\
\no iii. The functions $z(x,t)$ and $w_1(x,t), w_2(x,t)$  are defined in  \eqref{eq-z100} and \eqref{eqn-wxt} respectively.\\
\no iv.  Throughout the article,   $u(x,t)$ will denote an ancient  solution of the nonlinear
equation \eqref{eqn-v} of the form \eqref{eq-form100}  defined on $\R^n \times (-\infty,t_0]$,  where $t_0$ is a constant which will be chosen sufficiently close to $-\infty$. $\eta(t)$ is a parameter function defined on $(-\infty,t_0]$. The perturbation   function $\psi(x,t)$ satisfies equation \eqref{eqn-Psi} where $E(\psi)$ is a non-linear error term give by \eqref{eq-Epsi}-\eqref{eqn-N}.\\
\no  v. Only in section  \ref{sec-linear},  $\psi(x,t)$  will denote a  solution  to  equation \eqref{eqn-psi}, 
for a given $f$,  where the correction term $C(\psi,t)$ is given by \eqref{eqn-Cpsi22}.   Also, $\psi^s(x,t)$  will denote a  solution  of equation \eqref{eq-psi}.  
\end{notation}

\smallskip
\begin{notation}[The norms] \label{not-norms} \no i. For a given $\tau < t_0$,  the  norm $\|\psi (\cdot,\tau) \|_{L^2} $  is given in Definition \ref{defn1}.\\
\no ii. For any $\tau < t_0-1$ we set   $\Lambda_\tau := \R \times [\tau,\tau+1]$. 
 For a given function $\psi(x,t)$ defined on $\Lambda_\tau$, the norms $\|\psi \|_{L^2(\Lambda_\tau)}$, 
$\|\psi \|_{H^1(\Lambda_\tau)}$ and $ \|\psi \|_{H^1(\Lambda_\tau)}$ are given in Definition \ref{defn1}. \\
\no iii. The norms $\|\psi \|_{L^2_{t_0}}^\nu$, 
$ \|\psi \|_{H^1_{t_0}}^\nu$ and $ \|\psi \|_{H^2_{t_0}}^\nu$  are given in Definition \ref{defn2}. \\
\no iv. The norms $\|\psi \|_{\sigma,\Lambda_\tau}$,  $\|\psi \|_{2,\sigma,\Lambda_\tau}$ are given in Definition \ref{defn-w2p1}, while the norms $\|\psi \|_{\sigma,t_0}^\nu $, $\|\psi \|_{2,\sigma,t_0}^\nu $  are given in Definition \ref{defn-w2p2}. \\
\no v. The weighted  $L^\infty$ norm $\| \psi \|^\nu_{L^\infty_{t_0}}$ is given in Definition \ref{defn-linfty}. \\
\no vi. The global norms $\|f\|_{*,2, \sigma,t_0}^{\nu} $, $ \|w\|_{*,\sigma,t_0}^{\nu} $   are given in Definition \ref{defn-norm}. \\
\no vi. For given functions $h(t)$ and $\eta(t)$, the norms $\|h\|^{\mu,1+\mu}_{1,\sigma,t_0}$ and $\| \eta \|^\mu_{\sigma,t_0}$ are given in Definition \ref{dfn-heta}. 
\end{notation}

\section{The Linear Equation}\label{sec-linear}

Consider the linear equation
\begin{equation}\label{eqn-psi0}
p\zz^{p-1} \, \pp_t \psi = \pp_{xx} \psi - \psi  + p \zz^{p-1} \, \psi + \zz^{p-1} \, g
\end{equation}
defined on $-\infty <  t \leq  t_0$. The coefficient $\zz$ is given  by
\begin{equation}\label{eqn-wxt1}
\zz(x,t) = w(x-\xi(t)) + w(x+\xi(t)),
\end{equation}
where $\xi(t)$ is given by \eqref{eqn-xit} 
for a suitable function $h \in C^1((-\infty,t_0])$ and $b >0$.
Note that $\zz(\cdot,t)$ is even in $x$. We will also   impose that $g(\cdot,t)$ is an even function  in $x$ and we shall seek for a solution $\psi(\cdot,t)$  which is even in $x$. 
We will consider a class of functions $g$  defined for  $(x,t)\in \R\times (-\infty, t_0]$
that decay both in $x$ and $t$ at suitable rates, and will build a solution $\psi$ that defines a linear
operator of $g$ which shares the same decay rates.

\msk

Our goal is to  establish the existence of the solution $\psi$ of \eqref{eqn-psi0}  in  appropriate $L^2$ and $H^1$ spaces, defined in Definition \ref{defn2}.
We observe that in the region $-\infty < x < -\xi(t)$ and under the change of variables $\bar x :=x+\xi(t)$
the operator in \eqref{eqn-psi0}, namely 
$$L \psi := \frac 1{\zz^{p-1}} \big ( \psi_{xx}  - \psi + p\zz^{p-1} \psi \big )$$
can be approximated by the elliptic operator 
\begin{equation}\label{eqn-oper}
L_0\phi := \frac 1{w^{p-1}} \big ( \phi_{\bar x\bar x} - \phi + pw^{p-1} \phi \big ),
\end{equation}
with $\phi(\bar x,t) := \psi(x,t)$, since $z(x,t) = w(\bar x) + w(\bar x-2\xi(t)) \approx w(\bar x)$ in that region. 
Defining $\bar g(\bar x,t) :=g(x,t)$, the approximated
parabolic equation takes the form 
\begin{equation}\label{eqn-aeq}
p\, w^{p-1} \, \pp_t \phi = \pp_{\bar x \bar x} \phi - \phi_{\bar{x}}\, \dot{\xi} - \phi  + p w^{p-1} \, \phi + w^{p-1}  \, \bar g
\end{equation}
The region  $\xi(t) < x < +\infty$, under the change of variables $\bar x :=x-\xi(t)$ is treated similarly.
 
\ssk

We wish to construct an ancient solution $\psi$  of \eqref{eqn-psi0}, such that the  weighted $L^2$ norm
of $\psi$  is controlled by the weighted $L^2$ norm of 
the right hand side $g$. In order to do that  we  use the well known spectrum of the approximated 
operator $L_0$ defined above.   Consider the   eigenvalue problem
$$
L_0 \theta   +    \lambda \,  \theta =0, \qquad \theta\in S,
$$
\no on the weighted space $L^2( w^{p-1}dx)$.  It is standard that this problem  has an infinite sequence
 of simple eigenvalues
$$\la_{-1} < \la_0  = 0 < \la_1<\la_2<\cdots .$$

\no with an associated orthonormal basis of $L^2( w^{p-1}dx)$ constituted by eigenfunctions
$\theta_j$, $j=-1,0,1,\ldots$, where  $\theta_{-1}$ is a suitable multiple of $w$ and $\theta_0$ of $w'$.
Since we are seeking for a solution which is controlled by the weighted $L^2$ norm
of its right hand side, we need to  restrict ourselves to a subspace $S_0 \subset L^2( w^{p-1}dx)$ which
constitutes of functions $g(\cdot,t)$
on $\R \times (-\infty,t_0]$  that are even in $x$ and that also satisfy the orthogonality conditions
\begin{equation}\label{eqn-orth1}
\int_{-\infty}^\infty g(\bar x-\xi(t),t) \, w'(\bar x) \, w^{p-1} \, d\bar x=0, \qquad a.e. \quad t < t_0
\end{equation}
and
\begin{equation}\label{eqn-orth2}
\int_{-\infty}^\infty g (\bar x-\xi(t),t) \, w(\bar x) \, w^{p-1}  \, d\bar x=0, \qquad a.e. \quad t < t_0.
\end{equation}
Notice that since  $g$ is an even function in $x$, then the orthogonality conditions \eqref{eqn-orth1} and
\eqref{eqn-orth2} also imply the symmetric conditions 
\begin{equation}\label{eqn-orth10}
\int_{-\infty}^\infty g (\bar x+\xi(t),t) \, w'(\bar x) \, w^{p-1}  \, d\bar x=0, \qquad a.e. \quad t < t_0
\end{equation}
and
\begin{equation}\label{eqn-orth20}
\int_{-\infty}^\infty g (\bar x+\xi(t),t) \, w(\bar x) \, w^{p-1}  \, d\bar x=0, \qquad a.e. \quad t < t_0.
\end{equation}
This easily follows by changing the variables of integration and using that $w$ is an even function
of $\bar x$.


%

%


  
\medskip

We wish to establish the existence of an ancient solution of \eqref{eqn-psi0} 
on $\R \times (-\infty,t_0]$ which satisfies the estimate 
$$\sup_{\tau \leq t_0} |\tau|^\nu  \| \psi(\tau) \|_{L^2} \leq C \, \| g\|_{L^2_{t_0}}^\nu.$$ 
Such a solution can be easily constructed for the approximated equation \eqref{eqn-aeq}
if $\bar  g \in S_0$. Indeed one simply looks for a solution in the from 
$$
\phi(\bar x,t) = \sum_{j=1}^\infty  \phi_j(t)\theta_j( \bar x)
$$ 
where $\theta_j$, $j\geq 1$ are the  eigenfunctions,  corresponding to the positive eigenvalues
$\lambda_j$, $j \geq 1$ mentioned above. 
However, this cannot be done for equation \eqref{eqn-psi0} as its coefficients depend on time
and as a result the equation doesn't preserve the orthogonality conditions   \eqref{eqn-orth1} and \eqref{eqn-orth2}. 
In order to achieve our goal we need to consider the equation
\begin{equation}\label{eqn-psi}
p\zz^{p-1} \, \pp_t \psi = \pp_{xx} \psi - \psi  + p \zz^{p-1} \, \psi + \zz^{p-1} \, \big [ f - C(\psi,t) \big ],
\end{equation} 
where $g := f - C(\psi,t)$,
for a suitable correction  function  $C(\psi,t)$ of  the form
\be \label{eqn-Cpsi22}
C(\psi,t) = d_1(\psi,t)  \, \zz(x,t) +   d_2(\psi,t)  \, \bar \zz(x,t),
\ee
and where $f \in S_0$.
Recall that  $\zz(x,t) := w(x-\xi(t)) + w(x+\xi(t))$ and  that $\bar \zz$ is defined  by
\eqref{eqn-barz}.

\medskip

We will construct an ancient  solution of \eqref{eqn-psi} on $\R \times (-\infty, t_0]$,
by considering first the solution $\psi^s$ of the initial value problem 
\begin{equation}
\label{eq-psi}
\begin{cases}
p z^{p-1}\, \partial_t \psi^s = \psi^s_{xx} - \psi^s +p  z^{p-1}\, \psi^s + z^{p-1}\, (f - C(\psi^s,t))
 \qquad  & \mbox{on} \,\,\,  \R  \times [s,t_0]\cr
\psi^s(\cdot,0) = 0  \qquad & \mbox{on} \,\,\, \R 
\end{cases}
\end{equation}
and then pass  to the limit as $s \to -\infty$.  The existence of  $\psi^s$ will be shown in Lemma 
\ref{lem-es}. 

\smallskip

The coefficients $d_1(t)$ and $d_2(t)$ in \eqref{eqn-psi} are defined so  that $\psi^s(\cdot,t) \in S_0$
for all $t\in [s,t_0]$, given that $f\in S_0$. We will next determine the coefficients $d_1$ and $d_2$. To this end,  it is more
convenient to work with the function  
$$\phi^s(x,t) := \psi^s(x-\xi(t),t).$$
To simplify  notation we  omit for the moment the superscript $s$ and set $\phi=\phi^s$ and $\psi=\psi^s$.  
A direct computation shows that if $\psi$ is a solution  to \eqref{eqn-psi}, then the function $\phi$ satisfies
the equation
\begin{equation}\label{eqn-phi}
p \,  \pp_t \phi = L_0 \phi  + E(\phi)+   \big ( \bar f - d_1\, \bar w - d_2 \, \tilde  w \big ).  
\end{equation} 
Here we have used the following notation 
$$\bar w(x,t) := z(x-\xi(t),t)=w(x) + w(x-2\xi(t))$$
and
$$\tilde w := \bar z(x-\xi(t))= w'(x) - w'(x-2\xi(t))$$
and $\bar f(x,t) :=f(x-\xi(t),t)$, while $E(\phi)$ denotes the error term
$$E(\phi) := -\dot \xi(t) \,  \phi_x  + \big ( \frac{1}{\bar w^{p-1}}-\frac{1}{w^{p-1}} \big ) \, (\phi_{xx} - \phi).$$
We recall that $L_0$ is given by \eqref{eqn-oper}. 
Also recall that  $\theta_i$, $i=-1,0$  denote   the eigenfunctions (which are the multiples  of $w$, $w'$)  of  operator $L_0$,
corresponding to the eigenvalues $\lambda_{-1} <0$ and $\lambda_0=0$,  respectively.  
We have assumed that $\bar f$ is orthogonal to $\theta_i$, $i=-1,0$,  namely
$$\inti \bar f(x)\, \theta_i(x) \, w^{p-1}\, dx=0.$$
Since $\phi(\cdot, s)=0$ (remember that $\phi=\phi^s$ for the moment),  it follows from \eqref{eqn-phi} that the solution $\phi$ will remain orthogonal
to the eigenfunctions $\theta_i$, $i=-1,0$, if and only if the coefficients  $d_1(t)$ and $d_2(t)$  satisfy the system
of equations 
\begin{equation}\label{eqn-system000}
d_1(t) a_1^i(t)  + d_2(t) a_2^i(t)=E^i, \quad i=-1,0
\end{equation}
where 
$$a_1^i(t)=\inti \bar w(x,t) \, \theta_i(x)  w^{p-1} \, dx, \quad a_2^i(t)=\inti \tilde w(x,t) \, \theta_i(x) w^{p-1} \, dx.
$$
and
$$E^i:=\inti E(\phi)(x,t)\, \theta_i(x) w^{p-1} \, dx, \qquad i=-1,0.$$
Using that  $\bar w(x,t) := w(x) + w(x-2\xi(t))$ and  $\tilde w(x,t) := w'(x) - w'(x-2\xi(t))$ together with the orthogonality 
$$\inti w(x) \theta_0(x) w^{p-1}\, dx = \inti w'(x) \theta_{-1}(x) w^{p-1}\, dx =0,$$
we conclude that
$$a_1^{i}(t)=e_1^i + \inti w(x-2\xi(t)) \theta_i(x)\, w^{p-1}\, dx,  \quad a_2^{i}(t)=e_2^i - \inti w'(x-2\xi(t)) \theta_i(x)\, w^{p-1}\, dx$$
where 
$$e_1^{-1}= c_{-1}\, \inti w^{p+1}\, dx >0,  \quad e_2^0= c_0\, \inti w'(x)^2 w^{p-1}\, dx>0, \quad  e_1^{0}= e_2^{-1}=0.$$
It is easy to see that 
$$\inti w(x-2\xi(t)) \theta_i(x)\, w^{p-1}\, dx=O(|t|^{-1}), \quad \inti w'(x-2\xi(t)) \theta_i(x)\, w^{p-1}\, dx=O(|t|^{-1})
$$
as $t \to -\infty$. 
Hence,
$$a_1^{-1}(t)=e_1^{-1}+ O(|t|^{-1}),   \quad a_2^{0}(t)=e_2^0 + O(|t|^{-1}), \quad a_1^0=O(|t|^{-1}),
\quad a_2^{-1}=O(|t|^{-1}).$$
It follows that  the determinant $D$ of the coefficients of the system \eqref{eqn-system000} satisfies 
$$D:=a_1^{-1}a_2^0-a_1^0a_2^{-1} = e_1^{-1}e_2^0+ O(|t|^{-1}) >0, \qquad \mbox{as}\,\, t \to -\infty.$$
Solving the system \eqref{eqn-system000} gives
$$
d_1(t) = \frac{a_2^0(t)E^{-1}(t)-a_2^{-1}(t)E^0(t)}{D} = \frac{e^2_0E^{-1}(t)}{D} + O(|t|^{-1}) \quad
$$
and
$$
d_2(t) = \frac{a_1^{-1}(t)E^{-1}(t)-a_1^{0}(t)E^0(t)}{D} = \frac{e_1^{-1}E^0(t)}{D} + O(|t|^{-1}).$$

\begin{claim} We have
\begin{equation}\label{eqn-c1c2}
\left(\int_\tau^{\tau+1} (d_1^2 + d_2^2)\, dt \right)^{1/2}  \leq C \,\frac 1{\sqrt{|\tau|}} \,  \left \{ \left (  \iint_{\Lambda_\tau} 
 \big ( \frac {\phi_{xx} - \phi}{\bar w^{p-1}} \big )^2   \bar w^{p-1} \, dx \, dt\right  )^{\frac 12} + \|\phi\|_{L^{\infty}(\Lambda_\tau)}\right  \}.
\end{equation}

\end{claim}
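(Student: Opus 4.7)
The plan is as follows. Starting from the closed-form expressions for $d_1(t)$ and $d_2(t)$ derived just above, together with the lower bound $D \ge e_1^{-1}e_2^0/2 > 0$ valid for $|t_0|$ sufficiently large, we obtain $|d_1(t)| + |d_2(t)| \le C(|E^{-1}(t)| + |E^0(t)|)$ for $t \le t_0$. The claim therefore reduces to controlling, for $i = -1, 0$, the $L^2([\tau,\tau+1])$-norm of each projection
$$E^i(t) = \int_{\R} E(\phi)(x,t)\,\theta_i(x)\,w^{p-1}\,dx.$$

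Using the definition $E(\phi) = -\dot\xi\,\phi_x + (\bar w^{1-p} - w^{1-p})(\phi_{xx} - \phi)$, I would split $E^i = E^i_1 + E^i_2$ along these two pieces. For $E^i_1$, an integration by parts in $x$ (boundary terms vanish by the exponential decay of $\theta_i w^{p-1}$ from \eqref{eqn-w2}) gives $|E^i_1(t)| \le C\,|\dot\xi(t)|\,\|\phi(\cdot,t)\|_{L^\infty(\R)}$; since $\dot\xi_0(t) = -1/(2t)$ and $\dot h$ decays at least as fast, $|\dot\xi(t)| \le C/|t|$, and hence $\big(\int_\tau^{\tau+1}|E^i_1|^2\,dt\big)^{1/2} \le (C/|\tau|)\,\|\phi\|_{L^\infty(\Lambda_\tau)}$. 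For $E^i_2$, Cauchy--Schwarz with the weight $\bar w^{p-1}\,dx$ yields
$$|E^i_2(t)|^2 \le K(t)\cdot \int_{\R}\Big(\frac{\phi_{xx}-\phi}{\bar w^{p-1}}\Big)^2 \bar w^{p-1}\,dx,$$
where $K(t) := \int_{\R} \bar w^{p-1}\,(\bar w^{1-p}-w^{1-p})^2\,\theta_i^2\,w^{2(p-1)}\,dx$. Since $|\theta_i| \le C\,w$ (both $\theta_{-1}\propto w$ and $\theta_0\propto w'$ satisfy this, with $|w'|\le w$ a direct consequence of the closed form \eqref{eqn-w1}), the integrand in $K$ carries the fast-decaying factor $w^{2p}$, so its essential support lies in an $O(1)$-neighborhood of the origin, where $w$ and $\bar w$ are bounded between positive constants while $w(x-2\xi(t)) \le C\,e^{-2\xi(t)} = O(|t|^{-1})$; this gives $|\bar w^{1-p}-w^{1-p}| = O(|t|^{-1})$ and therefore $K(t) = O(|t|^{-2})$. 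Integration in $t$ then yields the desired bound for $E^i_2$, and the two contributions combine as claimed (with margin, since $|\tau|^{-1} \le |\tau|^{-1/2}$).

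The step I expect to require most care is the uniform estimate $K(t) = O(|t|^{-2})$, since one must verify that the non-principal zones -- in particular the transition region $|x| \sim \xi(t)$ where $\bar w^{1-p}-w^{1-p}$ may be of order $1$ (the two bumps composing $\bar w$ become comparable there) -- do not spoil the estimate. In such regions the factor $\theta_i^2 w^{2(p-1)}$ is exponentially small, of order $e^{-2p|x|} \lesssim e^{-2p\xi(t)}$, which is much smaller than any polynomial in $|t|^{-1}$. A three-zone decomposition of the integration domain ($|x|\le\xi/2$, handled by the expansion above; $|x|\sim\xi$, absorbed by the exponential weight; and $|x|\ge 3\xi/2$, handled by symmetry around $x=2\xi(t)$ which reduces it to the inner estimate for the shifted bump) will make the argument rigorous.
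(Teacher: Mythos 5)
Your proposal follows the paper's argument quite closely: the same reduction from $(d_1,d_2)$ to $(E^{-1},E^0)$, the same split $E^i = E^i_1 + E^i_2$, the integration by parts plus $L^2$-in-time bound on $\dot\xi$ for $E^i_1$, and the Cauchy--Schwarz inequality with weight $\bar w^{p-1}$ for $E^i_2$. (Your quantity $K(t)$ is exactly the paper's $I(t)$ once one substitutes $\theta_0 \propto w'$.) The final conclusion is right, but one intermediate estimate, $K(t) = O(|t|^{-2})$, is not.

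The problem is in the transition region $|x|\sim\xi(t)$. You write that $\theta_i^2 w^{2(p-1)} \lesssim e^{-2p\xi(t)}$ is ``much smaller than any polynomial in $|t|^{-1}$'', but $\xi(t) = \tfrac12\log(2b|t|)+o(1)$, so $e^{-2p\xi} \sim |t|^{-p}$ is only \emph{polynomially} small, not super-polynomially. Moreover the other factor $(\bar w^{1-p}-w^{1-p})^2$ is \emph{exponentially large} there (it is $\lesssim w^{2(1-p)}\sim e^{2(p-1)|x|}$), and it eats up much of the decay. Tracking the exponents carefully in the zone $0\le x\le \xi$, the integrand of $K$ behaves like $w(x-2\xi)^2 w^{p-1}\sim e^{(3-p)x-4\xi}$, whose integral is $O\big(e^{-(p+1)\xi}\big) = O\big(|t|^{-(p+1)/2}\big)$ when $p<3$ (that is, $n>4$). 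This is the paper's own estimate on $I_1$: $I_1 \le C\max\{|t|^{-2},|t|^{-(p+1)/2}\}$, which for $n>4$ is strictly larger than $|t|^{-2}$. The outer zone $x>\xi$ similarly contributes $O(|t|^{-1})$ with the paper's crude bound $|w'|^2\le C|t|^{-1}$. So the sharp statement is $K(t) = O(\max\{|t|^{-2},|t|^{-(p+1)/2}\})$, which equals $O(|t|^{-2})$ only when $n=3$; for $n=4$ there is a logarithmic loss, and for $n>4$ the power drops to $(p+1)/2 \in (1,2)$.

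This does not damage your conclusion: since $p>1$ one always has $(p+1)/2>1$, so $K(t) = o(|t|^{-1})$, and all that the claim requires is $K(t) \le C|t|^{-1}$ (this is precisely what the paper proves as $I(t)\le C|t|^{-1}$, and why the final factor is $|\tau|^{-1/2}$ rather than $|\tau|^{-1}$). But you should replace ``$K(t)=O(|t|^{-2})$'' by ``$K(t)=O(|t|^{-1})$'' (or the sharp $\max$ above) and correct the claim that the exponential weight beats every polynomial in $|t|^{-1}$. A second, minor, imprecision: $\|h\|_{1,\sigma,t_0}^{\mu,1+\mu}\le 1$ gives only an $L^\sigma$-in-time control of $\dot h$ on unit intervals, not a pointwise bound $|\dot h|\le C/|t|$; the correct route, which the paper takes, is to keep $\|\phi\|_{L^\infty(\Lambda_\tau)}$ outside and bound $\big(\int_\tau^{\tau+1}|\dot\xi|^2\,dt\big)^{1/2}\le C/|\tau|$ directly.
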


\begin{proof}[Proof of Claim]
It is easy to see that
\begin{equation}\label{eqn-d1d2}
\left(\int_\tau^{\tau+1}\, (d_1^2 + d_2^2)\, dt\right)^{1/2} \le C\left(\int_\tau^{\tau+1}\big((E^{-1})^2 + (E^0)^2\big)\, dt\right)^{1/2} + O(|\tau|^{-1})
\end{equation}
so it is enough to estimate the integrals  $\int_\tau^{\tau+1} (E^i)^2\, dt$, for $i = \{-1, 0\}$ and $\tau \le t_0$.
We will only discuss the computation for $E^0$, as the computation for $E^{-1}$ is identical.  
We have
$$E^0 = -\dot \xi(t) \inti \,  \phi_x \,  w'(x) w^{p-1} \, dx + 
  \inti  c(x,t) \, (\phi_{xx} - \phi)\, w'(x)  \, dx=E^0_1+E^0_2$$
with 
$$c(x,t) =  \frac{w^{p-1}- \bar w^{p-1}}{\bar w^{p-1}}=\big (\frac w{\bar w} \big )^{p-1} -1.$$
Clearly we have
\begin{equation}\label{eqn-E1}
\begin{split}
\left(\int_\tau^{\tau+1} |E^0_1|^2\, dt \right)^{1/2} &\leq C \,\left(\int_\tau^{\tau+1}|\dot{\xi}|^2\big(\inti\phi_x w'(x)w^{p-1}\, dx\big)^2\, dt\right)^{1/2} \\
&= C\left(\int_\tau^{\tau+1}|\dot{\xi}|^2\, \big(\inti \phi\,  (w'(x)w^{p-1})_x\, dx\big)^2\, dt \right )^{1/2} \\
&\le C\, \|\phi\|_{L^{\infty}(\Lambda_\tau)} \left(\int_\tau^{\tau+1}|\dot{\xi}|^2\, dt \right)^{1/2}\\
&\le \frac{C}{|\tau|}\, \|\phi\|_{L^{\infty}(\Lambda_\tau)}.
\end{split}
\end{equation}
For the second term, we have
\begin{equation}\label{eqn-c1}
\inti  c(x,t) \, (\phi_{xx} - \phi)\, w'(x)  \, dx 
 \leq I(t)^{\frac 12}   \, \left (  \inti  \frac {(\phi_{xx} - \phi)^2}{\bar w^{p-1}} \, dx  \right )^{\frac 12}
\end{equation}
where
$$I(t) := \inti c^2(x,t) \, |w'(x)|^2 \bar w^{p-1}\, dx .$$

\no Recall that $\xi$ given by \eqref{eqn-xit} satisfies $\xi(t) = \frac 12 \log |t| + O(1)$,
as $t \to -\infty$. On $x < \xi(t)$ we have 
$w \leq \bar w \leq 2w$, hence 
$$\frac 12 \leq \frac w{\bar w} \leq 1.$$ 
It follows that 
$$c^2(x,t)=(1- (\frac w{\bar w})^{p-1})^2 \leq C(p)  \, (1-\frac w{\bar w})^2.$$

\no We conclude that
\begin{equation*}
\begin{split}
I_1:&=\int_{-\infty}^{\xi(t)}  c^2(x,t)  |w'(x)|^2 \bar w^{p-1}\, dx  \\
&\leq C  \int_{-\infty}^{\xi(t)} \big (\frac{w(x-2\xi)}{\bar w(x,t)}\big )^2  |w'(x)|^2 \bar w^{p-1}\, dx \leq C \int_{-\infty}^{\xi(t)}  w(x-2\xi)^2 \bar w^{p-1}\, dx\\
&\leq C\, |t|^{-2}  \left  ( \int_{-\infty}^0  e^{2x}  e^{(p-1)x}\, dx + \int_0^{\xi(t)} e^{2x} e^{-(p-1)x}\, dx \right  )\\
&\leq C\, |t|^{-2} \big (C_1 + C_2 |t|^{\frac{3-p}2}) \leq C\, \max\{|t|^{-2}, |t|^{-\frac{1+p}2}\}.
\end{split}
\end{equation*}
On $x > \xi(t)$, using the bound   $c^2 \leq 1$ and $|w'(x)|^2 \leq C\, |t|^{-1}$ we have 
\begin{equation*}
\begin{split}
I_2:&=\int_{\xi(t)}^\infty c^2(x,t)  |w'(x)|^2 \bar w^{p-1} \, dx  \leq C \int_{\xi(t)}^\infty   |w'(x)|^2 \bar w^{p-1}\, dx \\
&\leq C |t|^{-1} \, \int_{\xi(t)}^\infty   \bar w^{p-1}\, dx \leq C\, |t|^{-1}. 
\end{split}
\end{equation*}
Since $p >1$, combining the above gives us the estimate
$$I (t) =I_1+I_2 \leq C\, |t|^{-1}.$$
Using  the last estimate in  \eqref{eqn-c1} yields the bound 

\begin{equation}\label{eqn-E2}
|E^0_2(t)| \leq C \,\frac 1{\sqrt{|t|}} \,  \left (  \inti 
  \frac {(\phi_{xx} - \phi)^2}{\bar w^{p-1}} \, dx  \right )^{\frac 12}.
\end{equation}

\no Combining \eqref{eqn-E1} and \eqref{eqn-E2} gives us the bound
$$\left(\int_\tau^{\tau+1} |E^0(t)|^2\, dt\right)^{1/2} \leq C \,\frac 1{\sqrt{|\tau|}} \,  \left \{ \left (  \iint_{\Lambda_\tau} 
 \big ( \frac {\phi_{xx} - \phi}{\bar w^{p-1}} \big )^2   \bar w^{p-1} \, dx \, dt\right  )^{\frac 12} + \|\phi\|_{L^{\infty}(\Lambda_\tau)}\right  \}$$
and the same bound holds for $E^{-1}(t)$. 
By \eqref{eqn-d1d2} it follows that \eqref{eqn-c1c2} holds.
\end{proof}

\no Using  \eqref{eqn-c1c2}  we can easily   estimate the $L^2(\Lambda_\tau)$ norm of the term 
$$C(\psi,t):= d_1(t)\, z + d_2(t) \, \bar z$$

\no by the $H^2(\Lambda_\tau)$ norm of  function $\psi$,  as 

\begin{equation}\label{eqn-cpsi}
\| C(\psi,t)  \|_{L^2(\Lambda_\tau)}  \leq C \,\frac 1{\sqrt{|\tau|}} \, \big (\|\psi  \|_{H^2(\Lambda_\tau)} + \|\psi\|_{L^{\infty}(\Lambda_{\tau})} \big).
\end{equation}

\medskip

The main result in  this section is the following proposition. 

\begin{prop}\label{prop-l2e} For  given numbers $p >1$, $b >0$ and $0 \leq \mu \leq 1$   
and a given function $\xi:=\frac 12 \log (2b|t|) + h$ on $(-\infty, t_0]$  with
$\| h \|_{1,\sigma, t_0}^{\mu, 1+\mu} \leq 1$,  consider the equation \eqref{eqn-psi}
with coefficient $z$ given by \eqref{eqn-wxt1}. Then, for any $\nu \in [0,1]$,  there is a number  $t_0 <0$,
depending on $ \mu, \nu, b$ and $p$  and such that  for any even function  $f$ on $\R \times (-\infty,t_0]$
with
$\|f \|_{L^2_{t_0}}^\nu  < \infty$,   satisfying the 
orthogonality conditions \eqref{eqn-orth1} and \eqref{eqn-orth2} there exists an ancient  solution $\psi$ of \eqref{eqn-psi} on $-\infty \leq  t \leq t_0$
 also satisfying the orthogonality conditions \eqref{eqn-orth1} and \eqref{eqn-orth2},  and for which
\begin{equation}\label{eqn-basic0}
\sup_{\tau \leq t_0} |\tau|^\nu  \| \psi(\tau) \|_{L^2} + \| \psi \|_{H^2_{t_0}}^\nu \leq C \, \| f \|_{L^2_{t_0}}^\nu.
\end{equation}
The  constant $C$ depends  only on $b, \mu$, $\nu$ and $p$. 

\end{prop}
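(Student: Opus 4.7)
The plan is to construct the ancient solution by an exhaustion procedure: solve the initial value problem \eqref{eq-psi} on $[s,t_0]$ with zero initial data for each $s < t_0-1$, derive a priori estimates uniform in $s$ in the weighted norms of Definitions \ref{defn1}--\ref{defn2}, and then pass to the limit $s \to -\infty$. For fixed $s$, standard linear parabolic theory (with $d_1, d_2$ determined by the system \eqref{eqn-system000}, which is non-degenerate for $|t_0|$ large by the computation of $D$ preceding the claim) produces a unique smooth solution $\psi^s$; the computation leading to \eqref{eqn-c1c2} shows precisely that this choice of $d_1, d_2$ is the one that makes $\phi^s := \psi^s(\cdot - \xi(t),t)$ stay orthogonal to $\theta_{-1}$ and $\theta_0$, because $\phi^s(\cdot,s) = 0$ trivially satisfies these conditions.

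The heart of the argument is a uniform weighted $L^2$ energy estimate. I would multiply \eqref{eq-psi} by $\psi^s$, integrate on $\R \times [\tau,\tau+1]$, and invoke the spectral gap for $L_0$ on the subspace orthogonal to $\theta_{-1}, \theta_0$: there exists $\lambda_1 > 0$ such that
\begin{equation*}
\int_{-\infty}^\infty (\phi_x^2 + \phi^2 - p w^{p-1} \phi^2) \, dx \geq \lambda_1 \int_{-\infty}^\infty \phi^2 \, w^{p-1} \, dx
\end{equation*}
for all $\phi$ in $S_0$. Since $z(x,t)$ behaves, after translation by $\pm\xi(t)$, like $w$ on each half line (with the overlap of size $O(e^{-2\xi(t)}) = O(|t|^{-1})$), transplanting this coercivity to the full $z$-weighted inner product costs only a multiplicative error of order $O(|t|^{-1})$. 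Combined with the control of the correction term provided by \eqref{eqn-cpsi}, which carries the smallness factor $|\tau|^{-1/2}$, this yields for $|t_0|$ large enough an energy inequality of the form
\begin{equation*}
\tfrac{d}{dt}\|\psi^s\|_{L^2}^2 + c\, \|\psi^s\|_{H^1(\Lambda_\tau)}^2 \leq C \|f\|_{L^2(\Lambda_\tau)}^2 + \tfrac{C}{|\tau|}\, \|\psi^s\|_{H^2(\Lambda_\tau)}^2.
\end{equation*}
The $H^2$ bound is recovered by solving \eqref{eq-psi} for $\psi^s_{xx} - \psi^s + p z^{p-1} \psi^s$ and estimating the weighted $L^2$ norms of $\psi^s_t$, $\psi^s_{xx}$ separately via standard parabolic regularity; this allows the last term above to be absorbed once $|t_0|$ is sufficiently large.

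To incorporate the polynomial weight $|\tau|^\nu$, I would multiply the energy identity by $|\tau|^{2\nu}$ and iterate the resulting differential inequality backwards from $t_0$: a Gronwall-type argument over unit intervals $\Lambda_\tau$ upgrades the local estimates to the global bound
\begin{equation*}
\sup_{\tau \in [s,t_0]} |\tau|^\nu \|\psi^s(\tau)\|_{L^2} + \|\psi^s\|_{H^2_{s,t_0}}^\nu \leq C \|f\|_{L^2_{t_0}}^\nu,
\end{equation*}
with $C$ independent of $s$. Finally, passing $s \to -\infty$ along a subsequence, weak compactness in $H^2(\Lambda_\tau)$ for each fixed $\tau$, together with a diagonal extraction, yields a limit $\psi$ satisfying the same estimate and the equation; the orthogonality conditions \eqref{eqn-orth1}--\eqref{eqn-orth2} pass to the limit by strong convergence in weighted $L^2$ on compact time intervals.

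The principal obstacle I anticipate is the coercivity step. The operator $L$ differs from the model operator $L_0$ in non-trivial ways on the intermediate region $|x| \lesssim \xi(t)$ where both bumps $w(x\pm\xi(t))$ contribute, and the orthogonality conditions are imposed with respect to the shifted eigenfunctions rather than the eigenfunctions of $L$ itself. Making the perturbation argument rigorous requires showing that the overlap error is quantitatively small compared to the spectral gap $\lambda_1$ of $L_0$, and carefully tracking how the projection onto the kernel-generating directions $z$ and $\bar z$ in the definition of $C(\psi,t)$ interacts with the coercivity. The estimate \eqref{eqn-cpsi} together with the bound $|\dot\xi| = O(|t|^{-1})$ is what makes this perturbation admissible for $|t_0|$ large, and the restriction $\nu \leq 1$ reflects the best decay rate compatible with these correction errors.
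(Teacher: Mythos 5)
Your overall exhaustion framework (solve the initial value problem \eqref{eq-psi} from each $s$, get $s$-uniform bounds, pass to the limit) matches the paper, and the energy estimate for $H^2$, $L^\infty$ in terms of $\|\psi\|_{L^2}$ and $\|g\|_{L^2}$ is exactly Lemma \ref{lem-energy1}. But the crux --- establishing $\sup |\tau|^\nu\|\psi^s(\tau)\|_{L^2} \le C\|f\|_{L^2_{t_0}}^\nu$ --- is where your argument diverges from the paper and where it has a genuine gap.

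You propose a direct coercivity argument: transfer the spectral gap for $L_0$ (with weight $w^{p-1}$) to the operator with weight $z^{p-1}$, claiming the cost is an $O(|t|^{-1})$ multiplicative error from the overlap. This claim is false as stated. After the change of variables $\phi(\bar x) = \psi(\bar x + \xi(t))$, the $L_0$ spectral gap gives $\int(\psi_x^2 + \psi^2 - pw_1^{p-1}\psi^2)\,dx \ge \lambda_1\int\psi^2 w_1^{p-1}\,dx$, where $w_1(x) = w(x-\xi)$. Passing to the $z$-weighted quadratic form produces a discrepancy term $-p\int(z^{p-1} - w_1^{p-1})\psi^2\,dx$. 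On the half-line $x<0$ one has $z \approx w_2 = w(x+\xi) \gg w_1$, so $z^{p-1} - w_1^{p-1}$ is \emph{comparable} to $z^{p-1}$ there, not small: the discrepancy is $O(1)$ relative to $\int\psi^2 z^{p-1}\,dx$, and no uniform $c>0$ in your coercivity inequality follows from this naive transfer. The smallness of the overlap $w_1 w_2 \sim e^{-2\xi}$ does not help, because the problematic region is not the overlap zone but the far half-line where the ``wrong'' bump dominates.

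The paper avoids this obstacle entirely. Proposition \ref{prop-apriori} establishes the $L^2$ bound by a contradiction-compactness argument: supposing the bound fails along sequences $s_k, \bar t_k$, one normalizes, recenters in time at the points $t_k$ where the mass concentrates, and extracts a limit (after a further spatial translation by $\xi(t + t_k)$) which solves the autonomous model equation $pw^{p-1}\pp_t\phi = \phi_{xx} - \phi + pw^{p-1}\phi$ on $\R\times(-\infty,0]$ with the orthogonality conditions preserved and nonzero. Only then is the $L_0$ spectral gap invoked, cleanly, to derive backward-in-time exponential growth of $\|\phi(t)\|_{L^2(w^{p-1}dx)}$, contradicting the uniform bound inherited from the normalization. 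The $z$-versus-$w$ issue disappears in the limit because $w(x - 2\xi(t+t_k)) \to 0$ locally uniformly. If you want to salvage a direct coercivity route, you would need to exploit the evenness of $\psi$ to fold the integrals onto $x>0$ (where $z\approx w_1$) and then prove a half-line spectral gap for $L_0$ with the Neumann condition $\psi_x(0)=0$ and the appropriately restricted orthogonality --- a nontrivial additional lemma that your sketch does not supply.
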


As we already discussed,   the ancient solution $\psi$ will be constructed as the
limit of the solutions $\psi^s$ to \eqref{eq-psi}, as $s \to -\infty$. 
The existence of the solutions  $\psi^s$ is given by  the next Lemma. 

\begin{lem}\label{lem-es} Under the assumptions of Proposition \ref{prop-l2e}, 
there exists a  number $t_0<0$ depending on $b,  \mu, \nu$ and $p$ and a  solution $\psi^s$ of the initial value problem \eqref{eq-psi}
also satisfying the orthogonality conditions \eqref{eqn-orth1} and \eqref{eqn-orth2}. 
In addition,  
\begin{equation}\label{eqn-basic0s}
\sup_{\tau \in [s,t_0]} |\tau|^\nu  \| \psi^s(\tau) \|_{L^2} + \| \psi^s\|_{H^2_{s,t_0}}^\nu \leq C \, \| f \|_{L^2_{s,t_0}}^\nu
\end{equation}
where $C$ depends only on $b,  \nu, \mu$ and $p$.  
\end{lem}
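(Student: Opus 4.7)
The plan is to view \eqref{eq-psi} as a linear parabolic Cauchy problem and derive the estimate \eqref{eqn-basic0s} by a weighted energy argument that exploits the coercivity of the approximating operator $L_0$ on the orthogonal complement of its bottom two modes.

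First, I would establish existence. The equation \eqref{eq-psi} is linear in $\psi^s$: both $f$ and the correction $C(\psi^s,t)=d_1\,z+d_2\,\bar z$ enter linearly, since the coefficients $d_1, d_2$ are determined by the linear algebraic system \eqref{eqn-system000} whose right-hand sides depend linearly on $\psi^s$ and its derivatives. Since $z^{p-1}$ is smooth and bounded above and below on every compact subset of $\mathbb{R}\times[s,t_0]$, I would construct $\psi^s$ by a Galerkin scheme in an orthonormal basis of the weighted space $L^2(z^{p-1}\,dx)$, combined with parabolic regularity to upgrade to $H^2$. The orthogonality conditions \eqref{eqn-orth1}-\eqref{eqn-orth2} are built in by construction: $d_1,d_2$ were chosen precisely so that, after the change of variables $\phi^s(x,t)=\psi^s(x-\xi(t),t)$, the projections of the evolution equation \eqref{eqn-phi} onto $\theta_{-1},\theta_0$ vanish; since $\psi^s(\cdot,s)=0\in S_0$ and $f\in S_0$, the projections remain identically zero on $[s,t_0]$.

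The heart of the proof is the weighted $L^2$ estimate. Testing \eqref{eq-psi} against $\psi^s$ and integrating yields
$$\tfrac{p}{2}\tfrac{d}{dt}\!\int z^{p-1}(\psi^s)^2\,dx + \int\!\big[(\psi^s_x)^2+(\psi^s)^2-pz^{p-1}(\psi^s)^2\big]dx = \int z^{p-1}(f-C(\psi^s,t))\psi^s\,dx + \tfrac{p(p-1)}{2}\!\int z^{p-2}z_t(\psi^s)^2\,dx.$$
The key input is a coercivity estimate for the quadratic form on the left: since $z(\cdot,t)=w(\cdot-\xi)+w(\cdot+\xi)$ has exponentially small overlap as $\xi\to\infty$, and since $\psi^s$ is even and satisfies \eqref{eqn-orth1}-\eqref{eqn-orth2}, I would localize near each bump and invoke the spectral gap of $L_0$ on $\{\theta_{-1},\theta_0\}^\perp$ to obtain
$$\int\!\big[(\psi^s_x)^2+(\psi^s)^2-pz^{p-1}(\psi^s)^2\big]dx \ge (\lambda_1-o(1))\!\int z^{p-1}(\psi^s)^2\,dx,$$
with $o(1)\to 0$ as $t_0\to-\infty$. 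The drift term $\int z^{p-2}z_t(\psi^s)^2\,dx$ is of order $|\dot\xi|=O(|t|^{-1})$, and by \eqref{eqn-cpsi} the correction contribution is bounded by $C|\tau|^{-1/2}\big(\|\psi^s\|_{H^2(\Lambda_\tau)}+\|\psi^s\|_{L^\infty(\Lambda_\tau)}\big)$, both small for $|t_0|$ large.

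Combining coercivity with these small error bounds and Cauchy--Schwarz on the $f$-term, I get a Gronwall-type inequality
$$\tfrac{d}{dt}\!\int z^{p-1}(\psi^s)^2\,dx + c_0\!\int z^{p-1}(\psi^s)^2\,dx \le C\|f(\cdot,t)\|_{L^2}^2 + \text{small}\cdot\|\psi^s\|_{H^2(\Lambda_t)}^2,$$
which, integrated from $s$ with vanishing initial data and weighted by $|\tau|^\nu$, yields the pointwise bound $|\tau|^\nu\|\psi^s(\tau)\|_{L^2}\le C\|f\|_{L^2_{s,t_0}}^\nu$. The $H^2_{s,t_0}^\nu$ part of \eqref{eqn-basic0s} follows by testing \eqref{eq-psi} against $\partial_t\psi^s$ on each slab $\Lambda_\tau$, which controls $\|\partial_t\psi^s\|_{L^2(\Lambda_\tau)}$ and (via the PDE itself) $\|\psi^s_{xx}-\psi^s\|_{L^2(\Lambda_\tau)}$ in terms of $\|\psi^s\|_{L^2(\Lambda_\tau)}$ and $\|f\|_{L^2(\Lambda_\tau)}$, with the small correction contribution absorbed through \eqref{eqn-cpsi} for $|t_0|$ large. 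The main obstacle is making the coercivity rigorous: the approximate kernel of the linearization is four-dimensional, spanned by $w(\cdot\pm\xi)$ and $w'(\cdot\pm\xi)$, while only two orthogonality conditions are imposed; I would exploit the evenness of $\psi^s$ and $f$ to reduce to the symmetric sector where the two shifted copies of $w,w'$ produce only two effective directions that are precisely killed by \eqref{eqn-orth1}-\eqref{eqn-orth2}, and then quantify the exponentially small interaction between the two bumps to transfer the spectral gap of $L_0$ to the genuine operator at time $t$.
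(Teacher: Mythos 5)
Your approach is genuinely different from the paper's, and the difference sits precisely at the step you yourself call ``the main obstacle.'' The paper does \emph{not} prove the key $L^2$ estimate via a direct coercivity inequality for the quadratic form $\int \psi_x^2 + \psi^2 - pz^{p-1}\psi^2\,dx$. Instead, Proposition \ref{prop-apriori} is proved by a compactness/contradiction argument: assume the estimate \eqref{eqn-l22} fails along a sequence $\bar t_k \to -\infty$, normalize, recenter in space by $\xi(t+t_k)$, and pass to a limit in which the two bumps separate and $z$ degenerates to a single $w$. Only \emph{in the limit} is the spectral gap of $L_0$ invoked, where it is exact; no quantitative, uniform-in-$t$ coercivity constant for the time-dependent two-bump operator is ever claimed or needed. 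The existence part (your Galerkin step, the paper's finite-cylinder Dirichlet approximation plus a contraction argument for the nonlocal correction $C(\psi^s,t)$) is essentially a matter of taste and either route would work; the a priori estimate then extends the local solution to all of $[s,t_0]$.

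The genuine gap in your proposal is that the coercivity inequality
\[
\int\!\big[\psi_x^2+\psi^2-pz^{p-1}\psi^2\big]\,dx \ \ge\ (\lambda_1-o(1))\!\int z^{p-1}\psi^2\,dx
\]
is asserted, not proved, and proving it is nontrivial. Your sketch (IMS localization near each bump, evenness to reduce the four-dimensional approximate kernel to two effective directions, exponential smallness of the overlap) is heuristically sound, but the details are substantial: the orthogonality conditions \eqref{eqn-orth1}--\eqref{eqn-orth2} are imposed with weight $w^{p-1}$ after recentering by $\xi(t)$, not with weight $z^{p-1}$, so the localized pieces $\chi_i\psi$ are only approximately orthogonal to the single-bump modes and one must track the projections; the neck region, where $z^{p-1}=O(|t|^{-(p-1)/2})$, must be handled separately; and the cutoff errors and the drift term $\dot\xi\,\phi_x$ must be absorbed. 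A secondary issue is the Gronwall step: your differential inequality has on the right a slab-integrated error $\mathrm{small}\cdot\|\psi^s\|_{H^2(\Lambda_t)}^2$ while the left side is pointwise in time, so turning this into a pointwise bound requires first invoking Lemma \ref{lem-energy1} to reduce $H^2(\Lambda_t)$ to $L^2(\Lambda_t)$ and then a nontrivial iteration (the paper does a version of this inside the compactness argument, via the function $\alpha(\tau)=\int_{s_k}^\tau\!\int \psi^2 z^{p-1}$). Until the coercivity inequality is actually established, the proposal does not close; this is very likely the reason the paper avoids a direct coercivity argument in favor of the soft compactness route.
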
 

\begin{remark} [Dependence  on  function  $\xi$] 
For the remaining of Section \ref{sec-linear}, we  will fix $b >0$, $\mu \in (0,1)$
and function $\xi:=\frac 12 \log (2b|t|) + h$ with   $\| h \|_{1,\sigma, t_0}^{\mu, 1+\mu} \leq 1$ and we will only discuss the dependence
of the various constants on $s$ and $t_0$, while assuming that may also depend on $b$, $\mu$ and $\nu$.

\end{remark}

\subsection{A priori estimates} 
\label{sec-apriori}

We will establish in this section apriori $L^2$ and $H^2$ energy estimates for the solutions $\psi^s$ of \eqref{eq-psi} that are independent on $s$. 
 We begin by proving an energy estimate (independent of $s$) for solutions of the initial value problem
 \begin{equation}
\label{eq-psi222}
\begin{cases}
p z^{p-1}\, \partial_t \psi^s = \psi^s_{xx} - \psi^s + p  z^{p-1}\, \psi^s + z^{p-1}\, g
 \qquad  & \mbox{on} \,\,\,  \R  \times [s,t_0]\cr
\psi^s(\cdot,s) = 0  \qquad & \mbox{on} \,\,\, \R 
\end{cases}
\end{equation}
Energy estimates for solutions of equation \eqref{eq-psi} will easily follow by Lemma \ref{lem-energy1} and estimate \eqref{eqn-cpsi}.

\begin{lem}[Energy $H^2$ and   $L^{\infty}$ estimate for equation \eqref{eqn-psi0}]
\label{lem-energy1}
Let $\psi^s(x,t)$ be a  solution of \eqref{eq-psi222}.   Then,
for any $\nu \in [0,1)$  there exists a uniform in $s$ constant $C$ so that for $|t_0|$ sufficiently large we have 
\begin{equation}
\label{eqn-l224}
\sup_{\tau \in [s,t_0]} |\tau|^{\nu} \| \psi \|_{L^{\infty}(\Lambda_{\tau})} + \| \psi \|_{H^2_{s,t_0}}^\nu   \le C \big (  \| \psi \|^{\nu}_{L^2_{s, t_0}}+ \|g\|^{\nu}_{L^2_{s, t_0}}\big ).
\end{equation} 
\end{lem}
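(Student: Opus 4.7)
This is a standard parabolic Caccioppoli energy estimate augmented by a one-dimensional Sobolev embedding in $x$ to upgrade to $L^\infty$. The weighted structure of $\|\cdot\|_{H^2}$ is precisely what integration by parts produces once one tests \eqref{eq-psi222} against $\psi$ or $\psi_t$, since the coefficient in front of $\partial_t\psi$ is $pz^{p-1}$. Throughout, fix $\tau\in[s,t_0-1]$ and a smooth cutoff $\chi=\chi(t)$ supported in $[\tau-1,\tau+1]$ with $\chi\equiv 1$ on $[\tau,\tau+1]$, so that every spacetime integration runs over the enlarged slab $\Lambda_{\tau-1}\cup\Lambda_\tau$.

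\emph{$H^1$ estimate.} Multiply \eqref{eq-psi222} by $\psi\chi^2$ and integrate. Integration by parts in $x$ converts $\iint\psi_{xx}\psi\chi^2$ into $-\iint\psi_x^2\chi^2$, and integration by parts in $t$ converts $\iint pz^{p-1}\psi_t\psi\chi^2$ into $p\iint z^{p-1}\psi^2\chi\chi_t-\tfrac{p}{2}\iint(z^{p-1})_t\psi^2\chi^2$. Since $\xi=\tfrac12\log(2b|t|)+h$ with $\|h\|^{\mu,1+\mu}_{1,\sigma,t_0}\leq 1$, one has $|\dot\xi|\leq C|t|^{-1}$, hence $|(z^{p-1})_t|\leq C|t|^{-1}z^{p-1}$, and this lower-order term is absorbable for $|t_0|$ sufficiently large. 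Cauchy--Schwarz on the $g$-term then yields
\begin{equation*}
\iint_{\Lambda_\tau}(\psi_x^2+\psi^2)\,dx\,dt \;\leq\; C\bigl(\|\psi\|_{L^2(\Lambda_{\tau-1}\cup\Lambda_\tau)}^2+\|g\|_{L^2(\Lambda_{\tau-1}\cup\Lambda_\tau)}^2\bigr),
\end{equation*}
which by Definition \ref{defn1} covers both $\|\psi\|_{L^2(\Lambda_\tau)}$ and $\|z^{-(p-1)/2}\psi_x\|_{L^2(\Lambda_\tau)}$.

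\emph{$\psi_t$ and $\psi_{xx}$ estimates.} Testing with $\psi_t\chi^2$ and using the identities $\iint\psi_{xx}\psi_t\chi^2=\iint\psi_x^2\chi\chi_t$, $-\iint\psi\psi_t\chi^2=\iint\psi^2\chi\chi_t$, and $p\iint z^{p-1}\psi\psi_t\chi^2=-\tfrac{p}{2}\iint(z^{p-1})_t\psi^2\chi^2-p\iint z^{p-1}\psi^2\chi\chi_t$, one absorbs half of $p\iint z^{p-1}\psi_t^2\chi^2$ on the left after Cauchy--Schwarz on $\iint z^{p-1}g\psi_t\chi^2$, leaving a right-hand side controlled by the previous step. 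The remaining two $\psi_{xx}$ pieces of the $H^2$-norm follow algebraically by rewriting the equation as $\psi_{xx}-\psi=pz^{p-1}(\psi_t-\psi)-z^{p-1}g$ and using that $z$ is bounded above.

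\emph{$L^\infty$ estimate.} The one-dimensional Sobolev embedding $\|\phi\|_{L^\infty(\R)}^2\leq\|\phi\|_{L^2(\R)}^2+\|\phi_x\|_{L^2(\R)}^2$ reduces the task to bounding $F(t):=\int(\psi^2+\psi_x^2)(\cdot,t)\,dx$ uniformly in $t\in[\tau,\tau+1]$. A direct computation, using integration by parts in $x$, gives $F'(t)=2\int(\psi-\psi_{xx})\psi_t\,dx=2\int\bigl(pz^{p-1}\psi+z^{p-1}g-pz^{p-1}\psi_t\bigr)\psi_t\,dx$; by Cauchy--Schwarz and the preceding step, $\int_{\tau-1}^{\tau+1}|F'(t)|\,dt\leq C(\|\psi\|_{L^2}^2+\|g\|_{L^2}^2)$ on the enlarged slab. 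Combined with $\int_{\tau-1}^{\tau+1}F(t)\,dt\leq C(\cdots)$ from the $H^1$ step, the mean value theorem produces a reference time at which $F$ is small, and then the fundamental theorem of calculus yields $\sup_{t\in[\tau,\tau+1]}F(t)\leq C(\|\psi\|_{L^2(\Lambda_{\tau-1}\cup\Lambda_\tau)}^2+\|g\|_{L^2(\Lambda_{\tau-1}\cup\Lambda_\tau)}^2)$. Multiplying every bound obtained by $|\tau|^{2\nu}$ and taking supremum in $\tau$ gives \eqref{eqn-l224}. The main obstacle throughout is the time dependence of the coefficient $z$, which through $(z^{p-1})_t$ threatens to spoil every energy identity; this is precisely tamed by the structural bound $|\dot\xi|=O(|t|^{-1})$ together with the freedom to choose $|t_0|$ sufficiently close to $-\infty$.
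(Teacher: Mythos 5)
Your proof follows essentially the same route as the paper's: test the equation against $\psi$ and $\psi_t$, exploit that $z$ is bounded above and that $\big(\int_{\tau-1}^{\tau}|\dot\xi|^2\,dt\big)^{1/2}\le C/|\tau|$ to tame the $(z^{p-1})_t$ term, and close with the one-dimensional Sobolev embedding. The only presentational differences are that the paper uses the ramp cutoff $\eta(t)=t-\tau$ on $[\tau,\tau+1]$ rather than a smooth cutoff on the enlarged slab $[\tau-1,\tau+1]$, and upgrades to $L^\infty$ by bootstrapping a self-referencing term $\frac{1}{|\tau|}\|\psi\|_{L^\infty(\Lambda_\tau)}$ (which is where $|t_0|$ large enters), whereas you reach the same endpoint via a mean-value-plus-FTC argument for $F(t)=\int(\psi^2+\psi_x^2)\,dx$; both work. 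One small caveat: $\dot h$ is only controlled in $L^\sigma$ through $\|h\|^{\mu,1+\mu}_{1,\sigma,t_0}\le1$, so the pointwise claim $|\dot\xi|\le C|t|^{-1}$ should be stated as an averaged bound, as in the paper, but this does not affect the Cauchy--Schwarz steps where it is actually used.
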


\begin{proof}

To simplify the notation, we will denote $\psi^s$ by $\psi$. In what follows we will perform
various integration by parts in space without worrying about the boundary terms at
infinity. This can be easily  justified by considering approximating  solutions  $\psi^s_R$ of the
Dirichlet problem on expanding cylinders  $Q_R:=[-R,R]\times [s,t_0]$, establish 
the  a priori estimates  on $\psi^s_R$, independent of both, $s$ and $R$, and then pass to the limit  of $\psi_R^s$ as $R \to \infty$ (our solution
$\psi^s$ in Lemma \ref{lem-es} will be constructed  that way). 

 If we  multiply the equation (\ref{eqn-psi0}) by $\psi$ and integrate it over $\mathbb{R}$, we  obtain,
\begin{equation}
\label{eqn-ddt}
\frac{p}{2}\frac{d}{dt}  \inti \psi^2\, \zz^{p-1} dx = \inti  \big (\psi \, \psi_{xx} - \psi^2  +   p  [1+ (p-1) \frac{\zz_t}{z} ] \, \psi^2 \, \zz^{p-1} +  g\,  \psi \zz^{p-1} \big ) \, dx.
\end{equation}
If we integrate by parts the first term  on the right hand side, use the bound $|\zz_t| / z \le C\,  |\dot{\xi}|$ and apply Cauchy-Schwarz, we obtain
\begin{equation*}
\begin{split}
\frac{p}{2}\frac{d}{dt}  \inti \psi^2(\cdot,t)\,  \zz^{p-1} dx &+  \inti ( \psi_x^2 + \psi^2)  \, dx\\
&\leq C\, \left(\inti  (\psi^2 +  g^2)\, \zz^{p-1} \, dx + |\dot{\xi}|\, \inti \psi^2\, z^{p-1}\, dx\right).
\end{split}
\end{equation*}
For any number $\tau \in [s,t_0-1] $, set $\eta(t)=t-\tau$ so that $0 \leq \eta(t) \leq 1$
on $[\tau,\tau +1]$. 
 Then, for any $t \in [\tau,\tau+1]$, we have 
 \begin{equation*} 
\begin{split}
 \frac{d}{dt} \left  (\eta(t) \inti \psi^2(\cdot,t) \, \zz^{p-1} dx \right ) &+ \eta(t)\inti (\psi_x^2 + \psi^2)\, dx \\&\leq  C\, \left (\inti  (\psi^2+g^2)  \, \zz^{p-1} \, dx +  |\dot{\xi}|\inti \psi^2 z^{p-1}\, dx\right ).
 \end{split}
\end{equation*}
Integrating this inequality in time  on $[\tau,\tau+1]$ while   applying the Cauchy-Schwarz inequality to the last term and  using that $\eta(t) \le 1$ and  $\left(\int_{\tau-1}^{\tau}|\dot{\xi}|^2\, dt\right)^{1/2} \le C/|\tau|$, we obtain
\begin{equation}
\begin{split}
\label{eqn-H1}
\inti \psi^2  &(\cdot,\tau+1) \, \zz^{p-1} dx + \iint_{\Lambda_\tau} \eta(t)\, (\psi_x^2 + \psi^2) \, dx\, dt  \\
&\leq  C\, \left (\|\psi\|^2_{L^2(\Lambda_{\tau})} + \|g\|^2_{L^2(\Lambda_{\tau})} + \frac{1}{|\tau|}\, \sup_{t \in [\tau, \tau+1]}
(\inti  \psi^2 z^{p-1}\, dx )^{1/2}\|\psi\|_{L^2(\Lambda_{\tau})}\right ). 
\end{split}
\end{equation}

If we now multiply (\ref{eqn-psi0}) by $\psi_t(x,t)$, integrate by parts over $\mathbb{R}$, use the bound 
$|\zz_t|/z \le C\, |\dot{\xi}|$  and apply Cauchy-Schwartz to the last term,  we obtain 
\begin{equation}\label{eqn-psit000}
\begin{split}
\frac p2\,  \inti \psi_t^2\, \zz^{p-1} dx  &+ \frac{1}{2}\frac{d}{dt}\left (  \inti (\psi_x^2 + \psi^2 - p\zz^{p-1} \psi^2)\, dx\right  ) \\& \le  C\, \left (\inti \big ( \psi^2 + g^2)  \, \zz^{p-1}  \, dx  + |\dot{\xi}|\inti \psi^2 z^{p-1}\, dx \right ).
\end{split}
\end{equation}
Multiplying the last inequality   by the cut off function $\eta(t)$ introduced above,  integrating in time and using (\ref{eqn-H1}), we obtain (similarly as above) the  bound
\begin{equation}
\label{eqn-estH2}
\begin{split}
\iint_{\Lambda_\tau} \eta(t)\,  \psi_t^2\, &z^{p-1} \, dx \,  dt + \inti (\psi_x^2 + \psi^2 - pz^{p-1}\psi^2)(\cdot,\tau+1)
\, dx \nonumber \\
&\le C\, \left (\|\psi\|^2_{L^2(\Lambda_{\tau})} + \|g\|^2_{L^2(\Lambda_{\tau})} + \frac{1}{|\tau|}  
\sup_{t \in [\tau, \tau+1]}
(\inti  \psi^2 z^{p-1}\, dx )^{1/2}
\,\|\psi\|_{L^2(\Lambda_{\tau})} \right ).
\end{split}
\end{equation}

Furthermore,  \eqref{eqn-estH2}, \eqref{eqn-H1},  the Sobolev embedding theorem in one dimension and  the interpolation inequality,  yield  the $L^\infty$ estimate
\begin{equation*}
\begin{split}
\|\psi (\cdot,\tau+1) \|_{L^{\infty}(\R)} &\le C\left(\inti (\psi_x^2 + \psi^2)(\cdot,\tau+1)\, dx\right)^{1/2}  \\
&\le C\left (\|\psi\|_{L^2(\Lambda_{\tau})} + \|g\|_{L^2(\Lambda_{\tau})} + \frac{1}{|\tau|} \|\psi\|_{L^{\infty}(\Lambda_{\tau})}\right )
\end{split}
\end{equation*}
We next multiply the last inequality  by $|\tau+1|^{\nu}$, for any $\nu \ge 0$. Since $ s \leq \tau \le t_0-1$, by choosing $|t_0|$ sufficiently large  we obtain
$$|\tau+1|^{\nu}\|\psi\|_{L^{\infty}(\Lambda_{\tau + 1})} \le C(\|\psi\|^{\nu}_{L^2_{s,t_0}} + \|g\|^{\nu}_{L^2_{s,t_0}}) + \frac{1}{2}\sup_{\tau\in [s,t_0]}|\tau|^{\nu}\|\psi\|_{L^\infty(\Lambda_{\tau})}.$$
\no Since $\tau + 1 \le t_0$ is arbitrary, we obtain
$$\sup_{\tau\in [s,t_0]}\, |\tau|^{\nu} \|\psi\|_{L^{\infty}(\Lambda_{\tau})} \le C(\|\psi\|^{\nu}_{L^2_{s,t_0}} + \|g\|^{\nu}_{L^2_{s,t_0}}).$$
Since $\tau \in [s,t_0-1]$ is arbitrary, by choosing $|t_0|$ sufficiently large,  we conclude
\begin{equation}
\label{eq-Linfty100}
\sup_{\tau\in [s,t_0]} |\tau|^{\nu} \|\psi\|_{L^{\infty}(\Lambda_{\tau})} \le C(\|\psi\|^{\nu}_{L^2_{s,t_0}} + \|g\|^{\nu}_{L^2_{s,t_0}}).
\end{equation}
In addition, integrating \eqref{eqn-psit000} on $[\tau,\tau+1]$ and using the previous estimates yields    the  bound
\begin{equation}
\label{eqn-psit111}
\|\psi_t\|^{\nu}_{L^2_{s,t_0}} 
 \le C\, \big (\|\psi\|^{\nu}_{L^2_{s,t_0}} + \|g\|^{\nu}_{L^2_{s,t_0}} \big ).
\end{equation}
Finally, from    \eqref{eqn-psit111},  (\ref{eq-Linfty100}) and the equation \eqref{eqn-psi0} we obtain 
$$ \| \zz^{-(p-1)} \, (\psi_{xx}-\psi)  \|_{L^2_{s,t_0}}^\nu \leq C \big (  \| \psi\|^{\nu}_{L^2_{s, t_0}}+ \|g\|^{\nu}_{L^2_{s, t_0}}\big ).$$
Combining the above estimates gives us  the bound \eqref{eqn-l224}. 

\end{proof}

We will proceed next to showing the same estimate as above for solutions of \eqref{eq-psi}.

\begin{lem}[Energy $H^2$ and   $L^{\infty}$ estimate for equation \eqref{eq-psi}]
\label{lem-energy2}
Let $\psi^s(x,t)$ be a  the  solution to (\ref{eq-psi}). Then there exists a $t_0 < 0$ so that 
for any $\nu \in [0,1)$ we have 
\begin{equation}\label{eqn-energy2}
\sup_{\tau \in [s,t_0]} |\tau|^{\nu} \| \psi \|_{L^{\infty}(\Lambda_{\tau})} + \| \psi \|_{H^2_{s,t_0}}^\nu   \le C 
\big (  \| \psi \|^{\nu}_{L^2_{s, t_0}}+ \|f\|^{\nu}_{L^2_{s, t_0}} \big ).
\end{equation}
\end{lem}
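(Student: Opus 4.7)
The strategy is to regard equation \eqref{eq-psi} as an instance of the simpler equation \eqref{eq-psi222} with $g := f - C(\psi^s,\cdot)$, invoke the previous Lemma \ref{lem-energy1}, and then absorb the contribution of the correction term $C(\psi^s,\cdot)$ into the left-hand side by choosing $|t_0|$ sufficiently large. The crucial ingredient that makes the absorption possible is the decay factor $|\tau|^{-1/2}$ in the already-established bound \eqref{eqn-cpsi}.

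Writing $\psi := \psi^s$ to simplify notation, the function $\psi$ solves \eqref{eq-psi222} with $g = f - C(\psi,\cdot)$. Applying Lemma \ref{lem-energy1} together with the triangle inequality on $L^2_{s,t_0}$ gives
\begin{equation*}
\sup_{\tau \in [s,t_0]} |\tau|^{\nu} \|\psi\|_{L^{\infty}(\Lambda_{\tau})} + \|\psi\|_{H^2_{s,t_0}}^\nu \le C\bigl(\|\psi\|^\nu_{L^2_{s,t_0}} + \|f\|^\nu_{L^2_{s,t_0}} + \|C(\psi,\cdot)\|^\nu_{L^2_{s,t_0}}\bigr).
\end{equation*}

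To handle the last term I will use \eqref{eqn-cpsi}. Multiplying that estimate by $|\tau|^\nu$ and distributing the factor inside the parentheses gives, for each $s \le \tau \le t_0 - 1$,
\begin{equation*}
|\tau|^\nu \|C(\psi,\cdot)\|_{L^2(\Lambda_\tau)} \le \frac{C}{\sqrt{|\tau|}}\bigl(|\tau|^\nu\|\psi\|_{H^2(\Lambda_\tau)} + |\tau|^\nu\|\psi\|_{L^\infty(\Lambda_\tau)}\bigr) \le \frac{C}{\sqrt{|t_0|}}\bigl(\|\psi\|^\nu_{H^2_{s,t_0}} + \sup_{\tau \in [s,t_0]}|\tau|^\nu\|\psi\|_{L^\infty(\Lambda_\tau)}\bigr),
\end{equation*}
where I have used that $|\tau| \ge |t_0|$ throughout $[s,t_0-1]$ since $t_0 < 0$. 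Taking supremum in $\tau$ yields the analogous bound for $\|C(\psi,\cdot)\|^\nu_{L^2_{s,t_0}}$.

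Combining the two displays and choosing $|t_0|$ large enough that $C/\sqrt{|t_0|} \le 1/2$, the $\|\psi\|^\nu_{H^2_{s,t_0}}$ and $\sup_{\tau}|\tau|^\nu\|\psi\|_{L^\infty(\Lambda_\tau)}$ terms on the right may be absorbed into their counterparts on the left, yielding \eqref{eqn-energy2}. The entire analytic work has been done in Lemma \ref{lem-energy1} and in the preparatory bound \eqref{eqn-cpsi}; the only issue here is the bookkeeping of norms and the choice of $|t_0|$, and I do not anticipate any genuine obstacle.
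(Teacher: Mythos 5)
Your proof takes the same approach as the paper: apply Lemma \ref{lem-energy1} with $g = f - C(\psi^s,\cdot)$, invoke \eqref{eqn-cpsi}, and absorb the resulting $C(\psi^s,\cdot)$ contribution by choosing $|t_0|$ large. If anything you are slightly more careful than the paper's version, which abbreviates \eqref{eqn-cpsi} to produce the bound \eqref{eqn-cpsi1} involving only the $H^2$ norm, whereas you explicitly retain the $L^\infty$ term from \eqref{eqn-cpsi} and absorb it too; both are correct, and your bookkeeping is cleaner.
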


\begin{proof}
If we   apply the  estimate from Lemma \ref{lem-energy1}, with $g=f + C(\psi,t)$, we obtain

$$\sup_{\tau \in [s,t_0]} |\tau|^{\nu} \| \psi \|_{L^{\infty}(\Lambda_{\tau})} + \| \psi \|_{H^2_{s,t_0}}^\nu   \le C \big (  \| \psi \|^{\nu}_{L^2_{s, t_0}}+ \|C(\psi,t)\|^{\nu}_{L^2_{s, t_0}} +   \|f\|^{\nu}_{L^2_{s, t_0}}\big ).$$

\no On the other hand, it follows from \eqref{eqn-cpsi}, that 

\begin{equation}\label{eqn-cpsi1}
 \|C(\psi,t)\|^{\nu}_{L^2_{s, t_0}} \leq \frac{C}{\sqrt{|t_0|} }\, \| \psi \|_{H^2_{s,t_0}}^\nu
 \end{equation}

\no and the desired estimate follows by choosing $t_0$   so that 
$\frac{C}{\sqrt{|t_0|}} \le 1/2.$ 
\end{proof}

\begin{corollary}\label{cor-energy2}
[Estimation of the error term] Under the assumptions of Lemma \ref{lem-es},  there exist uniform constants $t_0 < 0$ and $C > 0$ so that  for any $\nu \in [0,1)$,  we have 
\begin{equation}\label{eqn-cpsi5}
 \|C(\psi,t)\|^{\nu}_{L^2_{s, t_0}} \leq \frac{C}{\sqrt{|t_0|} }  \big (  \| \psi \|^{\nu}_{L^2_{s, t_0}}+ \|f\|^{\nu}_{L^2_{s, t_0}} \big ).
 \end{equation}
\end{corollary}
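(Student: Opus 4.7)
The plan is to derive the bound almost immediately by combining the preliminary estimate \eqref{eqn-cpsi} for the correction term with the $H^2$ energy estimate of Lemma \ref{lem-energy2}. First, I would localize in time: fix $\tau \in [s, t_0-1]$ and apply \eqref{eqn-cpsi} on the strip $\Lambda_\tau$, which yields
\begin{equation*}
\|C(\psi,t)\|_{L^2(\Lambda_\tau)} \leq \frac{C}{\sqrt{|\tau|}}\,\bigl(\|\psi\|_{H^2(\Lambda_\tau)} + \|\psi\|_{L^\infty(\Lambda_\tau)}\bigr).
\end{equation*}
Multiplying by the temporal weight $|\tau|^\nu$ and using that $|\tau| \geq |t_0|$ for every $\tau \leq t_0$, so $|\tau|^{-1/2} \leq |t_0|^{-1/2}$, I obtain
\begin{equation*}
|\tau|^{\nu}\,\|C(\psi,t)\|_{L^2(\Lambda_\tau)} \leq \frac{C}{\sqrt{|t_0|}}\,\bigl(|\tau|^{\nu}\|\psi\|_{H^2(\Lambda_\tau)} + |\tau|^{\nu}\|\psi\|_{L^\infty(\Lambda_\tau)}\bigr).
\end{equation*}

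Next I would take the supremum over $\tau \in [s, t_0-1]$ on both sides. By the definition of the global norms in Definitions \ref{defn2} and \ref{defn-linfty}, this gives
\begin{equation*}
\|C(\psi,t)\|^{\nu}_{L^2_{s,t_0}} \leq \frac{C}{\sqrt{|t_0|}}\,\Bigl(\|\psi\|^{\nu}_{H^2_{s,t_0}} + \sup_{\tau \in [s,t_0]}|\tau|^{\nu}\|\psi\|_{L^\infty(\Lambda_\tau)}\Bigr).
\end{equation*}

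Finally, I would invoke Lemma \ref{lem-energy2}, which precisely controls the right-hand side: its estimate \eqref{eqn-energy2} gives
\begin{equation*}
\sup_{\tau \in [s,t_0]} |\tau|^{\nu}\|\psi\|_{L^\infty(\Lambda_\tau)} + \|\psi\|^{\nu}_{H^2_{s,t_0}} \leq C\,\bigl(\|\psi\|^{\nu}_{L^2_{s,t_0}} + \|f\|^{\nu}_{L^2_{s,t_0}}\bigr),
\end{equation*}
with a constant $C$ independent of $s$, provided $|t_0|$ is large enough (so that the absorption argument in the proof of Lemma \ref{lem-energy2} goes through). Substituting this into the previous inequality yields exactly \eqref{eqn-cpsi5}.

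There is essentially no obstacle here, since both ingredients are already in place; the only thing to be mildly careful about is that the factor $|t_0|^{-1/2}$ in \eqref{eqn-cpsi1} should not be absorbed back into the $H^2$ bound for $\psi$ (as was done inside the proof of Lemma \ref{lem-energy2}). Instead, one keeps it explicit, so that it multiplies the final $L^2$ norm of $\psi$ and $f$ on the right-hand side. This explicit smallness factor $|t_0|^{-1/2}$ is precisely what will be needed later, when this corollary is applied to carry out the fixed-point and contraction arguments for the nonlinear problem.
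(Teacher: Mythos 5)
Your proposal is correct and follows the same route as the paper, which simply cites the combination of \eqref{eqn-cpsi1} with \eqref{eqn-energy2}; you reconstruct \eqref{eqn-cpsi1} from the local estimate \eqref{eqn-cpsi} by taking weighted suprema, which is exactly how that inequality is obtained. If anything your version is slightly more careful than the paper's compressed statement, since you explicitly carry the $L^\infty$ term from \eqref{eqn-cpsi} through to the point where \eqref{eqn-energy2} absorbs it (the paper's \eqref{eqn-cpsi1} drops that term without comment), and you correctly note that the factor $|t_0|^{-1/2}$ must be kept explicit rather than absorbed.
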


\begin{proof}
It  readily follows from combining \eqref{eqn-cpsi1}
and \eqref{eqn-energy2}. 
\end{proof}

We will next establish \eqref{eqn-basic0s} as an a'priori estimate. 


\begin{prop}\label{prop-apriori} 
There exist uniform constants $C< \infty$, $t_0 <0$,   such that if 
  $\psi^s(x,t)$ is a solution of  (\ref{eq-psi}) with $s < t_0/2$,  which also  satisfies the  orthogonality conditions \eqref{eqn-orth1} and \eqref{eqn-orth2}, then 
 \begin{equation}\label{eqn-basic-s}
\sup_{\tau \in [s,t_0]} |\tau|^{\nu} \| \psi^s (\cdot,\tau) \|_{L^2}  \leq C \, \|f \|_{L^2_{s,t_0}}.
\end{equation}
It follows that \eqref{eqn-basic0s} holds. 
\end{prop}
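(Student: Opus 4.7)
My plan is to prove the $L^2$-type bound \eqref{eqn-basic-s} by a compactness/contradiction argument, from which the full estimate \eqref{eqn-basic0s} follows immediately by applying Lemma \ref{lem-energy2}. Suppose \eqref{eqn-basic-s} fails. Then one can find sequences $s_k\to -\infty$ with $s_k<t_0/2$, right-hand sides $f_k$ satisfying \eqref{eqn-orth1}--\eqref{eqn-orth2}, and associated solutions $\psi_k:=\psi^{s_k}$ of \eqref{eq-psi} also satisfying these, such that after normalization
$$\sup_{\tau\in[s_k,t_0]}|\tau|^\nu\,\|\psi_k(\cdot,\tau)\|_{L^2}=1,\qquad \|f_k\|_{L^2_{s_k,t_0}}^\nu\to 0.$$
Pick $\tau_k$ with $|\tau_k|^\nu\|\psi_k(\cdot,\tau_k)\|_{L^2}\geq 1/2$. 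Lemma \ref{lem-energy2} then provides uniform $H^2_{s_k,t_0}$- and weighted $L^\infty$-bounds on $\psi_k$, while Corollary \ref{cor-energy2} gives $\|C(\psi_k,\cdot)\|_{L^2_{s_k,t_0}}^\nu\leq C/\sqrt{|t_0|}$; I fix $|t_0|$ large enough at the outset that this remainder is as small as desired.

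The core step is a rescaled limit. In the generic case $\tau_k\to -\infty$, translate in time and shift spatially to the right peak by setting
$$\Phi_k(y,t):=|\tau_k|^\nu\,\psi_k\bigl(y+\xi(t+\tau_k),\,t+\tau_k\bigr),$$
defined on $t\in[s_k-\tau_k,\,t_0-\tau_k]$. Since $\xi(t+\tau_k)\to\infty$ and $\dot\xi(t+\tau_k)\to 0$ for $t$ in any bounded interval, the coefficient $z(y+\xi(t+\tau_k),t+\tau_k)=w(y)+w(y+2\xi(t+\tau_k))$ converges locally to $w(y)$. Under this change of variables the orthogonality conditions on $\psi_k$ transform (using evenness) into orthogonality of $\Phi_k(\cdot,t)$ to $w$ and $w'$ in $L^2(w^{p-1}dy)$. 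A diagonal extraction using the $H^2_{\mathrm{loc}}$-bounds produces a limit $\Phi_\infty$ on $\R\times\R$ with $\|\Phi_\infty(\cdot,0)\|_{L^2(w^{p-1}dy)}\geq c>0$, satisfying the same orthogonality. The case where $\tau_k$ remains bounded is handled analogously, or by a further iterated rescaling at a sequence of times $t_m\to -\infty$ in the first limit.

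The final step is a spectral argument on the limit. The rescaled forcing $|\tau_k|^\nu(f_k-C(\psi_k,t))$ has its $f_k$-part vanishing and its correction part $C(\psi_k)=d_1 z+d_2\bar z$ converging (locally) to an element of $\mathrm{span}\{w,w'\}$, because near the right peak $z\to w$ and $\bar z=w'_1-w'_2\to w'$. Projecting the limiting equation onto the $L^2(w^{p-1}dy)$-orthogonal complement of $\mathrm{span}\{w,w'\}$ eliminates the correction entirely, so $\Phi_\infty$ solves the homogeneous model equation $p\,\partial_t\Phi_\infty=L_0\Phi_\infty$ on that subspace. Expanding $\Phi_\infty(\cdot,t)=\sum_{j\geq 1}a_j(t)\theta_j$ in the eigenbasis corresponding to the strictly positive eigenvalues $\lambda_j>0$ of $L_0$, each mode obeys $p\dot a_j=-\lambda_j a_j$, hence $a_j(t)=a_j(0)e^{-\lambda_j t/p}$. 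Boundedness of $\Phi_\infty$ as $t\to -\infty$ then forces $a_j\equiv 0$ for every $j\geq 1$. Thus $\Phi_\infty\equiv 0$, contradicting its nontriviality.

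The main obstacle is the joint rescaling-and-compactness argument: one must justify extraction of $\Phi_k$ along a subsequence on all of $\R\times\R$ via a diagonal argument across expanding time slabs, verify that the weighted orthogonality (and its symmetric counterpart induced by evenness) behave cleanly under the shift-and-scale, and check that the projected correction term vanishes in the limit on the orthogonal complement — all of which rely on choosing $|t_0|$ large from the start. Once these ingredients are assembled, the spectral decomposition and the argument that forward exponential modes are incompatible with a bounded ancient solution are routine.
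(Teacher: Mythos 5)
Your overall strategy mirrors the paper's: normalize, contradict, translate in time, shift to a peak, pass to a limit, and invoke the spectral gap of $L_0$ on the orthogonal complement of $\{w,w'\}$ to exclude a nontrivial bounded ancient solution. But there are two genuine gaps, and both are precisely the nontrivial steps the paper spends most of its effort on.

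First, you assert that a diagonal extraction produces $\Phi_\infty$ on $\R\times\R$, i.e.\ an \emph{ancient} limit. But $\psi_k$ is defined only on $[s_k,t_0]$ with $\psi_k(\cdot,s_k)=0$, so after translating by $\tau_k$ the time interval is $[s_k-\tau_k,\,t_0-\tau_k]$. Nothing in your argument ensures that $\tau_k-s_k\to+\infty$. If $\tau_k-s_k$ stays bounded, the limit lives on a bounded backward interval and the exponential-mode argument has no teeth. The paper handles exactly this via Claim~\ref{claim-sigma}: a Gronwall-type differential inequality on $\alpha(\tau)=\int_{s_k}^{\tau}\!\int\psi_k^2\,z^{p-1}$, exploiting $\psi_k(\cdot,s_k)=0$ and the smallness of the forcing ($\mu_k\to 0$), shows that $\psi_k$ cannot reach $O(1)$ size in bounded time. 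This also simultaneously rules out the degenerate case where the chosen times stay bounded, which you propose to handle ``analogously'' without explaining how.

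Second, you claim $\|\Phi_\infty(\cdot,0)\|_{L^2(w^{p-1})}\ge c>0$ as if it follows automatically from $|\tau_k|^\nu\|\psi_k(\cdot,\tau_k)\|_{L^2}\ge\tfrac12$. It does not: the convergence $\Phi_k\to\Phi_\infty$ is only local, and the bound on $\|\psi_k(\cdot,\tau_k)\|_{L^2}$ is a global bound which could be carried entirely by mass concentrating in the ``middle'' region $|x|\le\xi(t)-M$ or escaping to infinity, in which case the local limit around the peak would vanish. The paper's intermediate Claim (the estimate \eqref{eqn-claim3}) shows, using the uniform $L^\infty$ bound from the energy estimate together with the smallness of $\int w^{p-1}$ on $(-\infty,-M]$, that at most $1/2$ of the mass can sit in the middle; by the evenness of $\bar\psi_k$ at least $1/4$ then sits in a fixed neighborhood of a single peak, which survives the local limit. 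Without this step your contradiction collapses because $\Phi_\infty$ could be identically zero.

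Finally, a smaller stylistic remark: the paper first reduces the weighted estimate \eqref{eqn-basic-s} to the unweighted estimate \eqref{eqn-l22}, using that for $s\le\tau\le t<0$ one has $|t|\le|\tau|$, so the weight is harmless. You instead carry the factor $|\tau_k|^\nu$ into the rescaling of $\Phi_k$; this is workable but forces you to track the ratio $|\tau_k|/|t+\tau_k|$ throughout and to argue separately in the non-generic case where $\tau_k$ is bounded. The paper's reduction avoids both complications. Your handling of the correction term (projecting out $\mathrm{span}\{w,w'\}$) is correct, and in fact simpler than necessary: \eqref{eqn-cpsi} together with your normalization gives $|\tau_k|^\nu\|C(\psi_k,\cdot)\|_{L^2(\Lambda_{\tau_k})}\lesssim|\tau_k|^{-1/2}\to0$, so the rescaled correction vanishes outright, as in the paper.
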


\begin{proof} 
We will first establish the estimate \eqref{eqn-basic-s}. 
We begin by observing that under the assumptions of the Proposition,  it will be sufficient to establish the bound
\begin{equation}\label{eqn-l22}
\sup_{s \leq \tau  \leq t}  \| \psi^s (\cdot,\tau) \|_{L^2}  \leq C  \,
\sup_{s \leq \tau  \leq t}  \|f \|_{L^2(\Lambda_\tau)}, 
\end{equation}
for all $t$ such that $s \le t/2 \le t_0/2$, where we recall that $\Lambda_\tau=\R \times [\tau,\tau+1]$. 
Indeed, if  \eqref{eqn-l22} holds, then for any $t$ satisfying  $ s <  t/2 \leq  t_0/2$,  we have  
\begin{equation*}
\begin{split}
|t|^\nu  \| \psi^s (\cdot,t) \|_{L^2} &\leq C  \,
|t|^\nu \sup_{s \leq \tau  \leq t} \|f \|_{L^2(\Lambda_\tau)} \leq 
 \sup_{s \leq \tau  \leq t} |\tau|^\nu  \|f \|_{L^2(\Lambda_\tau)}\\
& \leq \sup_{s \leq \tau  \leq t_0} |\tau|^\nu \|f \|_{L^2(\Lambda_\tau)} = C \, \|f \|_{L^2_{s,t_0}} 
\end{split}
\end{equation*}
which readily shows that \eqref{eqn-basic-s} holds.

To establish the validity of \eqref{eqn-l22} we  argue by contradiction.  If \eqref{eqn-l22} doesn't hold,  then there
exist decreasing sequences  $ \bar t_k \to -\infty$ and $s_k < \bar t_k/2$,
$s_k \to -\infty$  and solutions   
$\psi_k$ of equation
\begin{equation}\label{eqn-psik}
p\zz^{p-1} \, \pp_t \psi_k = \pp_{xx} \psi_k - \psi_k  + p \zz^{p-1} \, \psi_k + \zz^{p-1} \, [ f_k - C(\psi_k,t)] 
\end{equation}
defined on $\R \times [s_k ,\bar t_k]$, so that  
\begin{equation}\label{eqn-l2k}
\sup_{s_k \leq \tau  \leq \bar t_k}  \| \psi_k (\cdot,\tau) \|_{L^2}   \geq k  \, 
\sup_{s_k \leq \tau  \leq \bar t_k}  \|f_k\|_{L^2(\Lambda_\tau)}.  \end{equation}
We may assume, without loss of generality, that 
\begin{equation}\label{eqn-sup1}
\sup_{s_k \leq \tau  \leq \bar t_k}  \inti \psi^2_k (x,\tau) \, \zz^{p-1}\, dx=2
\end{equation}
otherwise we would perform the rescaling of the solutions  for (\ref{eqn-sup1}) to hold.
Then, by \eqref{eqn-l2k}, we have
\begin{equation}\label{eqn-l2k2}
\| f_k \|_{L^2_{s_k,\bar t_k}} \leq \frac Ck. 
\end{equation}
Because of \eqref{eqn-sup1}, we can pick $t_k \in  [s_k ,\bar t_k]$ such that 
\begin{equation}\label{eqn-sup12}
\frac 32 \leq  \inti \psi^2_k (x,t_k) \, \zz^{p-1} \, dx \leq  2.
\end{equation}
Also, passing to a subsequence if necessary, we may assume that $t_k$ is decreasing.

\begin{claim}\label{claim-sigma}  We have 
$$\liminf_{k \to \infty} \, (t_k - s_k)= +\infty.$$
\end{claim}

\begin{proof}[Proof of Claim] 
We will apply  \eqref{eqn-l224} with $\psi=\psi_k$,
$g=g_k:=f_k-C(\psi_k,t)$ and for $\nu=0$.   To estimate  the right hand side of \eqref{eqn-l224},
we use   \eqref{eqn-cpsi5}, \eqref{eqn-l2k2} and \eqref{eqn-sup1}  to obtain for all $s_k \leq t \leq \bar t_k$  the bound 
$$\| g_k ( \cdot, t) \|_{L^2} \leq   \frac{C}{\sqrt{|\bar t_k|} }  
\big (  \| \psi \|_{L^2_{s, \bar t_k}}+ \frac 1k \big ) + 
\frac Ck \leq C \, \big ( \frac 1{\sqrt{|\bar t_k|}} + \frac 1k \big ).$$
Hence, for all $\tau \in [s_k, \bar t_k]$ we have 
\begin{equation}\label{eqn-ode2}
\int_{s_k}^{\tau} \inti g^2_k\,   \zz^{p-1}  \, dx  \, dt \leq  (\tau - s_k)  \, \| g_k \|_{L^2_{\bar t_k}}^2  \leq  C \, \big ( \frac 1{\sqrt{|\bar t_k|}} + \frac 1k \big )^2 (\tau-s_k)
\end{equation}
Set $\alpha(\tau) = \int_{s_k}^{\tau}  \inti  \psi^2 \, \zz^{p-1}\, dx dt$. It follows from  \eqref{eqn-l224} and the above discussion that $\alpha(\tau)$ satisfies the differential inequality
$$\alpha'(\tau) \leq C \, \alpha(\tau) + \mu_k \, (\tau-s_k)$$
with $
\mu_k = C \,( \frac 1{\sqrt{|\bar t_k|}} + \frac 1k )^2$
and  $\alpha(s_k)=0$. 
Solving this differential inequality gives
$$\alpha(\tau) \leq \frac{\mu_k}{C^2} \big ( e^{C(\tau-s_k)} - [1+C (\tau-s_k)]\big )
\leq \frac{\mu_k}{C^2} \,  e^{C(\tau-s_k)}$$ which combined with \eqref{eqn-l224} and  \eqref{eqn-ode2} gives   the bound
$$\inti \psi^2 (\cdot,\tau) \, \zz^{p-1} dx \leq  C_1\, \mu_k \,  e^{C(\tau-s_k)},$$
for all $\tau\in [s_k,\bar{t}_k]$, where $C_1$ is a different, but still uniform constant. 
Hence  
$$\inti \psi^2 (\cdot,\tau) \, \zz^{p-1} dx \leq  1$$
as long as 
$e^{C(\tau-s_k)} < 1/(C_1\mu_k).$
Since $\inti \psi^2 (\cdot,t_k)  \, \zz^{p-1} dx \geq 3/2$, we must have 
$$e^{C(t_k-s_k)}  \geq 1/(C_1\mu_k).$$
Since $\lim_{k \to \infty} \mu_k =0$, this 
 readily implies  the claim. 
\end{proof}

\ssk

Set
$$\bar \psi_k(x,t) = \psi_k(x,t+t_k) \quad \mbox{and} \quad \bar f_k(x,t) = f_k(x,t+t_k)$$
and observe that each $\bar \psi_k$ satisfies the equation
\begin{equation}\label{eqn-psik2}
p \zz^{p-1}_k \, \pp_t \bar \psi_k = \pp_{xx} \bar  \psi_k - \bar \psi_k  + p \zz^{p-1}_k \, 
\bar \psi_k + \zz^{p-1}_k \, [\bar f_k - C_k(\bar \psi_k,t))]
\end{equation}
on $\R \times [\bar s_k , 0]$, with $\bar s_k:= s_k - t_k$  and
$$z_k(x,t) := z(x,t+t_k), \qquad \bar z_k(x,t) = \bar z(x,t+t_k)$$
and
$$C_k(\bar \psi_k,t) = d_1 (\bar \psi_k,t+t_k)\, z_k(x, t) + d_2(\bar \psi_k,t+t_k)\, \bar z_k(x, t)$$
where  $d_i(\bar \psi_k,t)$ are defined in terms of $\psi_k$ as before. Notice that because
of the previous claim, $\bar s_k \leq -\sigma$, for a uniform constant $\sigma$. 
It follows from   \eqref{eqn-energy2}  (with $\nu=0$) and \eqref{eqn-sup1} that $\bar \psi_k$ satisfy the bound 
\begin{equation}\label{eqn-energy3}
\| \bar \psi_k \|_{L^\infty(\R \times [\bar s_k,0])} + \| \bar \psi_k \|_{H^2_{\bar s_k,0}}    \leq C
\end{equation}
for a uniform in $k$  constant $C$. 

The inequality
\eqref{eqn-sup12} says   that 
\begin{equation}\label{eqn-sup11}
\frac 32 \leq  \inti \bar \psi^2_k (x,0) \, \zz^{p-1}_k  \, dx \leq  2.
\end{equation}
If we integrate \eqref{eqn-ddt} in time on $[t_k-\delta,t_k]$ and use  \eqref{eqn-sup11},
we conclude, after a straightforward calculation, the bounds   
\begin{equation}\label{eqn-sup11111}
1 \leq \inf_{\tau \in [-\delta,0]}   \inti \bar \psi^2_k (x,\tau) \, \zz^{p-1}_k \, dx \leq  2.
\end{equation}
for a uniform in $k$ small constant $\delta >0$. 

\begin{claim} There exists a universal large constant $M >0$ for which 
\begin{equation}\label{eqn-claim3}
 \sup_{\tau \in [-\delta,0]} \int_{-\xi(\tau+t_k)+M}^{\xi(\tau+t_k)-M} \bar \psi_k^2(x,\tau ) \, \zz_k^{p-1}\, dx < \frac 12.
 \end{equation}

\end{claim}

\begin{proof}[Proof of Claim] We recall that by \eqref{eqn-xit}, $\xi(\tau+t_k) = \frac 12 \log (2b|\tau+t_k|)  + O(1).$
By symmetry ($\bar \psi_k$ is an even function)  we only need to show that
$$  \sup_{\tau \in [-\delta,0]} \int_0^{\xi(\tau+t_k) -M} \bar \psi_k^2(x,\tau) \, \zz_k^{p-1}\, dx < \frac 14.$$
Also, since for $x>0$ and $\tau \in [-\delta,0]$, 
$$\zz_k(x,\tau)= w(x-\xi(\tau+t_k)) + w(x+\xi(\tau+t_k)) \leq 2\, w(x-\xi(\tau+t_k)) $$
it will be enough  to establish the inequality  
$$ \sup_{\tau \in [-\delta,0]}  \int_0^{\xi(\tau+t_k) -M} \bar \psi_k^2(x,\tau) \, w^{p-1}(x-\xi(\tau+t_k)) \, dx < \frac 18.$$
Using the $L^\infty$ bound in \eqref{eqn-energy3}, we 
 conclude that for every $\tau \in [-\delta,0]$ we have
$$ \int_0^{\xi(\tau +t_k) -M} \bar \psi_k^2(x,\tau) \, w^{p-1}(x-\xi(\tau+t_k)) \, dx \leq  C \,
\int_0^{\xi(\tau+t_k) -M}  w^{p-1}(x-\xi(\tau+t_k)) \, dx$$
for a uniform constant $C$. 
Finally, we have
$$\int_0^{\xi(\tau+t_k) -M}  w^{p-1}(x-\xi(\tau+t_k)) \, dx = \int_{-\xi(\tau+t_k)}^{-M}  w^{p-1}(x) \, dx$$
where $w$ is given by \eqref{eqn-w1}. It follows that there exists a uniform constant $M$ such that
$$ C \,
\int_0^{\xi(\tau+t_k) -M}  w^{p-1}(x-\xi(\tau+t_k)) \, dx = C \, \int_{-\xi(\tau+t_k)}^{-M}  w^{p-1}(x) \, dx <\frac 18$$
for all $\tau \in [-\delta,0]$ finishing the proof of the claim.
\end{proof}

We will now conclude the proof of the Proposition. By \eqref{eqn-sup11111}, 
\eqref{eqn-claim3} and the symmetry of $\bar \psi_k$, we have 
\begin{equation}\label{eqn-bb}
\inf_{\tau \in [-\delta,0]}  \int_{-\infty}^{\xi(\tau+t_k)+M}  \bar \psi_k^2(x,\tau) \, \zz_k^{p-1}\, dx \geq  \frac 14. 
\end{equation}

\no  We wish to pass to the limit along a sub-sequence  
$k_l \to \infty$. However, in order that we see something non-trivial at the limit,
we will need to perform a new change of variables defining  
$$\phi_k(x,t) := \bar \psi_k(x-\xi(t+t_k),t), \qquad t \leq 0.$$
It follows that each $\phi_k$ satisfies the equation 
\begin{equation}\label{eqn-phik3}
\begin{split}
p w^{p-1}_k \, \pp_t \phi_k &= \pp_{xx}   \phi_k- \phi_k  + p  w^{p-1}_k \,  \phi_k  - \dot \xi(t+t_k) \, \pp_x   
\phi_k + w^{p-1}_k \, g_k
\end{split}
\end{equation}
on $-\bar s_k <  t \leq 0$ with $g_k(x,t) := \bar f_k(x-\xi(t+t_k),t) - C_k(\phi_k,t)$ and 
$$w_k(x,t) :=  \zz_k (x-\xi(t+t_k),t)= w(x) + w(x-2\xi(t+t_k)).$$
Moreover, \eqref{eqn-l2k2} and \eqref{eqn-cpsi5} imply the bounds
\begin{equation}\label{eqn-l2k5}
 \|g_k\|_{L^2_{s, t_0}}  \leq C \, \big ( \frac 1k + \frac 1{\sqrt{|t_k|} } \big )
\end{equation}
and by \eqref{eqn-energy3} and the inequality $w_k \geq w$, 
\begin{equation}\label{eqn-energy31}
\|  \phi_k \|_{L^\infty(\R \times [\bar s_k,0])} + \| \phi_k \|_{H^2_{\bar s_k,0}}    \leq C.
\end{equation}

\no In addition, \eqref{eqn-bb} implies the following uniform bound
\begin{equation}\label{eqn-bb2}
\inf_{\tau \in [-\delta,0]} \int_{-\infty}^M    \phi_k^2(x,\tau) \, w_k^{p-1}\, dx \geq  \frac 14. 
\end{equation}

\msk

Set $Q=(-\infty,x_0] \times [\tau_0,0] $, where $x_0 >0$ is an arbitrary number  and $\tau_0$ is any number 
such that  $\bar s_k < \tau_0 < 0$, for all $k$ (recall that $\bar s_k \leq - \sigma$, $\forall k$
by Claim \ref{claim-sigma}).
It follows from  the energy bound \eqref{eqn-energy31} that  passing to a subsequence,  still denoted by $\phi_k$,  we have
$\phi_k \to \phi $ in  $L^2_w(Q)$ and $\phi_k \to \phi $ weakly in
$H^1_w(Q)$.  Passing to the limit in \eqref{eqn-phik3} while using \eqref{eqn-l2k5} and
the bound $$ \int_{\tau_0}^0 \dot \xi^2(t+t_k) \, dt = O(\frac 1{|t_k|^2})$$ we conclude that $\phi$ is a weak solution of
\begin{equation}\label{eqn-phi22}
p w^{p-1} \, \pp_t \phi = \pp_{xx}   \phi - \phi  + p  w^{p-1} \, 
\phi
\end{equation}
on $\R \times [\tau_0,0)$. Standard regularity theory  shows that $\phi$ is actually
a smooth solution.  In addition,   $\phi$ satisfies the orthogonality conditions 
\begin{equation}\label{eqn-orth11p}
\int_{-\infty}^\infty \phi (x,t) \, w'(x) \, w^{p-1} (x) \, dx=0, \qquad a.e. \quad t < t_0
\end{equation}
and
\begin{equation}\label{eqn-orth21p}
\int_{-\infty}^\infty \phi  (x,t) \, w(x) \, w^{p-1}(x) \, dx=0, \qquad a.e. \quad t < t_0.
\end{equation}
Moreover,   from \eqref{eqn-energy3}
 we have the following uniform estimate
\begin{equation}\label{eqn-sup4}
\sup_{\tau  \leq 0}  \int^{\tau+1}_\tau \inti  \phi^2(x,t)  \,  w^{p-1} \, dx\, dt \leq 2.
\end{equation}
Also, passing to the limit in \eqref{eqn-bb2} we conclude that
$$
\int_{-\delta}^0  \int_{-\infty}^M   \phi^2(x,t)  \, w^{p-1}\, dx\, dt  \geq  \frac \delta4 >0 
$$
which shows that our limit $\phi$ is non-trivial. 
From Claim \ref{claim-sigma} we have  $\liminf_{k \to \infty} \bar s_k = -\infty$. Hence,  we may assume,
passing to a subsequence,  that $\bar s_k \to  -\infty$.
It follows, that the limit $\phi$ is an ancient solution of equation \eqref{eqn-phi22}, i.e
defined on $\R \times (-\infty,0]$ which satisfies  
the orthogonality conditions \eqref{eqn-orth11p} and \eqref{eqn-orth21p}. 
 
Set $\alpha(t) = \frac 12 \, \| \phi (\cdot, t) \|_{L^2(w^{p-1}\, dx)}$ and observe that since $\phi$ is orthogonal 
 to the two eigenfunctions of the operator $L_0$ (defined in  \eqref{eqn-oper})  corresponding to its only  two nonnegative eigenvalues $\lambda_{-1}$ 
 and $\lambda_0$, we have 
$$ \alpha'(t)  \leq -  \lambda  \, \alpha(t), \qquad t \leq 0$$
for some $\lambda  >0$, 
implying that 
$\alpha(t) \geq \alpha(0) \, e^{ \lambda  |t|}$
which contradicts  \eqref{eqn-sup4}.  This finishes the  proof of the Proposition. \end{proof}


\subsection{The proofs of Lemma \ref{lem-es} and Proposition \ref{prop-l2e}}

Based on the a priori estimates of  the previous section, we will give now the proofs of Lemma \ref{lem-es}
and Proposition \ref{prop-l2e}.

\begin{proof}[Proof of Lemma \ref{lem-es}]
It will be sufficient  to establish the existence of a solution $\psi^s$ to \eqref{eq-psi}. Indeed, given the existence of solution $\psi^s$, by the fact that the forcing term $f$ satisfies orthogonality conditions \eqref{eqn-orth1} and \eqref{eqn-orth2}  we already know that $\psi^s(\cdot,t)$ will continue to satisfy the  conditions \eqref{eqn-orth1} and \eqref{eqn-orth2} for $t \ge s$. Then the estimate \eqref{eqn-basic0s} follows by Proposition \ref{prop-apriori}.

The strategy for establishing the existence of $\psi^s$ is as follows. Fix an  $s < t_0-1$. We  first  establish the existence of a solution $\psi^s$ to  the initial value problem 
\eqref{eq-psi222}
on $\mathbb{R}\times [s,s+\tau_0]$, for a given function $g$ with $\|g\|_{L^2_{t_0}}^{\nu} < \infty$, where $\tau_0$ is a uniform constant, independent of $s$, to be chosen below.  Then we solve the non-local problem
\eqref{eq-psi} on  $\R  \times [s,s+\tau_0]$. At the end we show how to extend such a solution in time up to 
$t_0$, to obtain a solution of \eqref{eq-psi}. 

We first claim that given an $s < t_0-1$ and a  function $g$ with $\|g\|_{L^2_{t_0}}^{\nu} < \infty$, there exists a 
solution $\psi^s$ of \eqref{eq-psi222}, on $\mathbb{R}\times [s,s+\tau_0]$, for some $\tau_0$ to be chosen below.
The solution  $\psi^s$ will be constructed as the limit, as $R \to \infty$,  of solutions $\psi_R^s $ to the Dirichlet  problems
\begin{equation}
\label{eq-psi-sR}
\begin{cases}
p \, z^{p-1}\, \partial_t \psi_R^s  = (\psi_R^s)_{xx} - \psi_R^s + p \, z^{p-1}\, \psi_R^s
 + z^{p-1}\, g \qquad  & \mbox{on}\,\,\, Q_{R,s} \cr
\psi_R^s(\cdot,s) = 0  \qquad  & \mbox{on}\,\,\, \partial_p Q_{R,s} 
 \end{cases}
\end{equation}
on    $Q_{R,s}:= [-R,R] \times [s,s+\tau_0]$. Since our  weight $z^{p-1}$ is bounded from above and below away from zero  on  $Q_{R,s}$,  by  standard parabolic theory there exists a solution $\psi_R^s$  to the same Dirichlet problem on $\hat Q_{R,s} := [-R,R] 
\times [s,s+\tau_R]$, for some $\tau_R > 0$  which will be taken to satisfy   $\tau_R \leq 1$.   Similarly 
 as in the proof of Lemma \ref{lem-energy1},  $\psi:=\psi_R^s$ satisfies the following estimate
 \begin{equation}
 \label{eq-interm}
\frac{d}{dt}\int_{-R}^R \psi^2 z^{p-1}\, dx + \int_{-R}^R \big( \psi^2 + \psi_x^2 \big )\, dx \le C_1 \, \left
( \int_{-R}^R g^2\,  z^{p-1}\, dx + (\dot{\xi} + 1)\, \int_{-R}^R \psi^2 z^{p-1}\, dx \right)
\end{equation}
for a universal  constant $C_1$. Before we integrate  \eqref{eq-interm} in time,  we observe that 
 \begin{equation}
 \label{eqn-last12}
\int_s^{s+\tau_R} \dot{\xi}  \int_{-R}^R \psi^2 z^{p-1}\, dx\, dt  \le
\left(\int_s^{s+\tau_R} \dot{\xi}^2\, dt \right)^{1/2}  \sup_{[s,s+\tau_R]} \int_{-R}^R \psi^2 z^{p-1}\, dx 
\leq  \epsilon  \sup_{[s,s+\tau_R]} \int_{-R}^R \psi^2 z^{p-1}\, dx
\ee
if $s$ is chosen sufficiently close to $-\infty$. The last inequality follows from the fact that $\dot \xi(t) = \frac 1{2|t|} + \dot h(t)$
and $\| h \|_{1,\sigma,t_0}^{\mu, 1+\mu} \leq 1$ by assumption. 

Set $\tau_0 := \min\{ 1/ (3C_1), 1\}$,  with $C_1$ being the  constant from (\ref{eq-interm})
and take  $\tau_R \leq \tau_0$. Integrating  in time (\ref{eq-interm}),  while using the Dirichlet boundary condition in 
\eqref{eq-psi-sR},  the Cauchy-Schwartz inequality and \eqref{eqn-last12}  with $\epsilon$ chosen sufficiently small, 
we obtain 
\be
\begin{split}
\frac 23\, \sup_{[s,s+\tau_R]} \int_{-R}^R \psi^2 z^{p-1} dx  &+ \iint_{\hat Q_{R,s}} \big ( \psi_x^2 + \frac 12 \psi^2 \big ) dx\, dt\\ &\le C \iint_{\hat Q_{R,s}}  g^2 z^{p-1} dx dt+ C_1  \iint_{\hat Q_{R,s}} \psi^2 z^{p-1} dx \, dt \nonumber \\
&\le C\, \iint_{\hat Q_{R,s}} g^2 z^{p-1}\, dx\, dt +
\frac 13 \sup_{[s,s+\tau_R]} \int_{-R}^R \psi^2\, dx\, dt \nonumber 
\end{split}
\ee
since $C_1 \, |\tau_R| \leq C_1 \, \tau_0 \leq 1/3$ by our choice of $C_1$. 
It follows that $\psi:=\psi_R^s$ satisfies 
\begin{equation}
\label{eq-tildepsi}
\sup_{[s,s+\tau)} \int_{-R}^R \psi^2 z^{p-1}  \, dx + \iint_{Q_{R,s}}  \big(\psi_x^2 + \frac 12 \psi^2 \big)\, dx \, dt 
\le C_0 \iint_{Q_{R,s}}    g^2 z^{p-1}\, dx\, dt
\end{equation}
for a uniform constant $C_0$. Similarly to deriving the energy estimate in Lemma \ref{lem-energy1}, using (\ref{eq-tildepsi}) and the fact that $(\psi_R^s)_x(\cdot,s) = 0$, we find that  $\psi:=\psi_R^s$ also satisfies 
\begin{equation}
\label{eq-psi-H2}
\iint_{\hat Q_{R,s}}  \psi_t^2 z^{p-1}\, dx  dt + \frac 12 \sup_{[s,s+\tau_R]} \int_{-R}^R \big( \psi^2 + \psi_x^2 \big)\, dx  \le C_0 \, \iint_{\hat Q_{R,s}}   g^2  \,   z^{p-1}\, dx dt
\end{equation}
where $C_0$ is a constant, possibly larger than the constant in \eqref{eq-tildepsi},  but still  independent of $R, s$. 
The right hand sides in both inequalities  (\ref{eq-tildepsi}) and (\ref{eq-psi-H2}) are bounded by a  constant  that is independent of $R$, namely
$C_0\, \int_s^{s+1} \int_{-\infty}^{\infty} g^2 z^{p-1}\, dx dt$.
Hence, by   standard linear parabolic  theory  the  solution $\psi_R^s$ will  exist at least for $s \le t \le \tau_0$, namely on $Q_{R,s}$. 
Take a sequence $R_j \to +\infty$ and set $\Lambda_{s,\tau_0} := \mathbb{R}\times [s,s+\tau_0]$. Since the equation in  \eqref{eq-psi222} is non-degenerate on any compact subset $K$  of
$\Lambda_{s,\tau_0}$, the uniform estimates \eqref{eq-tildepsi}-\eqref{eq-psi-H2} and  standard arguments imply
that a subsequence of solutions $\psi_{R_j}^s$ converges in $C^\infty(K)$ 
to  a smooth solution $\psi^s$  of \eqref{eq-psi222}. The limiting smooth solution $\psi^s$ is defined on 
$\Lambda_{s,\tau_0}$.

The next step is to show that we can solve a nonlocal problem (\ref{eq-psi}) on $\Lambda_{s,\tau_0}$. We will do that via contraction map arguments. Define a set
$$X^s  := \{\, \psi \, |\, \|\psi \|_{H^2(\Lambda_s)} < \infty\, \}.$$
We  consider  the  operator $A^s: X^s  \to X^s $ given  by 
$$A^s(\psi) := T^s( f - C(\psi,t)),$$ where $T^s(g)$ denotes  a   solution to \eqref{eq-psi222}
as constructed above. We will  show that the map $A^s$ defines a  contraction map and we will  apply the fixed point theorem to it. 

To this end, set  $c := C_0\, \|f\|_{L^2(\Lambda_s)}$ and  $X_c^s := \{\psi  \in X^s \, | \, \, \|\psi \|_{H^2(\Lambda_s)}  < 2c \}$, where the constant $C_0$ is taken from \eqref{eq-tildepsi}-(\ref{eq-psi-H2}). We claim that $A^s(X_c^s) \subset X_c^s$. To show this claim, let $\psi  \in X_c^s$. The estimates (\ref{eq-tildepsi}), (\ref{eq-psi-H2}), the estimate 
\eqref{eqn-cpsi} for $C(\psi,t)$ 
 and the Sobolev embedding yield to
\begin{equation}
\label{eq-preserve}
\begin{split}
\|A^s(\psi)\|_{H^2(\Lambda_s)} &= \|T^s(f - C(\psi,t))\|_{H^2(\Lambda_s)} \le  C_0\, \|f - C(\psi,t)\|_{L^2(\Lambda_s)} \\
& \le C_0\, \big( \|f\|_{L^2(\Lambda_s)} + \|C(\psi,t)\|_{L^2(\Lambda_s)} \big) = c + C_0\, \|C(\psi,t)\|_{L^2(\Lambda_s)}
\\ & \le c + \frac{C}{\sqrt{|s|}}\, \|\psi\|_{L^2(\Lambda_s)} < 2c,
\end{split}
\end{equation}
if  $|s|$ sufficiently large (which holds if   $t_0$ is chosen sufficiently close to $-\infty$). Next we show that $A^s$ defines a contraction map. Indeed, since $C(\psi,t)$ is  linear in $\psi$, we have
\begin{equation}
\label{eq-contr1000}
\begin{split}
\|A^s(\psi_1) &- A^s(\psi_2) \|_{H^2(\Lambda_s)} = \| T^s \big (C(\psi_1,t) - C(\psi_2,t) \big )\|_{H^2(\Lambda_s)} \\
& \le
C_0\, \|C(\psi_1,t) - C(\psi_2,t)\|_{L^2(\Lambda_s)} = C_0\, \|C(\psi_1 - \psi_2, t)\|_{L^2(\Lambda_s)} 
\\ &\le
\frac{C}{\sqrt{|s|}}\, \| \psi_1 - \psi_2 \|_{H^2(\Lambda_s)}
\le \frac 12 \, \| \psi_1 - \psi_2 \|_{H^2(\Lambda_s)}. 
\end{split}
\end{equation}
By (\ref{eq-preserve}) and (\ref{eq-contr1000}), the fixed point Theorem implies that there exists a $\psi^s \in X^s$ so that $A^s(\psi^s) = \psi^s$, meaning that  the equation (\ref{eq-psi}) has a solution $\psi^s$, defined
on $\Lambda_{s,\tau_0}$. 

We claim that $\psi^s(\cdot,t)$  can be extended  to a solution on $\mathbb{R} \times [s,t_0]$, still satisfying our orthogonality conditions and apriori estimates. To this end, assume that our solution $\psi^s(\cdot,t)$ exists for $s \le t < T$, where $T < t_0$ is the maximal time of   existence. Since $\psi^s(\cdot,t)$ satisfies  orthogonality conditions \eqref{eqn-orth1} and \eqref{eqn-orth2} for $t \in [s,T]$, by Proposition \ref{prop-apriori} ,
\begin{equation}
\label{eq-11}
\sup_{t\in [s,T)} \, |\tau|^{\nu}\, \| \psi^s(\cdot,t) \|_{L^2(\mathbb{R})} \le C\, \| f \|_{L^2_{s,T}}
\end{equation}
and
\begin{equation}
\label{eq-12}
\sup_{t\in [s,T)}\, |\tau|^{\nu} \|\psi^s(\cdot,t)\|_{L^{\infty}(\mathbb{R})} + \|\psi\|_{H^2_{s,T}} \le C\, \|f\|_{L^2_{s,T}}
\end{equation}
where $C$ is a uniform constant.
Since $\| f \|_{L^2_{s,T}} \le \|f\|_{L^2_{s,t_0}} \le C$, it follows  that $\psi^s$ can be extended past time $T$, unless $T = t_0$. Moreover, (\ref{eqn-basic0s}) is  satisfied as well and $\psi^s$ also satisfies the orthogonality conditions.
\end{proof}

Having Lemma \ref{lem-es} we are able to conclude the proof of Proposition \ref{prop-l2e}.

\begin{proof}[Proof of Proposition \ref{prop-l2e}]
Having Lemma \ref{lem-es} we are able to conclude the proof of the Proposition.  Take a sequence of $s_j \to -\infty$. By Lemma \ref{lem-es}, for every $s_j$ there is a solution $\psi^{s_j}$ to \eqref{eq-psi} such that $\psi^{s_j}(\cdot,s_j) = 0$ and it satisfies the  uniform estimate \eqref{eqn-basic0s}, independent of $s_j$. Moreover, our equation \eqref{eq-psi} is non degenerate on every compact subset $K \subset \mathbb{R}\times (-\infty,t_0)$. Therefore on  $K$  we can apply standard parabolic theory  to get higher order derivative estimates for our sequence of solutions $\psi^{s_j}$, which are independent of $s_j$  but may depend on $K$. Let $j\to \infty$. By the Arzela-Ascoli theorem and a standard diagonalization argument  we conclude that a subsequence $\{\psi^{s_j}\}$ converges, as $j\to\infty$,  to a smooth function $\psi$  defined on $\mathbb{R}\times (-\infty, t_0)$. Moreover, $\psi$ satisfies orthogonality conditions \eqref{eqn-orth1} and \eqref{eqn-orth2} and the estimate \eqref{eqn-basic0}. The latter follows from taking the limit as $j\to\infty$ in \eqref{eq-11} and \eqref{eq-12}, both satisfied by $\psi^{s_j}$, and the fact that the constants on the right hand side are independent of $j$.
\end{proof}


\subsection{$W^{2,\sigma}$ estimates}

We will next derive weighted $W^{2,\sigma}$ estimates for the linear equation \eqref{eqn-psi}. We recall the 
$W^{2,\sigma}$ norm is given by Definitions \ref{defn-w2p1} and \ref{defn-w2p2}. We have  the following global  estimate.
\begin{prop}\label{prop-w2p}  Let  $\psi$ be a  solution of \eqref{eqn-psi} as in  Proposition 
\ref{prop-l2e}. If   $\|f\|_{\sigma,t_0}^\nu < \infty$,  for some $\sigma >2$ and $\nu \in [0,1]$,  
then
\begin{equation}\label{eqn-w2p}
\|\psi \|_{2,\sigma,t_0}^\nu \leq C\, \big (  \|f\|_{L^2_{t_0}}^\nu + \|f\|_{\sigma,t_0}^\nu \big ).
\end{equation}

\end{prop}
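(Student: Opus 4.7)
The plan is to upgrade the $L^2$/$H^2$ control of Proposition \ref{prop-l2e} to the weighted $W^{2,\sigma}$ estimate by combining classical parabolic Calder\'on--Zygmund theory with a covering argument adapted to the bifurcated weight $\alpha_\sigma$, and then using Corollary \ref{cor-energy2} to absorb the correction term $C(\psi,t)$. First I would rewrite \eqref{eqn-psi} in the nondivergence form
$$\psi_t = a(x,t)\,(\psi_{xx}-\psi) + \psi + \tfrac{1}{p}\bigl[f - C(\psi,t)\bigr], \qquad a(x,t):=\tfrac{1}{p\,z^{p-1}(x,t)}.$$
On any parabolic cylinder $Q$ on which $z$ is bounded above and below by positive constants, this equation is uniformly parabolic with bounded measurable coefficients, so the interior parabolic $L^\sigma$ estimate of Krylov--Safonov type gives
$$\|\psi_t\|_{L^\sigma(Q)} + \|\psi_{xx}\|_{L^\sigma(Q)} \leq C(Q)\bigl( \|f\|_{L^\sigma(2Q)} + \|C(\psi,t)\|_{L^\sigma(2Q)} + \|\psi\|_{L^\sigma(2Q)}\bigr),$$
with $C(Q)$ depending only on the oscillation of $a$ on $2Q$.

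Next I would cover $\mathbb{R}\times[\tau,\tau+1]$ by three families of parabolic cylinders on each of which $z$ and $\alpha_\sigma$ are essentially constant: outer cylinders in $|x|>\xi(t)+M$ where $z\sim e^{-|x|\pm\xi(t)}$ varies geometrically, inner cylinders in $|x|\leq \xi(t)-M$ where $z\sim |t|^{-\beta}$, and a fixed finite family of overlapping cylinders in the transition zone $|x|=\xi(t)+O(1)$. Because the two prescriptions of $\alpha_\sigma$ in \eqref{eqn-weight} are comparable in this transition zone (Remark following \eqref{eqn-weight}), the transition cylinders patch together cleanly. On each cylinder I would multiply the local estimate by the value of $\alpha_\sigma$ and sum, obtaining
$$\|\psi\|_{2,\sigma,\Lambda_\tau} \leq C\,\bigl(\|f\|_{\sigma,\Lambda_\tau} + \|C(\psi,t)\|_{\sigma,\Lambda_\tau} + \|\psi\|_{\sigma,\Lambda_\tau}\bigr).$$

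To close the estimate I would bound the last two terms in terms of the data. For $C(\psi,t)=d_1(t)\,z+d_2(t)\,\bar z$, since $z$ and $\bar z$ have finite weighted $\sigma$-norm uniformly in time, one gets
$$\|C(\psi,t)\|_{\sigma,\Lambda_\tau} \leq C\,\Bigl(\int_\tau^{\tau+1}(d_1^2+d_2^2)\,dt\Bigr)^{1/2},$$
and the right-hand side is controlled by $|\tau|^{-1/2}(\|\psi\|_{H^2(\Lambda_\tau)}+\|f\|_{L^2(\Lambda_\tau)})$ via Corollary \ref{cor-energy2}, which in turn is dominated by $\|f\|^{\nu}_{L^2_{t_0}}$ through Proposition \ref{prop-l2e}. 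For the term $\|\psi\|_{\sigma,\Lambda_\tau}$, I would interpolate between the pointwise bound $\|z^{-1}\psi\|_{L^\infty}+\|\psi\|_{L^\infty}$ coming from Lemma \ref{lem-energy2} (embedded into $L^\sigma$ against $\alpha_\sigma$) and the weighted $L^2$ norm, again controlled by Proposition \ref{prop-l2e}. Multiplying by $|\tau|^\nu$ and taking the supremum over $\tau\leq t_0-1$ yields \eqref{eqn-w2p}.

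The main obstacle I expect is the precise compatibility of weights across the transition region $|x|=\xi(t)+O(1)$: the exponent $(2\beta+\theta)\sigma$ on the inner side and $n\beta-\sigma$ on the outer side must line up after matching $z$ to its respective profile there, and the covering must be constructed so that no degenerating factor in $|t|$ slips into the constant. This is precisely where the smallness of the parameter $\theta$ in \eqref{eqn-weight} is consumed, and where one must also ensure that the weighted $L^\sigma$ norms of $z, \bar z$ entering through $C(\psi,t)$ remain uniformly bounded as $t\to-\infty$ rather than growing in $|t|$.
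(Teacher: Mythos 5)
Your overall plan (local parabolic $W^{2,\sigma}$ estimates, a covering of $\R\times[\tau,\tau+1]$ by regions where the weight and coefficient are essentially constant, then absorb $C(\psi,t)$ and $\|\psi\|_{\sigma}$) is in the right spirit, but it is not the paper's route and, as written, it has a real gap at the heart of the covering step.

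The issue is the claim that on a cylinder $Q$ where $z$ is bounded above and below the estimate holds with a constant ``depending only on the oscillation of $a$ on $2Q$.'' The constant in the parabolic $L^\sigma$ estimate depends on the \emph{magnitude} of $a=1/(pz^{p-1})$ as well, not just its oscillation; equivalently, it depends on the parabolic scaling, i.e.\ on the time-extent of the cylinder relative to $a$. In the inner region $|x|\le\xi(t)-M$ one has $z\sim|t|^{-1/2}$, hence $a\sim|t|^{(p-1)/2}$, so the natural parabolic time-length at scale $\rho_x\sim1$ is $\rho_t\sim|t|^{-(p-1)/2}$, and in the outer region $a\sim e^{(p-1)(|x|-\xi)}$ forces $\rho_t$ to decay exponentially in $|x|$. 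Your covering must therefore use cylinders whose time-extent shrinks dramatically across the domain and through the limit $t\to-\infty$, and after summing the local estimates the resulting constants will not be uniform unless you track how the $W^{2,\sigma}$ norm rescales on each such cylinder; you do not address this. This is exactly where the paper's device comes in: setting $\phi(x,t)=e^{x}\ttt\phi(r,t)$ with $r=e^{\beta x}$ after translating by $\xi_0$ converts \eqref{eqn-psi0} into the nondegenerate $n$-dimensional radial equation \eqref{eq2} with ellipticity coefficient $\alpha(r,t)\sim(1+r^2)^2$, uniformly in $t$. One then performs a \emph{dyadic} space-time rescaling on annuli $\{ \rho < r < 2\rho\}$ with time-blocks of length $\rho^{-2}$ (see \eqref{eq3} and the subsequent estimates), carefully tracking the powers of $\rho$ on the different derivative terms, and chooses the weight exponent $\lambda=-(2\sigma+n+\theta)$ so that the summed weighted estimates close. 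Without replicating this scaling structure (or, equivalently, supplying a precise 1D analogue with the correct time-rescaling and bookkeeping of the $|t|$-dependent constants), your covering argument does not yield \eqref{eqn-w2p}.

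The remaining pieces of your plan are consistent with the paper: the absorption of $C(\psi,t)$ does go through by \eqref{eqn-cpsi1} and its $L^\sigma$ analogue \eqref{eqn-cp2}, and the way you propose to control $\|\psi\|_{\sigma,\Lambda_\tau}$ by interpolating the $L^\infty$ bound of Lemma~\ref{lem-energy2} against the integrability of the weight is essentially what the paper does at the end of the proof of Lemma~\ref{lem-w2p}, where $\|e^{2\beta x}\phi\|_{L^\sigma_\lambda}$ is bounded using the $L^\infty$ estimate and the choice of $\lambda$ making $\iint e^{(2\sigma+\lambda+n)\beta x}\,dx\,dt$ finite. But these steps come after the covering/rescaling estimate is established, which is the part you have not secured.
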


The proof of Proposition \ref{prop-w2p} will follow from a similar a'priori  estimate for solutions
of \eqref{eqn-psi0}. 

\begin{lem}\label{lem-w2p}  Let  $\psi$ be an even solution of  equation \eqref{eqn-psi0}
with $g$ a given even function that  satisfies $\|g\|_{\sigma,t_0}^\nu  + \| g \|_{L^2_{t_0}}^\nu< \infty$,
for some $\sigma >2$ and $\nu \in [0,1)$. 
Then, we have 
\be\label{eqn-lemw2p}
\|\psi \|_{2,\sigma,t_0}^\nu \leq C\, \big (  \|g\|_{L^2_{t_0}}^\nu + \|g\|_{\sigma,t_0}^\nu \big ).
\ee

\end{lem}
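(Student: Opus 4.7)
The strategy is to reduce \eqref{eqn-lemw2p} to classical interior parabolic $L^\sigma$ (Calderón--Zygmund) estimates on unit-size cylinders, together with the a priori $L^2$ and $L^\infty$ bounds of Lemma \ref{lem-energy1}. The two-piece weight $\alpha_\sigma$ in \eqref{eqn-weight} is designed precisely so that, after multiplying the local estimates by $\alpha_\sigma$ and summing over the cover, one recovers the global weighted $W^{2,\sigma}$ norm on the left and the global weighted $L^\sigma$ norm on the right.

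Concretely, I would partition $\Lambda_\tau = \mathbb R \times [\tau,\tau+1]$ into an inner region $\{|x|\le \xi(t)-2\}$, a transition region $\{|\,|x|-\xi(t)\,|\le 2\}$, and an outer region $\{|x|\ge \xi(t)+2\}$, and cover each by a locally finite family of unit cylinders $Q_j = I_j\times J_j$ on which $z$ is comparable to a single representative value $z_j$. Rewriting \eqref{eqn-psi0} as
$$\partial_t\psi - \frac{1}{p\,z^{p-1}}\,\partial_{xx}\psi = -\frac{\psi}{p\,z^{p-1}} + \psi + \frac{g}{p},$$
rescaling time on $Q_j$ by the factor $p\,z_j^{p-1}$ to restore unit diffusion, and applying the classical interior parabolic $L^\sigma$ estimate on a slightly shrunk subcylinder $Q_j'\Subset Q_j$, yields (after undoing the rescaling)
$$\|\psi_t\|_{L^\sigma(Q_j')} + z_j^{-(p-1)}\|\psi_{xx}\|_{L^\sigma(Q_j')} \le C\bigl(\|g\|_{L^\sigma(Q_j)} + z_j^{-(p-1)}\|\psi\|_{L^\sigma(Q_j)}\bigr),$$
with $C$ independent of $j$ and $\tau$. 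Multiplying by the pointwise weight $\alpha_\sigma(x_j,t_j)$ and summing over $j$: in the outer region the exponent $n\beta-\sigma$ in \eqref{eqn-weight} is calibrated to convert the factor $z_j^{-(p-1)}$ into exactly the weight appearing in $\|g\|_{\sigma,\Lambda_\tau}^\sigma$ (recall $p-1 = 2\beta$); in the inner region the strictly positive exponent $\theta\sigma$ provides the margin needed to absorb $z_j^{-(p-1)\sigma}$ while still producing a summable remainder; and in the transition region only finitely many cylinders contribute, on which the two forms of $\alpha_\sigma$ agree by part (b) of the remark following \eqref{eqn-weight}.

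The zeroth-order contribution $\|\psi\|_{\sigma,\Lambda_\tau}$ appearing on the right-hand side is absorbed by interpolating between the $L^2$ bound of Proposition \ref{prop-l2e} and the $L^\infty$ bound $\|\psi\|_{L^\infty(\Lambda_\tau)} \le C(\|\psi\|_{L^2_{s,t_0}}^\nu + \|g\|_{L^2_{s,t_0}}^\nu)$ from Lemma \ref{lem-energy1}, both of which are already controlled by $\|g\|_{L^2_{t_0}}^\nu$; the first-derivative piece $\|\psi_x\|_{\sigma,\Lambda_\tau}$ needed in $\|\psi\|_{2,\sigma,\Lambda_\tau}$ follows by interpolation between $\psi$ and $\psi_{xx}$. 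Multiplying the resulting local inequality by $|\tau|^\nu$ and taking the supremum over $\tau \le t_0-1$ produces \eqref{eqn-lemw2p}. The main obstacle is the inner region, where the coefficient $1/z^{p-1}$ diverges like $|\tau|^{(p-1)/2}$ as $\tau\to-\infty$, so that the local Calderón--Zygmund constant inherits the bad factor $z_j^{-(p-1)}$; one must verify that the parameter $\theta$ can be chosen both strictly positive (to secure the summability of $z_j^{\theta\sigma}$ against the blown-up diffusion factor) and small enough that the weight $\alpha_\sigma$ remains compatible with the later nonlinear estimates---this is precisely the role of $\theta$ noted in part (a) of the remark following \eqref{eqn-weight}.
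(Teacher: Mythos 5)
Your overall strategy---reduce to frozen-coefficient parabolic $W^{2,\sigma}$ estimates on a rescaled cover, then absorb the zeroth-order term via the $L^2$ and $L^\infty$ bounds---is the same skeleton as the paper's, and in the inner region $|x|\le\xi(t)$ it works: the small parameter $\theta$ in the weight $z^{(2\beta+\theta)\sigma}$ produces exactly the geometric margin $z_j^{\theta\sigma}$ needed to make the sum over unit cells converge. But in the outer region $|x|>\xi(t)$ the plan breaks down for the time-derivative piece of the $W^{2,\sigma}$ norm. Your local estimate (correct as far as it goes) reads
$$
\|\psi_t\|_{L^\sigma(Q_j')}\ \le\ C\big(\|g\|_{L^\sigma(Q_j)}+z_j^{-(p-1)}\|\psi\|_{L^\sigma(Q_j)}\big),
$$
and the $z_j^{-(p-1)}$ on the right is forced: the zeroth-order term in the interior $L^\sigma$ estimate carries the factor $R^{-2}\sim D_j = (p z_j^{p-1})^{-1}$, which cannot be improved because the spatial extent of $Q_j$ is fixed at one. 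Multiplying by the outer weight $\alpha_\sigma=z_j^{n\beta-\sigma}$ gives a coefficient $z_j^{\,n\beta-\sigma-2\beta\sigma}=z_j^{\,n\beta-\sigma\frac{n+2}{n-2}}$ on the $\|\psi\|^\sigma$ term, and since $\frac{n\beta}{1+2\beta}=\frac{2n}{n+2}<2\le\sigma$, this exponent is \emph{negative}, so the factors blow up exponentially as $z_j\to 0$ and the sum over outer cells diverges. The $L^\infty$ bound on $\psi$ cannot save the sum; you would need pointwise decay $\psi=O(z)$ (which is precisely the weighted $L^\infty$ norm of Definition \ref{defn-linfty}, but it is a conclusion, not an input here).

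The paper sidesteps this completely: after translating by $\xi_0$ and setting $r=e^{\beta x}$, $\phi=e^x\widetilde\phi$, the entire outer half-line $\{x<0\}$ collapses to the bounded ball $\{r<1\}$, on which the transformed equation \eqref{eq2} is \emph{uniformly} parabolic (the ellipticity coefficient $\alpha(r,t)\sim(1+r^2)^2\sim 1$ there). One standard interior estimate on $B_2\times[\tau-2,\tau]$ then gives \eqref{eqn-w2p11}; the chain rule $\psi_t=e^x\widetilde\phi_t+\dot\xi_0(\dots)$ automatically builds in the factor $e^{\sigma x}$ that, together with the Jacobian $r^{n-1}dr=\beta e^{n\beta x}dx$, reproduces exactly the weight $z^{n\beta-\sigma}$. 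So the change of variables is not cosmetic---it is what prevents the divergent sum you would otherwise face. To repair your argument you should either adopt the paper's change of variables in the outer region, or replace the unit cover there by a Whitney cover whose radii track $\sqrt{D_j}$ \emph{and} verify that the comparability $z\sim z_j$ holds on such cells (it does not, since $D$ varies exponentially over a cell of that size), or first establish the decay $|\psi_t|\lesssim z$ in the outer region by a separate maximum-principle or barrier argument before summing. The first option is by far the cleanest.
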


\medskip

Before we give the proof of  Lemma \ref{lem-w2p}, we will  prove Proposition \ref{prop-w2p} using  Lemma \ref{lem-w2p}. 
\begin{proof}[Proof of Proposition \ref{prop-w2p}]  Assume  that $\psi$ is a solution
of equation \eqref{eqn-psi}, as in the statement of the Proposition. It follows from   Lemma 
\ref{lem-w2p} that 
$$\|\psi \|_{2,\sigma,t_0}^\nu \leq C\, \big (  \|f\|_{L^2_{t_0}}^\nu + \|f\|_{\sigma,t_0}^\nu 
+  \|C(\psi,t)\|_{L^2_{t_0}}^\nu + \|C(\psi,t)\|_{\sigma,t_0}^\nu  \big ).$$
It follows from \eqref{eqn-cpsi1} and the estimate in Proposition \ref{prop-l2e}  that 
\begin{equation}\label{eqn-cp1}
\|C(\psi,t)\|_{L^2_{t_0}}^\nu  \leq \frac {C}{\sqrt{|t_0|}} \, \|f\|_{L^2_{t_0}}^\nu.
\end{equation}
In addition,  it can be shown, similarly as in the proof of \eqref{eqn-cpsi1},  that
\begin{equation}\label{eqn-cp2}
\|C(\psi,t)\|_{\sigma,t_0}^\nu  \leq \frac {C}{\sqrt{|t_0|}} \, \|\psi\|_{2,\sigma,t_0}^\nu.
\end{equation}
Combining the last three estimates, readily yields  the estimate of the Proposition, provided 
that $|t_0|$ is chosen sufficiently large. 
\end{proof}

Before we proceed with the proof of Lemma \ref{lem-w2p}, let us  summarize the estimate we have  for $C(\psi,t)$, using Propositions \ref{prop-l2e} and \ref{prop-w2p}.
\begin{corollary}\label{cor-cp3} Under the assumptions of  Proposition \ref{prop-w2p} we have
\begin{equation}\label{eqn-cp3}
\|C(\psi,t)\|_{*,\sigma,t_0}^\nu \leq \frac {C}{\sqrt{|\tau_0|}} \, \big ( \|\psi\|_{H^2_{t_0}}^\nu + \|\psi\|_{2,\sigma,t_0}^\nu \big ).
\end{equation}
It follows that 
\begin{equation}\label{eqn-cp0}
\|C(\psi,t)\|_{*,\sigma,t_0}^\nu \leq 
\frac {C}{\sqrt{|\tau_0|}} \, \|f\|_{*,\sigma,t_0}^\nu.
\end{equation}
for a universal constant $C$. 
\end{corollary}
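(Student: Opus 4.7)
The plan is that this Corollary is essentially a packaging result: we just combine the component estimates on $C(\psi,t)$ already derived in (or implicit in) the proof of Proposition \ref{prop-w2p} with the a priori bounds from Propositions \ref{prop-l2e} and \ref{prop-w2p}. No new analytic ingredient is needed; the only question is whether the $1/\sqrt{|t_0|}$ factor survives intact after assembly. I expect it does because it appears in both partial estimates \eqref{eqn-cpsi1} and \eqref{eqn-cp2}.

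First, I would recall the definition $\|C(\psi,t)\|_{*,\sigma,t_0}^\nu = \|C(\psi,t)\|_{L^2_{t_0}}^\nu + \|C(\psi,t)\|_{\sigma,t_0}^\nu$ and treat the two pieces separately. For the $L^2$ piece I would invoke \eqref{eqn-cpsi1}, which gives
\[
\|C(\psi,t)\|_{L^2_{t_0}}^\nu \;\leq\; \frac{C}{\sqrt{|t_0|}}\,\|\psi\|_{H^2_{t_0}}^\nu .
\]
For the $L^\sigma$ piece, the same proof strategy used for \eqref{eqn-cpsi1}, but carried out with the weighted $L^\sigma$ norm in place of the $L^2$ norm and with $\|\psi\|_{2,\sigma,t_0}^\nu$ replacing $\|\psi\|_{H^2_{t_0}}^\nu$, yields \eqref{eqn-cp2}:
\[
\|C(\psi,t)\|_{\sigma,t_0}^\nu \;\leq\; \frac{C}{\sqrt{|t_0|}}\,\|\psi\|_{2,\sigma,t_0}^\nu .
\]
Adding these two bounds produces \eqref{eqn-cp3} with a universal constant.

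To deduce \eqref{eqn-cp0} from \eqref{eqn-cp3}, I would then apply the two a priori estimates we have for the linear equation: Proposition \ref{prop-l2e} gives $\|\psi\|_{H^2_{t_0}}^\nu \leq C\|f\|_{L^2_{t_0}}^\nu$, and Proposition \ref{prop-w2p} gives $\|\psi\|_{2,\sigma,t_0}^\nu \leq C\bigl(\|f\|_{L^2_{t_0}}^\nu+\|f\|_{\sigma,t_0}^\nu\bigr)$. Substituting both into \eqref{eqn-cp3} and using $\|f\|_{L^2_{t_0}}^\nu + \|f\|_{\sigma,t_0}^\nu = \|f\|_{*,\sigma,t_0}^\nu$ delivers
\[
\|C(\psi,t)\|_{*,\sigma,t_0}^\nu \;\leq\; \frac{C}{\sqrt{|t_0|}}\,\|f\|_{*,\sigma,t_0}^\nu,
\]
which is \eqref{eqn-cp0}.

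The only mildly nontrivial point, and hence the step I would double-check most carefully, is the derivation of \eqref{eqn-cp2}. The $L^2$ version relies on the bound \eqref{eqn-c1c2} together with straightforward Cauchy--Schwarz applications; transcribing that to $L^\sigma$ requires one to recompute the auxiliary integral $I(t)$ in the $\sigma$-weighted setting (using the weight $\alpha_\sigma$ from \eqref{eqn-weight}), verify it still decays like $|t|^{-1}$, and combine via Hölder rather than Cauchy--Schwarz. Since in the relevant region $|x|\leq\xi(t)$ the weight $\alpha_\sigma$ is comparable to $\zz^{(2\beta+\theta)\sigma}$ and the gain $|\dot\xi|\lesssim 1/|t|$ is pointwise in $t$, the same $1/\sqrt{|t_0|}$ factor emerges and no difficulty beyond bookkeeping arises.
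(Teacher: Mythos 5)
Your proposal is correct and follows exactly the paper's own argument: the paper proves \eqref{eqn-cp3} by adding \eqref{eqn-cpsi1} (the $L^2$ piece) and \eqref{eqn-cp2} (the $L^\sigma$ piece, itself stated and used inside the proof of Proposition \ref{prop-w2p}), and then gets \eqref{eqn-cp0} by substituting the estimates from Propositions \ref{prop-l2e} and \ref{prop-w2p}. Your closing paragraph about rechecking \eqref{eqn-cp2} is a reasonable sanity check but is no more than what the paper itself asserts when it says that \eqref{eqn-cp2} ``can be shown, similarly as in the proof of \eqref{eqn-cpsi1}.''
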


\begin{proof} The estimate \eqref{eqn-cp3} readily follows by combining \eqref{eqn-cpsi1}
and \eqref{eqn-cp2}.
The estimate \eqref{eqn-cp0} follows from  \eqref{eqn-cp3}  and the bounds in Propositions \ref{prop-l2e} and \ref{prop-w2p}. 
\end{proof}

\medskip
We will now proceed  to the proof of Lemma \ref{lem-w2p}. 

\begin{proof}[Proof of Lemma \ref{lem-w2p}]
We first observe that since both $\zz$ and $\psi$ are even functions in $x$, we will only
need to establish the Lemma on $-\infty < x \leq 0$. We first perform a translation
in space, setting
$$\phi(x,t) = \psi (x-\xi_0(t),t), \qquad -\infty < x < \xi_0(t),$$
where $\xi_0(t) = \frac{1}{2}\log (2b|t|)$. 
It follows that $\phi$ satisfies the equation 
\begin{equation}\label{eqn-phi0}
p \bar \zz^{p-1} \, \pp_t \phi = \pp_{xx} \phi - p\dot{\xi}_0\, \bar \zz^{p-1} \, \pp_x \phi  - \phi  + p \bar \zz^{p-1} \, \phi + \bar \zz^{p-1} \, 
\bar g
\end{equation}
with
$$\bar \zz(x,t) := w(x+\xi(t) - \xi_0(t))+ w(x-\xi(t)-\xi_0(t))$$
and $\bar g(x,t) :=g(x-\xi_0(t),t)$. We observe that on the interval of consideration  $-\infty < x \leq \xi_0(t)$, we have $w(x-\xi(t)-\xi_0(t)) \leq w(x+\xi(t)-\xi_0(t))$, hence 
\begin{equation}\label{eqn-Ww}
w(x+\xi(t)-\xi_0(t)) \leq \zz(x,t) \leq 2 \, w(x+\xi(t)-\xi_0(t)), \qquad \mbox{on} \,\,\, -\infty < x \leq \xi_0(t).
\end{equation}
If we divide equation  \equ{eqn-phi0} by $\bar z^{p-1}$ and perform the change of variables

\begin{equation}\label{eqn-tildephi}
\phi(x,t) = e^x \ttt \phi ( r, t), \qquad r=e^{\beta x},
\end{equation}

\no we conclude, after a simple calculation, that  the new function $ \tilde \phi(r,t)$ satisfies  the equation

\be
\ttt \phi_t  = \al(r,t) \,  \Delta \ttt \phi - \beta \,  \dot{\xi}_0 \, r \ttt \phi_r 
+ (1-\dot{\xi}_0)\,  \ttt \phi  + \ttt{g}(r,t)
\label{eq2}\ee
with
$$\al(r,t)=p^{-1} \beta^2 U^{1-p}(r,t)$$ 
and
$$U(r,t)=e^{-x}\, ( w(x+\xi(t)-\xi_0(t)) + w(x-\xi(t)-\xi_0(t))) , \qquad r=e^{\beta x}.
$$
 To obtain \eqref{eq2}, we compute directly that 
$$
\phi_{xx} -\phi  =  \beta^2 e^{(1+ 2\beta)x} (\ttt \phi_{rr} + \frac{n-1}r \ttt \phi_r )
$$
and
$$
p  \bar \zz^{p-1} \phi_t =     p  e^x \ttt \phi_t  \,  ( e^x U )^{p-1} =
p  e^{(1+ 2\beta)x}  \,  U^{p-1} \ttt \phi_t
$$
and similarly
$$
p \bar \zz^{p-1} \bar g=   p e^{(1+ 2\beta)x} U^{p-1}\ttt g.
$$
Combining the last three equalities we readily conclude  \equ{eq2}. In addition, it  follows from \eqref{eqn-w1} and \eqref{eqn-Ww} that 
\be\label{eqn-Ur}
   \left ( \frac{ 2\, k_n\, e^{\beta(\xi_0(t)-\xi(t))}    }{ e^{2\beta  (\xi_0(t)-\xi(t))} +  r^2}\right )^{\frac 1\beta} 
   \leq U(r,t)  \leq 2\,     \left ( \frac{ 2\, k_n \, e^{\beta (\xi_0(t)-\xi(t))}   }{ e^{2\beta (\xi_0(t)-\xi(t))} +  r^2}\right )^{\frac 1\beta}.
\ee
Observe that $|\xi_0(t)-\xi(t)| = |h(t)| \le C\, |t|^{-\mu}$, since $\| h \|^{\mu, 1+\mu}_{1, \sigma, t_0} < \infty$
by assumption. This together with (\ref{eqn-Ur}), the fact that $p - 1 = \frac{4}{n-2} = 2\beta$,  imply the estimate for the ellipticity coefficient $\alpha(r,t)$,
$$d_1(1/2 + r^2)^2 \le \alpha(r,t) \le d_2 (1+r^2)^2,$$
for $d_1$ and  $d_2$ universal positive constants.

\smallskip 
We fix $\tau \leq t_0$. We will next establish sharp $W^{2,\sigma}$ estimates for equation \eqref{eq2} 
on $B_{R(\tau)} \times [\tau-2,\tau]$,  where $R(\tau) := e^{\beta \xi_0(\tau)}$ is a large number. 
\no Let $\bar Q := B_2 \times [\tau-2,\tau]$ and $Q := B_1 \times [\tau-1,\tau]$. By the  standard parabolic 
$W^{2,\sigma}$,  we have
\be \label{eqn-w2p1}  \| \ttt \phi \|_{W^{2,\sigma}(Q)} \leq C \, \big (  
\| \ttt \phi \|_{L^\sigma(\bar Q)} + \| \ttt g \|_{L^\sigma(\bar Q)} \big ).
\ee
Translating this estimate   back to the original coordinates and in terms of $\psi$ gives us the desired weighted $W^{2,\sigma}$ bound on the exterior region, namely 
\be \label{eqn-w2p11} \|\psi \|_{2,\sigma,E_\tau} 
\leq C \, \big (   \|\psi \|_{\sigma, \bar E_\tau}  
+ \| g \|_{\sigma,\bar E_\tau}    \big ) 
\ee
where $E_\tau= (-\infty,-\xi_0(\tau))  \times [\tau-1,\tau]$ and $\bar E_\tau = (-\infty,-\xi_0(\tau) + \frac {\ln 2}\beta)   \times [\tau-2,\tau]$. 
\medskip

We will next obtain a weighted $W^{2,\sigma}$ estimate on $B_{R(\tau)}  \setminus  B_1$.
To this end, we will assume that $R(\tau)=2^{k_0}$ for a large constant $k_0=k_0(\tau)$ and we will 
derive the estimate on the annuli 
$$\{ 2^k < |x| < 2^{k+1} \}  \times [\tau-1,\tau], \,\,\,   \mbox{for any}\,\,  k=0, \cdots, k_0-1.$$
  Set $\rho=2^k$,  $D_\rho = \{\rho  < r < 2\rho \}  \times [\tau-1,\tau]$ and by $\bar D_\rho =  \{\frac \rho 2 < r < 4\rho \}  \times [\tau-2,\tau]$. Then, on $\bar D_\rho$ we have
$$ \la \rho^4 \leq  \alpha(r,t) \leq \Lambda \rho^4$$
for $\la >0$ and $\Lambda < \infty$ universal constants. 
We will then divide the time interval $[\tau-1,\tau]$ into subintervals of length $1/\rho^2$ and
in each of them we scale our solution $\ttt \phi$ to make the equation \eqref{eq2} strictly
parabolic.
Let us denote by $[s-1/{\rho^2},s]$ one such sub-interval
and consider the cylindrical regions   $D_{\rho}^s=  \{\rho  < r < 2\rho \}  \times [s-1/{\rho^2},s]$ and $\bar D_{\rho}^s=  \{\frac \rho2  < r < 4\rho \}  \times [s-2/{\rho^2},s].$
It follows that  the rescaled solution 
$$\phi_\rho (r,t) := \ttt \phi(\rho \, r, s + \rho^{-2} t), \qquad (r,t) \in \bar D := \{ \frac 12 < r <  4
\} \times [\tau-2,\tau]$$
satisfies the equation  
\be
\pp_t  \phi_\rho  = \frac{ \alpha(r,t)}{\rho^4}  \,   \Delta  \phi_\rho  - \beta \dot \xi_0(t)\, \frac {r}{\rho^3} \pp_r \phi_\rho + 
\frac 1{\rho^2} (1- \dot \xi_0(t))  \phi_\rho  +  \frac 1{\rho^2} f_\rho(r,t)
\label{eq3}\ee
on $\bar D$ with $g_\rho(r,t) := \ttt g(\rho r, s + \rho^{-2} t)$.  Moreover, 
$$ \la  \leq  \frac{\alpha(r,t)}{\rho^4}  \leq \Lambda.$$
Recall also that $\dot \xi_0 (t) = 1/t$, and in particular, $|\dot{\xi}_0|$ is bounded. 
Hence, by the standard $W^{2,\sigma}$ estimates on equation  \eqref{eq3},  we have
$$\| \phi_\rho \|_{L^{2,\si}(D)}  \leq C \big ( \, \| \phi_\rho \|_{L^\si(\bar D)} + \rho^{-2} \,  
\|g_\rho\|_{L^\si(\bar D)} \big )$$
with $D := \{ 1 < r <  2 \} \times [\tau-1,\tau]$ and $\bar D:=\{ 1/2 < r <  4 \} \times [\tau-2,\tau]$.
This readily yields  the bound
\begin{equation*}
\begin{split}
 \| \ttt \phi_t \|_{L^\si(D_{\rho}^s)} +  \rho^4 \| D^2 \ttt \phi \|_{L^\si(D_{\rho}^s)}
+ \rho^3 \|  &D \ttt \phi \|_{L^\si(D_{\rho}^s)} + \rho^2 \,  \| \ttt \phi \|_{L^\si(D_{\rho}^s)}\\ &\leq 
C \big ( \rho^2 \,  \| \ttt \phi \|_{L^\si(\bar D_{\rho}^s)} +  
\| \ttt g\|_{L^\si(\bar D_{\rho}^s)} \big ).
\end{split}
\end{equation*}
By repeating the above estimate on all time sub-intervals we finally conclude 
\be\label{eqn-w2p2}
\begin{split}
 \| \ttt \phi_t \|_{L^\si(D_{\rho})} +  \rho^4 \| D^2 \ttt \phi \|_{L^\si(D_{\rho})}
+ \rho^3 \| &D \ttt \phi \|_{L^\si(D_{\rho})} + \rho^2 \,  \| \ttt \phi \|_{L^\si(D_{\rho})}\\ &\leq 
C \big ( \rho^2 \,  \| \ttt \phi \|_{L^\si(\bar D_{\rho})} +  \|\ttt g\|_{L^\si(\bar D_{\rho})} \big ).
\end{split}
\end{equation}

Because the first terms on the left hand side of \eqref{eqn-w2p2} have a growth in
$\rho$, we will need to weight the $L^\sigma$ norms by a power $r^\la$,
for some appropriate $\lambda <0$ to be chosen in the sequel. To this end, we define for any function $\tilde h$ the norm
$$\| \tilde h \|_{L^\si_\lambda(A)} = \big ( \iint_A    | \ttt h |^\sigma \, r^{ \lambda  + n-1}\, dr dt  \big )^{\frac 1\sigma}$$
and observe that \eqref{eqn-w2p2} readily implies the following estimate in the new norms
\be\label{eqn-w2p3}
\begin{split}
 \| \ttt \phi_t \|_{L^\si_\lambda(D_{\rho})} +   \| r^4 \, D^2 \ttt \phi \|_{L^\si_\lambda(D_{\rho})}
&+  \| r^3 D \ttt \phi \|_{L^\si_\lambda(D_{\rho})} +  \| r^2 \ttt \phi \|_{L^\si_\lambda
(D_\rho)}\\ &\leq 
C \big (   \| r^2 \ttt \phi \|_{L^\si_\lambda(\bar D_{\rho})}   +  \|\ttt g\|_{L^\si_\lambda(\bar D_\tau)} \big ).
\end{split}
\end{equation}

\ssk

We will next use the above local  estimate to establish an estimate on the entire
inner region. To this end, set $D_\tau= \{ 1 < r < R(\tau) \} \times [\tau-1,\tau]$ and
$\bar D_\tau= \{ 1/2  < r < 2R(\tau) \} \times [\tau-2,\tau]$,  where $R(\tau) :=e^{2\beta \xi_0(\tau)}$, as before. Applying \eqref{eqn-w2p3}
for all $\rho=2^k$, $k=0,1,\cdots k_0$, where $R(\tau)=2^{k_0}$, we obtain the bound
\be\label{eqn-w2p31}
\begin{split}
 \| \ttt \phi_t \|_{L^\si_\lambda(D_\tau)} +   \| r^4 \, D^2 \ttt \phi \|_{L^\si_\lambda(D_\tau)}
+  \| r^3 D \ttt \phi &\|_{L^\si_\lambda(D_\tau)} +  \| r^2 \ttt \phi \|_{L^\si_\lambda
(D_\tau)}\\ &\leq 
C \big (   \| r^2 \ttt \phi \|_{L^\si_\lambda(\bar D_\tau)}   +  \|\ttt g \|_{L^\si_\lambda(\bar D_\tau)} \big ).
\end{split}
\end{equation}

\ssk

\no Before we find the appropriate $\la$, we will express \eqref{eqn-w2p31}
back in terms of the 
functions $\phi(x,t)=e^x \ttt \phi(r,t)$ and $f(x,t)=e^x \ttt f(r,t)$ through the change of variables $r=e^{\beta x}$. Let  
$I_\tau := [0, \xi_0(\tau)] \times [\tau-1,\tau]$ and
$\bar I(\tau) := [- (\ln 2)/{\beta}, \xi_0(\tau)+ (\ln 4)/\beta ]\times [\tau-2,\tau]$
denote the images of the sets $D_\tau$ and $\bar D_\tau$ under this change of variables.
We will use the  formula 
${\partial}/{\partial r} =(\beta r)^{-1}  \,   {\partial}/{\partial x}$ and the bounds
\begin{equation}\label{eqn-wxw}
c e^{-\beta x} \leq w^\beta(x) \leq  C\, e^{-\beta x}
\end{equation}
which hold in the region of consideration. 
 A  direct calculation  
shows that \eqref{eqn-w2p31} implies the bound 
\be\label{eqn-w2p5}
\begin{split}
 \| \pp_t \phi  \|_{L^\si_\lambda(I_\tau)} +   \|  \pp^2_x  \phi \|_{L^\si_\lambda(I_\tau)}
+ & \| \pp_x   \phi \|_{L^\si_\lambda(I_\tau)} \\ &\leq 
C \big (   \| e^{2\beta x} \phi \|_{L^\si_\lambda(\bar I_\tau)}   +  
\| \bar g\|_{L^\si_\lambda(\bar I_\tau)} \big )
\end{split}
\end{equation}
where, for any function $h$ and any $I \subset [0,\infty)  \times (-\infty,0]$,  we denote 
$$\| h \|_{L^\si_\lambda(I)} := \big ( \iint_I   |  h |^\sigma \, e^{( \lambda  + n)\beta x}\, dx dt  \big )^{\frac 1\sigma}.$$

We next observe that the same arguments as in the proof of Lemma
\ref{lem-energy1} give us a  global $L^\infty $ bound on the solution $\psi$ of \eqref{eqn-psi},
namely  
$$\| \psi \|_{L^\infty(\R \times (-\infty, t_0])} \leq C \, \big (   \|\psi\|_{L^2_{t_0}} +\|g\|_{L^2_{t_0}} \big ).$$
which gives a similar bound for $\phi$, namely  
$$\| \phi \|_{L^\infty(\R \times (-\infty, t_0])} \leq C \, \big (  \|\phi\|_{L^2_{t_0}} + \|\bar g\|_{L^2_{t_0}})$$
were
$$\|\bar g\|_{L^2_{t_0}} := \sup_{\tau \leq t_0} \big 
(\iint_{\Lambda_\tau}  \bar g^2 \, \bar \zz^{p-1} \, dx \, dt \big )^{\frac 12}.$$
\medskip

\no Using this bound we obtain 
$$
\| e^{2\beta x} \phi \|_{L^\si_\lambda(\bar I_\tau)} \leq C\, \big(\, \|\bar g\|_{L^2_{t_0}} + \|\phi\|_{L_{t_0}^2} \,\big) \,
\left ( \iint_{\bar I_\tau} e^{(2 \sigma + \lambda + n)\beta x}\, dx dt \right )^{\frac 1\sigma}.$$
The last integral is bounded uniformly in $\tau$ if $\la$ is chosen so that 
$$2 \sigma + \lambda + n <0.$$
Choose $\lambda= - (2\sigma +n + \theta)$, with   $\theta >0$ any small universal constant. 
With this choice of $\lambda$ and for any 
function $h$ we have 
$$\| h \|_{L^\si_\lambda(I)} = \big ( \iint_I   |  h |^\sigma \, e^{( \lambda  + n)\beta x}\, dx dt  \big )^{\frac 1\sigma}=\big ( \iint_I   |  h |^\sigma \, e^{-(2\sigma +\theta) \beta x}\, dx dt  \big )^{\frac 1\sigma}.$$
With such a choice of $\lambda$, combining this last estimate with \eqref{eqn-w2p5},  yields to the bound
$$
 \|  \phi \|_{2,\sigma,I_\tau} \leq 
C \big (   \|\phi \|_{L^2_{t_0}} +   \|\bar g\|_{L^2_{t_0}} +
\| \bar g\|_{L^\si_\lambda(\bar I_\tau)} \big ).
$$
This readily gives the desired $W^{2,\sigma}$ estimate on $\psi$ in the intermediate
region, which combined with \eqref{eqn-w2p11} yields to  \eqref{eqn-lemw2p},
finishing the proof of the lemma.
\end{proof}

We recall next the weighted $L^\infty$ norm and our global norm  given in Definitions \ref{defn-linfty}  and \ref{defn-norm} respectively.  It is clear that $$\| \psi \|^\nu_{L^\infty_{t_0}} \leq C\, \| \psi \|_{\infty, t_0}^\nu$$ since 
$z^{-1} \geq c >0$ for a universal constant $c$.
\no The following  $L^\infty$ estimate follows as a consequence of estimates \eqref{eqn-basic0}
and \eqref{eqn-w2p}. To derive it, as we will see below we need to take $\sigma > n+1$, so lets define from now on $\sigma := n+2$. We have

\begin{corollary} \label{coro-linfty} Under the assumptions of Proposition \ref{prop-w2p}, if $\sigma =n+2$  then the solution $\psi$ satisfies the
following estimate
\begin{equation}\label{eqn-linfty1}
\| \psi \|_{\infty, t_0}^\nu
\leq  C\, \big (  \|f\|_{L^2_{t_0}}^\nu + \|f\|_{\sigma,t_0}^\nu \big ).
\end{equation}

\end{corollary}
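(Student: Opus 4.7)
The plan is to deduce \eqref{eqn-linfty1} from the weighted $W^{2,\sigma}$ bound of Proposition \ref{prop-w2p} by applying a parabolic Sobolev--Morrey embedding in the rescaled coordinates used in the proof of Lemma \ref{lem-w2p}. The choice $\sigma=n+2>n+1$ is precisely what is needed so that the parabolic embedding $W^{2,1,\sigma}\hookrightarrow L^{\infty}$ is available and, as importantly, so that the dyadic scaling factors close up against the weight $\alpha_{\sigma}$.

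Fix $\tau\le t_{0}$. By the evenness of $\psi$ we may restrict to $x\le 0$. After the translation $\phi(x,t)=\psi(x-\xi_{0}(t),t)$ and the change of variables $\phi(x,t)=e^{x}\tilde{\phi}(r,t)$ with $r=e^{\beta x}$ from the proof of Lemma \ref{lem-w2p}, the outer region $\{|x|>\xi(t)\}$ corresponds to $r\lesssim 1$ (on which \eqref{eq2} is uniformly parabolic) while the inner region $\{|x|\le\xi(t)\}$ corresponds to $1\lesssim r\le R(\tau):=e^{\beta\xi_{0}(\tau)}$ (which is covered by the dyadic annuli $\rho\le r\le 2\rho$, $\rho=2^{k}$, on which the rescaled equation \eqref{eq3} is uniformly parabolic). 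A direct computation shows that on the outer piece $z^{-1}\psi$ coincides with $\tilde{\phi}$ up to a bounded multiplicative factor $e^{-h(t)}$, while on the inner piece $\psi=r^{1/\beta}\tilde{\phi}$.

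For the outer region I would apply the standard parabolic Sobolev embedding $W^{2,1,\sigma}\hookrightarrow L^{\infty}$ on unit parabolic cylinders to $\tilde{\phi}$, and then invoke \eqref{eqn-w2p11} to bound the resulting $W^{2,1,\sigma}$ norm by the weighted norm of $\psi$ on a slightly enlarged region. For the inner region, on each dyadic annulus I rescale via $\phi_{\rho}(r,t)=\tilde{\phi}(\rho r,s+\rho^{-2}t)$; parabolic Sobolev on the fixed cylinder $\bar{D}$ yields
\[
\|\phi_{\rho}\|_{L^{\infty}(D)}\le C\|\phi_{\rho}\|_{W^{2,1,\sigma}(\bar{D})}.
\]
Unscaling this inequality (with the correct Jacobian factors in $L^{\sigma}$) and invoking \eqref{eqn-w2p2} to dominate the right-hand side by the $L^{\sigma}$ norm of $\tilde{\phi}$ and $\tilde{g}$ on $\bar{D}_{\rho}$ produces a pointwise bound for $\tilde{\phi}$, hence for $\psi$, on the annulus. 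Taking the supremum over the dyadic scales $\rho=2^{k}$, $0\le k\le k_{0}$, and combining with Proposition \ref{prop-w2p} (which bounds $\|\psi\|_{L^{2}_{t_{0}}}^{\nu}$ and $\|\psi\|_{2,\sigma,t_{0}}^{\nu}$ by $\|f\|_{L^{2}_{t_{0}}}^{\nu}+\|f\|_{\sigma,t_{0}}^{\nu}$) yields \eqref{eqn-linfty1}.

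The main obstacle is the precise bookkeeping of the scaling factors along the dyadic decomposition of the inner region: the power of $\rho$ produced by rescaling $L^{\sigma}$ norms under $\phi_{\rho}(r,t)=\tilde{\phi}(\rho r,s+\rho^{-2}t)$, together with the factor $r^{1/\beta}$ coming from $\psi=r^{1/\beta}\tilde{\phi}$, must match the power of $z$ in the weight $\alpha_{\sigma}=z^{(2\beta+\theta)\sigma}$ so that the final bound is uniform in $\rho$. This matching of exponents is exactly what forces the condition $\sigma>n+1$ mentioned by the authors and, in turn, fixes $\sigma=n+2$.
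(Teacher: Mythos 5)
Your treatment of the outer piece $\|z^{-1}\psi\,\chi_{\{|x|>\xi\}}\|^{\nu}_{L^{\infty}_{t_0}}$ is essentially the paper's: shift by $\xi_0$, pass to $\tilde\phi$ via $r=e^{\beta x}$, observe that equation \eqref{eq2} is uniformly parabolic on $\{r<2\}$, and apply the local parabolic $L^\infty$ estimate (using $\sigma>n+1$), then translate back and invoke Proposition \ref{prop-w2p} together with \eqref{eqn-cp0} to absorb $g=f-C(\psi,t)$.

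However, for the inner piece $\|\psi\,\chi_{\{|x|\le\xi\}}\|^{\nu}_{L^{\infty}_{t_0}}$ you have set up a much harder argument than is needed, and — more seriously — one that you do not close. The paper bounds the full unweighted $\|\psi\|^{\nu}_{L^\infty_{t_0}}$ (which in particular dominates the inner piece) directly from the energy estimate \eqref{eqn-basic0}, using only the one--dimensional Sobolev embedding $H^1(\R)\hookrightarrow L^\infty(\R)$ applied to the unweighted $\int(\psi_x^2+\psi^2)\,dx$ controlled in the proof of Lemma \ref{lem-energy1}. No dyadic decomposition, no rescaled $W^{2,1,\sigma}\hookrightarrow L^\infty$, and no matching against the weight $\alpha_\sigma$ is required for this piece. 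Your proposed route instead requires propagating the local parabolic Sobolev bound $\|\phi_\rho\|_{L^\infty(D)}\lesssim\|\phi_\rho\|_{W^{2,1,\sigma}(\bar D)}$ through the unscaling, multiplying by the factor $r^{1/\beta}\sim\rho^{1/\beta}$ (since $\psi=r^{1/\beta}\tilde\phi$), and summing over $\rho=2^k$; you acknowledge that the exponent bookkeeping is the main obstacle but do not carry it out. If you do carry it out, you will find that the factor $\rho^{1/\beta}$ together with the $\rho^{(2-n)/\sigma}$ loss from the scaled embedding produces a net positive power $1/\beta+2/\sigma$ of $\rho$ against the weight $r^{\lambda}$, $\lambda=-(2\sigma+n+\theta')$, so the resulting bound grows like $\rho^{1/\beta+2/\sigma}$ up to $\rho\sim R(\tau)$ and does not give the uniform $|\tau|^{-\nu}$ decay you need. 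The condition $\sigma>n+1$ appears in the paper only for the local $L^\infty$ estimate on the uniformly parabolic outer region, not as a consequence of any dyadic matching. So while your strategy for the outer region is correct, you should replace the inner--region argument with the direct appeal to \eqref{eqn-basic0} and the one--dimensional Sobolev embedding.
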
 

\begin{proof} The bound on $\| \psi  \|_{L^\infty_{t_0}}^\nu$ readily follows from \eqref{eqn-basic0} and
Sobolev embedding. For the bound on $\| z^{-1} \, \psi  \,  \chi_{\{|x| \geq \xi(t)\}}  \|_{L^\infty_{t_0}}^\nu$,
by symmetry we may  restrict  ourselves to the region $\{ -\infty < x < - \xi(t) +  \beta^{-1}\ln 2 \}$, and set  
$\phi(x,t) :=\psi(x-\xi_0(t),t)$, where $-\infty < x < \beta^{-1}\ln 2$.  As in the proof of Lemma \ref{lem-w2p}, it follows
that $\phi$ satisfies the equation \eqref{eqn-phi0} with $\bar g:= g(x-\xi_0(t),t)$ and $g := f - C(\psi,t)$. 
Hence, $\tilde \phi(r,t)$ given by \eqref{eqn-tildephi} satisfies the equation \eqref{eq2} which is
now strictly  parabolic in the region of consideration $0 \leq r < 2$, $t \leq t_0$. 
Let $Q= B_{e^{\epsilon}} \times [\tau-1,\tau]$ and $\bar Q= B_2 \times [\tau-2,\tau]$, $\tau \leq  t_0$ and $e^{\epsilon} < 2$. Standard parabolic estimates imply the bound
$$\| \tilde \phi \|_{L^\infty(Q)} \leq C \, \big ( \| \tilde \phi \|_{L^\sigma(\bar Q)} + \| \tilde g \|_{L^\sigma(\bar Q)} \big )$$ 
since  $\sigma >  n+1$.  
Expressing everything in the original variables, using that $|\xi(t) - \xi_0(t)| = |h(t)|  \le C\, |t|^{-\mu} < \epsilon$, for $|t|$ sufficiently large, we conclude
$$\| z^{-1} \, \psi  \,  \chi_{\{|x| \geq \xi(t)\}}  \|_{L^\infty_{t_0}}^\nu \leq \| z^{-1} \, \psi  \,  \chi_{\{|x| \geq \xi_0(t) - \epsilon\}}  \|_{L^\infty_{t_0}}^\nu \leq \big ( \| \psi \|_{\sigma,t_0}^\nu + \|g\|_{\sigma,t_0}^\nu \big ).$$
Since  $g=f-C(\psi,t)$,  where 
$C(\psi,t)$ satisfies the bound \eqref{eqn-cp0}, 
the desired bound readily follows from \eqref{eqn-w2p}.
\end{proof}

\medskip

We will finally summarize the results in this section in one result. This will play a crucial
role in the construction of the solution of our nonlinear problem. 
We have shown the following result.

\begin{prop}\label{prop-linear} Let $\mu, \nu \in [0,1)$ and $\sigma = n+2$, $n \geq 3$,  be fixed constants.  
Then, there is a number  $t_0 <0$ such that  for any even function  $f$ on $\R \times (-\infty,t_0]$
with
$\|f \|_{*,\sigma}^\nu < \infty$,   satisfying the 
orthogonality conditions \eqref{eqn-orth1} and \eqref{eqn-orth2} and a function $h$ on $(-\infty, t_0]$  with
$\| h \|_{1,\sigma, t_0}^{\mu, 1+\mu} < \infty$, 
there exists an ancient  solution $\psi = T(f)$ of \eqref{eqn-psi} on $-\infty \leq  t \leq t_0$, also satisfying the orthogonality conditions \eqref{eqn-orth1} and \eqref{eqn-orth2},  and the estimate 
\begin{equation}\label{eqn-basic02}
\|\psi \|_{*,2,\sigma,t_0}^\nu \leq C \, \|f \|_{*,\sigma,t_0}^\nu.
\end{equation}
The  constant $C$ depends  only on dimension $n$, $ \nu$ and $\mu$. 
\end{prop}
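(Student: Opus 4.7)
The plan is to recognize that Proposition \ref{prop-linear} is simply a packaging of the three linear estimates already established in this section, all applied to the same solution $\psi = T(f)$. First I would invoke Proposition \ref{prop-l2e} to produce, for $|t_0|$ large enough, the desired ancient solution $\psi$ of \eqref{eqn-psi} on $\R\times(-\infty,t_0]$, even in $x$, satisfying the orthogonality conditions \eqref{eqn-orth1}--\eqref{eqn-orth2}, together with the bound
$$\sup_{\tau\le t_0}|\tau|^\nu\|\psi(\tau)\|_{L^2} + \|\psi\|_{H^2_{t_0}}^\nu \le C\,\|f\|_{L^2_{t_0}}^\nu \le C\,\|f\|_{*,\sigma,t_0}^\nu.$$
This controls the $H^2$ piece of the global norm $\|\psi\|_{*,2,\sigma,t_0}^\nu$.

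Next, since the hypothesis $\|f\|_{*,\sigma,t_0}^\nu < \infty$ includes $\|f\|_{\sigma,t_0}^\nu < \infty$, I apply Proposition \ref{prop-w2p} to the same $\psi$ to obtain
$$\|\psi\|_{2,\sigma,t_0}^\nu \le C\bigl(\|f\|_{L^2_{t_0}}^\nu+\|f\|_{\sigma,t_0}^\nu\bigr)=C\,\|f\|_{*,\sigma,t_0}^\nu,$$
which handles the weighted $W^{2,\sigma}$ part of the global norm. Finally, using the choice $\sigma=n+2>n+1$, Corollary \ref{coro-linfty} applied to $\psi$ gives
$$\|\psi\|_{\infty,t_0}^\nu \le C\bigl(\|f\|_{L^2_{t_0}}^\nu+\|f\|_{\sigma,t_0}^\nu\bigr)=C\,\|f\|_{*,\sigma,t_0}^\nu,$$
which controls the weighted $L^\infty$ piece.

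Summing the three bounds yields \eqref{eqn-basic02}. For the choice of $t_0$ I would take the most negative of the three thresholds required by Proposition \ref{prop-l2e}, Proposition \ref{prop-w2p}, and Corollary \ref{coro-linfty}; with that choice the constants in all three cited estimates depend only on $n$, $\nu$, $\mu$ (together with the fixed $b>0$ and the assumed bound $\|h\|_{1,\sigma,t_0}^{\mu,1+\mu}\le 1$), matching the claim. The proof is therefore essentially bookkeeping; the genuine work is already done in the preceding results, most notably the contradiction/compactness argument of Proposition \ref{prop-apriori} delivering an $L^2$ bound uniform in the initial time $s$, and the weighted $W^{2,\sigma}$ Calderón--Zygmund analysis of Lemma \ref{lem-w2p} on dyadic annuli in the transformed variable $r=e^{\beta x}$. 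No new obstacle should arise at this consolidation step.
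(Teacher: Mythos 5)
Your proof is correct and matches the paper's own treatment exactly: the paper presents Proposition \ref{prop-linear} as a consolidation of Propositions \ref{prop-l2e}, \ref{prop-w2p} and Corollary \ref{coro-linfty}, giving no separate argument beyond ``We have shown the following result,'' and your assembly of the three component estimates is precisely that bookkeeping.
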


\section{The nonlinear problem}
\label{sec-np}

Let   $X$ be the  Banach space defined as in Definition \ref{defn-X} and $T$ and $A$ the operators as introduced 
in Section \ref{sec-outline}.  
In addition, for a given $\mu \in (0,1)$ and  $t_0< 0$, we set  
\begin{equation}\label{defn-K}
K:= \{\, (h,\eta) : (-\infty,t_0] \to \R  \,\,  |\quad  \|h\|^{\mu,1+\mu}_{1,\sigma,t_0}  \le 1 \,\,  \mbox{and} \,\, 
\| \eta \|^1_{1,\sigma,t_0} \leq C_0\, \}
\end{equation}
where $C_0$ is the universal constant  given by \eqref{eqn-defnco}.  
We say that $h \in K$ or  $\eta \in K$ if $(h,0) \in K$ or  $(0,\eta) \in K$ respectively.  Moreover, define
\begin{equation}
\label{eq-lambda}
\Lambda := \{\psi\in X \, |\, \| \psi\|_{*,2,\sigma,\nu} \le 1\}.
\end{equation}
\begin{remark} \label{rem-small}  If $(h,\eta) \in K$, then 
\begin{equation}\label{eqn-small}
\| h\|_{L^\infty(-\infty,t_0])} \leq |t_0|^{-\mu} \quad \mbox{and} \quad \| \eta\|_{L^\infty(-\infty,t_0])} 
\leq C_0\,  |t_0|^{-1}.
\end{equation}
In particular, by choosing $|t_0|$ sufficiently large we may assume that both $h$ and $\eta$ have  small 
$L^\infty$ norms. In addition, 
\begin{equation}\label{eqn-xi0}
 \xi(t) := \frac 12 \log (2b\,|t|) + h(t) = \frac 12 \log (2b\,|t|) + o(1), \qquad \mbox{ as} \,\, |t| \to -\infty. 
\end{equation} 
 \end{remark}

The main goal in this section is to prove the following Proposition.

\begin{prop}\label{prop-fp} Let $\sigma=n+2$. There exist   numbers    $\nu \in (\frac 12,1)$ and $t_0 <0$,   depending 
only on dimension $n$,  such that for any given pair of functions $(h,\eta)$ in $K$,  there is a solution $\psi = 
\Psi(h,\eta)$ of \eqref{eqn-fp} which  satisfies
the orthogonality conditions \eqref{eqn-orth11}-\eqref{eqn-orth22}. Moreover, the following estimates hold
\begin{equation}\label{eqn-contr1}
\| \Psi(h^1,\eta) - \Psi(h^2,\eta) \|^{\nu}_{*,2,\sigma,t_0}  \leq C \, |t_0|^{-\mu} \, \| h^1 - h^2 \|_{1,\sigma,t_0}^{\mu,\mu+1}
\end{equation}
and
\begin{equation}\label{eqn-contr2}
\| \Psi(h,\eta^1) - \Psi(h,\eta^2) \|^{\nu}_{*,2,\sigma,t_0}  \leq C \, |t_0|^{-1+\nu} \,  \| \eta^1 - \eta^2 \|_{1,\sigma,t_0}^1
\end{equation}
for any $(h^i,\eta) \in K$ and $(h,\eta^i) \in K$, $i=1,2$ and $\mu < \min\{2\nu-1, \gamma\}$, where $\gamma \in (0,1)$ is a positive number determined by Lemma \ref{lem-projection} and $C$ is a universal constant.  
\end{prop}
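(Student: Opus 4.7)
The plan is to solve the fixed point problem $\psi = A(\psi):=T(\bar E(\psi))$ via Banach's contraction principle applied to the closed ball $\Lambda$ in $X$. For fixed $(h,\eta)\in K$, the operator $A$ is well-defined once we have chosen $c_1(t),c_2(t)$ so that $\bar E(\psi)=E(\psi)-(c_1 z+c_2\bar z)$ satisfies the orthogonality conditions \eqref{eqn-orth11}-\eqref{eqn-orth22}. This is a $2\times 2$ linear system in $c_1,c_2$ of the same type solved in Section \ref{sec-linear} for $d_1,d_2$; invoking Lemma \ref{lem-projection} (cf.\ Notation \ref{not-par}) yields solvability and the bound $|c_1(t)|+|c_2(t)|\lesssim |t|^{-\gamma}$ with room to absorb the projection into the main error estimate. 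Since $T$ preserves orthogonality and satisfies $\|T(f)\|_{*,2,\sigma,t_0}^\nu\leq C\,\|f\|_{*,\sigma,t_0}^\nu$ by Proposition \ref{prop-linear}, it suffices to bound $\bar E(\psi)$ in the $\|\cdot\|_{*,\sigma,t_0}^\nu$ norm, both in size and as a Lipschitz map.

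The key estimates split $E(\psi)=z^{1-p}M+Q(\psi)$ as in \eqref{eq-Epsi}. The $\psi$-independent piece $z^{1-p}M$ is controlled by Lemma \ref{lem-M}, which provides $\|z^{1-p}M\|_{*,\sigma,t_0}^{\nu_0}\leq C$; the condition $\nu<\nu_0$ gives a small surplus factor $|t_0|^{-(\nu_0-\nu)}$. For the $\psi$-dependent piece $Q(\psi)$:
\begin{itemize}
\item The correction $C(\psi,t)$ is dominated by $|t_0|^{-1/2}\|\psi\|_{*,2,\sigma,t_0}^\nu$ through Corollary \ref{cor-cp3}.
\item $N(\psi)=(\tilde z+\psi)^p-\tilde z^p-p\tilde z^{p-1}\psi+p\psi(\tilde z^{p-1}-z^{p-1})$ is quadratic in $\psi$ when $\psi$ is small in $L^\infty$, and the factor $\tilde z^{p-1}-z^{p-1}=O(\eta)$ contributes an extra $|t_0|^{-1}\|\psi\|_{*,2,\sigma,t_0}^\nu$; the $L^\infty$ control afforded by $\|\psi\|_{\infty,t_0}^\nu$ in the definition of $\|\cdot\|_{*,2,\sigma,t_0}^\nu$ is essential here to close the quadratic bound in the mixed inner/outer weight $\alpha_\sigma$.
\item $\partial_t N(\psi)$ is handled similarly, using the $H^2$ control on $\psi_t$; the linear term $p\psi\,\partial_tz^{p-1}$ carries the factor $\dot\xi=O(|t|^{-1})$.
\end{itemize}
Combining these and the projection bound yields, for $\psi\in\Lambda$,
\begin{equation*}
\|\bar E(\psi)\|_{*,\sigma,t_0}^\nu\ \leq\ C\,|t_0|^{-(\nu_0-\nu)}\,+\,C\,|t_0|^{-\delta}\,\|\psi\|_{*,2,\sigma,t_0}^\nu\,+\,C\,\bigl(\|\psi\|_{*,2,\sigma,t_0}^\nu\bigr)^2
\end{equation*}
for some $\delta>0$, and the analogous Lipschitz bound
\begin{equation*}
\|\bar E(\psi_1)-\bar E(\psi_2)\|_{*,\sigma,t_0}^\nu\ \leq\ \tfrac{1}{2}\|\psi_1-\psi_2\|_{*,2,\sigma,t_0}^\nu
\end{equation*}
upon taking $|t_0|$ large and $\|\psi_i\|_{*,2,\sigma,t_0}^\nu\leq 1$. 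The Banach fixed point theorem then produces a unique $\psi=\Psi(h,\eta)\in\Lambda$ solving \eqref{eqn-fp}, satisfying the orthogonality conditions by construction of $T$.

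For the Lipschitz estimates \eqref{eqn-contr1}-\eqref{eqn-contr2}, I write
\begin{equation*}
\Psi(h^1,\eta)-\Psi(h^2,\eta)\ =\ T\bigl(\bar E_{h^1}(\Psi(h^1,\eta))-\bar E_{h^2}(\Psi(h^2,\eta))\bigr),
\end{equation*}
decompose the right-hand side into a contribution involving $\bar E_{h^1}(\psi)-\bar E_{h^2}(\psi)$ at a common $\psi$ and a contraction-type contribution involving the already controlled difference $\Psi(h^1,\eta)-\Psi(h^2,\eta)$, and absorb the latter for $|t_0|$ large. The reduction is then to estimate the dependence of $M$, $N(\psi)$ and the correction $C(\psi,t)$ on $h$ via the smoothness of $w$ in its shift parameter (producing the factor $|t_0|^{-\mu}\|h^1-h^2\|_{1,\sigma,t_0}^{\mu,1+\mu}$) and on $\eta$ via linearity/quadratic expansion in $\eta$ (producing $|t_0|^{-1+\nu}\|\eta^1-\eta^2\|_{1,\sigma,t_0}^1$); the constraint $\mu<\min\{2\nu-1,\gamma\}$ is precisely what makes these factors overcome the $|t|^{-\nu}$ weight in the global norm.

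The main obstacle is the quadratic estimate for $N(\psi)$ in the $\sigma$-norm: the weight $\alpha_\sigma$ changes form at $|x|=\xi(t)$, and one must verify that both the inner region (where $z$ is small of order $|t|^{-1/2}$) and the outer region (where $z\sim w$) yield compatible bounds, with sufficient decay in $|t|$ to close the fixed point. Combined with this is the need to ensure that the decay rate $\nu_0$ in Lemma \ref{lem-M} exceeds $\nu$ strictly by enough to dominate the fixed point's limiting norm and to preserve the Lipschitz estimates in $h$ and $\eta$; this is what forces the choice $\frac{1}{2}<\nu<\nu_0$ and $\mu<\min\{2\nu-1,\gamma\}$.
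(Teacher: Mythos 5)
Your overall strategy matches the paper's: you set up $A(\psi)=T(\bar E(\psi))$ as a contraction on the closed ball $\Lambda$, bound $\bar E(\psi)$ using the decomposition into $z^{1-p}M$ and $Q(\psi)$, and use the small factor $|t_0|^{\frac12-\nu}$ coming from the $L^\infty$ control of $\psi/z$ to close the estimate. That part of the argument is sound (your displayed inequality writes a quadratic term $(\|\psi\|^{\nu}_{*,2,\sigma,t_0})^2$ without a small prefactor, but you compensate correctly in words by noting the $L^\infty$ mechanism, which in the paper's Lemma~\ref{lem-N} yields the factor $|t_0|^{\frac12-\nu}$ on the linear term; this is fine since $\nu>\tfrac12$). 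Also, your invocation of Lemma~\ref{lem-projection} for the solvability of $c_1,c_2$ is slightly misplaced — that lemma computes the leading-order expansion of the projections of $M$, while the bound $\|c_1 z + c_2\bar z\|_{*,\sigma,t_0}^\nu\leq C\|E(\psi)\|_{*,\sigma,t_0}^\nu$ used in the paper follows from the non-degeneracy of the Gram matrix of $z,\bar z$ against $w_2^p$, $w_2'w_2^{p-1}$, exactly as for the $d_1,d_2$ system in Section~\ref{sec-linear}.

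There is however a genuine gap in your treatment of the Lipschitz estimate \eqref{eqn-contr1}. You write $\Psi(h^1,\eta)-\Psi(h^2,\eta)=T\bigl(\bar E_{h^1}(\Psi(h^1,\eta))-\bar E_{h^2}(\Psi(h^2,\eta))\bigr)$ and plan to absorb the contraction part. But the operator $T$ itself depends on $h$ through the coefficient $z$ in the linear equation \eqref{eqn-psi}, so $\psi^1=T_{h^1}(\cdot)$ and $\psi^2=T_{h^2}(\cdot)$ come from \emph{different} linear operators, and your identity is false as written. More importantly, $\psi^1$ and $\psi^2$ satisfy the orthogonality conditions \eqref{eqn-orth11}-\eqref{eqn-orth22} with respect to different shifts $\xi^1$ and $\xi^2$, so the difference $\psi^1-\psi^2$ does not itself satisfy any exact orthogonality condition. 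This blocks a direct application of the linear a priori bound \eqref{eqn-basic0} (or its abstract form $\|T(f)\|\leq C\|f\|$), which is only valid for solutions that do satisfy orthogonality. The paper's proof explicitly works around this by introducing the corrected difference $Y:=\psi^1-\bar\psi^2$ with $\bar\psi^2=\psi^2-\lambda_1(t)w'(x-\xi^1)-\lambda_2(t)w(x-\xi^1)$, where $\lambda_1,\lambda_2$ are the projections of $\psi^2$ onto $w',w$ at the shift $\xi^1$, so that $Y$ satisfies orthogonality with respect to $\xi^1$; it then applies a variant of \eqref{eqn-basic0} for the operator $L_t^1$ built from $z^1$, showing $\|Y\|_{*,2,\sigma,t_0}^\nu\leq C\|(z^1)^{1-p}L_t^1Y\|_{*,\sigma,t_0}^\nu$, and controls $|\lambda_i|,|\dot\lambda_i|$ by $|h^1-h^2|$ to recover $\psi^1-\psi^2$. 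Without this device the bound \eqref{eqn-contr1} does not follow from the earlier linear theory, so you would need to supply an analogous correction step.

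By contrast, your outline of \eqref{eqn-contr2} — at fixed $h$, comparing $\eta^1$ and $\eta^2$ — is unproblematic, since there the operator $T$ and the shift $\xi$ are genuinely the same, and the paper indeed handles this case much more briefly for exactly that reason.
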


We will find a solution of \equ{eqn-fp} by the contraction mapping principle. To this end, we need suitable estimates on the operator 
$E(\psi)$.
They are given in  the following section, after which we will proceed with the proof of Proposition \ref{prop-fp}.

\subsection{The estimation of the Error term}

We will next estimate the error term $E(\psi)$ in the $\| \cdot \|_{*,\sigma,t_0}^\nu$ norm
and also establish its Lipchitz property with respect to $\psi$ as well as $h$ and $\eta$.  
We will begin by estimating the error term $M$ in  \eqref{eq-Epsi}.

\begin{lem}\label{lem-M} Let $\sigma =n+2$. There exist  numbers $\nu=\nu_0(n)  \in (\frac 12, 1]$ and $t_0 <0$, depending on dimension $n$, such that for any $\nu\in (1/2, \nu_0]$ and $\mu < \min\{2\nu - 1, \gamma\}$ and any $(h, \eta) \in K$ (where the set $K$ is defined with respect to this particular $\mu$), we have  
$$\| z^{1-p} \, M \|_{*,\sigma,t_0}^{\bnu} \leq C$$
for a universal constant $C$.
\end{lem}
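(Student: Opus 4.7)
The plan is to decompose $M$ into its natural pieces and estimate each separately in both halves of the norm $\|\cdot\|_{*,\sigma,t_0}^\nu = \|\cdot\|_{L^2_{t_0}}^\nu + \|\cdot\|_{\sigma,t_0}^\nu$. Using $\tilde z = (1+\eta)z$, $z = w_1+w_2$, and $\partial_t z = -\dot\xi\,\bar z$ (since $\partial_t w_1 = -\dot\xi w'(x-\xi)$, $\partial_t w_2 = +\dot\xi w'(x+\xi)$), I would first compute
$$\partial_t\tilde z^p \;=\; p(1+\eta)^{p-1}\dot\eta\,z^p \;-\; p(1+\eta)^p\dot\xi\,z^{p-1}\bar z,$$
and substitute into the definition of $M$. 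Dividing by $z^{p-1}$ yields the clean decomposition
$$z^{1-p}M \;=\; \underbrace{(1+\eta)\,z^{1-p}\bigl[z^p-w_1^p-w_2^p\bigr]}_{M_I} \,+\, \underbrace{\bigl[(1+\eta)^p-(1+\eta)\bigr]z}_{M_\eta} \,-\, \underbrace{p(1+\eta)^{p-1}\dot\eta\,z}_{M_{\dot\eta}} \,+\, \underbrace{p(1+\eta)^p\dot\xi\,\bar z}_{M_{\dot\xi}}.$$

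Of these, only $M_I$ lacks an a priori small factor inherited from $(h,\eta)\in K$, so its smallness must come entirely from the spatial decay of $w$ combined with $e^{-\xi(t)}\sim|t|^{-1/2}$. On $x\ge 0$ we have $w_2\le w_1$; setting $r=w_2/w_1\in[0,1]$, a Taylor expansion gives
$$z^{1-p}\bigl[(w_1+w_2)^p-w_1^p-w_2^p\bigr] \;=\; w_1\,\frac{(1+r)^p-(1+r^p)}{(1+r)^{p-1}} \;\lesssim\; \min(w_1,w_2),$$
so the core calculation reduces, after change of variable $y=x-\xi$ and using evenness, to the weighted integral
$$J(\xi) \;:=\; \int_{-\xi}^{\infty} w(y)^{p-1}\,w(y+2\xi)^2\,dy,$$
whose asymptotic behavior as $\xi\to\infty$ depends on the sign of $p-3$: one finds $J(\xi)=O(e^{-4\xi})$ if $n=3$, $J(\xi)=O(\xi\,e^{-4\xi})$ if $n=4$, and $J(\xi)=O(e^{-(p+1)\xi})$ if $n\ge 5$. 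Translating via $\xi\sim\tfrac12\log(2b|t|)$ and taking square roots yields $\|M_I\|_{L^2(\Lambda_\tau)}\le C\,|\tau|^{-\nu_0(n)}$, where $\nu_0(n):=\min(1,(p+1)/4)-\epsilon$. Since $p>1$, this exponent is strictly greater than $1/2$ in every dimension $n\ge 3$.

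For the remaining three terms, each inherits an $O(|t|^{-1})$ decay from its prefactor: $(1+\eta)^p-(1+\eta)=(p-1)\eta+O(\eta^2)=O(|t|^{-1})$ in $L^\infty$, $|\dot\xi|\le(2|t|)^{-1}+|\dot h|$ with $\|\dot h\|^{1+\mu}_{\sigma,t_0}\le1$, and $\|\dot\eta\|^\mu_{\sigma,t_0}\le C_0$; meanwhile the spatial factors satisfy the uniform bounds $\int z^{p+1}\,dx\le C$ and $\int\bar z^2\,z^{p-1}\,dx\le C$ since $w^{p+1}\in L^1(\mathbb R)$. Consequently each of $M_\eta, M_{\dot\eta}, M_{\dot\xi}$ contributes at most $O(|t|^{-1})$ to $\|z^{1-p}M\|_{L^2(\Lambda_\tau)}$, which is dominated by $|t|^{-\nu_0(n)}$. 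The $L^\sigma$ estimate with the weight $\alpha_\sigma$ from \eqref{eqn-weight} is treated analogously: in the interior $|x|\le\xi(t)$ the weight $z^{(2\beta+\theta)\sigma}$ provides extra smallness precisely where $z\sim|t|^{-1/2}$, while in the exterior $|x|>\xi(t)$ the weight $z^{n\beta-\sigma}$ matches the tails so that all integrals are uniformly bounded. The free parameter $\theta>0$ is chosen small enough to preserve $\nu_0>1/2$.

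The main obstacle is the sharp bookkeeping of the dimension-dependent exponent in $J(\xi)$, specifically that the dominant contribution switches between $e^{-4\xi}$ and $e^{-(p+1)\xi}$ exactly at $n=4$, and verifying that the sharp threshold $(p+1)/4>1/2$ (equivalent to $p>1$) produces a uniform lower bound on $\nu_0(n)$ strictly above $1/2$ for every $n\ge 3$. A secondary difficulty is the careful splitting of the transition region $x=\pm\xi(t)+O(1)$, where the two pieces of the weight $\alpha_\sigma$ are only comparable, and where the expansion for $M_I$ must be performed without losing the sharp exponent; this requires subdividing the integration in $J(\xi)$ into the three sub-intervals $y\in[-\xi,0]$, $y\in[0,\xi]$, and $y\in[\xi,\infty)$ and applying the bound $w(y)\lesssim e^{-|y|}$ separately in each.
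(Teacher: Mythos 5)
Your proposal matches the paper's proof in essence: both control the interaction term $z^{1-p}[z^p-w_1^p-w_2^p]$ by $C\min(w_1,w_2)$ on $x\ge 0$, then compute a weighted exponential integral whose decay in $e^{-\xi}\sim|t|^{-1/2}$ yields the dimension-dependent exponent $\nu_0(n)\in(\tfrac12,1]$ (with the same trichotomy at $n=3,4,\ge5$), and both bound the remaining terms by $(|\eta|+|\dot\eta|+|\dot\xi|)\,z$ using the $K$-norms of $(h,\eta)$. The only difference is cosmetic bookkeeping (you expand $\partial_t\tilde z^p$ and split $M$ into four pieces where the paper uses two, and your prefactor on $M_I$ is $(1+\eta)$ instead of the paper's $(1+\eta)^p$, the discrepancy being absorbed in $M_\eta$), plus a small slip writing $\|\dot\eta\|^\mu_{\sigma,t_0}$ where the membership $\eta\in K$ actually gives the stronger bound $\|\dot\eta\|^1_{\sigma,t_0}\le C_0$.
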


\begin{proof} Throughout  the proof $C$ will denote various universal constants. Since  all the functions involved, including $M$, are even in $x$ it will be sufficient
to restrict  our computation to the region $x >0$. Notice  in that region
$$w_2(x,t) :=w(x+\xi(t)) \leq w(x-\xi(t)) = :w_1(x,t).$$
 
\no We write $M=M_1+M_2$,  where
$$M_1=\tilde{z}^p  - (1+\eta)^p \, (w_1^p+w_2^p) , \quad M_2 = \big [(1+\eta)^p - (1+\eta)\big ]\,  (w_1^p+w_2^p)- \pp_t \tilde{z}^p$$
and  set
$$\bar M_1=z^{1-p} \, M_1 \quad \mbox{and} \quad \bar M_2 = z^{1-p}\, M_2.$$
 
\no We have 
\begin{equation*}
\begin{split}
0 \leq \bar M_1  &= z^{1-p} \, (1+\eta)^p\,\big (  [ (w_1+w_2)^p -  w_1^p] - w_2^p \big )\\
& \leq p (1+\eta)^p w_2  \int_0^1  \frac{(w_1+sw_2)^{p-1}}{z^{p-1}} \, ds 
\end{split}
\end{equation*}
hence, using that $w_2 \leq w_1 \leq z$, obtain the bound
$$
|\bar  M_1| \leq C w_2. 
$$
For the moment take $\sigma \geq 2$ to be any constant and $q>0$. By the  last bound  and the estimate $z \leq 2 w_1$ which holds on $x >0$, we compute  
\begin{equation*}
\begin{split}
\big ( \int_0^{\xi(t)}  \big .  \big .  {\bar M_1}^\sigma \, z^{q \sigma} \, dx   \big )^{\frac 1\sigma} &
\leq  C \big ( \int_0^{\xi(t)}  w^\sigma(x+\xi(t))  \, w^{q \sigma} 
(x-\xi(t))  \, dx \big )^{\frac 1\sigma}\\ & \leq   C \,\big ( \int_0^{\xi(t)}  e^{-\sigma (x+\xi(t))}   \, 
e^{q \sigma(x-\xi(t))}   \, dx \big )^{\frac 1\sigma} \\&= C\, e^{-(1+q)\, \xi(t)} \big ( \int_0^{\xi(t)}  
e^{(q -1)\sigma x}   \, dx \big )^{\frac 1\sigma}.
\end{split}
\end{equation*}
Recalling \eqref{eqn-xi0}  we then conclude that
$$\big ( \int_0^{\xi(t)}  \bar M_1^\sigma \, z^{q \sigma} \, dx   \big)^{\frac 1\sigma} \leq C \, \alpha(|t|)$$
where
$\alpha(|t|) = |t|^{-\frac{1+q}2}$, if $q < 1$, $\alpha(|t|) =  |t|^{-1}$, if $q >1$, and  
$\alpha(|t|) = (\ln |t|)^{1/\sigma} \, |t|^{-1}$, if  
$q =1.$

\medskip

\no On the other hand, recalling that $\beta :=2/(n-2)=(p-1)/2$, we have
\begin{equation*}
\begin{split}
\big  ( \int_{\xi(t)}^\infty     \bar M_1^\sigma \, z^{n\beta -\sigma} \, dx   \big  )^{\frac 1\sigma} &\leq  C \big( \int_{\xi(t)}^\infty
   w^\sigma(x+\xi(t))  \, w^{n\beta -\sigma} 
(x-\xi(t))  \, dx \big )^{\frac 1\sigma}\\ & \leq C \,\big ( \int_{\xi(t)}^\infty  e^{-\sigma (x+\xi(t))}   \, 
e^{-(n\beta -\sigma)(x-\xi(t))}   \, dx \big )^{\frac 1\sigma} \\&= C e^{-2\xi(t)} 
\big( \int_{\xi(t)}^\infty   \, 
e^{-(n\beta)(x-\xi (t))}   \, dx \big)^{\frac 1\sigma} \, 
\leq C\, |t|^{- 1}. 
\end{split}
\end{equation*}

\no First, we combine  the above estimates  when $q=\beta$ and $\sigma=2$,  to  conclude that
on $\Lambda_\tau = \R \times [\tau,\tau+1]$, $\tau < t_0-1$,  we have 
$
\| \bar M_1 \|_{L^2(\Lambda_\tau)}  \leq C \big  |\tau|^{-\bnu}, 
$
with  $\bnu:=\bnu(\beta) > 1/2$ for all $\beta$. It follows that
$\| \bar M_1 \|_{L^2_{t_0}}^{\bnu} \leq C.$ 
 Also,   if  we combine the above estimates for   $\sigma = n+2$  and   $q=2\beta + \theta$, with $\theta$ a small 
universal positive constant as in the proof of Proposition \ref{prop-w2p}, we obtain (recall the Definitions  \eqref{eqn-w2p1} and \eqref{eqn-w2p2}) that
$
\| \bar M_1 \|_{\sigma, \Lambda_\tau}  \leq C \big  |\tau|^{-\bnu}, 
$
where $\bnu = \bnu(\beta) > 1/2$.
By choosing $t_0 < -1$, we obtain
$\| \bar M_1 \|_{\sigma,t_0}^{\bnu} \leq C$.  We conclude that
$$\| \bar M_1 \|_{*,\sigma,t_0}^{\bnu} \leq C.$$ 

\medskip
We will now estimate the term involving $\bar M_2$. Since $w_2 \leq w_1 \leq z$, we have
$$
\abs{\bar M_2} \le C(p)\, \big(|\eta|\, w_1 + |\dot{\xi}(t)| \,  \big  |\, w'(x+\xi(t)) - w'(x-\xi(t) \big) | + |\dot{\eta}(t)| \, z  \big)
$$
hence, using that $|w'(x)| \leq C\, w(x)$, we obtain 
$$
\abs{\bar M_2} \le C \, (|\eta| +|\dot \eta | + |\dot \xi|)\, z. 
$$
It follows that  for any $q >0$,  $\sigma \geq 2$ and $\tau < t_0-1$, we have 
\begin{equation*}
\begin{split}
\big (\iint_{\Lambda_\tau}  |\bar M_2|^\sigma  \,\big [  z^{q\sigma} \chi_ {\{|x| < \xi(t)\} } &+
 z^{n\beta -\sigma}\, \chi_ {\{|x| \geq  \xi(t)\} }  \big ] \, dx\, dt \big)^{\frac 1\sigma} \\&
 \le C\big (\int_{\tau}^{\tau+1} (|\eta|^{\sigma} + |\dot{\xi}|^{\sigma} + |\dot{\eta}|^{\sigma})\, dt\big)^{\frac 1\sigma}.
 \end{split}
\end{equation*}

\no By  Definition \ref{dfn-heta}  the right hand side of the last estimate is bounded
by $C\, |\tau|^{-1}$, if we assume that $\|\eta \|_{\sigma,t_0}^1 \leq C_0$ and 
$\|h \|_{\sigma,t_0}^{\mu,1+\mu} \leq 1$, with $\mu >0$. Hence, arguing as before,  we easily conclude the bound 
$$\| \bar M_2 \|_{*,\sigma,t_0}^{\bnu} \leq C.$$
This finishes the proof of the Lemma. 
\end{proof}

\medskip

The following Corollary follows  immediately  from  Lemma \ref{lem-M} by choosing $\nu=\nu(n)$ to be any number
in $(\frac 12,\nu_0)$ and $t_0  <0$ so that   $C\, |t_0|^{-(\nu_0-\nu)} < \frac 12$. Let also $\mu < \min\{2\nu - 1, \gamma\}$, where $\gamma \in (0,1)$ is a positive number determined by Lemma \ref{lem-projection}.

\begin{coro}\label{cor-M} Let $\sigma =n+2$ and let  $\nu=\nu(n)  \in (\frac 12, 1)$, $t_0 <0$, $\mu$ be as above. There exist uniform constants $t_0 < 0$ and $C>0$,  depending on dimension $n$, such that
for any $(h,\eta) \in K$,  we have 
$$\| z^{1-p} \, M \|_{*,\sigma,t_0}^{\nu} \leq C\, |t_0|^{-(\nu_0-\nu)}.$$

\end{coro}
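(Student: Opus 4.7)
The plan is to obtain this corollary as a direct consequence of Lemma \ref{lem-M} via a simple interpolation of the time-weighted norms: apply the Lemma at the larger temporal weight $\nu_0$ and trade the gap $\nu_0-\nu>0$ for a negative power of $|t_0|$. No new analytic input is required beyond the bound already established in Lemma \ref{lem-M}; the content of the corollary is purely a bookkeeping manipulation of the weights $|\tau|^\nu$ versus $|\tau|^{\nu_0}$.

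The key observation I would exploit is that for every $\tau\le t_0-1<0$ we have $|\tau|\ge |t_0|$, so, using $\nu_0-\nu>0$,
$$
|\tau|^\nu \;=\; |\tau|^{-(\nu_0-\nu)}\cdot |\tau|^{\nu_0} \;\le\; |t_0|^{-(\nu_0-\nu)}\cdot |\tau|^{\nu_0}.
$$
Inserting this inside each of the two local seminorms that constitute the global norm of Definition \ref{defn-norm}, namely $\|\cdot\|_{L^2(\Lambda_\tau)}$ and $\|\cdot\|_{\sigma,\Lambda_\tau}$, and then taking the supremum over $\tau\le t_0-1$, yields
$$
\|z^{1-p}M\|_{L^2_{t_0}}^\nu \;\le\; |t_0|^{-(\nu_0-\nu)}\,\|z^{1-p}M\|_{L^2_{t_0}}^{\nu_0},\qquad
\|z^{1-p}M\|_{\sigma,t_0}^\nu \;\le\; |t_0|^{-(\nu_0-\nu)}\,\|z^{1-p}M\|_{\sigma,t_0}^{\nu_0}.
$$
Adding the two estimates and invoking Lemma \ref{lem-M}, which provides the uniform bound $\|z^{1-p}M\|_{*,\sigma,t_0}^{\nu_0}\le C$ under the very same hypotheses $(h,\eta)\in K$ and $\mu<\min\{2\nu-1,\gamma\}$, gives exactly the claimed inequality
$$
\|z^{1-p}M\|_{*,\sigma,t_0}^\nu \;\le\; C\,|t_0|^{-(\nu_0-\nu)}.
$$

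The only thing to verify is that the parameter ranges are compatible: one picks $\nu\in(1/2,\nu_0)$ strictly, which guarantees $\nu_0-\nu>0$ so that the trade-off produces a decaying factor as $|t_0|\to\infty$, and one keeps $\mu<\min\{2\nu-1,\gamma\}$ so that $(h,\eta)\in K$ remains a valid input to Lemma \ref{lem-M}. There is no genuine obstacle here; the corollary simply records, in a form convenient for the forthcoming fixed-point argument in Proposition \ref{prop-fp}, the fact that the purely-error term $z^{1-p}M$ can be made as small as desired by choosing $|t_0|$ sufficiently large.
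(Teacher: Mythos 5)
Your argument is correct and is essentially identical to the paper's own reasoning: the paper states that the corollary ``follows immediately from Lemma \ref{lem-M}'' precisely via the observation that for $\tau\le t_0-1$ one has $|\tau|\ge|t_0|$, so $|\tau|^\nu\le|t_0|^{-(\nu_0-\nu)}|\tau|^{\nu_0}$, which is exactly the weight trade you carry out.
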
 

\smallskip
For the remaining of the section we will fix the parameters $\sigma, \mu$ and $\nu$ as in Corollary \ref{cor-M}.  
We will next establish   an $L^\infty$ bound on $\psi/z$ which will be used very frequently in  the rest of the article. 

\begin{claim}\label{claim-linfty} 
For any function $\psi$ on $\R \times (-\infty,t_0]$, we have
\begin{equation}\label{eqn-linfty11}
 \| \frac{\psi}z    \|_{L^\infty_{t_0}} 
\leq  C\, |t_0|^{\frac 12-\nu} \, \| \psi \|_{*,2,\sigma,t_0}^\nu  
\end{equation}
for a universal constant $C$. 
Hence, under the assumption $\| \psi \|_{*,2,\sigma,t_0}^\nu \leq1$,  
\begin{equation}\label{eqn-linfty}
 \| \frac{\psi}z    \|_{L^\infty_{t_0}} 
\leq  C\, |t_0|^{\frac 12-\nu}   
\end{equation}
and it  can be made sufficiently small by taking $|t_0|$ sufficiently 
large. 
\end{claim}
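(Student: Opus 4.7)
The plan is to estimate $\psi/z$ separately on the outer region $\{|x|>\xi(t)\}$ and on the inner region $\{|x|\leq\xi(t)\}$, exploiting how the weighted $L^\infty$ norm of Definition \ref{defn-linfty} treats these two regions differently, and noting that $\|\psi\|_{\infty,t_0}^\nu\leq\|\psi\|_{*,2,\sigma,t_0}^\nu$.

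On the outer region, the norm $\|\psi\|_{\infty,t_0}^\nu$ already controls the ratio $z^{-1}\psi$ directly: for $(x,t)$ with $|x|>\xi(t)$ and $\tau\leq t\leq\tau+1\leq t_0$ we have $|(z^{-1}\psi)(x,t)|\leq|\tau|^{-\nu}\|\psi\|_{\infty,t_0}^\nu$. Since $|\tau|\geq|t_0|$ and $\nu>1/2$, the inequality $|\tau|^{-\nu}\leq|t_0|^{-\nu}\leq|t_0|^{1/2-\nu}$ holds, giving the desired bound by a multiple of $|t_0|^{1/2-\nu}\|\psi\|_{*,2,\sigma,t_0}^\nu$ on this region.

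On the inner region the norm $\|\psi\|_{\infty,t_0}^\nu$ controls $\psi$ itself, not $\psi/z$, so I need a pointwise lower bound on $z$ there. Since $z(x,t)=w(x-\xi(t))+w(x+\xi(t))$ is even in $x$ and, by direct differentiation using that $w'$ is odd, $x=0$ is a critical point with $z''(0,t)>0$ for $|t_0|$ large, one checks that $z(\cdot,t)$ attains its minimum on $[-\xi(t),\xi(t)]$ at $x=0$, where $z(0,t)=2w(\xi(t))$. Using the explicit formula \eqref{eqn-w1} together with the asymptotic $w(y)\sim Ce^{-|y|}$ from \eqref{eqn-w2} and the identity $\xi(t)=\frac12\log(2b|t|)+h(t)$ with $\|h\|_{L^\infty}\to 0$ by Remark \ref{rem-small}, this yields a uniform bound $z(x,t)\geq c|t|^{-1/2}$ on $\{|x|\leq\xi(t)\}$ for $|t_0|$ sufficiently large.

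With this lower bound in hand, on the inner region
$$\bigl|(\psi/z)(x,t)\bigr|\leq c^{-1}|t|^{1/2}\,|\psi(x,t)|\leq c^{-1}|\tau|^{1/2-\nu}\,\|\psi\|_{\infty,t_0}^\nu,$$
and using again $|\tau|\geq|t_0|$ together with $1/2-\nu<0$ gives the bound $C|t_0|^{1/2-\nu}\|\psi\|_{*,2,\sigma,t_0}^\nu$. Combining the two regions yields \eqref{eqn-linfty11}, and \eqref{eqn-linfty} is then immediate by specializing to $\|\psi\|_{*,2,\sigma,t_0}^\nu\leq 1$; note the right-hand side can be made small since $1/2-\nu<0$ and $|t_0|$ is large. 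The only nontrivial ingredient is the lower bound $z\geq c|t|^{-1/2}$ on the inner region, which is where the specific form of $w$ and the choice $\xi_0(t)=\tfrac12\log(2b|t|)$ enter; I expect that to be the main point to verify carefully.
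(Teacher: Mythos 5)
Your proof is correct and follows essentially the same approach as the paper: split into the outer region $\{|x|>\xi(t)\}$, where the weighted $L^\infty$ norm controls $\psi/z$ directly, and the inner region $\{|x|\leq\xi(t)\}$, where a pointwise lower bound $z\gtrsim|t|^{-1/2}$ is needed. The paper obtains that lower bound more directly via $z(x,t)\geq w(x-\xi(t))\geq w(\xi(t))\geq C|t|^{-1/2}$ for $0\leq x\leq\xi(t)$, avoiding the second-derivative argument you sketch (which, as stated, only establishes a local minimum at $x=0$ and would need a bit more to conclude the global minimum on $[-\xi(t),\xi(t)]$).
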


\begin{proof}[Proof of Claim]   Recalling  the definitions of our norms from section \ref{sec-norms}, we have
\begin{equation}\label{eqn-linfty00}
\| \frac {\psi}{z} \chi_{\{ |x| \geq \xi(t)\} }   \|_{L^\infty_{t_0}} 
\leq C\, |t_0|^{-\nu} \, \| \psi \|_{*,2,\sigma,t_0}^\nu.
\end{equation}
 On the other hand for any $\tau \leq  t_0-1$,  on $\Lambda_\tau=\R \times [\tau,\tau+1]$ we have
 $$\| \frac {\psi } z \chi_{\{ |x| \leq \xi(t)\} }  \|_{L^\infty(\Lambda_\tau)} \leq  
 |\tau|^{-\nu} \, \| \psi  \|_{*,2,\sigma,t_0}^\nu \,  \|  z^{-1}   \|_{L^\infty(\Lambda_\tau \cap \{ |x| \leq \xi(t)\})}.$$
To estimate   $\|  z^{-1}  \|_{L^\infty(\Lambda_\tau \cap \{ |x| \leq \xi(t)\})}$ we observe that
 $$\min_{\{ |x| \leq \xi(t)\}} z(x,t) \geq \min_{\{0 <  x  \leq \xi(t)\}} w(x-\xi(t)) =w(\xi(t)) \geq C\, 
 |t|^{-\frac 12}$$
 since \eqref{eqn-xit} holds. 
 Hence, 
 $ \|  z^{-1}   \|_{L^\infty(\Lambda_\tau \cap \{ |x| \leq \xi(t)\})} \leq C\,  |\tau|^{\frac 12}.$
It follows that 
 $ \| z^{-1} \psi   \|_{L^\infty(\Lambda_\tau \cap \{ |x| \leq \xi(t)\})}   \leq C \,  |\tau|^{\frac 12- \nu} \,  \| \psi
 \|_{*,2,\sigma,t_0}^\nu$
 implying the bound  
 \begin{equation}\label{eqn-linfty10}
\| \frac {\psi}{z} \chi_{\{ |x| \leq \xi(t)\} }   \|_{L^\infty_{t_0}} 
\leq C\, |t_0|^{\frac 12-\nu} \, \| \psi \|_{*,2,\sigma,t_0}^\nu.
\end{equation}
The claim now follows from the two estimates  \eqref{eqn-linfty00} and \eqref{eqn-linfty10} and our assumption 
$\|  \psi\|_{*,2,\sigma,t_0}^\nu \leq 1$. 
 
 \end{proof}

We will next estimate the  norm of the term $(1-\pp_t) N(\psi)$ in \eqref{eq-Epsi}.

\begin{lem}\label{lem-N}  There exist  uniform constants $t_0 < 0$ and $C >0$  depending on dimension $n$ such that
for any functions $(h,\eta) \in K$ and  $\psi \in \Lambda$,  we have 
$$\| z^{1-p} \, (1-\pp_t) N(\psi) \|_{*,\sigma,t_0}^\nu \leq C\, \, |t_0|^{\frac 12-\nu}\, 
\| \psi \|_{*,2,\sigma,t_0}^\nu.$$
\end{lem}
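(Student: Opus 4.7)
The plan is to split $N(\psi) = N_1(\psi) + N_2(\psi)$ where $N_1(\psi) := (\tilde z + \psi)^p - \tilde z^p - p\tilde z^{p-1}\psi$ is the quadratic Taylor remainder in $\psi$ and $N_2(\psi) := p\psi(\tilde z^{p-1} - z^{p-1}) = p\psi\, z^{p-1}((1+\eta)^{p-1} - 1)$ is a correction which is linear in $\psi$ but carries a coefficient of order $|\eta|$. The crucial tool will be Claim \ref{claim-linfty}, which guarantees $\|\psi/z\|_{L^\infty_{t_0}} \leq C\,|t_0|^{1/2-\nu}$, a quantity that can be made arbitrarily small for $|t_0|$ large, so that $|\psi|/\tilde z \leq 1/2$ pointwise.

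The first step is to obtain pointwise bounds
\[
|z^{1-p} N_1(\psi)| \leq C\,\frac{\psi^2}{z}, \qquad |z^{1-p} N_2(\psi)| \leq C\,|\eta|\,|\psi|,
\]
by a second-order Taylor expansion (valid because $|\psi/\tilde z| \leq 1/2$) and the elementary bound $|(1+\eta)^{p-1}-1| \leq C|\eta|$. To estimate $\|z^{1-p} N(\psi)\|_{*,\sigma,t_0}^\nu$, we split the space integrals into the inner region $\{|x|\leq \xi(t)\}$ and the outer region $\{|x|>\xi(t)\}$, matching the two-piece structure of $\alpha_\sigma$ in \eqref{eqn-weight}. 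In both regions Hölder's inequality allows us to factor out $\|\psi/z\|_{L^\infty_{t_0}}$, whence \eqref{eqn-linfty} produces the advertised $|t_0|^{1/2-\nu}$, with the remaining $\|\psi\|_{L^2_{t_0}}^\nu$ or $\|\psi\|_{\sigma,t_0}^\nu$ factor absorbed into $\|\psi\|_{*,2,\sigma,t_0}^\nu$. The $N_2$ contribution is handled identically using $\|\eta\|^1_{\infty,t_0} \leq C_0\,|t_0|^{-1}$.

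The second step is to analyze $\partial_t N(\psi)$. A direct computation reveals the useful cancellation
\[
\partial_{\tilde z} N(\psi) = p\bigl[(\tilde z + \psi)^{p-1} - \tilde z^{p-1}\bigr],
\]
which by Taylor expansion satisfies $|\partial_{\tilde z} N| \leq C\,\tilde z^{p-2}|\psi|$. Differentiating in $t$ gives
\[
\partial_t N(\psi) = \partial_{\tilde z} N\cdot \partial_t \tilde z + [\partial_\psi N]\,\psi_t + p\psi\,\partial_t(\tilde z^{p-1} - z^{p-1}),
\]
with $\partial_\psi N = p[(\tilde z + \psi)^{p-1} - \tilde z^{p-1}] + p(\tilde z^{p-1} - z^{p-1})$. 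Using $|\partial_t z| \leq C|\dot\xi|\,z$ and $|\partial_t\tilde z| \leq C(|\dot\eta| + |\dot\xi|)\,z$, we arrive at the pointwise bound
\[
|z^{1-p}\partial_t N(\psi)| \leq C\Bigl[\bigl(|\psi|/z\bigr)\bigl(|\dot\xi| + |\dot\eta|\bigr)\,z + \bigl(|\psi|/z + |\eta|\bigr)|\psi_t| + |\dot\eta|\,|\psi|\Bigr].
\]
Integrating these bounds against the weights $z^{p-1}$ and $\alpha_\sigma$, we use $\|\dot h\|^{1+\mu}_{\sigma,t_0} \leq 1$, $\|\dot\eta\|^1_{\sigma,t_0} \leq C_0$, together with the presence of $\|\psi_t\|_{L^2_{t_0}}^\nu$ inside $\|\psi\|_{H^2_{t_0}}^\nu$ and of $\|\psi_t\|_{\sigma,t_0}^\nu$ inside $\|\psi\|_{2,\sigma,t_0}^\nu$, to recover the factor $|t_0|^{1/2-\nu}$ from Claim \ref{claim-linfty}.

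The main obstacle is the $\psi_t$ term in $\partial_t N$: the product $(|\psi|/z)\,|\psi_t|$ must be estimated in the weighted $L^\sigma$ norm over the inner region, and this requires the inner-region weight $z^{(2\beta+\theta)\sigma}$ in \eqref{eqn-weight} to be compatible with the weight $z^{p-1}$ naturally appearing from the equation. Pulling out $\|\psi/z\|_{L^\infty}$ via Hölder leaves a factor that matches, up to the small margin controlled by $\theta > 0$, with the weight defining $\|\psi_t\|_{\sigma,\Lambda_\tau}$. A careful bookkeeping verifies the estimate in both regions simultaneously, and summing over $\tau$ with the appropriate $|\tau|^\nu$ weight produces the claimed bound.
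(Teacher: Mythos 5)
Your proof follows essentially the same path as the paper's: you use the same decomposition $N=N_1+N_2$, the same central tool (Claim \ref{claim-linfty} pulling out $\|\psi/z\|_{L^\infty_{t_0}} \leq C|t_0|^{1/2-\nu}$), and the same treatment of the time-derivative terms via the smallness of $\dot\xi$, $\dot\eta$. The only difference is cosmetic: the paper differentiates the factored form $N_1 = p\tilde z^{p-1}\psi A(\psi)$ by the product rule, whereas you differentiate $N$ by the chain rule via $\partial_{\tilde z}N = p[(\tilde z+\psi)^{p-1}-\tilde z^{p-1}]$; your intermediate pointwise bound for the $\partial_t\tilde z$ contribution is slightly weaker (it drops one factor of $|\psi|/z$), but the $|\dot\xi|+|\dot\eta|$ factor already yields the decay $|t_0|^{-1}\leq|t_0|^{1/2-\nu}$ by the estimate \eqref{eqn-good16}, so the argument closes just as in the paper.
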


\begin{proof} 
We write $N(\psi)=N_1(\psi) + N_2(\psi)$ where 
$$N_1(\psi)=(\tilde{z}+\psi)^p -  \tilde{z}^p -  p\, \tilde{z}^{p-1} \psi, \quad N_2(\psi)= 
p\psi z^{p-1}[(1+\eta)^{p-1} - 1].$$
To estimate $\|z^{1-p} \, (1-\pp_t) \,  N_1(\psi)\|_{*,\sigma,t_0}^\nu$, we begin by observing
that 
\begin{equation}\label{eqn-N4}
z^{1-p} \, N_1(\psi)  = (1+\eta)^{p-1}\, \tilde z  \big [ \big ( 1 + \frac{\psi}{\tilde{z}} \big )^p -1 - p \frac{\psi}{\tilde{z}} \big ] 
=p (1+\eta)^{p-1} \psi  A(\psi)
\end{equation}
where 
$$ A(\psi) = \int_0^1\big(\big( 1 + s  \frac{\psi}{\tilde{z}}   \big )^{p-1} - 1\big)\, ds.$$
By \eqref{eqn-linfty} we have
\begin{equation}\label{eqn-apsi}
|A(\psi)| \leq C(p) \, \frac{|\psi|}{\tilde{z}}   \leq C \, \frac{|\psi|}{z}
\end{equation}
and therefore we conclude the bound
$$
\| z^{1-p} \,N_1(\psi)\|_{*,\sigma,t_0}^\nu    \leq C\, \| \psi\|_{*,\sigma,t_0}  \, 
\| \frac{\psi}{z}   \|_{L^\infty_{t_0}} \leq C\, |t_0|^{\frac 12-\nu} \, \|  \psi\|_{*,2,\sigma,t_0}.
$$

\medskip

\no It remains to estimate $z^{1-p} \,\, \pp_t N_1(\psi)$. Differentiating  \eqref{eqn-N4} in $t$ we get
\begin{equation}\label{eqn-N7}
\pp_t N_1(\psi) = \underbrace{p \, \pp_t \tilde z^{p-1}  \, \psi  A(\psi)}_{N_{11}} + 
\underbrace{p \, \tilde z^{p-1}  \, \pp_t \psi\,   A(\psi)}_{N_{12}}  + \underbrace{
p  \, \tilde z^{p-1}  \, \psi  \pp_t A(\psi)}_{N_{13}}.
\end{equation}
Using \eqref{eqn-apsi} we obtain,  similarly  as before,  the bounds 

$$|z^{1-p} \, N_{11}| \leq C\, ( |\dot \xi| + | \dot \eta | )  \, |\psi| \,  \frac {| \psi | }{z}, \qquad |z^{1-p} \, N_{12}| \leq C\,  |\pp_t \psi | \,  \frac {|\psi | }{z}.$$
To estimate the term $|z^{1-p} \, N_{13} (\psi) |$, we first observe that since 
${|\psi |} /{\tilde z} < 1/2$ by \eqref{eqn-linfty} and $|t_0| >>1$, we have  
\begin{equation}\label{eqn-apsit}
|\pp_t A(\psi)| \leq C \frac{| \tilde{z} \pp_t \psi  - \tilde{z}_t \psi|}{\tilde{z}^2} \int_0^1 \big(1 + s\frac{\psi}{\tilde{z}}\big)^{p-2} s \, ds \leq C\, \frac{ |\pp_t \psi| + ( |\dot \xi| + |\dot \eta |)\, |\psi|}{z}.
\end{equation}
Hence
\begin{equation}\label{eqn-N17}
|z^{1-p} \, N_{13} (\psi) |   \leq C\, \frac{|\psi|}z \,  \big ( \, |\pp_t \psi| + ( |\dot \xi| + |\dot \eta |) \, |\psi| \, \big ).
\ee
Combining the above estimates with \eqref{eqn-linfty11} gives 
$$\|z^{1-p} \,\pp_t N_1 (\psi) \|_{*,\sigma,t_0}^\nu  \leq C\, |t_0|^{\frac 12-\nu} \,
  \|\psi\|_{*,2,\sigma,t_0}^\nu   ( \|\psi\|_{*,2,\sigma,t_0}^\nu  + \| ( |\dot \xi| + |\dot \eta |)\, \psi \|_{*,\sigma,t_0}^\nu  ).$$
However, a  direct computation shows that 
\begin{equation}\label{eqn-good16}
\|  ( |\dot \xi| + |\dot \eta |)\, \psi  \|_{*,\sigma,t_0}^\nu \leq C\, |t_0|^{-1} \big ( \| \dot \xi \|_{\sigma,t_0}^1   + 
\| \dot \eta \|_{\sigma,t_0}^1 \big ) \, \|\psi  \|_{*,2,\sigma,t_0}^\nu
\end{equation}
where  $\| \dot \eta \|_{\sigma,t_0}^1 \leq \|\eta \|_{1,\sigma,t_0}^1 \leq 1$ and   $ \| \dot \xi \|_{\sigma,t_0}^1 \leq \frac 12 + 
\| \dot h \|_{\sigma,t_0}^1 \leq \frac 12 + 
\|  h \|_{1,\sigma,t_0}^{\mu,1+\mu}  \leq 2$. 
Hence,
$$ \|  z^{1-p} \, \pp_t  N_1(\psi)\|_{*,\sigma,t_0}^\nu    \leq C\, |t_0|^{\frac 12 -\nu} \,  \|  \psi\|_{*,2,\sigma,t_0}^\nu.
$$
Since  $\|\eta\|_{*,1,\sigma,t_0}^1 \leq C_0$,  a computation along the lines of the previous estimate
also shows that 
\begin{equation}\label{eqn-N00}
\|z^{1-p} \, (1-\pp_t) \,  N_2(\psi)\|_{*,\sigma,t_0}^\nu \leq C\, |t_0|^{-1} \,   \| \psi \|_{*,2,\sigma,t_0}^\nu.
\end{equation}
The proof of the Lemma is now complete. 
\end{proof}

\begin{lem}\label{lem-E1} 
 There exist  $t_0 < 0$ and $C >0$, depending on dimension $n$, such that
for any functions $(h,\eta) \in K$ and $\psi \in \Lambda$,    we have 
\begin{equation}\label{eqn-bepsi1}
\| \bar E(\psi) \|_{*,\sigma,t_0}^\nu \leq C\, \left ( |t_0|^{-(\nu_0-\nu)} +  |t_0|^{\frac 12-\nu}\, 
\| \psi \|_{*,2,\sigma,t_0}^\nu \right ).
\end{equation}
\end{lem}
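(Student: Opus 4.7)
\smallskip

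\noindent\textbf{Proof proposal for Lemma \ref{lem-E1}.} The plan is to decompose
\[
\bar E(\psi) \;=\; z^{1-p}M \;+\; C(\psi,t) \;+\; z^{1-p}(1-\partial_t)N(\psi) \;-\; p\,z^{1-p}\psi\,\partial_t z^{p-1} \;-\; \bigl(c_1(t)z + c_2(t)\bar z\bigr)
\]
according to \eqref{eq-Epsi}--\eqref{eqn-Epsi3}, and to estimate each of the five pieces in the $\|\cdot\|_{*,\sigma,t_0}^{\nu}$ norm. The first term is already controlled by Corollary \ref{cor-M}, contributing $C|t_0|^{-(\nu_0-\nu)}$. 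The third term is exactly what Lemma \ref{lem-N} handles, giving $C|t_0|^{1/2-\nu}\|\psi\|_{*,2,\sigma,t_0}^{\nu}$.

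Next I would treat the two elementary terms. Since $|\partial_t z^{p-1}| \le C(p)|\dot\xi|\, z^{p-1}$, we have the pointwise bound $|p z^{1-p}\psi\,\partial_t z^{p-1}| \le C\,|\dot\xi|\,|\psi|$, so \eqref{eqn-good16} (using $\|\dot\xi\|_{\sigma,t_0}^1 \le 2$ and $\|\dot\eta\|_{\sigma,t_0}^1 \le 1$ since $(h,\eta) \in K$) yields
\[
\|\,p z^{1-p}\psi\,\partial_t z^{p-1}\,\|_{*,\sigma,t_0}^{\nu} \;\le\; C\,|t_0|^{-1}\,\|\psi\|_{*,2,\sigma,t_0}^{\nu},
\]
which is absorbed into $C|t_0|^{1/2-\nu}\|\psi\|_{*,2,\sigma,t_0}^{\nu}$. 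For $C(\psi,t)$, I would re-run the derivation of \eqref{eqn-cpsi}--\eqref{eqn-cp2} directly for the given $\psi$ (i.e.\ apply estimate \eqref{eqn-c1c2} without invoking the linear theory): this gives
\[
\|C(\psi,t)\|_{*,\sigma,t_0}^{\nu} \;\le\; \frac{C}{\sqrt{|t_0|}}\,\|\psi\|_{*,2,\sigma,t_0}^{\nu},
\]
which again fits inside $C|t_0|^{1/2-\nu}\|\psi\|_{*,2,\sigma,t_0}^{\nu}$ since $\nu < 1$.

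The remaining piece is the projection correction $c_1(t)z + c_2(t)\bar z$. By construction $(c_1,c_2)$ solve a $2\times 2$ linear system analogous to \eqref{eqn-system000}, whose determinant tends to $e_1^{-1}e_2^{0}>0$ as $t\to-\infty$, and whose right-hand side consists of the projections of $E(\psi)$ onto $w$ and $w'$ (translated by $\pm\xi(t)$). Consequently, a Cauchy--Schwarz argument like the one leading to \eqref{eqn-c1c2} gives
\[
\Bigl(\int_{\tau}^{\tau+1}(c_1^2+c_2^2)\,dt\Bigr)^{1/2} \;\le\; C\,\|E(\psi)\|_{L^2(\Lambda_\tau)}.
\]
Since $z(\cdot,t)$ and $\bar z(\cdot,t)$ are uniformly bounded in the spatial weights defining $\|\cdot\|_{L^2(\Lambda_\tau)}$ and $\|\cdot\|_{\sigma,\Lambda_\tau}$ (the weight $\alpha_\sigma$ was chosen precisely so that $w(x\pm\xi(t))$ has bounded $L^\sigma(\alpha_\sigma)$ norm, uniformly in $t$), one concludes
\[
\|c_1 z + c_2\bar z\|_{*,\sigma,\Lambda_\tau} \;\le\; C\,\|E(\psi)\|_{*,\sigma,\Lambda_\tau},
\]
which after multiplication by $|\tau|^\nu$ and sup over $\tau\le t_0-1$ is dominated by the estimates already obtained for the four other pieces of $E(\psi)$.

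Combining all five estimates yields \eqref{eqn-bepsi1}. The main obstacle is the last step: establishing that the projection operator $g\mapsto c_1(g)z+c_2(g)\bar z$ is bounded on the weighted space $\|\cdot\|_{*,\sigma,t_0}^{\nu}$ uniformly in $t_0$, which requires careful bookkeeping of the spatial weight $\alpha_\sigma$ near the transition region $|x|=\xi(t)$; the auxiliary computations that appear in the derivation of \eqref{eqn-c1c2} (splitting the integrals into $|x|<\xi(t)$ and $|x|>\xi(t)$) serve as the template. The remaining estimates are routine once the norms introduced in Section 2 are in hand.
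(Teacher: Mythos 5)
Your decomposition and the lemmas you invoke match the paper's proof almost term for term: the paper combines Lemma~\ref{lem-M} (for $z^{1-p}M$), Lemma~\ref{lem-N}, Corollary~\ref{cor-cp3} (that is, estimate \eqref{eqn-cp3}), and the pointwise bound $|z^{1-p}\psi\,\partial_t z^{p-1}|\le C|\dot\xi|\,|\psi|$ to obtain \eqref{eqn-Qpsi}, and then treats the projection term $c_1z+c_2\bar z$ as a bounded operator — exactly your plan, although the paper defers stating that last bound to part (a) of the proof of Proposition~\ref{prop-fp}. One step worth tightening: the Cauchy--Schwarz inequality $(\int_\tau^{\tau+1}(c_1^2+c_2^2)\,dt)^{1/2}\le C\|E(\psi)\|_{L^2(\Lambda_\tau)}$ controls only the $L^2$ half of $\|c_1 z+c_2\bar z\|_{*,\sigma,\Lambda_\tau}$; for the $\|\cdot\|_{\sigma,\Lambda_\tau}$ half you need the stronger bound $(\int_\tau^{\tau+1}(|c_1|^\sigma+|c_2|^\sigma)\,dt)^{1/\sigma}\le C\|E(\psi)\|_{*,\sigma,\Lambda_\tau}$, which follows from H\"older in $x$ rather than Cauchy--Schwarz alone (the kernel $w'(x)w^{p-1}(x)$ localizes the projection near $x=0$, where, after translating by $\pm\xi(t)$, the weight $\alpha_\sigma$ is comparable to a constant) — this is precisely the weight bookkeeping you flag as the remaining obstacle, so the sketch is sound.
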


\begin{proof} Let $Q(\psi)$ be as in \eqref{eq-Epsi}. The estimate of the error term $N(\psi)$ given in Lemma \ref{lem-N}, 
the estimate of the correction term $C(\psi,t)$ given in Corollary  \ref{cor-cp3}, and the  bound 
$$\|z^{1-p}\psi  \pp_t z^{p-1} \|_{*,\sigma,t_0}^\nu \leq C\,  \|  | \dot \xi | \, \psi  \|_{*,\sigma,t_0}^\nu \leq 
  C\, |t_0|^{-1}  \,  \|\psi  \|_{*,2,\sigma,t_0}^\nu,$$
(where we have used the $L^{\infty}$ bound on $\psi$ given by Lemma \ref{lem-energy1}) yield
\begin{equation}\label{eqn-Qpsi}
 \| Q(\psi) \|_{*,\sigma,t_0}^\nu \leq C\,  |t_0|^{\frac 12-\nu}\, \| \psi \|_{*,2,\sigma,t_0}^\nu.\end{equation}
This combined with the estimate in Lemma \ref{lem-M} easily imply \eqref{eqn-bepsi1}. 
\end{proof}

We will next show the Lipschitz property of $E(\psi)$ with respect to $\psi$. 

\begin{lem}\label{lem-E2} 
There exist $t_0 < 0$ and $C >0$, depending on dimension $n$, such that
for any functions $(h,\eta) \in K$ and $\psi^1, \psi^2  \in \Lambda$,    we have   
\begin{equation}\label{eqn-bepsi2}
\| E(\psi^1) -  E(\psi^2) \|_{*,\sigma,t_0}^\nu \leq   C\, \, |t_0|^{\frac 12-\nu}\, 
\| \psi^1 - \psi^2 \|_{*,2,\sigma,t_0}^\nu.
\end{equation}
\end{lem}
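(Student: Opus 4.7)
The plan is to exploit the fact that $M$ in \eqref{eq-Epsi} is independent of $\psi$, so
$$E(\psi^1) - E(\psi^2) = Q(\psi^1) - Q(\psi^2) = \big[C(\psi^1,t)-C(\psi^2,t)\big] + z^{1-p}(1-\partial_t)\big[N(\psi^1)-N(\psi^2)\big] - p(\psi^1-\psi^2)\,z^{1-p}\partial_t z^{p-1}.$$
The three pieces can then be estimated separately, essentially by rerunning the proof of Lemma \ref{lem-N} with each appearance of $\psi$ replaced by the difference $\psi^1-\psi^2$, and each occurrence of the ``second factor'' bounded by the larger of $\|\psi^i\|_{*,2,\sigma,t_0}^\nu$, which is $\le 1$ since $\psi^1,\psi^2\in\Lambda$.

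For the correction term, linearity of $C(\cdot,t)$ in $\psi$ together with Corollary \ref{cor-cp3} gives $\|C(\psi^1-\psi^2,t)\|_{*,\sigma,t_0}^\nu \le C|t_0|^{-1/2}\,\|\psi^1-\psi^2\|_{*,2,\sigma,t_0}^\nu$. The last piece is handled exactly as in \eqref{eqn-good16}, producing a factor $|t_0|^{-1}$.

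The main work is in $N(\psi)=N_1(\psi)+N_2(\psi)$. Since $N_2$ is linear in $\psi$, the estimate \eqref{eqn-N00} applies verbatim to $\psi^1-\psi^2$. For $N_1$, I use the representation from \eqref{eqn-N4},
$$z^{1-p}N_1(\psi) = p(1+\eta)^{p-1}\,\psi\,A(\psi),\qquad A(\psi)=\int_0^1\Big(\big(1+s\tfrac{\psi}{\tilde z}\big)^{p-1}-1\Big)\,ds,$$
and write the difference as
$$z^{1-p}\big[N_1(\psi^1)-N_1(\psi^2)\big] = p(1+\eta)^{p-1}\Big[(\psi^1-\psi^2)\,A(\psi^1) + \psi^2\,\big(A(\psi^1)-A(\psi^2)\big)\Big].$$
The bound $|A(\psi)|\le C|\psi|/z$ from \eqref{eqn-apsi} gives control of the first summand. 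For the second, a direct expansion yields $|A(\psi^1)-A(\psi^2)|\le C\,|\psi^1-\psi^2|/z$, which, combined with $|\psi^2|/z\le 1$ via \eqref{eqn-linfty}, produces the same decay. Applying Claim \ref{claim-linfty} to the factor $|\psi^1-\psi^2|/z$ yields
$$\|z^{1-p}(N_1(\psi^1)-N_1(\psi^2))\|_{*,\sigma,t_0}^\nu \le C\,|t_0|^{\frac12-\nu}\,\|\psi^1-\psi^2\|_{*,2,\sigma,t_0}^\nu.$$

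For the $\partial_t$ part I differentiate the product $\psi A(\psi)$ as in \eqref{eqn-N7} and take differences of the three resulting pieces. The key additional ingredient is a Lipschitz bound for $\partial_t A$: following \eqref{eqn-apsit},
$$|\partial_t A(\psi^1)-\partial_t A(\psi^2)| \le C\,\frac{|\partial_t(\psi^1-\psi^2)|+(|\dot\xi|+|\dot\eta|)|\psi^1-\psi^2|}{z} + C\,\frac{(|\partial_t\psi^2|+(|\dot\xi|+|\dot\eta|)|\psi^2|)\,|\psi^1-\psi^2|}{z^2},$$
the second term coming from differentiating the Lipschitz factor and using $|\psi^i|/z\le 1$. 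Each resulting contribution is then bounded by Hölder against $\|\psi^1-\psi^2\|_{*,2,\sigma,t_0}^\nu$, with the $|\dot\xi|$, $|\dot\eta|$ factors handled exactly as in \eqref{eqn-good16} using $(h,\eta)\in K$. I expect the bookkeeping in this last step—keeping track of which factor is the ``small'' one (either the $\psi^1-\psi^2$ difference supplying the Lipschitz control, or $|t_0|^{-1}$ supplied by the time weights in $K$)—to be the main technical obstacle, but it is purely mechanical once the decomposition above is in place. Collecting all contributions yields \eqref{eqn-bepsi2}.
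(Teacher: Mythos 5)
Your decomposition of $E(\psi^1)-E(\psi^2)$ into the correction term, the $N$-difference, and the $p\psi\,z^{1-p}\partial_t z^{p-1}$ piece, together with the treatment of $N_1$ via the factorisation $z^{1-p}N_1 = p(1+\eta)^{p-1}\psi A(\psi)$ and the Lipschitz bounds for $A$ and $\partial_t A$, is precisely the route the paper takes; your two-term bound on $|\partial_t A(\psi^1)-\partial_t A(\psi^2)|$ is the content of the paper's $\Lambda_1$, $\Lambda_2$ split. The argument is correct and essentially identical to the paper's proof.
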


\begin{proof} We begin by observing that  the bound
$$
\|C(\psi^1,t) - C(\psi^2,t)\|_{*,\sigma,t_0}^{\nu} \le \frac{C}{\sqrt{|t_0|}}\|\psi^1 - \psi^2\|_{*,2,\sigma,t_0},
$$
follows similarly as the bound in Corollary \ref{cor-cp3}. 

All the other estimates    are similar to those in Lemma \ref{lem-N}, so we will omit   most of the
details. Using the notation in the proof of Lemma \ref{lem-N}, let us look at  the  estimate of the term  
$$|z^{1-p} \, \big (N_{13}(\psi^1) - N_{13}(\psi^2) \big ) | \leq  
C \, \big ( \, |\psi^1 - \psi^2 |   \, |\pp_t A(\psi^1)| + |\psi^2|\,  |\pp_t A(\psi^1) - \pp_t A(\psi^2) | \, \big).$$
By \eqref{eqn-apsit} we have
$$|\psi^1 - \psi^2 |   \, |\pp_t A(\psi^1)| \leq C\, \frac{|\psi^1-\psi^2|}{z} \,(\, |\pp_t \psi^1| + ( |\dot \xi| + |\dot \eta |)\, |\psi^1| \, )$$
hence, by \eqref{eqn-linfty} applied to $\psi^1-\psi^2$, \eqref{eqn-good16}  
  and the assumed  bounds on $h, \eta$ and $\psi$, we   conclude
$$\| \, |\psi^1 - \psi^2 |   \, |\pp_t A(\psi^1)| \,  \|_{*,\sigma,t_0}^\nu 
\leq C\, |t_0|^{\frac 12 -\nu}\, 
 \|\psi^1 - \psi^2\|_{*,2,\sigma,t_0}^\nu.$$ 
To estimate the last term we set $I(\psi) = \int_0^1 \big(1 + s \tilde z^{-1}\, \psi\big)^{p-2} s \, ds$
so that 
$$ |\psi^2| \, |\pp_t A(\psi^1) - \pp_t A(\psi^2) | \leq \Lambda_1 + \Lambda_2$$
where
$$\Lambda_1 = |\psi^2| \,   \frac{|\tilde z\, \pp_t \psi^1 - \pp_t \tilde{z} \, \psi^1|}{\tilde{z}^2} \,
| I(\psi^1) - I(\psi^2)| $$
and
$$\Lambda_2 =|\psi^2| \,  \frac{ |\tilde z\, \pp_t (\psi^1-\psi^2) - \pp_t \tilde{z}\,  (\psi^1-\psi^2)|}{\tilde z^2}  \, |I(\psi^2)|.$$
The bound $\| \Lambda_2 \|_{*,\sigma,t_0}^\nu  \leq C\, |t_0|^{\frac 12 -\nu}\, 
 \|\psi^1 - \psi^2\|_{*,2,\sigma,t_0}^\nu$ follows by similar arguments  as before. 
 For the other term we have
 $$\| \Lambda_1 \|_{*,\sigma,t_0}^\nu \leq C \,\| \frac{\psi^2}{z} \|_{L^\infty_{t_0} }  \| |\pp_t \psi^1| + ( |\dot \xi| + |\dot \eta |)\, |\psi^1| \|_{*,\sigma,t_0}^\nu \, \| I(\psi^1) - I(\psi^2) \|_{L^\infty_{t_0}}$$
 where $\| \tilde z^{-1} \psi^2 \|_{L^\infty_{t_0} } \leq C\, |t_0|^{\frac 12 - \nu}$ by \eqref{eqn-linfty} and $\| |\pp_t \psi^1| + ( |\dot \xi| + |\dot \eta |)\, |\psi^1| \|_{*,\sigma,t_0}^\nu \leq C$
 by 
 \eqref{eqn-good16} and the assumed bounds on $\psi$, $h$ and $\eta$. On the other hand, applying \eqref{eqn-linfty11} to $\psi^1 - \psi^2$ we obtain 
 $$ \| I(\psi^1) - I(\psi^2) \|_{L^\infty_{t_0}} \leq C\, \|  \frac{\psi^1 - \psi^2}{\tilde z}  \|_{L^\infty_{t_0}}
 \leq  C\, |t_0|^{\frac 12 - \nu} \, \| \psi^1 - \psi^2 \|_{*,2,\sigma,t_0}. $$
Combining the above gives us  the  bound $\| \Lambda_2 \|_{*,\sigma,t_0}^\nu  \leq C\, |t_0|^{\frac 12 -\nu}\, 
 \|\psi^1 - \psi^2\|_{*,2,\sigma,t_0}^\nu$.   All other bounds can be obtained similarly.
\end{proof}

We will now show the  Lipschitz property of the error term $M$ with respect to $h$ and $\eta$.

\begin{lem}\label{lem-Mh2}  There exist $t_0 < 0$ and $C >0$, depending on dimension $n$, such that
for any functions $h,h_i$, $\eta, \eta_i  \in K$, $ i=1,2$,  we have 
\begin{equation}\label{eqn-Mh2}
\| (z^1)^{1-p}  \big (  M(h^1,\eta) - M(h^2,\eta)  \big )\|_{*,\sigma,t_0}^\nu \leq   C\, \, |t_0|^{-\mu}\, 
\| h^1 - h^2 \|_{1,\sigma,t_0}^{\mu,\mu+1}.
\end{equation}
and
\begin{equation}\label{eqn-Meta2}
\| z^{1-p}  \big ( M(h,\eta^1) - M(h,\eta^2)\big ) \|_{*,\sigma,t_0}^\nu \leq   C\, \, |t_0|^{\nu-1}\, 
\| \eta^1 - \eta^2 \|_{1,\sigma,t_0}^1.
\end{equation}
\end{lem}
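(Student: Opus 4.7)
The plan is to obtain both bounds by linearization: parameterize a straight path from one argument to the other and integrate the $s$-derivative, extracting the $h$- or $\eta$-dependence as an explicit multiplicative factor, and then re-run the pointwise estimates of Lemma \ref{lem-M} on the remaining expression. For \eqref{eqn-Mh2} I would set $h^s := (1-s)h^2 + sh^1$, $\xi^s := \tfrac 12 \log(2b|t|) + h^s$, and write
\[
M(h^1,\eta) - M(h^2,\eta) = \int_0^1 \frac{d}{ds} M(h^s, \eta)\, ds.
\]
Splitting $M = M_1 + M_2$ as in the proof of Lemma \ref{lem-M}, I would use $\partial_\xi w(x\pm\xi) = \pm w'(x\pm \xi)$ together with the pointwise estimate $|w'|\le Cw$ to show that the $s$-derivative of $M_1$ is bounded pointwise by $C|h^1(t)-h^2(t)|$ times a quantity with the same algebraic structure and exponential decay as $M_1$ itself. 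For $M_2$, the derivative splits into a piece proportional to $h^1 - h^2$ (from the $(w_1^p+w_2^p)$ and $\dot\xi$ factors) plus a piece proportional to $\dot h^1-\dot h^2$ (from $\partial_t\tilde z^p$), with integrands pointwise bounded by $C z$, exactly as in the analysis of $\bar M_2$.

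With these pointwise identities in hand, I would rerun verbatim the region-by-region integrations of Lemma \ref{lem-M}. The non-differentiated factor $h^1 - h^2$ gets pulled out in $L^\infty$ via $|h^1(t) - h^2(t)| \le |t|^{-\mu} \|h^1-h^2\|^{\mu,1+\mu}_{1,\sigma,t_0}$, while the $\dot h^1 - \dot h^2$ factor is absorbed by Hölder's inequality in time using $(\int_\tau^{\tau+1} |\dot h^1 - \dot h^2|^\sigma dt)^{1/\sigma} \le |\tau|^{-(1+\mu)}\|h^1 - h^2\|^{\mu,1+\mu}_{1,\sigma,t_0}$. Both choices produce the same type of temporal weight $|\tau|^{-\mu}$, so Lemma \ref{lem-M}'s calculation gives
\[
\|z^{1-p}[M(h^1,\eta) - M(h^2,\eta)]\|_{*,\sigma,\Lambda_\tau} \le C |\tau|^{-\nu_0 - \mu}\, \|h^1 - h^2\|^{\mu,1+\mu}_{1,\sigma,t_0}.
\]
Multiplying by $|\tau|^\nu$, using $\nu < \nu_0$ and $|\tau| \ge |t_0|$, so that $|\tau|^{\nu - \nu_0 - \mu} \le |t_0|^{-\mu}$, yields \eqref{eqn-Mh2}. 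For \eqref{eqn-Meta2} I would argue entirely analogously with $\eta^s := (1-s)\eta^2 + s\eta^1$: now $\eta$ enters $M$ through the factors $(1+\eta)^p$ and $(1+\eta)^p - (1+\eta)$ in $M_1, M_2$, and $\dot\eta$ enters through $\partial_t\tilde z^p$. The $s$-derivative is bounded pointwise by $C(|\eta^1-\eta^2| + |\dot\eta^1-\dot\eta^2|)$ times expressions of the same structure as $M_1$ or $M_2$. Using $|\eta^1(t)-\eta^2(t)|\le |t|^{-1}\|\eta^1-\eta^2\|^1_{1,\sigma,t_0}$ and the analogous $L^\sigma$-in-time bound on $\dot\eta^1-\dot\eta^2$, the same regional integrations produce
\[
\|z^{1-p}[M(h,\eta^1) - M(h,\eta^2)]\|_{*,\sigma,\Lambda_\tau} \le C |\tau|^{-\nu_0 - 1} \|\eta^1 - \eta^2\|^1_{1,\sigma,t_0},
\]
and then $|\tau|^{\nu - \nu_0 - 1} \le |\tau|^{\nu-1}\le |t_0|^{\nu-1}$ since $\nu_0 > 0$ and $\nu < 1$.

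I expect the main obstacle to be purely bookkeeping: confirming that each of the many terms produced by differentiating $M_1$ and $M_2$ retains the exact exponential/algebraic structure used in Lemma \ref{lem-M} (so that no new integral has to be computed), and choosing, for each factor, between pulling it out in $L^\infty$ or applying Hölder in time, so as to produce a multiplicative weight of the right power of $|\tau|$ rather than $|t_0|$. The replacement $z \to z^1$ on the left-hand side of \eqref{eqn-Mh2} is harmless because, for $(h^1,h^2)\in K$, one has $z^1/z^2 = 1 + O(|t_0|^{-\mu})$ uniformly, so $(z^1)^{1-p}$ and $(z^2)^{1-p}$ are equivalent weights for $|t_0|$ large. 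The only other subtlety is that the $\sigma$-piece of the norm uses the weight $\alpha_\sigma$ of Definition \ref{defn-w2p1}, which has a different exponent on the regions $\{|x|<\xi\}$ and $\{|x|>\xi\}$; but the region-by-region estimates of Lemma \ref{lem-M} were already carried out separately on these regions with precisely this weight, so the argument goes through without modification.
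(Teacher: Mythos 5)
Your proposal follows essentially the same route as the paper's proof: decompose $M = M_1 + M_2$ as in Lemma~\ref{lem-M}, derive a pointwise Lipschitz bound in the parameter, and then rerun the regional integrations from Lemma~\ref{lem-M}. The paper extracts the parameter dependence by a direct algebraic expansion of the difference and the elementary bound $|w_2^1 - w_2^2|/w_2^2 \le C|h^1 - h^2|$, whereas you instead integrate $\partial_s M$ along the segment $h^s$ and use $|w'|\le Cw$; these give the same pointwise estimates, so this is a cosmetic variant rather than a genuinely different argument. Your observation that $(z^1)^{1-p}$ and $(z^2)^{1-p}$ are equivalent weights for $(h^1,h^2)\in K$ is correct and is used implicitly in the paper.

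There is one arithmetic slip in the $\eta$-Lipschitz step: the intermediate bound $\|z^{1-p}[M(h,\eta^1)-M(h,\eta^2)]\|_{*,\sigma,\Lambda_\tau}\le C|\tau|^{-\nu_0-1}\|\eta^1-\eta^2\|^1_{1,\sigma,t_0}$ is too strong. The $\eta$-derivative of the $M_2$ part (i.e.\ of $\big[(1+\eta)^p-(1+\eta)\big](w_1^p+w_2^p)$ and of the $\dot\eta z^p$ term inside $\partial_t\tilde z^p$) does \emph{not} inherit the extra $|\tau|^{-\nu_0}$ gain that $\bar M_1$ enjoys: differentiating $(1+\eta)^p-(1+\eta)$ in $\eta$ produces a factor of size $p-1=O(1)$, and the remaining spatial factor $z^{1-p}(w_1^p+w_2^p)\le Cz$ satisfies $\|z\|_{*,\sigma,\Lambda_\tau}=O(1)$, not $O(|\tau|^{-\nu_0})$. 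So the $M_2$ contribution is only $C|\tau|^{-1}\|\eta^1-\eta^2\|^1_{1,\sigma,t_0}$, which dominates the $M_1$ contribution $|\tau|^{-\nu_0-1}$; the correct intermediate bound is $C|\tau|^{-1}\|\eta^1-\eta^2\|^1_{1,\sigma,t_0}$, matching the paper's pointwise bound $\le C|\eta^1-\eta^2|w_1$. This does not affect your conclusion, since $|\tau|^{\nu}\cdot|\tau|^{-1}\le|t_0|^{\nu-1}$ still yields~\eqref{eqn-Meta2}; but the stated exponent should be corrected. (In the $h$-Lipschitz case your claimed $|\tau|^{-\nu_0-\mu}$ is fine, because there the $M_2$ contribution carries the extra factor $|\eta|\sim|\tau|^{-1}\le|\tau|^{-\nu_0}$.)
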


\begin{proof} The estimates follow by direct (yet tedious)  calculation, along the lines of the proof of Lemma \ref{lem-M}.  Set
 $$z^i(x,t) := \underbrace{w(x+\xi_i(t))}_{w_2^i} + \underbrace{w(x-\xi_i(t))}_{w_1^i}.$$  For the reason of dealing with even functions we restrict ourselves to the region $x \ge 0$, where $w_2^i \le w_1^i \le z^i$. 
 Using the notation of Lemma \ref{lem-M}, we have
\begin{equation*}
\begin{split}
|(z^1)^{1-p} \, &[\, M_1(h^1,\eta) - M_1(h^2,\eta)]\, | \\
&= (z^1)^{1-p} (1+\eta)^p \big [  w_2^1\int_0^1(w_1^1 + sw_2^1)^{p-1}\, ds   - w_2^2\int_0^1 (w^2_1 + sw^2_2)^{p-1}\, ds \big ]\\
& \quad  + (z^1)^{1-p} (1+\eta)^p \big [ -  (w_2^1)^p+(w_2^2)^p \, \big  ] \\
&= (z^1)^{1-p}(1+\eta)^p\big [ (w_2^1-w_2^2)\, \int_0^1(w_1^1 + sw_2^1)^{p-1}\, ds  + ( ( w_2^2)^p-(w_2^1)^p ) ] \\
&\quad +(z^1)^{1-p} (1+\eta)^p \,  w_2^2\int_0^1\big((w_1^1 + sw_2^1)^{p-1} - (w_1^2 + sw_2^2)^{p-1}
\big)\, ds.
\end{split}
\end{equation*}
From the bound 
$$\frac{|w_2^1 - w_2^2|}{w_2^2}  \leq C\, |h_1(t) - h_2(t)|, \qquad i=1,2$$
we conclude the estimate
$$|(z^1)^{1-p} \, [\, M_1(h^1,\eta) - M_1(h^2,\eta)]\, | \leq C\, w_2^2 \, |h_1 - h_2|.$$

\no Having the above estimate, a similar computation as in Lemma \ref{lem-M} implies 
$$\|(z^1)^{1-p}\big(M_1(h^1,\eta) - M_1(h^2,\eta)\big)\|^{\nu}_{*,\sigma,t_0} \le C|t|^{-\mu}\, \|h^1 - h^2\|_{\infty,t_0}^{\mu}.$$
For the $M_2$ term we have
\begin{equation*}
\begin{split}
(z^1)^{1-p}|M_2(h^1,\eta) - M_2(h^2,\eta)| &\le C|\eta|\big(((w_1^1)^p - (w_1^2)^p) + ((w_2^1)^p - (w_2^2)^p)\big) \\
&\le C|\eta||h^1 - h^2| w(x-\xi_1)
\end{split}
\end{equation*}
implying
$$\|(z^1)^{1-p}\big(M_2(h^1,\eta) - M_2(h^2,\eta))\|^{\nu}_{*,\sigma,t_0} \le C|t|^{\nu - 1 -\mu}\|h^1 - h^2\|_{\infty,t_0}^{\mu}.$$
To conclude \eqref{eqn-Mh2} we observe that $\nu <1$ and $\|h^1 - h^2\|_{\infty,t_0}^{\mu}
\leq \| h^1 - h^2 \|_{1,\sigma,t_0}^{\mu,\mu+1}$. 
\smallskip

\no Finally,to show the  Lipschitz property of $M$ in $\eta$, we use the bounds
$$|(z^1)^{1-p}\big(M(h,\eta^1) - M(h,\eta^2)\big)| \le C\,  |\eta^1 - \eta^2|\,  w_1$$
and $|\eta_1 - \eta_2| \leq |t_0|^{-1} \, \|\eta^1 - \eta^2\|^1_{\infty,t_0} \leq |t_0|^{-1} \, \|\eta^1 - \eta^2\|^1_{1,\sigma,t_0}$,  
implying that
$$\|(z^1)^{1-p}\big(M(h,\eta^1) - M(h,\eta^2)\big)\|^{\nu}_{*,\sigma,t_0} \le C|t|^{\nu- 1}\, \|\eta^1 - \eta^2\|^1_{1,\sigma,t_0}.$$  
\end{proof}

We will show the Lipschitz property of the error term 
$$\tilde E(\psi)(h,\eta) :=  (1-\partial_t) N(\psi)  - p\psi \, \pp_t  z^{p-1} - z^{p-1} \big ( c_1(t) \, z +  c_2(t) \, \bar z \big )$$  with respect to $h$ and $\eta$.

\begin{lem}
\label{lem-E3}
There exist $t_0 < 0$ and $C > 0$, depending on dimension $n$, so that for any functions $\psi\in \Lambda$, $h, h_i, \eta, \eta_i \in K$, $i = 1, 2$, we have
\begin{equation}
\label{eq-contr-Epsi}
\|(z^1)^{1-p}\big ( \tilde E(\psi)(h^1,\eta) - \tilde E(\psi)(h^2,\eta) \big ) \|^{\nu}_{*,\sigma,t_0} \le C\, |t_0|^{ - \mu}\, \|h^1 - h^2\|^{\mu,\mu+1}_{1,\sigma,t_0}
\end{equation}
and
\begin{equation}
\label{eq-contr-Epsi2}
\|z^{1-p} \big ( \tilde E(\psi)(h,\eta^1) - \tilde E(\psi)(h,\eta^2) \big )\|^{\nu}_{*,\sigma,t_0} \le C\, |t_0|^{-1}\|\eta^1 - \eta^2\|^1_{1,\sigma,t_0} .
\end{equation}
\end{lem}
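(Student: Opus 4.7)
The plan is to decompose $\tilde E(\psi)(h,\eta)$ into three pieces and estimate each contribution separately in the $\|\cdot\|^{\nu}_{*,\sigma,t_0}$ norm. Write $\tilde E(\psi) = T_1 + T_2 + T_3$ with
$$ T_1 := (1-\partial_t) N(\psi),\quad T_2 := -p\psi \,\partial_t z^{p-1},\quad T_3 := -z^{p-1}\bigl(c_1(t)\,z + c_2(t)\,\bar z\bigr). $$
In each case I would mimic the argument of Lemma \ref{lem-Mh2}: replace pointwise absolute values by differences and run exactly the same weighted integrations. The essential tool is the elementary pointwise difference bound
$$ |z^1-z^2|+|\tilde z^1-\tilde z^2|+|\bar z^1-\bar z^2|+|\partial_x z^1-\partial_x z^2| \le C\,|h^1(t)-h^2(t)|\,z^i(x,t),\qquad i=1,2, $$
coming from the mean value theorem applied to $w$ together with $|w'|,|w''|\le C w$, plus the analogous bounds for $\eta$-differences, which now produce $|\eta^1-\eta^2|\,z$ at the cost of trading no extra decay in $|t|$.

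For $T_1$ I would split $N(\psi)=N_1(\psi)+N_2(\psi)$ as in Lemma \ref{lem-N}, use the representation $z^{1-p} N_1(\psi)=p(1+\eta)^{p-1}\psi\, A(\psi)$, and subtract: the difference reduces to differences of $\tilde z^{p-1}$ and $A(\psi)$, both controlled by $|h^1-h^2|$, combined with the $L^\infty$ bound $\|\psi/z\|_{L^\infty_{t_0}}\le C|t_0|^{1/2-\nu}$ of Claim \ref{claim-linfty}. The time derivative piece is handled through the three-term decomposition \eqref{eqn-N7} together with the pointwise bounds \eqref{eqn-apsi}, \eqref{eqn-apsit}; the new contribution relative to Lemma \ref{lem-N} is a $(\dot h^1-\dot h^2)$ term, which is absorbed using the $\|\dot h\|^{1+\mu}_{\sigma,t_0}$ norm and the extra weight $|t|^{1+\mu}$ it carries. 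Term $T_2$ is the simplest: since $\partial_t z^{p-1}\asymp |\dot\xi| z^{p-1}$ and $\dot\xi =(2|t|)^{-1}+\dot h$, the $h$-difference produces both an $|h^1-h^2|\cdot |t|^{-1}$ contribution and a $|\dot h^1-\dot h^2|$ contribution; combining with the $L^\infty$ bound on $\psi$ as in \eqref{eqn-good16} yields the stated rate. The $\eta$-versions of both estimates are parallel but cheaper: the factor $|\eta^1-\eta^2|\le |t_0|^{-1}\|\eta^1-\eta^2\|^1_{1,\sigma,t_0}$ provides the extra power of $|t_0|^{-1}$ needed for \eqref{eq-contr-Epsi2}.

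The main obstacle is $T_3$, since $c_1(t)$ and $c_2(t)$ are defined only implicitly by the requirement that $\bar E(\psi)=E(\psi)-(c_1 z+c_2\bar z)$ satisfy the orthogonality conditions \eqref{eqn-orth11}-\eqref{eqn-orth22}. My plan here is to invoke the explicit representation derived in Section \ref{sec-linear}: one writes
$$ c_i(t) = \frac{ \text{linear combinations of } \langle E(\psi),\theta_j(\cdot\mp\xi)\,w^{p-1}(\cdot\mp\xi)\rangle }{D(t)}, $$
where $D(t)=e_1^{-1}e_2^0+O(|t|^{-1})$ is uniformly bounded below for $|t|$ large. The Lipschitz dependence in $h$ then splits into three manageable pieces: (i) the Lipschitz dependence of $E(\psi)$ itself, which is the sum of Lemma \ref{lem-Mh2} and the estimates just obtained for $T_1, T_2$; (ii) the dependence of the inner products on the translation parameter $\xi(t)=\xi_0(t)+h(t)$, handled by the mean value theorem exactly as for $z^1-z^2$ above; and (iii) the dependence of $D(t)$, which is smooth in $\xi$ and bounded away from zero. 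Assembling these yields $\|c_i^1-c_i^2\|_{L^\sigma(\tau,\tau+1)} \le C\,|t_0|^{-\mu}\,\|h^1-h^2\|^{\mu,\mu+1}_{1,\sigma,t_0}$, and multiplying by $z^{p-1}(z+|\bar z|)$ (which is uniformly in $\|\cdot\|^{\nu}_{*,\sigma,t_0}$) closes \eqref{eq-contr-Epsi}. The $\eta$-estimate proceeds identically with the improved decay factor.
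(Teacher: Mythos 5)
Your proposal is essentially correct and, for the $T_1$ and $T_2$ pieces, takes exactly the same route as the paper: split $N(\psi)=N_1(\psi)+N_2(\psi)$, use $z^{1-p}N_1(\psi)=p(1+\eta)^{p-1}\psi A(\psi)$, the three-term decomposition \eqref{eqn-N7}, the pointwise bounds \eqref{eqn-apsi}, \eqref{eqn-apsit}, the $L^\infty$ control of Claim \ref{claim-linfty}, and mean-value bounds such as $|z^1-z^2|\le C|h^1-h^2|z^i$. Indeed, the paper's own proof writes out only the $N_{13}$ difference in $h$ and in $\eta$, plus the $N_{12}$ difference in $\eta$, and closes with ``all other terms can be estimated similarly.'' Where you go beyond the paper is in treating $T_3=-z^{p-1}(c_1(t)z+c_2(t)\bar z)$ explicitly; the paper's proof simply folds it into ``the other terms,'' even though $c_1,c_2$ are nonlocal functionals of $E(\psi)$ and the remark that they are estimated ``similarly'' to $N_{13}$ is a stretch. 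Your plan for $T_3$ — solve the $2\times 2$ projection system, note $D(t)=e_1^{-1}e_2^0+O(|t|^{-1})$ is uniformly invertible, and split the $h$-Lipschitz into the Lipschitz of $E(\psi)$, of the translated inner products, and of $D(t)$ — is structurally sound and is the natural way to close the argument.

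One small gap to flag in step (i) of your $T_3$ discussion: you state that the $h$-Lipschitz of $E(\psi)$ is ``the sum of Lemma \ref{lem-Mh2} and the estimates just obtained for $T_1,T_2$,'' but by \eqref{eq-Epsi} the error term also contains the nonlocal correction $C(\psi,t)=d_1(\psi,t)z+d_2(\psi,t)\bar z$, whose coefficients $d_i$ depend on $h$ through the translation $\xi=\xi_0+h$ (via $\phi=\psi(\cdot-\xi,t)$, $\dot\xi$, and $\bar w(x)=w(x)+w(x-2\xi)$). Its $h$-Lipschitz needs a separate estimate, in the spirit of the Claim establishing \eqref{eqn-c1c2}; since that estimate carries an extra $|t|^{-1/2}$ factor from $|\dot\xi|$ and $I(t)$, the contribution is of lower order and does not change the stated rate, but the step should be recorded for the argument to be complete. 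A second, cosmetic point: $z^{1-p}T_3=-(c_1z+c_2\bar z)$, so what one multiplies the $c_i$-Lipschitz by is $\|z\|^{\nu}_{*,\sigma,t_0}$ and $\|\bar z\|^{\nu}_{*,\sigma,t_0}$ (both $O(1)$), not $z^{p-1}(z+|\bar z|)$; the conclusion is the same.
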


\begin{proof}
The estimates again follow from a direct (yet tedious) calculation, along the lines of the proof of Lemma \ref{lem-N}. For example, regarding the term  $N_{13}$,  as in the proof of Lemma \ref{lem-N},
we have
\begin{equation*}
\begin{split}
& |(z^1)^{1-p} N_{13}(\psi)(h^1,\eta) - (z^2)^{1-p} N_{13}(\psi)(h^2,\eta)|  \\
&\le C|\psi|\left|\int_0^1\left(\big (1 + s\frac{\psi}{\tilde{z}^1}\big)^{p-2} \,  \frac{(\psi_t \tilde{z}^1 - \psi(\tilde{z}^1)_t)}{(\tilde{z}^1)^2} - \big (1 + s\frac{\psi}{\tilde{z}^2}\big)^{p-2} \frac{(\psi_t \tilde{z}^2 - \psi(\tilde{z}^2)_t)}{(\tilde{z}^2)^2}\right) s\, ds\right| \\
&\le C\, |\psi| \frac{|\psi_t\tilde{z}^1 - \psi(\tilde{z}^1)_t|}{(\tilde{z}^1)^2}\, \int_0^1 \left |\big(1 + s\frac{\psi}{\tilde{z}^1}\big)^{p-2} - \big(1+s\frac{\psi}{\tilde{z}^2}\big)^{p-2}\right |\,s\,  ds\\
&\quad +C\, |\psi| \, \left| \psi_t \big(\frac{1}{\tilde{z}^2} - \frac{1}{\tilde{z}^1}\big) - \psi\big (\frac{(\tilde{z}^2)_t}{(\tilde{z}^2)^2} - \frac{(\tilde{z}^1)_t}{(\tilde{z}^1)^2} \big)\right|\int_0^1\left(1+s\frac{\psi}{\tilde{z}^2}\right)^{p-2} s\, ds \\
&\le C\, |\psi|\,  |h^1 - h^2| \left(\frac{|\psi_t|}{\tilde{z}^1} + \frac{|\psi|}{\tilde{z}^1}(|\dot{\xi}_1| + |\dot{\xi}_2| + |\dot{\eta}|)\right)
\end{split}
\end{equation*}
\noindent Now it easily follows (similarly as in the proof of Lemma \ref{lem-N}) that
for $\psi \in \Lambda$, we have 
\begin{equation*}
\begin{split}
\|(z^1)^{1-p} &N_{13}(\psi)(h^1,\eta) - (z^2)^{1-p} N_{13}(\psi)(h^2,\eta)\|^{\nu}_{*,\sigma,t_0} \\&\le C\, |t_0|^{\frac 12 - \nu - \mu}\|h^1 - h^2\|_{\infty,t_0}^{\mu}\|\psi\|^{\nu}_{*,\sigma,t_0} \leq C\, |t_0|^{\frac 12 - \nu - \mu}\, \|h^1 - h^2\|^{\mu,\mu+1}_{1,\sigma,t_0}.
\end{split}
\end{equation*}

\no All other terms in $E(\psi)$ can be estimated similarly and the estimate (\ref{eq-contr-Epsi}) follows. 

Let us now  look at the contraction of $E(\psi)$ in $\eta$. For example, we have
\begin{equation*}
\begin{split}
& |z^{1-p}\big(N_{13}(\psi)(h,\eta^1) - N_{13}(\psi)(h, \eta^2)\big)| \\
&\le C\, |\psi|\, \left |  (1+\eta^1)^{p-1}\left ( \frac{\psi_t}{(1+\eta^1)z} - \psi\, \frac{(1+\eta^1)\dot{z} +  \dot{\eta}^1 z}{(1+\eta^1)^2 z^2}\right ) \int_0^1\big  (1+s\frac{\psi}{(1+\eta^1)z}\big)^{p-2} s \, ds \right . \\
&\quad -  \left . (1+\eta^2)^{p-1}\left ( \frac{\psi_t}{(1+\eta^2)z} - \psi \, \frac{(1+\eta^2)\dot{z} + \dot{\eta}_2 z}{(1+\eta^2)^2 z^2}\right ) \int_0^1\big (1+s\frac{\psi}{(1+\eta^2)z}\big )^{p-2} s \, ds\,  \right | \\
&\le  C\, |\psi||\eta^1 - \eta^2|\left(\frac{|\psi_t| + |\psi| + |\psi|(|\dot{\xi}| + |\dot{\eta}_1|)}{z}\right) + C\, \frac{|\psi|}{z}\, (|\dot{\eta}_1 - \dot{\eta}_2| + |\dot{\xi}||\eta^1 - \eta^2|).
\end{split}
\end{equation*}
\no This easily implies (as in the proof of Lemma \ref{lem-N}) the bound
$$\|z^{1-p}\big(N_{13}(\psi)(h,\eta^1) - N_{13}(\psi)(h,\eta^2)\big)\|^{\nu}_{*,\sigma,t_0}  \le C|t_0|^{-\frac{1}{2}-\nu}\|\eta^1 - \eta^2\|^1_{1,\sigma,t_0}\|\psi\|^{\nu}_{*,2,\sigma,t_0}.$$
\no Furthermore
$$|z^{1-p}(N_{12}(\psi)(h,\eta_1) - N_{12}(h,\eta_2))| \le C|\psi_t||\eta^1 - \eta^2|\big(1+\frac{|\psi|}{z}\big)$$
implying that
$$\|z^{1-p}(N_{12}(\psi)(h,\eta^1) - N_{12}(h,\eta^2))\|^{\nu}_{*,\sigma,t_0} \le C|t_0|^{-1}\|\eta^1 - \eta_2\|^1_{1,\sigma,t_0} \|\psi\|^{\nu}_{*,2,\sigma,t_0}.$$ 

\no The  other terms in $E(\psi)$ can be estimated similarly and the estimate (\ref{eq-contr-Epsi2}) follows by recalling that $\|\psi\|^{\nu}_{*,2,\sigma,t_0} \leq 1$
for all $\psi \in \Lambda$.
\end{proof}

\subsection{Proof of Proposition \ref{prop-fp}} 

In this section we   give the proof of Proposition \ref{prop-fp} which claims the existence of a solution to the auxiliary equation (\ref{eqn-fp}) with the desired properties.
We fix $(h,\eta) \in K$ and we show that $A: X \cap \Lambda \to X \cap \Lambda$, given by (\ref{eqn-fp}) defines a contraction and then
use the fixed point Theorem. 

\smallskip

\no {\bf (a)} {\em There exists a universal constant $t_0 <0$ for which  
$A(X \cap \Lambda ) \subset X\cap \Lambda$. } 
\smallskip

\no Indeed, assume that $\psi \in X \cap \Lambda$.  By Proposition \ref{prop-linear}   we have
$$
 \|A(\psi)\|^{\nu}_{*,2,\sigma,t_0} = \|T(\bar E(\psi)\|^{\nu}_{*,2,\sigma,t_0} \le C\, 
 \|\bar E(\psi)\|_{*,\sigma,t_0}^\nu
$$
given that $\bar E(\psi)$ satisfies the orthogonality conditions \eqref{eqn-orth11} and \eqref{eqn-orth22}.
In addition,  it is easy to see that for the second term on the right hand side in 
\eqref{eqn-Epsi3},  we have
$$
\|c_1(t)z + c_2(t)\bar{z}\|^{\nu}_{*,\sigma,t_0} \le C\, \|E(\psi)\|_{*,\sigma,t_0}^{\nu}. 
$$
Hence,
$$
 \|A(\psi)\|^{\nu}_{*,2,\sigma,t_0} = \|T(\bar E(\psi)\|^{\nu}_{*,2,\sigma,t_0} \le C\, 
 \|E(\psi)\|_{*,\sigma,t_0}^\nu.
$$
Combining the last  estimate with \eqref{eqn-bepsi1} and the bound 
$ \|\psi\|^{\nu}_{*2,\sigma,t_0} \le 1$, we get 
$ \|A(\psi)\|^{\nu}_{*,2,\sigma,t_0}  \leq 1$,   
if $|t_0|$ is chosen sufficiently large. 

\smallskip

\no {\bf (b)} {\em There exists a universal constant $t_0 <0$ for which $A: X\cap \Lambda \to 
X\cap \Lambda$ 
defines a contraction map.}

\smallskip
\no For any  $\psi^1, \psi^2 \in X \cap \Lambda$, Proposition \ref{prop-linear} implies the bound 
$$
\|A(\psi^1) - A(\psi^2) \|^{\nu}_{*,2,\sigma,t_0} =  \|T(\bar E(\psi^1) - \bar E(\psi^2))\|^{\nu}_{*,2,\sigma,t_0} \le C\|\bar E(\psi^1) - \bar E(\psi^2)\|^{\nu}_{*,\sigma,t_0}.$$
Similarly as above we have
$$\|\bar E(\psi^1) - \bar E(\psi^2)\|^{\nu}_{*,\sigma,t_0} \leq C \, \|E(\psi^1) - E(\psi^2)\|^{\nu}_{*,\sigma,t_0}.$$
The last two estimates with  \eqref{eqn-bepsi2} yield  the contraction bound 
$$\|A(\psi^1) - A(\psi^2)\|^{\nu}_{*,2,\sigma,t_0} \leq q\,    \|\psi^1 - \psi^2\|_{*,2,\sigma,t_0}^\nu$$
with $q <1$,  provided that $|t_0|$  is chosen sufficiently large.  

\smallskip

The  above discussion and the Fixed Point  Theorem readily imply  the existence of  
a unique fixed point $\psi=\Psi(h,\eta) \in X\cap \Lambda$  of the map $A$. 

\medskip

We will now continue with the proofs of \eqref{eqn-contr1} and \eqref{eqn-contr2}. 

\smallskip 

\no {\bf (c)} {\em There exists  a $t_0 <0$, such that for any $(h^1,\eta),(h^2,\eta) \in K$,  \eqref{eqn-contr1} holds.}

\smallskip
\no Since $\eta$ is fixed we will omit denoting the dependence on $\eta$. For simplicity we set $\psi^1=\Psi(h^1,\eta)$ and $\psi^2=\Psi(h^2,\eta)$. The estimate will be obtained
by applying the estimate \eqref{eqn-basic0}. However, because each $\psi^i$ satisfies the orthogonality conditions \eqref{eqn-orth11} and \eqref{eqn-orth22} with $\xi(t)=\xi^i(t) := \frac 12 \log (2b\,|t|) + h^i(t)$, the difference $\psi^1-\psi^2$ doesn't satisfy 
an exact orthogonality condition. To overcome this technical difficulty we will consider instead the difference
$Y:= \psi^1 - \bar \psi^2$, where  
$$ \bar \psi^2 (x,t)= \psi^2 (x,t) - \lambda_1(t) \, w'(x-\xi^1(t)) - \lambda_2(t) \, w(x-\xi^1(t))$$
with 
$$\la_1(t) = \int \psi^2 (x-\xi^1(t),t)  w'(x)  w^{p-1} d x, \,\,\, 
\la_2(t) = \int  \psi^2 (x-\xi^1(t),t)  w(x)  w^{p-1} d x.$$
Clearly,  $Y$ satisfies the orthogonality conditions \eqref{eqn-orth11} and \eqref{eqn-orth22} with 
$\xi(t)=\xi^1(t)$. Denote by $L_t^1$ the operator 
$$L^1_t Y:= p(z^1)^{p-1} \, \pp_t Y - \big [ \pp_{xx} Y - Y + p(z^1)^{p-1} \, Y  \big ].$$
Since each  of the $\psi^i$ satisfies equation \eqref{eqn-fp}, it follows that 
$Y:= \psi^1 - \bar \psi^2$ satisfies
\begin{equation*}
\begin{split}
L^1_t Y  = & M(h^1) - M(h^2) +  (z^1)^{p-1} \big ( \hat E(\psi^1,h^1)-  \hat E(\psi^2,h^2) )  \\& - L_t^1 (\psi^2 - \bar \psi^2)+ \big ( (z^2)^{p-1} - (z^1)^{p-1} \big) \, (1-\pp_t)\psi^2
\end{split}
\end{equation*}
where for $i=1,2$, we denote by $M^i:=M(h^i)$ and by 
$$\hat  E(\psi^i,h^i) := (z^1)^{1-p} \big [ (1-\partial_t) N(\psi^i)  - p\psi^i \, \pp_t  (z^i)^{p-1} - 
(z^i)^{p-1}\big ( c_1^i(t) \, z^i +  c_2^i(t) \, \bar z^i \big) \big ]
$$
with  $M(h^i)$, $N(\psi^i)$ and $c_1^i, c_2^i$  defined in \eqref{eqn-M}, \eqref{eqn-N} and \eqref{eqn-Epsi3}
respectively.  

\smallskip

We next observe that the estimate \eqref{eqn-basic0} holds for any even solution $Y$  of equation 
$L^1_tY = (z^1)^{p-1} \, f$, as long as the solution  $Y$  itself, and not necessarily $f$,  satisfies the 
orthogonality conditions \eqref{eqn-orth11} and \eqref{eqn-orth22}. Indeed, the a priori estimate 
$$ \| Y \|_{*,2,\sigma,t_0}^\nu \leq C \, \big ( \| Y \|_{*,\sigma,t_0}^\nu +  \| f \|_{*, \sigma,t_0}^\nu \big )$$
holds for any solution $Y$ and the bound $\| Y \|_{*,\sigma,t_0}^\nu \leq C \, \| f \|_{*, \sigma,t_0}^\nu$,
based on the contradiction argument given in Proposition \ref{prop-apriori} can be shown to hold for
any even solution $Y$ that satisfies \eqref{eqn-orth11} and \eqref{eqn-orth22}. Hence, we have 
\be\label{eqn-YY}
 \| Y \|_{*,2,\sigma,t_0}^\nu \leq C \,   \| (z^1)^{1-p}L^1_t Y \|_{*, \sigma,t_0}^\nu.
 \ee

\begin{claim} We have 
$$ \| (z^1)^{1-p}\, L^1_t Y \|_{*, \sigma,t_0}^\nu \leq  C\,  |t_0|^{\frac 12 -\nu}  \| Y \|_{*, 2, \sigma,t_0}^\nu
 + C\, |t_0|^{-\mu} \,   \| h^1 - h^2 \|_{1,\sigma,t_0}^{\mu,\mu+1}.$$
\end{claim}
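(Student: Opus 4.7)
The plan is to control the $\|\cdot\|_{*,\sigma,t_0}^\nu$ norm of each of the four terms appearing on the right-hand side of the identity for $L^1_t Y$ displayed just above the claim. Multiplying through by $(z^1)^{1-p}$, the estimate of $(z^1)^{1-p}(M(h^1)-M(h^2))$ is immediate from Lemma \ref{lem-Mh2}, producing the desired $C|t_0|^{-\mu}\|h^1-h^2\|^{\mu,\mu+1}_{1,\sigma,t_0}$ contribution. For the fourth term, $(z^1)^{1-p}\big((z^2)^{p-1}-(z^1)^{p-1}\big)(1-\partial_t)\psi^2$, I would note that $(z^1)^{1-p}|(z^2)^{p-1}-(z^1)^{p-1}|\le C|h^1(t)-h^2(t)|$ by a mean-value computation in $\xi$ on $\{x>0\}$ (as in the Lipschitz estimates of Lemma \ref{lem-Mh2}), together with $\|(1-\partial_t)\psi^2\|_{*,\sigma,t_0}^\nu \le C\|\psi^2\|_{*,2,\sigma,t_0}^\nu \le C$ since $\psi^2\in\Lambda$; combining these via the bound $\|h^1-h^2\|_{\infty,t_0}\le |t_0|^{-\mu}\|h^1-h^2\|_{1,\sigma,t_0}^{\mu,\mu+1}$ yields a contribution of the desired form.

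For the second term, $\hat E(\psi^1,h^1) - \hat E(\psi^2,h^2)$, I would split it as
\begin{equation*}
\bigl[\hat E(\psi^1,h^1) - \hat E(\psi^2,h^1)\bigr] + \bigl[\hat E(\psi^2,h^1) - \hat E(\psi^2,h^2)\bigr].
\end{equation*}
The second bracket is bounded via Lemma \ref{lem-E3} by $C|t_0|^{-\mu}\|h^1-h^2\|^{\mu,\mu+1}_{1,\sigma,t_0}$. The first bracket, by repeating the argument of Lemma \ref{lem-E2} (every ingredient of $\hat E$ is Lipschitz in its $\psi$-argument with constant $C|t_0|^{1/2-\nu}$), is bounded by $C|t_0|^{1/2-\nu}\|\psi^1-\psi^2\|^\nu_{*,2,\sigma,t_0}$. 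Decomposing $\psi^1 - \psi^2 = Y + (\psi^2 - \bar\psi^2)$, the $Y$ contribution delivers exactly the first term on the right-hand side of the claim; the piece $\psi^2 - \bar\psi^2 = \lambda_1(t)\,w'(\cdot-\xi^1) + \lambda_2(t)\,w(\cdot-\xi^1)$ is handled by the same smallness of $\lambda_i$ established in the next step.

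The main obstacle is the third term, $(z^1)^{1-p} L^1_t(\psi^2-\bar\psi^2)$, which forces one to show that $\lambda_i$ and $\dot\lambda_i$ are small, of order $|h^1-h^2|$ and $|\dot h^1 - \dot h^2|$ respectively. The key observation is that $\psi^2$ satisfies \eqref{eqn-orth11}--\eqref{eqn-orth22} with center $\xi^2$ (not $\xi^1$), so that
\begin{equation*}
\lambda_1(t) = \int_{\mathbb R} \bigl[\psi^2(y-\xi^1(t),t) - \psi^2(y-\xi^2(t),t)\bigr]\,w'(y)\,w^{p-1}(y)\,dy,
\end{equation*}
and a fundamental-theorem-of-calculus argument in the parameter $\xi$, together with the exponential decay of $w'w^{p-1}$ and the $H^1$-type control on $\partial_x \psi^2$ coming from $\psi^2 \in \Lambda$, gives $|\lambda_1(t)| \le C\,|t|^{-\mu}\|h^1-h^2\|^{\mu}_{\infty,t_0}$; the same reasoning handles $\lambda_2$. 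For the time derivatives $\dot\lambda_i$, I would differentiate the integral representation in $t$, use the $\psi^2$-equation to trade $\partial_t\psi^2$ for spatial derivatives, and again exploit the orthogonality of $\psi^2$ at $\xi^2$ to cancel the leading terms, producing bounds in $L^\sigma$-in-time of the form $C\|\dot h^1-\dot h^2\|_{\sigma,t_0}^{1+\mu} + \text{l.o.t.}$ Finally, expanding
$L^1_t(\lambda_i w_1^{(k)}) = p(z^1)^{p-1}\bigl[\dot\lambda_i w_1^{(k)} - \lambda_i \dot\xi^1 w_1^{(k+1)}\bigr] - \lambda_i\bigl[w_1^{(k+2)} - w_1^{(k)} + p(z^1)^{p-1} w_1^{(k)}\bigr]$ and using that $w$ solves \eqref{eqn-w}, so the bracketed stationary operator reduces to $p\bigl((z^1)^{p-1}-w_1^{p-1}\bigr)w_1^{(k)}$ which is concentrated near the opposite bubble where $w_1^{(k)}$ is exponentially small, one obtains after multiplying by $(z^1)^{1-p}$ and computing in the $\|\cdot\|_{*,\sigma,t_0}^\nu$ norm a total contribution bounded by $C|t_0|^{-\mu}\|h^1-h^2\|^{\mu,\mu+1}_{1,\sigma,t_0}$. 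Summing the four contributions yields the claim.
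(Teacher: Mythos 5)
Your proposal takes essentially the same route as the paper: the same four-way decomposition of $(z^1)^{1-p}L_t^1 Y$, Lemma \ref{lem-Mh2} for the $M$-difference, the splitting of $\hat E(\psi^1,h^1)-\hat E(\psi^2,h^2)$ into a $\psi$-Lipschitz and an $h$-Lipschitz piece handled by Lemmas \ref{lem-E2}, \ref{lem-E3}, the mean-value bound for the $(z^2)^{p-1}-(z^1)^{p-1}$ factor, and the estimation of $\lambda_i,\dot\lambda_i$ through the orthogonality of $\psi^2$ centered at $\xi^2$. The paper is somewhat more economical in the treatment of $\dot\lambda_i$: rather than trading $\partial_t\psi^2$ for spatial derivatives via the equation and then exploiting cancellations, it simply bounds $|\dot\lambda_1(t)| \le C\bigl(|(h^1-h^2)(t)|\,\|\partial_t\psi^2(\cdot,t)\|_{L^2} + |(\dot h^1-\dot h^2)(t)|\,\|\psi^2(\cdot,t)\|_{L^2}\bigr)$, with $\|\partial_t\psi^2\|_{L^2}$ already controlled by the $H^2$-norm in $\|\psi^2\|^\nu_{*,2,\sigma,t_0}\le 1$. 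Your maneuver with the equation is not wrong, but it is an unneeded detour.

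One small but genuine inaccuracy: in your expansion of $L^1_t(\lambda_i w_1^{(k)})$, the claim that the stationary bracket $w_1^{(k+2)} - w_1^{(k)} + p(z^1)^{p-1}w_1^{(k)}$ reduces to $p\bigl((z^1)^{p-1}-w_1^{p-1}\bigr)w_1^{(k)}$ is correct only for $k=1$ (the translation mode $w'$, which is a zero eigenfunction of $L_0$). For $k=0$ there is an additional term $(p-1)w_1^p$, because $w$ is an eigenfunction of $L_0$ with the \emph{nonzero} eigenvalue $\lambda_{-1}=-(p-1)$. That extra piece is not ``concentrated near the opposite bubble'' -- it lives wherever $w_1$ does -- so the reasoning you give does not, as stated, cover it. The conclusion is still fine, since $(z^1)^{1-p}(p-1)w_1^p \le (p-1)w_1$ is uniformly bounded and the localized weight in $\|\cdot\|^\nu_{*,\sigma,t_0}$ makes the integrals converge, so multiplying by $|\lambda_i|$ again yields the $C|t_0|^{-\mu}\|h^1-h^2\|^{\mu,\mu+1}_{1,\sigma,t_0}$ contribution; but you should either keep the $(p-1)w_1^p$ term and bound it directly, or simply argue as the paper does that $(z^1)^{1-p}|L_t^1(\psi^2-\bar\psi^2)| \le C\sum_{i}\bigl(|\lambda_i|+|\dot\lambda_i|+|\lambda_i||\dot\xi^1|\bigr)$ from the start.
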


\begin{proof}[Proof of Claim] By \eqref{eqn-Mh2}, we have 
$$\| (z^1)^{1-p}  ( M(h^1) - M(h^2) ) \|_{*,\sigma,t_0}^\nu \leq   C\, |t_0|^{-\mu}\, 
\| h^1 - h^2 \|_{1,\sigma,t_0}^{\mu,\mu+1}.$$
Also, by combining \eqref{eqn-bepsi2} and \eqref{eq-contr-Epsi} we have
$$ \|   \hat E(\psi^1,h^1)-  \hat E(\psi^2,h^2)   \|_{*,\sigma,t_0}^\nu \leq   
C |t_0|^{\frac 12 -\nu} \, \| \psi^1 - \psi^2 \|_{*, 2, \sigma,t_0}^\nu + C  |t_0|^{-\mu}\, 
\| h^1 - h^2 \|_{1,\sigma,t_0}^{\mu,\mu+1}.$$
Also, since  $\| (1-\pp_t)\psi^2 \|_{*,\sigma,t_0}^\nu \leq 2 \| \psi^2 \|_{*,2,\sigma,t_0}^\nu \leq 2$, 
$$\| \frac{ (z^2)^{p-1} - (z^1)^{p-1} }{(z^1)^{p-1}}  \, (1-\pp_t)\psi^2 \|_{*,\sigma,t_0}^\nu \leq 
C\, |t_0|^{-\mu}\, \| h^1 - h^2 \|_{1,\sigma,t_0}^{\mu,\mu+1}.$$
For the term, $L_t^1(\psi^2 - \bar{\psi}^2)$, we observe that since both $w(x)$ and $w'(x)$ are eigenfunctions of the operator 
$L_0$ given in \eqref{eqn-oper}, and $w(x)$ and all its derivatives are bounded in $\R$, we have  
$$ (z^1)^{1-p} \, |L_t^1 (\psi^2 - \bar \psi^2) | \leq C  \,   
 \sum_{i=1}^2 (|\la_i| + |\dot \la_i| + |\lambda_i| |\dot \xi^1 |).$$
Let us now estimate $|\la_i(t)|$ and $|\dot \la_i(t)|$. Using the orthogonality condition \eqref{eqn-orth11}
satisfied by $\psi^2$ (with $\xi=\xi^2$) we have
$$|\la_1(t)| = \big  | \int_{\R}  \big ( \psi^2(x-\xi^1) - \psi^2(x-\xi^2)\big  ) w'(x)\, w^{p-1} dx  \big | 
\leq C\, |(h^1- h^2)(t) | \, \| \psi^2(\cdot,t) \|_{L^2}.$$
Similarly, one can see that
$$|\dot \la_1(t)| \leq C \, \big ( |(h^1- h^2)(t) | \, \| \pp_t \psi^2(\cdot,t) \|_{L^2} + |(\dot h^1- \dot h^2)(t) | \, \|  \psi^2(\cdot,t) \|_{L^2} \big ).$$
The estimates for $|\la_2(t)| $ and $|\dot \la_2(t)| $ are the same. Combining the last estimates,
readily yields the bound
$$\| (z^1)^{1-p} \, |L_t^1 (\psi^2 - \bar \psi^2) \|_{*,\sigma,t_0}^\nu \leq  C\, |t_0|^{-\mu}\, 
\| h^1 - h^2 \|_{1,\sigma,t_0}^{\mu,\mu+1}.$$

\no To  finish  the proof of the claim,  we need to show that 
 $$\| \psi^1-\psi^2 \|_{*, 2, \sigma,t_0}^\nu \leq  \| Y \|_{*, 2, \sigma,t_0}^\nu + C \, |t_0|^{-\mu} \, \| h^1 - h^2 \|_{1,\sigma,t_0}^{\mu,\mu+1}.$$ 
Since
$\| \psi^1-\psi^2 \|_{*, 2, \sigma,t_0}^\nu \leq \| Y \|_{*, 2, \sigma,t_0}^\nu + 
\| \la_1 w'(x-\xi^1) + \la_2 w(x-\xi^1) \|_{*, 2, \sigma,t_0}^\nu$
this estimate readily follows from the previous bounds on $\la_i$. 

\end{proof}

The proof of   \eqref{eqn-contr1} now readily follows by combining  \eqref{eqn-YY} and the above claim
and choosing $|t_0|$ sufficiently large. 

\smallskip 

\no {\bf (d)} {\em There exists  $t_0 <0$, such that for any $(h,\eta^1),(h,\eta^2) \in K$,  \eqref{eqn-contr2} holds.} 

\smallskip
This proof is an easy consequence of \eqref{eqn-Meta2}, \eqref{eqn-bepsi2}, \eqref{eq-contr-Epsi2}
and \eqref{eqn-basic0}, since 
for $\psi^1=\Psi(h,\eta^1)$ and $\psi^2=\Psi(h,\eta^2)$, we have 
$$\| \psi^1- \psi^2\|_{*,2,\sigma,t_0}^\nu = \| T ( \bar E(\psi^1,\eta^1)) - 
T ( \bar E(\psi^2,\eta^2))\|_{*,2,\sigma,t_0}^\nu$$
where now the operator $T$ depends only on $h$ (not the $\eta_i$) and $h$ is fixed. 


\section{Solving for $\xi$ and $\eta$}\label{sec-system}
\label{sec-xieta}

We recall the definition of $K$ given by \eqref{defn-K}. 
In the previous section we have established that for any given $(h,\eta) \in K$,  there exists a solution $\psi=\Psi(h,\eta)$ of the auxiliary equation \eqref{eqn-fp}. Recall that $c_1(t)$ and $c_2(t)$ are  chosen so that the error term 
$
\bar E(\psi):= E(\psi) - (c_1(t) \, z + c_2(t) \, \bar  z)  
$
satisfies the orthogonality conditions \eqref{eqn-orth11} and \eqref{eqn-orth22} whenever $\psi$ does. 
The error term $E(\psi)$ is given  by \eqref{eq-Epsi}. 
Thus $\psi =\Psi(h,\eta)$ defines a solution to our original equation \eqref{eqn-Psi} 
if we manage to adjust the parameter functions $(h,\eta)$ in such a way that $c_1 \equiv 0$ and $c_2  \equiv 0$.
This is equivalent to choosing $(h,\eta)$ so that 
\begin{equation}\label{eqn-Epsio1}
\inti  E(\psi) \, w^p(x+\xi(t))   \, dx = 0
\end{equation}
and
\be \label{eqn-Epsio2}
\inti  E(\psi)\, w'(x+\xi(t)) \, w^{p-1}(x+\xi(t)) \, dx = 0.
\ee
In fact, the main result in this section is the following.

\begin{prop}\label{prop-etaxi}  There exists   $(h,\eta) \in K$  such that \eqref{eqn-Epsio1} and 
\eqref{eqn-Epsio2}
are satisfied. It follows that  the solution $\psi=\Psi(h,\eta)$ of the auxiliary equation \eqref{eqn-fp} given by Proposition
\ref{prop-fp}    defines  a solution of  our original equation \eqref{eqn-Psi} and Theorem \ref{thm-main} holds.

\end{prop}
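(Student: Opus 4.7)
The plan is to recast the two orthogonality conditions \eqref{eqn-Epsio1}--\eqref{eqn-Epsio2} as a nonlinear system of two ODEs for the pair $(h,\eta)\in K$, extract the invertible linear skeleton, and close the argument by a contraction mapping. Into each of the two integrals we insert the decomposition $E(\psi)=z^{1-p}M+Q(\psi)$ from \eqref{eq-Epsi}. Since the test functions $w^p(x+\xi(t))$ and $w'(x+\xi(t))w^{p-1}(x+\xi(t))$ are concentrated near $x=-\xi(t)$, where $z\approx w(x+\xi)=w_2$, the main contributions come from the algebraic expansion of $M$ together with the derivative identity
\begin{equation*}
\pp_t\tilde z^p \;=\; p(1+\eta)^{p-1}z^p\,\dot\eta \;-\; p(1+\eta)^p z^{p-1}\dot\xi\,\bar z
\end{equation*}
used in \eqref{eqn-M}--\eqref{eqn-barz}, together with the exponential tail estimates $w(x)=O(e^{-|x|})$ that control the cross integrals of $w_1$ and $w_2$ at distance $2\xi(t)$.

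A direct computation then produces a coupled system of the schematic form
\begin{align*}
A_1\,\dot\eta + B_1\,\eta &= R_1(h,\eta,\psi),\\
A_2\,\dot\xi + B_2\,e^{-2\xi} &= R_2(h,\eta,\psi),
\end{align*}
where $A_i,B_i>0$ are explicit universal constants expressible in terms of the integrals $\int w^{p+1}\,dx$, $\int (w')^2 w^{p-1}\,dx$ and $\int w^p e^{-x}\,dx$ (the very constants appearing in the definition of $b$), and where $R_1,R_2$ collect all remaining contributions: the subleading pieces of $M$, the projections of $Q(\psi)=C(\psi,t)+z^{1-p}[(1-\pp_t)N(\psi)-p\psi\,\pp_t z^{p-1}]$, and the nontrivial cross interactions between $w_1$ and $w_2$. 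Writing $\xi=\xi_0+h$ with $\dot\xi_0+(B_2/A_2)e^{-2\xi_0}=0$ (which fixes the constant $b$) and linearising $e^{-2\xi}$ around $\xi_0$, the second equation reduces to $\dot h-|t|^{-1}h=\tilde R_2(h,\eta,\psi)$; both linear parts are then explicitly invertible on $(-\infty,t_0]$ — the $\eta$-equation by the integrating factor $e^{(B_1/A_1)t}$, the $h$-equation by variation of parameters against the homogeneous solutions $|t|$ and $|t|^{-1}$ — and one checks, using the decay rates built into Definition \ref{dfn-heta}, that each inversion maps its argument into a function with the norm prescribed by $K$.

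The desired $(h,\eta)$ is then produced as a fixed point of the operator $\mathcal{F}(h,\eta)$ that composes the inversion of the linear system with the right-hand side $(R_1,R_2)(h,\eta,\Psi(h,\eta))$, where $\Psi$ is the map constructed in Proposition \ref{prop-fp}. The self-mapping property $\mathcal{F}(K)\subset K$ follows from Corollary \ref{cor-M} and Lemma \ref{lem-N}, which together bound the $M$ and $Q(\psi)$ contributions in $R_i$ by $C|t_0|^{-\delta}$ in the appropriate weighted norms for some $\delta>0$, while contractivity follows from Lemmas \ref{lem-E2}, \ref{lem-Mh2}, \ref{lem-E3} combined with the Lipschitz bounds \eqref{eqn-contr1}--\eqref{eqn-contr2} of Proposition \ref{prop-fp}; both smallness factors are of order $|t_0|^{-\delta}$, so choosing $|t_0|$ large enough delivers a unique fixed point. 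By construction this $(h,\eta)$ forces $c_1\equiv 0$ and $c_2\equiv 0$, so $\psi=\Psi(h,\eta)$ solves \eqref{eqn-Psi}, and the resulting $u=(1+\eta)z+\psi$ is the ancient solution of Theorem \ref{thm-main}; the Ricci curvature sign change and type II character then follow from the explicit two-bubble ansatz, as $\xi(t)\to\infty$ produces a developing neck as $t\to-\infty$.

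The main obstacle is not the fixed point itself but the precise extraction and bookkeeping of the system: one must expand both orthogonality integrals to enough accuracy to identify the skeleton $(A_i,B_i)$ with the exact constants that make the equation $\dot\xi_0+be^{-2\xi_0}=0$ consistent, keep track of the correct powers of $|t|$ in $R_1,R_2$ so that the inverted operators output functions with the weights $\mu$ and $1$ built into Definition \ref{dfn-heta}, and verify that the nonlocal term $\Psi(h,\eta)$ inherits the Lipschitz dependence predicted by Proposition \ref{prop-fp} without losing the smallness factor in $|t_0|$ needed for contraction. The relation $\mu<\min\{2\nu-1,\gamma\}$ from Notation \ref{not-par} is precisely what makes these two decay exponents compatible.
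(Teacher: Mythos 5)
Your proposal follows essentially the same architecture as the paper's Section 5: expand the projections of $E(\psi)$ onto $w^p_2$ and $w'_2 w_2^{p-1}$ (Lemma \ref{lem-projection}), extract a nonlinear ODE system for $(h,\eta)$ (Corollary \ref{cor-system}), invert the linear skeleton, and close by a contraction. The organization differs slightly — the paper solves \emph{sequentially}, first producing $\eta=\eta(h)$ for each fixed $h$ via the operator $A(\eta)(t)=-\int_t^{t_0}e^{\lambda(t-s)}F_h(s)\,ds$ with $\lambda=\frac{p-1}{p}$ (Proposition \ref{prop-eta-contr}), then solving for $h$ via $B$ (Proposition \ref{prop-contr-h}), whereas you propose a simultaneous fixed point $\mathcal F(h,\eta)$; both routes work, and the sequential one only buys cleaner bookkeeping of the Lipschitz dependence of $\eta$ on $h$.

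There is, however, one genuine sign error in your schematic. You write the $\eta$-equation as $A_1\dot\eta+B_1\eta=R_1$ with $A_1,B_1>0$, i.e.\ $\dot\eta+\lambda\eta=\ldots$ with $\lambda>0$. The actual projection (from the $-p\dot\eta\int w^{p+1}\,dx+(p-1)\eta\int w^{p+1}\,dx$ terms in \eqref{eq-leading-E}) gives $\dot\eta-\frac{p-1}{p}\eta-ae^{-2\xi}=\ldots$, so the coefficient of $\eta$ is \emph{negative}, reflecting the negative eigenvalue $\lambda_{-1}<0$ of $L_0$ whose eigendirection $\eta$ is meant to absorb. This flips the character of the inversion: for $\dot\eta-\lambda\eta=F$, $\lambda>0$, the homogeneous solution $e^{\lambda t}$ decays as $t\to-\infty$, so \emph{every} solution is bounded backward in time and uniqueness of the ancient solution fails unless one fixes data at $t_0$ (the paper does this implicitly through $A(\eta)(t_0)=0$); in your version with the sign flipped, only $c=0$ would give a bounded solution, and the argument would use a different integrating factor and integrate from $-\infty$ rather than from $t_0$. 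Relatedly, the phrase ``homogeneous solutions $|t|$ and $|t|^{-1}$'' for the $h$-equation is a slip: $\dot h+\frac{1}{t}h=F$ is first order and has the single homogeneous solution $|t|^{-1}$. Neither issue breaks the strategy, but the sign on the $\eta$-coefficient must be corrected before the inversion operator and the imposed side condition can be set up properly.

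Beyond these, the structure you sketch — leading-order identification of $b$ from $\dot\xi_0+be^{-2\xi_0}=0$, $M$ and $Q(\psi)$ contributions controlled by Corollary \ref{cor-M} and Lemma \ref{lem-N}, contraction from Lemmas \ref{lem-E2}, \ref{lem-Mh2}, \ref{lem-E3} and \eqref{eqn-contr1}--\eqref{eqn-contr2}, and the exponent matching $\mu<\min\{2\nu-1,\gamma\}$ — matches the paper's proof.
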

 
\subsection{Computation of error Projections}\label{sec-proj}

The proof of Proposition \ref{prop-etaxi} is based on careful  expansions for the 
projections of the error terms (given by the left hand sides of  \eqref{eqn-Epsio1} and 
\eqref{eqn-Epsio2})  which  lead us to a system of ODE for  the functions 
$\xi:=\xi(t)$ and $\eta:=\eta(t)$. We will then solve this system  by employing the fixed point theorem. We will see that the main order terms in the system are all coming from the 
projections of the term $z^{1-p} M$ in \eqref{eq-Epsi}. Let us first expand these projections in terms of $\xi$, $\eta$
and their derivatives.

\begin{lem}[{\bf Projections of the error term $M$}]
\label{lem-projection} We have 
\begin{equation*}
\inti  M \, w^p (x+\xi) \, z^{1-p}\, dx =
  c_1 \,  \big (\dot{\eta} - \frac{p-1}{p}\eta - a \, e^{-2\xi}\big) + \mathcal{R}_1(\xi,\dot{\xi},\eta,\dot{\eta})
\end{equation*}
and
\begin{equation*} \inti  M \, w'(x+\xi) \, w^{p-1}(x+\xi) \,  z^{1-p} dx \\
=  c_2  \, \big( \dot{\xi} + b \, e^{-2\xi}) + \mathcal{R}_2(\xi,\dot{\xi},\eta,\dot{\eta})
\end{equation*}
where $c_1, c_2$ are universal constants and 
\be\label{eqn-ab}
a = \frac{(p-1)\int_0^{\infty} w^p e^x \, dx + p\int_{-\infty}^0 w^p e^x \, dx}{p\int  w^{p+1}\, dx}
\quad \mbox{and}  \quad b = \frac{\int_0^\infty w^p e^{-x}\, dx}{p\int w'^2 w^{p-1}\, dx}.
\ee  
Moreover,
$$\|\mathcal{R}_i(\xi,\dot{\xi},\eta,\dot{\eta})\|_{\sigma,t_0}^{1+\gamma} \le C, \qquad \mbox{for} \,\,\, i\in \{1,2\},$$
and for some $0 < \gamma < 1$, that depends on dimension only.
\end{lem}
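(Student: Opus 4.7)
The plan is to split $M = M^{\rm stat} + M^{\rm dyn}$ where $M^{\rm stat} := \tilde z^p - (1+\eta)\,(w_1^p + w_2^p)$ is the static (Yamabe-type) error and $M^{\rm dyn} := -\partial_t \tilde z^p$ is the dynamic error, and then project each piece against the two test functions $\vp_1 := w^p(x+\xi)\,z^{1-p}$ and $\vp_2 := w'(x+\xi)\,w^{p-1}(x+\xi)\,z^{1-p}$. Throughout I use that both test functions are concentrated on a bounded neighborhood of $x=-\xi(t)$, where $w_2(x):= w(x+\xi)$ is of order one while $w_1(x):= w(x-\xi)$ is exponentially small; more precisely, by \eqref{eqn-w1}--\eqref{eqn-w2}, in the change of variable $y=x+\xi$ one has $w(y-2\xi) = C_* e^{y-2\xi}(1+O(e^{-2\xi}))$ uniformly on bounded sets of $y$. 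Since $z \approx w_2$ on this region and $z \approx w_1$ on the symmetric region near $x=\xi$, the weighting factor $z^{1-p}$ neatly cancels a $w_2^{p-1}$ in the computation.

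First I would expand $M^{\rm stat}$ using $(1+\eta)^p - (1+\eta) = (p-1)\eta + O(\eta^2)$ together with the binomial expansion
\[
(w_1+w_2)^p = w_2^p + p\,w_2^{p-1}w_1 + O(w_2^{p-2}w_1^2) \qquad \text{(where } w_2 \gg w_1\text{)},
\]
so that $M^{\rm stat} = [(1+\eta)^p-(1+\eta)]\,w_2^p + p(1+\eta)^p w_2^{p-1}w_1 - (1+\eta)w_1^p + (\text{quadratic})$. Projecting against $\vp_1$ gives, to leading order, $(p-1)\eta\int w^{p+1}\,dy$ plus a term of order $e^{-2\xi}$ coming from the cross term $p\,w_2^{p-1}w_1$, whose coefficient—after substituting the asymptotic $w(y-2\xi)\sim C_* e^{y-2\xi}$ and using symmetry of $w$—evaluates exactly to the numerator of $a$ in \eqref{eqn-ab}. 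Projecting against $\vp_2$, the $w_2^p$ piece integrates to zero (since $\int w'w^{2p-1}$ vanishes by oddness of $w'w^{2p-1}$ after translation), so only the cross term $p w_2^{p-1}w_1$ contributes, producing a constant multiple of $e^{-2\xi}\int w^p e^{-y}dy$; this matches the numerator of $b$.

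Next I would compute $\partial_t \tilde z^p = p(1+\eta)^{p-1}\dot\eta\, z^p + p(1+\eta)^p z^{p-1}\,\dot\xi\,[w_2'-w_1']$. Projected against $\vp_1$, the first summand gives $p\dot\eta\int w^{p+1}dy$ to leading order (so combining with the $\eta$-contribution above yields the bracketed combination $\dot\eta - \tfrac{p-1}{p}\eta$ up to the universal factor $c_1 = -p\int w^{p+1}$), while the second summand contributes only lower-order terms of size $\dot\xi\, e^{-2\xi}$ that are absorbed in $\mathcal{R}_1$. Projected against $\vp_2$, the first summand now vanishes to leading order by the same oddness argument, while the second gives $p\dot\xi\int (w')^2 w^{p-1}dy$, producing the desired $c_2\dot\xi$ with $c_2 = -p\int (w')^2 w^{p-1}$. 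Adding the $b\,e^{-2\xi}$ contribution from $M^{\rm stat}$ yields the stated combination $\dot\xi + b e^{-2\xi}$.

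The remaining and technically most delicate step is the remainder estimate $\|\mathcal R_i\|_{\sigma,t_0}^{1+\gamma}\le C$. Every error I have discarded is of one of the following forms: (i) quadratic Taylor remainders in $\eta$ (size $\eta^2 \lesssim |t|^{-2}$), in $w_1/w_2$ (size $e^{-4\xi}\lesssim |t|^{-2}$), or mixed $\eta\, e^{-2\xi}$ (size $|t|^{-2}$); (ii) cross products $\dot\xi\, e^{-2\xi}$, $\dot\eta\, e^{-2\xi}$ of size $|t|^{-2}$ using $\|h\|_{1,\sigma}^{\mu,1+\mu}\le 1$ and $\|\eta\|_{1,\sigma}^1\le C_0$; (iii) the $o(1)$ discrepancy in replacing $w(y-2\xi)$ by its asymptotic $C_*e^{y-2\xi}$, which contributes a relative error $O(e^{-2\xi})$ and hence an absolute error $O(|t|^{-2})$; and (iv) the difference between $h(t)$-dependent $\xi$ and $\xi_0(t)$, which contributes $O(|h|\cdot e^{-2\xi_0}) = O(|t|^{-1-\mu})$. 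All such errors are bounded pointwise by $C|t|^{-(1+\gamma)}$ for any $\gamma<\min\{1,\mu\}$, and integrating against the localized test functions (whose $\sigma$-norm on $\Lambda_\tau$ is bounded) gives the claimed $\|\mathcal R_i\|_{\sigma,t_0}^{1+\gamma}\le C$. The main obstacle is the bookkeeping in step (iii)--(iv): one must verify that the asymptotic formula for $w$ is sufficiently accurate and that the $h(t)$-perturbation of $\xi_0$ produces no term that competes with the leading ones; this is what forces the specific choice $\gamma < \min\{1,\mu\}$ dictated by Notation \ref{not-par}.
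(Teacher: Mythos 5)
The overall route is the same as the paper's: split $M$ into a static and a dynamic part, Taylor-expand around the dominant bubble, project against the two test functions, and isolate the terms of size $\eta$, $\dot\eta$, $\dot\xi$, and $e^{-2\xi}$, absorbing everything else into $\mathcal{R}_i$. However, the sketch contains a concrete gap in the derivation of the $e^{-2\xi}$ coefficients $a$ and $b$.

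You attribute the entire $e^{-2\xi}$ contribution to the cross term $p\,w_2^{p-1}w_1$, and assert that its projection against $\vp_1$ "evaluates exactly to the numerator of $a$." In fact, the cross term alone projected against $\vp_1 = w_2^p z^{1-p}$ gives (after the change of variable $y = x + \xi$ and using $z \approx w_2$ near $y = 0$) a constant multiple of
\[
p\, e^{-2\xi}\int_{\mathbb{R}} w^p(y)\,e^y\,dy \;=\; p\,e^{-2\xi}\left(\int_0^\infty w^p e^y\,dy + \int_{-\infty}^0 w^p e^y\,dy\right),
\]
which is \emph{not} the paper's numerator
$(p-1)\int_0^\infty w^p e^x\,dx + p\int_{-\infty}^0 w^p e^x\,dx$; they differ by a full $\int_0^\infty w^p e^x\,dx$. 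In the paper this difference is accounted for by the term the paper calls $M_2 = -(1+\eta)^p w_1^p$: its projection against $\vp_1$ is shown (via the approximation $w^p/\bar w^{p-1} \approx w$ and a change of variables) to contribute $-e^{-2\xi}\int_0^\infty w^p e^x\,dx$ at the same order. You simply drop this term in your expansion, treating $-(1+\eta)w_1^p$ as negligible. Whether or not that dropped term is really of size $e^{-2\xi}$ is precisely the delicate point the paper is addressing; by omitting it you cannot reproduce the specific constant $a$ appearing in the lemma, and the same issue appears in the $b$ computation, where the paper obtains $\int_0^\infty w^p e^{-x}\,dx$ only after a cancellation between the $M_1$ cross term, which gives $-e^{-2\xi}\int_{\mathbb{R}} w^p e^x\,dx$ (via integration by parts), and the $M_2$ term, which gives $+e^{-2\xi}\int_0^\infty w^p e^x\,dx$. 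Your claim that "only the cross term contributes" is thus inconsistent with the paper's derivation, and as written your sketch would produce different constants $a$ and $b$ from those in the statement. To match the lemma you need to carry the $w_1^p$ piece through the projection computation and track the resulting cancellation.

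The remainder estimates you outline (terms of size $\eta^2$, $\eta\,e^{-2\xi}$, $\dot\xi\,e^{-2\xi}$, etc., together with the $h$-perturbation of $\xi_0$) are in the spirit of the paper's $g_1,\ldots,g_5$ bookkeeping and the choice $\gamma < \min\{1,\mu\}$ is reasonable, but those remainder bounds only make sense once the leading coefficients are correct.
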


\begin{proof}[Proof of Lemma \ref{lem-projection}]
We will use the notation of previous sections. 
Let us write  $M = M_1 + M_2 + M_3$ with 
$$M_1 = (1+\eta)^p\big((w_1+w_2)^p - w_2^p\big), \qquad M_2 = -(1+\eta)^p w_1^p$$
and
$$M_3 =  \underbrace{\big((1+\eta)^p - (1+\eta)\big)(w_1^p + w_2^p)}_{M_{31}} - \underbrace{p(1+\eta)^{p-1}\dot{\eta} z^p}_{M_{32}} - \underbrace{p(1+\eta)^p \, z^{p-1}\, \dot{\xi}\, (\pp_x w_2 - \pp_x w_1)}_{M_{33}}.$$
We  first  compute  the expansion of the term $\int  z^{1-p} M \, w^p_2 \,  dx$. We have
\bee
\bs
\label{eq-first-term-E}
&\inti  M_1   \, \frac{w_2^p}{z^{p-1}}\, dx = p (1+\eta)^p \inti  \big [ w_1  \int_0^1  (w_2+sw_1)^{p-1} \, ds \big ]  \, \frac{w_2^p}{z^{p-1}}\,dx \\
 &= p(1+\eta)^p \left ( \inti w_2^{p-1}w_1 \frac{w_2^p}{z^{p-1}}\, dx   +
(p-1) \inti \big [ w_1^2   \int_0^1 (w_2 + sw_1)^{p-2} (1-s) ds \,\big ] \frac{w_2^p}{z^{p-1}}\, dx \right )
\end{split}
\eee
where we have used the notation $\bar w(x) := w(x) + w(x-2\xi)$. 

\no We  next analyze the terms on the right hand side of  the last equation. For the first term we  have 
\bee
\bs
 \inti w_2^{p-1}w_1 \frac{w_2^p}{z^{p-1}}\, dx  &= \inti w^{p-1} w(x-2\xi)\frac{w^p}{\bar{w}^{p-1}}\, dx  
= \int_{x \le 2\xi} w(x-2\xi) w^p\, dx + g_1(2\xi) \\
 &= e^{-2\xi} \int_{x\le 2\xi}  e^x w^p\, dx + g_1(2\xi) = e^{-2\xi} \inti e^x w^p\ dx + g_1(2\xi)
 \end{split}
 \eee
 where we denote by $g_1(2\xi)$ various error terms having the following decay
 $$|g_1(2\xi)| \le C e^{-(1+\gamma)2\xi},\qquad  \mbox{with} \,\,\,  0 < \gamma < 1.$$
The other  term  turns out to be of a lower order and absorbable in $g_1(2\xi)$. To see that, since $w_2 \le w_1$ 
for $x > 0$ and $w_1 \le w_2$ for $x < 0$, we have
\bee
\bs
\int w_1^2 \frac{w_2^p}{z^{p-1}}   \,  \big [ \int_0^1 &(w_2 + sw_1)^{p-2} (1-s)\, ds \,\big ] dx \leq 
C \int_{x \ge 0}  w_1 w_2^p \, dx +  C\int_{x\le 0} w_1^2 w_2^{p-1}\, dx \\
&\le g_1(2\xi).
\end{split}
 \eee

\smallskip

\no For the term $M_2$ we have
\bee
\bs
 -\inti M_2 &\frac{w_2^p}{z^{p-1}} \, dx = (1+\eta)^p\, \inti  \frac{w^p(x-2\xi)}{\bar{w}^{p-1}} \, w^p\, dx \\
&=
  \int_0^{2\xi} w^p(x-2\xi)\,  w\, dx + g_2(2\xi,\eta)  = \int_{-2\xi}^0 w^p\,  w(y+2\xi)\, dy + g_2(2\xi,\eta) \\
& =
 e^{-2\xi}\int_{-\infty}^0 w^p e^{-y}\, dy + g_2(2\xi,\eta)  = e^{-2\xi} \int_0^{\infty} w^p e^x\, dx + g_2(2\xi,\eta)
\end{split}
 \eee
where we denote by $g_2(2\xi,\eta)$ various error terms having the following behavior,
$$|g_2(2\xi, \eta)| \le Ce^{-2\xi}(e^{-2\xi \gamma} + |\eta|), \qquad  \mbox{with} \,\,\,\gamma > 0.$$
 
\no Furthermore,
 \bee
\bs
&\int M_{31} \frac{w_2^p}{z^{p-1}}  \, dx \\
&= [(1+\eta)^p - (1+\eta)]\, \left(\inti w^{2p} \bar{w}^{1-p}\, dx + \inti w^pw^p(x-2\xi)\bar{w}^{1-p}\, dx\right) \\
&= (p-1)\eta\inti w^{2p} \bar{w}^{1-p}\, dx + g_3(2\xi,\eta) = (p-1)\eta\inti w^{p+1}\, dx + g_3(2\xi,\eta)
\end{split}
 \eee
where we denote by $g_3(2\xi,\eta)$ various error terms with the behavior 
$$|g_3(2\xi,\eta)| \le C \big(e^{-2\xi}(e^{-2\xi \gamma} + |\eta|) + |\eta|^2\big), \qquad  \mbox{with} \,\,\,\gamma > 0.
$$
Also, 
$$
\inti M_{32}\,  \frac{w_2^p}{z^{p-1}}\, dx =
p(1+\eta)^{p-1}\, \dot{\eta}\, \inti  z\, w^p_2\, dx = p\, \dot{\eta}\, \inti w^{p+1}\, dx + g_4(2\xi,\dot{\eta})
$$
with  $|g_4(2\xi,\dot{\eta})| \le C e^{-2\xi}|\dot{\eta}|$,  and 
\bee
\bs
\inti M_{33} \, \frac{w_2^p}{z^{p-1}}\, dx &=
p\, (1+\eta)^p\, \dot{\xi}\,  \inti (\pp_xw_2 - \pp_xw_1) \, w_2^p\, dx  \\
&= p\, \dot{\xi}\inti  w'(x-2\xi) \, w^p\, dx + g_5(2\xi,\dot{\xi},\eta) 
= g_5(2\xi, \dot{\xi},\eta)
\end{split}
\eee
with  $|g_5(2\xi,\dot{\xi},\eta)| \le C(e^{-2\xi}(|\eta| + |\dot{\xi}|) + |\dot{\xi}| |\eta|)$.

\smallskip
\no Combining the previous estimates for $M_1, M_2, M_3$ we obtain
\begin{equation} \label{eq-leading-E}
\bs
\inti M \frac{ w_2^p}{z^{p-1}}  \, dx &=  
-p\,  \big (\dot{\eta} - \frac{p-1}{p}\eta \big )\inti w^{p+1}\, dx  \\
&  \quad +  e^{-2\xi} \left((p-1)\int_0^{\infty} e^x w^p\, dx + p\int_{-\infty}^0 e^x w^p\, dx\right)\\
& \quad + \mathcal{R}_1(\xi,\dot{\xi},\eta,\dot{\eta})
\end{split}
\ee
where 
\begin{equation}
\label{eq-error-R}
|\mathcal{R}_1(\xi,\dot{\xi},\eta,\dot{\eta})| \le C\big ( \, e^{-2\xi}\, ( \,  e^{-2\gamma\xi}+ |\eta| + |\dot{\xi}| + |\dot{\eta}|  \, ) + |\eta|\, (\, |\dot{\xi}| + |\eta| + |\dot{\eta}| \, ) \, \big).
\end{equation}

\medskip
Let  us now expand $\int M \, z^{1-p} \pp_xw_2 \, w_2^{p-1}\, dx$. Similarly as before, analyzing term by term  we obtain
\bee
\bs
\inti  M_1  \pp_x w_2 \, \frac{ w_2^{p-1}}{z^{p-1}} \, dx &= 
p\, (1+\eta)^p   \inti w_2^{p-1} w_1 \pp_x w_2\,  \frac{w_2^{p-1}}{z^{p-1}} \,  dx + \mathcal{R}_2(\xi,\dot{\xi},\eta,\dot{\eta})\\
&= p\, \int_{x\le 2\xi} w^{p-1} w' \,  w(x-2\xi) \, dx + \mathcal{R}_2(\xi,\dot{\xi},\eta,\dot{\eta}) \\
&= p\, e^{-2\xi}\, \inti  w^{p-1} w' e^x\, dx + \mathcal{R}_2(\xi,\dot{\xi},\eta,\dot{\eta}) \\
&= -e^{-2\xi}\, \inti  w^p e^x\, dx + \mathcal{R}_2(\xi,\dot{\xi},\eta,\dot{\eta})
\end{split}
\eee
where by $\mathcal{R}_2(\xi,\dot{\xi},\eta,\dot{\eta})$ we have denoted various error terms that have the same behavior as in (\ref{eq-error-R}). Also, 
\bee
\bs
-\inti M_2 \pp_x w_2 \frac{ w_2^{p-1}}{z^{p-1}} \, dx 
&= (1+\eta)^p \int \frac{w^p(x-2\xi)}{\bar{w}^{p-1}(x)} w' w^{p-1}\, dx  \\
&= \int_0^{2\xi} w^p(x-2\xi) w'\, dx + \mathcal{R}_2(\xi,\dot{\xi},\eta,\dot{\eta}) \\
&= \int_{-2\xi}^0 w^p w'(y+2\xi)\, dy + \mathcal{R}_2(\xi,\dot{\xi},\eta,\dot{\eta}) \\
&= -e^{-2\xi}\int_0^\infty  w^p e^x\, dx + \mathcal{R}_2(\xi,\dot{\xi},\eta,\dot{\eta}), 
\end{split}
\eee
\no where $\mathcal{R}_2(\xi,\dot{\xi},\eta,\dot{\eta})$ is the error term satisfying (\ref{eq-error-R}). 

\no Next, using that 
$\int w' w^p\, dx = 0$ and that $\big (1- \big(\frac{w}{\bar{w}}\big)^{p-1}\big ) \le C(p) \big (1-\frac{w}{\bar{w}}\big )$, for $x < \xi$, 
and $1- \big (\frac{w}{\bar{w}}\big)^{p-1} \le 1$ otherwise, we obtain
\bee
\bs
\inti M_{31} \pp_x w_2 \frac {w_2^{p-1}}{z^{p-1}} \, dx &= (p-1)\, \eta  \inti w^p  w' \, \frac{w^{p-1}}{\bar{w}^{p-1}}   \, dx + \mathcal{R}_2(\xi,\dot{\xi},\eta,\dot{\eta}) \\
&= (p-1)\, \eta \inti \left(\big(\frac{w}{\bar{w}}\big)^{p-1} - 1\right) w' w^p \, dx  + \mathcal{R}_2(\xi,\dot{\xi},\eta,\dot{\eta}), \\
&= \mathcal{R}_2(\xi,\dot{\xi},\eta,\dot{\eta})
\end{split}
\eee
\no where $\mathcal{R}_2(\xi,\dot{\xi},\eta,\dot{\eta})$ satisfies (\ref{eq-error-R}). Using again that 
$\int w' w^p\, dx = 0$, we obtain 
\bee
\bs
\inti  M_{32}  \pp_xw_2 \frac{w_2^{p-1}}{ z^{p-1}} \, dx &= p\, (1+\eta)^{p-1}\, \dot{\eta}\inti  \bar{w} \, w' \, w^{p-1}\, dx \\
&= p\, \dot{\eta}\inti w(x-2\xi) \, w' w^{p-1}\, dx = \mathcal{R}_2(\xi,\dot{\xi},\eta,\dot{\eta}).
\end{split}
\eee
\no Finally,
\bee
\bs
\int M_{33} \, \pp_x w_2 \, \frac{w_2^{p-1}}{ z^{p-1}}\, dx  &= p(1+\eta)^{p-1}\, \dot{\xi}\int (w'(x) - w'(x-2\xi))\, w' \,  
w^{p-1}\, dx \\
&= p\, \dot{\xi} \inti (w')^2 w^{p-1}\, dx + \mathcal{R}_2(\xi,\dot{\xi},\eta,\dot{\eta}).
\end{split}
\eee
\no Combining the above estimates we conclude the bound 
\be\label{eq-form-c200}
\bs
\inti  M \,  \pp_x w_2 \, \frac{w_2^{p-1}}{ z^{p-1}} \, dx  &=  - p\, \dot{\xi}\, \inti (w')^2 \, w^{p-1}\, dx \\
&\quad  -e^{-2\xi} \int_0^\infty  w^p \, e^{-x}\,  dx + \mathcal{R}_2(\xi,\dot{\xi},\eta,\dot{\eta}).
\end{split}
\ee

\no Combining (\ref{eq-leading-E}), (\ref{eq-form-c200}) and (\ref{eq-error-R}) finishes the proof of the Lemma.
\end{proof}

As an  immediate Corollary of the previous lemma we obtain: 

\begin{corollary}
\label{cor-system} Set $Q(\psi) := E(\psi) - Mz^{1-p}$. 
With the same  notation as in  Lemma \ref{lem-projection}, equations \eqref{eqn-Epsio1} and \eqref{eqn-Epsio2}
are equivalent to the system 
\begin{equation}
\label{eq-eta}
\dot{\eta} - \frac{p-1}{p}\eta - a\,  e^{-2\xi} = \mathcal{R}_1(\xi,\dot{\xi},\eta,\dot{\eta}) + G_1(\psi,\xi,\eta)
\end{equation}
and
\begin{equation}
\label{eq-xi}
\dot{\xi} + b \, e^{-2\xi} = \mathcal{R}_2(\xi,\dot{\xi},\eta,\dot{\eta}) + G_2(\psi,\xi,\eta)
\end{equation}
where
$$G_1(\psi,\xi,\eta) := c_1^{-1}  \inti Q(\psi) \, w^p(x+\xi) \, dx$$
and 
$$G_2(\psi,\xi,\eta) := c_2^{-1}  \inti  Q(\psi) \, w'(x+\xi) \, w^{p-1} (x+\xi) \, dx.$$
The error terms $\mathcal{R}_i(\xi,\dot{\xi},\eta,\dot{\eta})$ satisfy  
$$\|\mathcal{R}_i(\xi,\dot{\xi},\eta,\dot{\eta})\|_{\sigma,t_0}^{1+\gamma} \le C, \qquad \mbox{for} \,\,\, i\in \{1,2\}.$$
\end{corollary}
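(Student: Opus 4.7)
The plan is to derive the system \eqref{eq-eta}--\eqref{eq-xi} by a direct substitution based on the decomposition $E(\psi) = z^{1-p} M + Q(\psi)$ coming from the definition \eqref{eq-Epsi}. Writing \eqref{eqn-Epsio1} as
$$
\int_{-\infty}^{\infty} z^{1-p} M \, w^p(x+\xi)\, dx \; + \; \int_{-\infty}^{\infty} Q(\psi)\, w^p(x+\xi)\, dx \;=\; 0,
$$
and \eqref{eqn-Epsio2} analogously with test function $w'(x+\xi)\, w^{p-1}(x+\xi)$, the two integrals involving $M$ are exactly those computed in Lemma \ref{lem-projection}. Substituting the expansions supplied by that lemma gives
$$
c_1 \Bigl(\dot\eta - \tfrac{p-1}{p}\eta - a\, e^{-2\xi}\Bigr) + \mathcal{R}_1(\xi,\dot\xi,\eta,\dot\eta) + \int Q(\psi)\, w^p(x+\xi)\, dx = 0
$$
and the corresponding equation for $\dot\xi + b\, e^{-2\xi}$.

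Dividing through by the nonzero universal constants $c_1$, $c_2$ and transposing then yields exactly \eqref{eq-eta}--\eqref{eq-xi}, with the projections of $Q(\psi)$ defining $G_1(\psi,\xi,\eta)$ and $G_2(\psi,\xi,\eta)$ as stated, and with the remainder terms (possibly after a sign flip and a scalar rescaling by $c_i^{-1}$) renamed as $\mathcal{R}_i(\xi,\dot\xi,\eta,\dot\eta)$. The reason for keeping $G_i$ separate from $\mathcal{R}_i$ is structural: $\mathcal{R}_i$ depends only on the parameter functions $(h,\eta)$ through $\xi$ and its derivatives, while $G_i$ encodes the dependence on $\psi=\Psi(h,\eta)$ and will later be estimated in the fixed-point step for $(h,\eta)$ via the nonlinear bounds on $C(\psi,t)$ and $N(\psi)$ from Section \ref{sec-np}.

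The norm estimate $\|\mathcal{R}_i\|_{\sigma,t_0}^{1+\gamma} \le C$ in the corollary is inherited directly from Lemma \ref{lem-projection}, since multiplication by the fixed constants $c_i^{-1}$ only changes $C$ by a harmless factor and the hypothesis $(h,\eta) \in K$ supplies precisely the control on $|\eta|$, $|\dot\eta|$, $|\dot\xi|$ and $e^{-2\xi}$ (via Remark \ref{rem-small} and $\xi = \tfrac12 \log(2b|t|) + h$) that was used in deriving the pointwise remainder bound \eqref{eq-error-R}. I do not anticipate any genuine obstacle here; the corollary is a bookkeeping consequence of Lemma \ref{lem-projection}, and the only point requiring a moment of care is ensuring that the $\psi$-dependent projection of $Q(\psi)$ is cleanly isolated as $G_i$ and not mixed into $\mathcal{R}_i$, so that the $(h,\eta)$-only bound on $\mathcal{R}_i$ is preserved.
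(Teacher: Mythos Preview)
Your proposal is correct and matches the paper's approach exactly: the paper states this as an ``immediate Corollary'' of Lemma~\ref{lem-projection} with no further argument, and what you have written is precisely the bookkeeping that makes this immediate. Your observation that $\mathcal{R}_i$ may absorb a sign flip and the factor $c_i^{-1}$ is on target and is consistent with how the paper silently reuses the symbol $\mathcal{R}_i$ in the corollary.
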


\begin{remark}
If we look at the proof of Lemma \ref{lem-projection} we can trace all the error terms we have denoted by $\mathcal{R}_i(\xi,\dot{\xi},\eta,\dot{\eta})$. 
\no Observe that 
\be
\label{eq-ppR}
\bs
\big|\pp_{\eta}\mathcal{R}_i\big| + \big|\pp_{\xi}\mathcal{R}_i\big| &\le Ce^{-2\xi}(e^{-2\gamma\xi} + |\dot{\xi}| + |\eta| + |\dot{\eta}| + 1)\\
\big|\pp_{\dot{\eta}}\mathcal{R}_i\big| + \big|\pp_{\dot{\xi}}\mathcal{R}_i\big| &\le C(e^{-2\xi} + |\eta|)
\end{split}
\ee
\end{remark}

\medskip

Our strategy in solving the system \eqref{eq-eta}-\eqref{eq-xi}  is as follows:  For any given  
$\xi := \frac 12 \log (2b \,|t|) + h$ with $\| h \|_{1,\sigma,t_0}^{\mu,\mu+1} \leq 1$,  we will first  find a solution 
$\eta(\xi)$ to (\ref{eq-eta}) by the Fixed Point Theorem. The existence of $(\xi,\eta)$ will be 
given by plugging    $\eta(\xi)$  in (\ref{eq-xi}) and applying  the Fixed Point Theorem once more. 

\subsection{Solving for $\eta$}
\label{sec-eta}

In this section we will fix a function $h$ on $(-\infty,t_0]$ with $\| h \|_{1,\sigma,t_0}^{\mu,\mu+1} \leq 1$ and  solve the equation
\begin{equation}
\label{eq-eta-100}
\dot{\eta} - \frac{p-1}{p}\eta = F_h(\eta,\dot{\eta},\psi),
\end{equation}
with 
\begin{equation}
\label{eqn-Fhh}
F_h(\eta,\dot{\eta},\psi) := a e^{-2\xi} + \mathcal{R}_1(\xi,\dot{\xi},\eta,\dot{\eta}) + G_1(\psi,h,\eta)
\ee
 and $\mathcal{R}_1(\xi,\dot{\xi},\eta,\dot{\eta})$, $G_1(\psi,h,\eta)$ are as in Corollary \ref{cor-system}. Recall that
 for any given $(h,\eta) \in K$,  $\psi=\Psi(h,\eta)$ is the solution of \eqref{eqn-fp}, which was proved in Proposition \ref{prop-fp}. 
 For simplicity we will denote, most of the time,  those functions as $F_h$, $R_1$ and $G_1$ respectively. 

\medskip

Let  $\la = \frac {p-1}p$. A function  $\eta$ is a solution of equation \eqref{eq-eta-100} on $(-\infty,t_0]$ 
if 
\begin{equation}\label{eqn-AAA}
A(\eta)(t) := -\int_t^{t_0} e^{\la (t-s)} F_h(\eta(s), \dot{\eta}(s),\psi(s) )\, ds
\end{equation}
satisfies $A(\eta)=\eta$. We have the following result. 

\begin{prop}
\label{prop-eta-contr}
For any  fixed $h\in K$   there is  an $\eta = \eta(h) \in K$ so that $A(\eta) = \eta$. Moreover, for any $h_1, h_2 \in K$, we have  
\begin{equation}\label{eqn-etah2}
\|\eta(h^1) - \eta(h^2)\|^1_{1,\sigma,t_0} \le C\, |t_0|^{-\delta}\, \|h^1 - h^2\|^{\mu,\mu+1}_{1,\sigma,t_0}
\end{equation}
where $t_0<0$ and $C$ are  universal constants  and $\delta>0$ is a small constant depending on $\mu$ and $\nu$.

\end{prop}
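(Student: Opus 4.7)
The plan is to solve the fixed point problem $A(\eta)=\eta$ by Banach's contraction principle applied to the ball $K$, exploiting that $\lambda:=(p-1)/p>0$ makes the linear part of \eqref{eq-eta-100} invertible with a uniformly bounded integral inverse. The change of variable $u=s-t$ in \eqref{eqn-AAA} gives
\[
A(\eta)(t) \;=\; -\int_{0}^{t_0-t} e^{-\lambda u}\, F_h\!\bigl(\eta(u+t),\dot\eta(u+t),\psi(u+t)\bigr)\,du,
\]
so that if $|F_h(s)|\le M/|s|$ then the bulk of the mass is at $u=O(1)$ and yields $|A(\eta)(t)|\le C\,M/|t|$, with the tail $u\gtrsim|t|/2$ exponentially small. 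Thus the operator $A$ acts essentially as $-F_h/\lambda$ up to fast-decaying corrections, which is the heuristic behind the choice of norm in $K$.

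First I would prove $A(K)\subset K$. Split $F_h = a\,e^{-2\xi} + \mathcal R_1 + G_1$. For $h\in K$ with $\|h\|^{\mu,1+\mu}_{1,\sigma,t_0}\le 1$ we have $\xi=\tfrac12\log(2b|t|)+h$ with $|h|\le|t_0|^{-\mu}$, so $|a\,e^{-2\xi}|\le C/|t|$. The remainder $\mathcal R_1$ is controlled by \eqref{eq-error-R}, yielding $|\mathcal R_1|\le C|t|^{-(1+\gamma)}$ for $\eta\in K$. For $G_1=c_1^{-1}\int Q(\psi) w^p(\cdot+\xi)\,dx$, we use that $\psi=\Psi(h,\eta)\in\Lambda$ from Proposition \ref{prop-fp} and that $Q(\psi)=E(\psi)-Mz^{1-p}$ satisfies $\|Q(\psi)\|_{*,\sigma,t_0}^\nu\le C|t_0|^{1/2-\nu}$ by \eqref{eqn-Qpsi}, giving a pointwise bound on the projection of the form $|G_1(t)|\le C|t|^{-(1+\gamma')}$ for some $\gamma'>0$. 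Applying the kernel estimate above, $\|A(\eta)\|^1_{\infty,t_0}\le C_0/2$, and using the identity $\dot A(\eta)=\lambda A(\eta)+F_h$, the bound for $\|\dot A(\eta)\|^1_{\sigma,t_0}$ follows. Choosing $C_0$ as in \eqref{defn-K} and $|t_0|$ large gives $A(K)\subset K$.

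Second, I would prove contraction. For $\eta^1,\eta^2\in K$,
\[
A(\eta^1)-A(\eta^2) = -\int_t^{t_0} e^{\lambda(t-s)}\bigl[F_h(\eta^1,\dot\eta^1,\Psi(h,\eta^1))-F_h(\eta^2,\dot\eta^2,\Psi(h,\eta^2))\bigr]\,ds.
\]
The partial derivative bounds \eqref{eq-ppR} control the $\mathcal R_1$ increment, while the $G_1$ increment is handled by \eqref{eqn-contr2}, which makes $\Psi(h,\cdot)$ Lipschitz in $\eta$ with constant $C|t_0|^{-1+\nu}$. Together these give $|F_h(\eta^1)-F_h(\eta^2)|\le C|t_0|^{-\delta}\,|t|^{-1}\,(|\eta^1-\eta^2|+|\dot\eta^1-\dot\eta^2|)$ for some $\delta>0$ depending on $\mu,\nu$. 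Integrating and again using $\dot A=\lambda A+F_h$ produces $\|A(\eta^1)-A(\eta^2)\|^1_{1,\sigma,t_0}\le\tfrac12\|\eta^1-\eta^2\|^1_{1,\sigma,t_0}$ once $|t_0|$ is large. Banach's theorem then produces the unique fixed point $\eta=\eta(h)\in K$.

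Finally, for \eqref{eqn-etah2} I would decompose $F_{h^1}(\eta^1)-F_{h^2}(\eta^2)=[F_{h^1}(\eta^1)-F_{h^1}(\eta^2)]+[F_{h^1}(\eta^2)-F_{h^2}(\eta^2)]$: the first piece is absorbed using the contraction estimate, the second is controlled by Lipschitzianity in $h$ of $a\,e^{-2\xi}$ (direct), of $\mathcal R_1$ (via \eqref{eq-ppR}), and of $G_1$ (via \eqref{eqn-contr1} applied to $\Psi(\cdot,\eta)$). Combining produces
\[
\|\eta^1-\eta^2\|^1_{1,\sigma,t_0} \le \tfrac12\|\eta^1-\eta^2\|^1_{1,\sigma,t_0}+C|t_0|^{-\delta}\|h^1-h^2\|^{\mu,\mu+1}_{1,\sigma,t_0},
\]
from which \eqref{eqn-etah2} follows by absorbing. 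The main obstacle is the careful conversion of the weighted $\|\cdot\|^\nu_{*,\sigma,t_0}$ bounds on $Q(\psi)$ into pointwise-in-$t$ decay of its scalar projections $G_1,G_2$, together with Lipschitz analogues with respect to both $\eta$ and $h$; the exponential decay of $w^p$ and $w'w^{p-1}$ in the projections provides enough localization for this conversion, but keeping all decay rates consistent with the definition of $K$ is the delicate bookkeeping step.
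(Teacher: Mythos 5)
Your overall plan is the same as the paper's: split $F_h = a e^{-2\xi} + \mathcal R_1 + G_1$, use the integral kernel $\int_t^{t_0} e^{\lambda(t-s)}\cdot ds$ with $\lambda=(p-1)/p>0$, quantify the invariance $A(K)\subset K$ by isolating the leading piece $a e^{-2\xi}$, obtain contraction from \eqref{eq-ppR} plus the Lipschitz estimates \eqref{eqn-contr1}--\eqref{eqn-contr2} of Proposition \ref{prop-fp}, and prove \eqref{eqn-etah2} by the standard decomposition and absorption. However, your proposal hinges on obtaining \emph{pointwise-in-time} bounds such as $|\mathcal R_1(t)|\le C|t|^{-(1+\gamma)}$ and $|G_1(t)|\le C|t|^{-(1+\gamma')}$, and you explicitly frame the ``main obstacle'' as converting the weighted $L^\sigma$-in-time norm on $Q(\psi)$ into pointwise decay of the scalar projections, claiming the spatial exponential localization of $w^p,w'w^{p-1}$ suffices for this. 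That conversion is not possible: the error terms $\mathcal R_1$ and $G_1$ contain $\dot h$, $\dot\eta$, and $\psi_t$, which belong only to $L^\sigma$ on unit time intervals (that is what the norms in Definitions \ref{defn2}, \ref{defn-w2p2} and \ref{dfn-heta} control), so no pointwise-in-$t$ decay is available no matter how localized the projections are in $x$. The spatial localization controls the $x$-integration, not the $t$-regularity.

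The fix is not an extra idea but a reformulation: the kernel estimate \eqref{eq-Ainfty} in the paper is set up precisely so that only $L^\sigma$ (in fact $L^2$) bounds on unit time intervals are needed, by splitting the tail integral $\int_\tau^{t_0}$ into unit subintervals and applying H\"older on each. With that, one needs only the weighted-in-$\sigma$ estimates $\|\mathcal R_1\|^{1+\gamma}_{\sigma,t_0}\le C$ (Corollary \ref{cor-system}) and $\sup_\tau|\tau|\|G_1\|_{L^\sigma(I_\tau)}\le C|t_0|^{-\delta}$, which do follow from \eqref{eqn-Qpsi} by H\"older in $x$ against the weight $\alpha_\sigma$. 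Your identity $\dot A(\eta)=\lambda A(\eta)+F_h$ is correct and is indeed how one passes to the full $\|\cdot\|^1_{1,\sigma,t_0}$ norm. So your route is structurally right and leads to the same constants, but the asserted pointwise-in-$t$ bounds on $\mathcal R_1$ and $G_1$ should be replaced by $L^\sigma$-on-unit-intervals bounds, and the kernel estimate should be written to accept those weaker inputs.
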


\begin{proof} Let $A$ be the operator defined by \eqref{eqn-AAA}.

\no {\bf (a)} {\it There exist a universal  constant   $t_0 < 0$, so that $K$ is invariant under $A$,
namely  $A(K)  \subset K$.} 

We will first show that for $\sigma=n+2$ and $|t_0|$ sufficiently large, we have 
\begin{equation}
\label{eq-Ainfty}
\sup_{\tau \leq  t_0} |\tau|\,  |A(\eta)| \le C\|F_h\|^1_{2,t_0}  \leq C \,  \|F_h\|^1_{\sigma,t_0}.
\end{equation}
\no Indeed, if $t_0 < -1$ then for $\tau < t_0$, 
\bee
\bs
|\tau||\, A(\eta)| &\le C\, e^{\la \tau}\, |\tau| \,\sum_{j=0}^{[t_0-\tau]}\int_{\tau+j}^{\tau+j + 1} e^{- \la s}\,  |F_h(s)|\, ds \\
&\le C\, \|F_h\|^1_{2,t_0} \, e^{\la \tau}\, |\tau|\, \sum_{j=0}^{[t_0-\tau]}\frac{e^{-\la  (\tau+j)}}{|\tau+j |} \\
&\le C\, \|F_h\|^1_{2,t_0} \, e^{\la \tau}\, |\tau|\, \int_\tau^{t_0} \frac{e^{- \la s}}{|s|}  \, ds \leq  C\, \|F_h\|^1_{2,t_0}
\end{split}
\eee
since
\bee
e^{\la \tau}\, |\tau|\,  \int_t^{t_0} \frac{e^{- \la s}}{|s|} ds \leq C
\eee
for a uniform constant $C$. Denoting for simplicity by $I_\tau=[\tau,\tau+1]$ and  using (\ref{eq-Ainfty}), observe that 
\be
\label{eq-const1011}
\|A(\eta)\|^1_{1,\sigma,t_0} = \sup_{\tau\le t_0-1} \ |\tau|\, \|A(\eta) \|_{L^{\sigma}(I_{\tau})} + \sup_{\tau\le t_0-1} |\tau|\, \|\frac{d}{dt}A(\eta) \|_{L^{\sigma}(I_{\tau})} \leq   C\, \|F_h\|^1_{\sigma,t_0},
\ee
where $C$ is a uniform constant.
Next  we want to use the previous estimate to show that $A(K) \subset K$, for an appropriately chosen constant $C_0$.  
By  \eqref{eqn-Fhh} we have  
$$\|F_h\|^1_{\sigma,t_0} := \sup_{\tau \leq t_0-1} |\tau |  \| F_h \|_{L^{\sigma}(I_\tau)} 
 \le \sup_{\tau\le t_0-1} |\tau| \big (a\,  \|  e^{-2\xi} \|_{L^{\sigma}(I_\tau)}  + \| \mathcal{R}_1 \|_{L^{\sigma}(I_\tau)}  + \|  G_1 \|_{L^{\sigma}(I_\tau)} \big ).$$
Since  $\xi = \frac 12 \log (2b\, |t|) + h$  with $ \|h\|^{\mu,1+\mu}_{1,\sigma,t_0} \le 1$, we have   
$|\tau|^\mu \,  \| h \|_{L^\infty(I_\tau)} \leq 1.$ 
Hence,  for $\tau < t_0-1$, 
$$a \, |\tau|\|  e^{-2\xi} \|_{L^{\sigma}(I_\tau)} \le \frac {a }{2b}  \|  e^{-2h} \|_{L^\infty(I_\tau)} \leq \frac{a}{b}.$$
provided that $|t_0|$ is chosen sufficiently large so that 
$\|  e^{-2h} \|_{L^\infty(I_\tau)} \leq 2$. 
Set
\be \label{eqn-defnco}
C_0:= \frac{2a}{Cb},
\ee
where $C$ is the same constant as in \eqref{eq-const1011} and constants $a$ and $b$ are defined in \eqref{eqn-ab}. This implies  $C_0$ is a universal constant
depending only on the dimension $n$. 
We claim that
\be \label{eqn-Fh}
\|F_h\|^1_{\sigma,t_0} \le \frac{C_0}{2} + C\, |t_0|^{-\delta}, \,\,\, \delta > 0.
\ee
To show this claim, we recall that  by   Corollary \ref{cor-system} we have
$$\sup_{\tau\le t_0-1} |\tau | \,  \| \mathcal{R}_1 \|_{L^{\sigma}(I_\tau)} 
\le C\, |t_0|^{-\gamma}$$
\no so we only need to  show that 
\bee
\sup_{\tau\le t_0-1} |\tau| \|  G_1 \|_{L^{\sigma}(I_\tau)}  \leq  C \, |t_0|^{-\delta},
\eee
for some $\delta > 0$.
To this end, we recall that 
$$
\|  G_1 \|_{L^{\sigma}(I_\tau)}  = \left ( \int_\tau^{\tau+1}
\left (\inti Q(\psi)\, w_2^p\, dx \right  )^{\sigma}\, dx \right )^{\frac{1}{\sigma}} 
$$
where
$Q(\psi) := E(\psi) - z^{1-p} \, M$ is given in \eqref{eq-Epsi}. To establish the above bound, we estimate term by term
similarly as in the proof of Lemma \ref{lem-N}. For example,  for the term  $z^{1-p} \, N_{13}$
which is  given in \eqref{eqn-N7} and satisfies the estimate \eqref{eqn-N17}, if we also use that $w_2 \leq z$,  we have
\bee
\bs
 \left(\int_{\tau}^{\tau+1} \left  (\inti N_{13} \frac{w_2^p}{z^{p-1}} \,  dx  \right)^{\sigma}\, dt\right)^{1/\sigma} \le 
 C\left(\int_{\tau}^{\tau+1}\left(\inti 
|\psi|\big( |\psi_t| + |\dot{\xi}|+|\dot{\eta}|\, |\psi| \big)\, w_2^{p-1} \, dx\right)^{\sigma}\, dt\right)^{1/\sigma} 
\end{split}
\eee

\no Recalling that $2\beta=p-1$ and using the bounds $\| \psi \|_{L^\infty (\R \times [\tau,\tau+1])}  \leq |\tau|^{-\nu} \| \psi \|_{*,2,\sigma,t_0}^\nu \leq 1$, $w_2 \leq z$ 
together with  H\"older's inequality we obtain  
\bee
 \left(\int_{\tau}^{\tau+1}  \left  (\inti N_{13} \frac{w_2^p}{z^{p-1}} \,  dx  \right)^{\sigma}\, dt\right)^{1/\sigma} \le  
 C\left(\int_{\tau}^{\tau+1}  \left
 (\int_{|x|\ge\xi} |\psi|\, |\psi_t| \, z^{\frac{n\beta-\sigma}{\sigma}} \, w_2^{p-\frac{n\beta}{\sigma}}\, dx\right)^{\sigma}\, dt\right)^{1/\sigma} \qquad \qquad \qquad 
\eee
\bee
\quad +  \, C\left(\int_{\tau}^{\tau+1}  \left  (\int_{|x|\le\xi} |\psi|\, |\psi_t| \, z^{2\beta+\theta} \, w_2^{-\theta}\, dx\right )^{\sigma} \, dt  \right)^{1/\sigma} +  C\, |\tau|^{-1-\nu} 
\eee
\bee \le C\left(\int_{\tau}^{\tau+1}  \inti |\psi_t|^{\sigma}\alpha_{\sigma}\, dx\, dt\right)^{1/\sigma}(|\tau|^{-\nu} + |\tau|^{-\nu+\theta}) + C\, |\tau|^{-1-\nu} 
\eee
\bee \le C\, |\tau|^{-2\nu + \theta}  \le C\, |\tau|^{-1-\delta}
\eee
for some $\delta > 0$ and sufficiently small $\theta$.  Similar bounds hold for all the other terms which readily give  the bound for $\|  G_1 \|_{L^{\sigma}(I_\tau)}$.  

The above discussion establishes the bound \eqref{eqn-Fh}. Using this bound, we finally obtain 
$$\|A(\eta)\|^1_{1,\sigma,t_0}  \le C\|F_h\|^1_{\sigma,t_0} \le \frac{C_0}{2} + C|t_0|^{-\delta} \le \frac{2}{3}\, C_0 < C_0$$
provided that  $|t_0|$ is sufficiently large. We conclude that   $A(K) \subset K$,  where
$C_0$ is the universal constant on the right hand side of \eqref{eqn-Fh}, finishing the proof of  {\bf (a)}.

\smallskip

\no {\bf (b)} {\it There exists a    universal constant   $t_0 < 0$,  for which $A: K \to K$ defines a contraction map.}

Since $h$ is fixed,  we only  write in $F_h, \mathcal{R}_1$ and $G_1$
their dependence on $\psi$ and  $\eta$. As in part {\bf (a)}, for every $\eta^1, \eta^2 \in K$, if $\psi^i=\Psi(h,\eta^i)$, we have
\be\label{eq-contr-A}
\bs
&\|A(\eta^1) - A(\eta^2)\|^1_{1,\sigma,t_0} \le C\, \|F_h(\eta^1,\dot{\eta}^1,\psi^1) - 
F_h(\eta^2,\dot{\eta}^2,\psi^2)\|^1_{L^{\sigma}_{t_0}} \\
&\le C\, \big(\|\mathcal{R}_1(\eta^1,\dot{\eta^1}) - \mathcal{R}_1(\eta^2,\dot{\eta^2})\|^1_{L^{\sigma}_{t_0}} + \|G_1(\eta^1,\psi^1) - G_1(\eta^2,\psi^2)\|^1_{L^{\sigma}_{t_0}} \big ) 
\end{split}
\ee
Observe first, using  \eqref{eq-ppR} that
\bee
\bs
 |\mathcal{R}_1(\eta^1&,\dot{\eta}^1)  - \mathcal{R}_1(\eta^2,\dot{\eta}^2)| \le
C\, \left|\int_{\eta^1}^{\eta^2} \pp_{\eta} \mathcal{R}_1(\eta,\dot{\eta}^1)\, d\eta\right| + \left |\int_{\dot{\eta}^1}^{\dot{\eta}^2}\pp_{\dot{\eta}}\mathcal{R}(\eta^2,\dot{\eta})\, d{\dot{\eta}}\right| \\
&\le C\left(\int_{\eta^1}^{\eta^2}  e^{-2\xi} \, (e^{-2\gamma \xi} +  |\dot{\xi}| + |\eta| + |\dot{\eta}| + 1)\, d\eta + \int_{\dot{\eta}^1}^{\dot{\eta}^2}(e^{-2\xi} + |\eta^2|)\, d\dot{\eta}\right)
\end{split}
\eee
implying that
\be
\label{eq-est-R}
\|\mathcal{R}_1(\eta^1,\dot{\eta^1}) - \mathcal{R}_1(\eta^2,\dot{\eta^2})\|^1_{\sigma,t_0} \le    \frac{C}{|t_0|} \, \|\eta^1 - \eta^2\|^1_{1,\sigma,t_0}.
\end{equation}
Furthermore, we claim,
$$\|G_1(\eta^1,\psi^1) - G_1(\eta^2,\psi^2)\|^1_{\sigma,t_0} \leq \|G_1(\eta^1,\psi^1) - G_1(\eta^2,\psi^1)\|^1_{\sigma,t_0}  + \|G_1(\eta^2,\psi^1) - G_1(\eta^2,\psi^2)\|^1_{\sigma,t_0} $$
where
\be
\label{eq-G1-est}
 \|G_1(\eta^1,\psi^1) - G_1(\eta^2,\psi^1)\|^1_{\sigma,t_0}  
\le C\, |t_0|^{-2\nu+\theta}\, \|\eta^1 - \eta^2\|^1_{1,\sigma,t_0}
\ee
and
\be
\label{eq-G1-est1}
\|G_1(\eta^2,\psi^1) - G_1(\eta^2,\psi^2)\|^1_{\sigma,t_0}   \le C\,|t_0|^{1-2\nu + \theta} \| \psi^1 - \psi^2 \|_{*,2,\sigma,t_0}^\nu  \leq C \, |t_0|^{-2\nu + \theta} \,  \|\eta^1 - \eta^2\|^1_{1,\sigma,t_0}. 
\ee

\no To establish \eqref{eq-G1-est}, as in part {\bf (a)} let us  look at the term coming from $z^{1-p} \, N_{13}(\psi)$. Using the estimate of this term 
obtained in the proof of Lemma \ref{lem-E3} and a similar analysis as  in part {\bf (a)},  we obtain  
$$\sup_{\tau\le t_0-1}|\tau| \left(\int^{\tau+1}_{\tau}\left (\int \big (N_{13}(\psi^1)(\eta^1) - N_{13}(\psi^1)(\eta^2) \big ) \frac{w_2^p}{z^{p-1}} \, dx\right)^{\sigma}\, dt\right)^{\frac{1}{\sigma}} \le C\, |t_0|^{-2\nu+\theta} \, \|\eta^1 - \eta^2\|^1_{1,\sigma,t_0},$$
where we used that $\|\psi^1 \|_{*,2,\sigma,t_0}^\nu \leq 1$ and that $(h, \eta_i) \in K$. 
All other terms in (\ref{eq-G1-est}) can be estimated similarly so the estimate (\ref{eq-G1-est}) follows. 
To establish \eqref{eq-G1-est1} one argues similarly as above using the  established bounds in the proof of Lemma \ref{lem-E2}
and in \eqref{eqn-contr2}.
Recall that $\nu >1/2$ and $\theta >0$ is small.  Then combining (\ref{eq-contr-A})- (\ref{eq-G1-est1}) and taking $|t_0|$ sufficiently large  yields to
the contraction bound 
\be\label{contr-eta5}
\|A(\eta^1) - A(\eta^2)\|^1_{1,\sigma,t_0} \le \frac{1}{2}\, \|\eta^1 - \eta^2\|^1_{1,\sigma,t_0}.
\ee
This finishes the proof of part {\bf (b)}. 

\smallskip
Having {\bf (a)} and {\bf (b)}, we may apply the  Fixed Point Theorem to the operator $A : K \to K$ to conclude the existence of an $\eta = \eta(h) \in K$ so that $A(\eta) = \eta$.

%
%

\medskip

\no {\bf (c)}{ \it For any $h_1, h_2 \in K$, \eqref{eqn-etah2} holds}.

\smallskip

Since $A(\eta)=\eta$, we have   $\eta(h) = \int_t^{t_0} e^{\lambda(t-s)} F_h(s)\, ds$. Hence, similarly as in
the proof of \eqref{eq-Ainfty}, 
$$\sup_{\tau\le t_0-1} |\tau| |\eta(h^1)-\eta(h^2)| \le C\, \|F_{h_1} - F_{h_2}\|^1_{2,t_0} \leq 
C\, \|F_{h_1} - F_{h_2}\|^1_{\sigma,t_0}$$
which  yields 
\begin{equation}
\label{eq-contr-eta-h}
\|\eta(h^1) - \eta(h^2)\|^1_{1,\sigma,t_0} \le   C\, \|F_{h_1} - F_{h_2}\|^1_{\sigma,t_0}.
\end{equation}
Recall that $F_h = a e^{-2\xi} + \mathcal{R}_1(\xi,\dot{\xi},\eta,\dot{\eta}) + G_1(\psi,h,\eta)$ and  $\xi=\xi_0+h$ with 
$\xi_0(t)=\frac 12 \log (2b\, |t|)$ and $|h|$ small. Applying that to $h^1$ and $h^2$ we get
$$\|e^{-2(\xi_0 + h^1)} - e^{-2(\xi_0 + h^2)}\|^1_{\sigma,t_0} \le C\, \sup_{\tau\le t_0-1}\left (\int^{\tau+1}_{\tau} |h^1-h^2|^\sigma\, dt\right)^{1/\sigma} \le |t_0|^{-\mu}\, \|h^1 - h^2\|^{\mu}_{\sigma,t_0}.$$
Next, using (\ref{eq-ppR}) and the notation $\eta^i=\eta(h^i)=\eta(\xi^i)$, where $\xi^i=\frac 12 \log (2b\, |t|) + h^i$, 
 we have
\bee\label{eq-contr-R1-h}
\bs
& \qquad \qquad \qquad \qquad \qquad\|\mathcal{R}_1(\xi^1,\dot{\xi}^1,\eta^1,\dot{\eta}^1) - \mathcal{R}_1(\xi^2,\dot{\xi}^2,\eta^2,\dot{\eta}^2)\|^1_{\sigma,t_0} \\
&\quad \qquad \qquad \le C\, \big\| \int_{\xi_1}^{\xi^2} \big |\frac{\partial\mathcal{R}_1}{\partial\xi}(\xi,\dot{\xi}^1,\eta^1,\dot{\eta}^1) \big |\, d\xi + \int_{\dot{\xi}^1}^{\dot{\xi}^2}\big |\frac{\partial\mathcal{R}_1}{\partial\dot{\xi}}(\xi^2,\dot{\xi},\eta^1,\dot{\eta}^1)\big  |\, d\dot{\xi}\\
& \quad \qquad \qquad \quad + \int_{\eta^1}^{\eta^2}\big  | \frac{\partial\mathcal{R}_1}{\partial\eta}(\xi^2,\dot{\xi}^2,\eta,\dot{\eta}^1)\big |\, d\eta + \int_{\dot{\eta}^1}^{\dot{\eta}^2}\big |\frac{\partial\mathcal{R}_1}{\partial\dot{\eta}}(\xi^2,\dot{\xi}^2,\eta^2,\dot{\eta})\big |\, d\dot{\eta}\, \big \|_{\sigma,t_0}^1 \\
&\le C\, \big (|t_0|^{-\mu}\|h^1-h^2\|^{\mu}_{\sigma,t_0} + |t_0|^{-1}\|\dot{h}^1-\dot{h}^2\|^{\mu+1}_{\sigma,t_0} + |t_0|^{-1}\|\eta^1-\eta^2\|^1_{\sigma,t_0} + |t_0|^{-1}\|\dot{\eta}^1 - \dot{\eta}^2\|^1_{\sigma,t_0}\big )  \\
&\qquad \qquad \qquad \qquad \qquad \le C\, |t_0|^{-\delta}\big(\|h^1-h^2\|^{\mu,1+\mu}_{1,\sigma,t_0} + \|\eta(h^1)-\eta(h^2)\|^1_{1,\sigma,t_0}\big)
\end{split}
\eee
if  $\delta <  \min\{1, \mu\}$. Finally, similarly to the estimates obtained in the proof of Proposition \ref{prop-eta-contr} and Lemma \ref{lem-E3}, if we denote for simplicity  $\psi^i = \psi(h^i,\eta^i)$
with $\eta^i=\eta(h^i)$, using (\ref{eq-G1-est}) and (\ref{eq-G1-est1}) we have
\bee
\label{eq-G1-contr-h}
\bs
\|G_1(\psi^1,&h^1,\eta^1) - G_1(\psi^2,h^2,\eta^2)\|^1_{\sigma,t_0} \le  \|G_1(\psi^1,h^1,\eta^1) - G_1(\psi^1,h^2,\eta^1)\|^1_{\sigma,t_0} \\&+ \|G_1(\psi^1,h^2,\eta^1) - G_1(\psi^1,h^2,\eta^2)\|^1_{\sigma,t_0}
+  \|G_1(\psi^1,h^2,\eta^2) - G_1(\psi^2,h^2,\eta^2)\|^1_{\sigma,t_0} \\
&\le C  \left(|t_0|^{1-2\nu-\mu+\theta}\|h^1-h^2\|^{\mu,1+\mu}_{1,\sigma,t_0} + |t_0|^{-2\nu+\theta}\|\eta^1-\eta^2\|^1_{1,\sigma,t_0} + |t_0|^{1-2\nu+\theta}\|\psi^1-\psi^2\|^{\nu}_{*,2,\sigma,t_0}\right)  \\
&\le C\big(|t_0|^{1-2\nu-\mu+\theta}\|h^1-h^2\|^{\mu,1+\mu}_{1,\sigma,t_0} + |t_0|^{-2\nu+\theta}\|\eta^1-\eta^2\|^1_{1,\sigma,t_0} \\
&+ |t_0|^{1-2\nu+\theta}\big(\|\psi(h^1,\eta(h^1)) - \psi(h^2,\eta(h^1))\|^{\nu}_{*,2,\sigma,t_0}  + \|\psi(h^2,\eta(h^1)) - \psi(h^2,\eta(h^2))\|^{\nu}_{*,2,\sigma,t_0}\big)\big) \\
&\le  C \left(|t_0|^{1-2\nu-\mu+\theta}\, \|h^1-h^2\|^{\mu,1+\mu}_{1,\sigma,t_0} + |t_0|^{-\nu+\theta}\|\eta^1 - \eta^2\|^1_{1,\sigma,t_0}\right)
\end{split}
\eee
where we have used (\ref{eqn-contr1}) and (\ref{eqn-contr2}). For the above estimate we only need to check that
$$\|G_1(\psi^1,h^1,\eta^1) - G_1(\psi^1,h^2,\eta^1)\|^1_{\sigma,t_0} \le C|t_0|^{1-2\nu-\mu+\theta}\|h^1-h^2\|^{\mu,1+\mu}_{1,\sigma,t_0}.$$
Indeed, if we pick the term $z^{1-p}N_{13}$, using the estimates from the proof of Lemma \ref{lem-E3} we have
\begin{equation*}
\begin{split}
&  \sup_{\tau\le t_0} |\tau| \left(\int^{\tau + 1}_{\tau}\left(\int_{-\infty}^{\infty}|N_{13}(\psi^1,\eta^1)(h^1) - N_{13}(\psi^1,\eta^1)(h^2)| w_2^p z^{1-p}\, dx\right)^{\sigma}\, dt\right)^{1/\sigma} \\
&\le
C\sup_{\tau\le t_0}|\tau|\left(\int^{\tau+1}_{\tau}\left(\inti |\psi||h^1 - h^2|(|\psi_{\tau}| + |\psi|(|\dot{\xi}_1| + |\dot{\xi}_2| + |\dot{\eta}|))w_2^{p-1}\, dx\right)^{\sigma}\, dt\right)^{1/\sigma} \\
&\le C\, |t_0|^{1-2\nu-\mu + \theta}\|h^1 - h^2\|_{1,\sigma,t_0}^{\mu,1+\mu}.
\end{split}
\end{equation*}

\no Combining the estimates from above yields to 
$$\|F_{h_1} - F_{h_2}\|^1_{2,t_0} \leq C\, \left(|t_0|^{-\delta} \|h^1-h^2\|^{\mu,1+\mu}_{1,\sigma,t_0} + |t_0|^{-\nu+\theta}\|\eta^1 - \eta^2\|^1_{1,\sigma,t_0}\right),$$
where $\delta < \min\{1, \mu, 1-2\nu-\mu+\theta\}$.
Hence by  \eqref{eq-contr-eta-h}  and  choosing $|t_0|$ sufficiently large  we conclude the  bound  \eqref{eqn-etah2}, 
finishing the proof of {\bf (c)}.
\end{proof}

\subsection{Solving for $\xi$}

Recall that  $\xi(t) = \xi_0(t) + h(t)$, $\xi_0(t) = \frac 12 \log (2b\, |t|)$. Moreover, $\xi_0(t)$ is a solution to
the homogeneous part of equation \eqref{eq-xi}, namely 
$$\dot \xi_0 + b\,e^{-2\xi_0} =0.$$
Using this last equation, we  may rewrite the equation (\ref{eq-xi})  the following way
\begin{equation}
\label{eq-h}
\dot{h} + \frac{1}{t}\, h = F(h)
\end{equation}
where $$F(h) := G_3(h) + \mathcal{R}_2(\xi,\dot{\xi},\eta,\dot{\eta}) + G_2(\psi,h,\eta)$$ 
with $G_2(\psi,h,\eta)$  as in Corollary \ref{cor-system} and
\begin{equation}
\label{eq-G3}
G_3(h) := - \big (  \mathcal{G}(\xi_0+h) - \mathcal{G}(\xi_0 ) - \mathcal{D}_{\xi}\mathcal{G}(\xi_0) h \big ), \qquad 
\mathcal{G}(\xi) = e^{-2\xi}.
\end{equation}
\no Equivalently, the left hand side of (\ref{eq-h}) is the linearization of equation (\ref{eq-xi}) around $\xi_0$. 
Recall also that $\eta = \eta(h)$ is obtained by solving (\ref{eq-eta}), and therefore 
$\psi = \psi(h,\eta(h))$.  
Let  
$$K_0   = \{h\, :\, (-\infty,t_0] \to\mathbb{R}\,\, |\,\, \|h\|^{\mu,1+\mu}_{1,\sigma,t_0} \le 1\}.$$
For any function $h \in K_0$
\be
B(h)(t) := |t|^{-2}\int_t^{t_0} s^{2} F(h)(s)\, ds
\ee
defines a solution of equation (\ref{eq-h}) on $(-\infty,t_0]$ with right hand side $F(h) = F(\psi(h,\eta(h)),h,\eta(h))$. Our goal  is to prove the following Proposition.

\begin{prop}
\label{prop-contr-h}
There exists a   universal constant    $t_0 <0$ and   function $h$  with $\| h \|_{1,\sigma,t_0}^{\mu,\mu+1} \leq 1$
 so  that $B(h) = h$. 
\end{prop}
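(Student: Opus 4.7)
The plan is to apply the Banach contraction principle to the operator $B:K_0\to K_0$, paralleling the three-step pattern of Proposition \ref{prop-eta-contr}. A direct computation shows
$$
\dot B(h)(t)=-\frac{2}{t}\,B(h)(t)-F(h)(t),
$$
so the norm $\|B(h)\|^{\mu,1+\mu}_{1,\sigma,t_0}$ is controlled by bounds on $F(h)$ in $\|\cdot\|^{1+\mu}_{\sigma,t_0}$ together with bounds on $B(h)$ in $\|\cdot\|^{\mu}_{\infty,t_0}$; the latter follows directly from the integral formula, for if $|F(h)(s)|\le A\,|s|^{-(1+\mu)}$ then
$$
|B(h)(t)|\le |t|^{-2}\,A\int_{t}^{t_0}|s|^{1-\mu}\,ds\le \frac{A}{2-\mu}\,|t|^{-\mu}.
$$

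Step one is to verify $B(K_0)\subset K_0$. Writing $F(h)=G_3(h)+\mathcal{R}_2+G_2$, I would estimate the three pieces separately. The quadratic remainder $G_3(h)=-\bigl(e^{-2(\xi_0+h)}-e^{-2\xi_0}+2e^{-2\xi_0}h\bigr)$ satisfies $|G_3(h)|\le C|t|^{-1}|h|^2\le C|t|^{-1-2\mu}$ on $K_0$, yielding $\|G_3(h)\|^{1+\mu}_{\sigma,t_0}\le C|t_0|^{-\mu}$. Lemma \ref{lem-projection} gives $\|\mathcal{R}_2\|^{1+\gamma}_{\sigma,t_0}\le C$, so the choice $\mu<\gamma$ from Notation \ref{not-par} produces $\|\mathcal{R}_2\|^{1+\mu}_{\sigma,t_0}\le C|t_0|^{-(\gamma-\mu)}$. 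For $G_2(\psi(h,\eta(h)),h,\eta(h))$, with $\psi=\Psi(h,\eta(h))\in\Lambda$ via Proposition \ref{prop-fp} and $\eta(h)\in K$ via Proposition \ref{prop-eta-contr}, the same term-by-term analysis carried out for $G_1$ in part (a) of the proof of Proposition \ref{prop-eta-contr} — peeling the error $Q(\psi)$ piece by piece and exploiting $\|\psi\|^{\nu}_{*,2,\sigma,t_0}\le 1$ with H\"older's inequality and the weight $\alpha_\sigma$ — gives $\|G_2\|^{1+\mu}_{\sigma,t_0}\le C|t_0|^{-\delta}$ for some $\delta>0$. Summing, $\|B(h)\|^{\mu,1+\mu}_{1,\sigma,t_0}\le C|t_0|^{-\delta}<1$ for $|t_0|$ sufficiently large.

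Step two is the contraction. From the same identity,
$$
\|B(h^1)-B(h^2)\|^{\mu,1+\mu}_{1,\sigma,t_0}\le C\,\|F(h^1)-F(h^2)\|^{1+\mu}_{\sigma,t_0}.
$$
A first-order Taylor expansion gives $|G_3(h^1)-G_3(h^2)|\le C|t|^{-1}(|h^1|+|h^2|)|h^1-h^2|$, contributing a factor $|t_0|^{-\mu}$. For $\mathcal{R}_2$, setting $\eta^i=\eta(h^i)$ and applying the mean value theorem with the derivative bounds \eqref{eq-ppR}, combined with the Lipschitz estimate \eqref{eqn-etah2}, reproduces verbatim the argument for $\mathcal{R}_1$ in step (c) of Proposition \ref{prop-eta-contr} and contributes a factor $|t_0|^{-\delta}$. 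Finally, $G_2$ is decomposed as
$$
G_2(\psi^1,h^1,\eta^1)-G_2(\psi^2,h^2,\eta^2)
$$
into three one-argument differences; the $h$ and $\eta$ pieces are handled as in the bound for $G_1$ in step (c) of Proposition \ref{prop-eta-contr}, while the $\psi$ piece uses \eqref{eqn-contr1} and \eqref{eqn-contr2}. Each contribution carries a small negative power of $|t_0|$, so for $|t_0|$ sufficiently large one obtains $\|B(h^1)-B(h^2)\|^{\mu,1+\mu}_{1,\sigma,t_0}\le \tfrac12\,\|h^1-h^2\|^{\mu,1+\mu}_{1,\sigma,t_0}$, and the Banach fixed point theorem yields $h\in K_0$ with $B(h)=h$.

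The main technical obstacle is the bookkeeping of Lipschitz dependencies through the composition $h\mapsto\eta(h)\mapsto\psi(h,\eta(h))\mapsto G_2$: because $\psi$ is only defined as the fixed point of the non-local auxiliary problem, each reduction loses a fractional power of $|t_0|$ rather than a uniformly small constant. The exponent balance $\tfrac12<\nu<\nu_0$ and $0<\mu<\min\{2\nu-1,\gamma\}$ fixed in Notation \ref{not-par} is precisely what keeps every residual exponent strictly negative, and the necessary Lipschitz inputs are already recorded in \eqref{eqn-contr1}, \eqref{eqn-contr2}, and \eqref{eqn-etah2}. Once these are assembled, the contraction closes in the same fashion as in Proposition \ref{prop-eta-contr}.
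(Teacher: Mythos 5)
Your argument reproduces the paper's proof essentially step for step: the same decomposition $F(h)=G_3+\mathcal{R}_2+G_2$, the same invariance estimates (with $\mu<\gamma$ and $\mu<2\nu-1$ producing the small negative exponents), the same contraction estimate via the Lipschitz inputs \eqref{eqn-etah2}, \eqref{eqn-contr1}, \eqref{eqn-contr2}, and the Banach fixed point theorem. The only cosmetic difference is that you motivate $\|B(h)\|^{\mu,1+\mu}_{1,\sigma,t_0}\le C\|F(h)\|^{1+\mu}_{\sigma,t_0}$ through the explicit ODE identity and a pointwise bound on $F$, whereas the paper works with $L^\sigma$-on-unit-intervals directly as in \eqref{eq-B}, but this is an immediate reformulation.
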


\begin{proof} We will show that the map $B$ leaves the sets invariant and that it is a contraction.

\medskip

\no {\bf (a)} {\it There exists  a universal constant $t_0 < 0$, 
so that $B(K_0) \subset K_0$.}

It is easy to see  that if  $t_0 < 0$ and $\sigma \ge 2$ we have
\be\label{eq-B}
\sup_{\tau\le t_0-1} |\tau|^{\mu} |B(h)(\tau)| \le C\, \|F(h)\|_{2,t_0}^{1+\mu}
\ee
for a universal constant $C$. 
Denote by $I_{\tau} = [\tau, \tau+1]$. Using (\ref{eq-B}), we observe that
\be
\bs
\|B(h)\|^{\mu,\mu+1}_{1,\sigma,t_0} &= \sup_{\tau\le t_0-1} |\tau|^{\mu}\|B(h)\|_{L^{\sigma}(I_{\tau})} + \sup_{\tau\le t_0-1} |\tau|^{1+\mu}\|\frac{d}{dt} B(h)\|_{L^{\sigma}(I_{\tau})} \\
&\le  C\, \|F(h)\|^{1+\mu}_{\sigma,t_0}.
\end{split} 
\ee
We will use this estimate to show that $B(K_0) \subset K_0$. We have
$$\|F(h)\|^{1+\mu}_{\sigma,t_0} = \sup_{\tau\le t_0-1} |\tau|^{1+\mu}\|F(h)\|_{L^{\sigma}(I_{\tau})} \le
C\sup_{\tau\le t_0-1} |\tau|^{1+\mu} \, \big (\|\mathcal{R}_2\|_{L^{\sigma}(I_{\tau})} + \|G_2\|_{L^{\sigma}(I_{\tau})} + \|G_3\|_{L^{\sigma}(I_{\tau})} \big ).$$
A straightforward computation, recalling (\ref{eq-G3}),  shows that 
$$\|G_3(h)\|_{L^{\sigma}(I_{\tau})} \le \frac{C}{|\tau|}\, \|h^2\|_{L^{\sigma}(I_{\tau})}$$
implying (since $\|h\|^{\mu,1+\mu}_{1,\sigma,t_0} \le 1$) the bound 
\begin{equation}
\label{eq-G300}
\sup_{\tau\le t_0-1} |\tau|^{1+\mu}\|G_3(h)\|_{L^{\sigma}(I_{\tau})} \le \frac{C}{|t_0|^{\mu}}.
\end{equation}
By Corollary \ref{cor-system} we have
\begin{equation}
\label{eq-R200}
\sup_{\tau\le t_0-1} |\tau|^{1+\mu}\|\mathcal{R}_2\|_{L^{\sigma}(I_{\tau})} \le \frac{C}{|t_0|^{\gamma - \mu}}
\end{equation}
where we choose $\mu$ so that $\mu < \gamma$. To establish the bound on $\sup_{\tau\le t_0} |\tau|^{1+\mu}\|G_2(h)\|_{L^{\sigma}(I_{\tau})}$, we estimate term by term, similarly to the proof of Lemma \ref{lem-N}.  Similarly to deriving the estimate for the term $G_1$ in section \ref{sec-eta} we have
$$\|G_2\|_{L^{\sigma}(I_{\tau})} \le C|\tau|^{-2\nu+\theta}$$
implying
$$\|G_2\|^{1+\mu}_{\sigma,t_0} \le C|\tau|^{-2\nu+\theta+1+\mu}$$
where  we choose $\mu < \min\{\gamma, 2\nu - 1\}$ and $\theta > 0$ is chosen small  so that $2\nu-\theta-1-\mu > 0$. This together with (\ref{eq-G300}) and (\ref{eq-R200}) imply
$$\|B(h)\|^{\mu,1+\mu}_{1,\sigma,t_0} \le \|F(h)\|^{1+\mu}_{\sigma,t_0} \le \frac{C}{|t_0|^{\tilde{\delta}}},$$
where $\tilde{\delta} = \min\{\mu, \gamma - \mu, 2\nu-\theta-1-\mu\}$. We conclude that   for $|t_0|$ sufficiently large
 we have  $B(K_0) \subset K_0$.

\medskip

\no {\bf (b)} There exists a universal constant $t_0 < 0$ for which the map $B: K_0 \to K_0$ defines a contraction map.

Observe that similarly as in part {\bf (a)} we have
\begin{equation}
\label{eq-diff-B}
\|B(h^1) - B(h^2)\|^{\mu,\mu+1}_{1,\sigma,t_0} \le C\, \|F(h^1) - F(h^2)\|^{1+\mu}_{\sigma,t_0}.
\end{equation}
An easy computation (the same as in part {\bf (a)}) shows that 
$$|G_3(h^1) - G_3(h^2)| \le \frac{C}{|t|}\, (|h^1 - h^2|^2 + (|h^1| + |h^2|)|h^1 - h^2|)$$
implying that
$$\|G_3(h^1) - G_3(h^2)\|_{\sigma,t_0}^{1+\mu} \le \frac{C}{|t_0|^{\mu}}\, \|h^1 - h^2\|^{\mu,1+\mu}_{1,\sigma,t_0}.$$
Let $\xi^i=\xi_0 + h^i$ and recall the notation  $\eta^i = \eta(\xi^i)$. Then, similar to  a discussion in part (c) of the proof of  Proposition \ref{prop-eta-contr},  we  have
\be
\bs
\|\mathcal{R}_2(\xi^1,\dot{\xi}^1,\eta(\xi^1),&\dot{\eta}(\xi^1)) - \mathcal{R}_2(\xi^2,\dot{\xi}^2,\eta(\xi^2),\dot{\eta}(\xi^2))\|_{\sigma,t_0}^{1+\mu} \\
&\le C\, |t_0|^{-\delta} \big (\|h^1 - h^2\|_{1,\sigma,t_0}^{\mu,1+\mu} + \|\eta(h^1) - \eta(h^2)\|^1_{1,\sigma,t_0} \big ) \\
&\le C\, |t_0|^{-\delta}\|h^1 - h^2\|_{1,\sigma,t_0}^{\mu,1+\mu}
\end{split}
\ee
where in the last line we have used (\ref{eqn-etah2}) where  $\delta < \min\{1, \mu, 1-2\nu-\mu+\theta\}$. Furthermore, similar to deriving (\ref{eqn-etah2}) in part (c) of the proof of Proposition \ref{prop-eta-contr},  we get
$$\|G_2(\psi^1,h^1,\eta^1) - G_2(\psi^2,h^2,\eta^2)\|_{\sigma,t_0}^{\mu+1} \le C\, |t_0|^{1-2\nu+\theta}\|h^1-h^2\|^{\mu,1+\mu}_{1,\sigma,t_0}.$$
Using (\ref{eq-diff-B}), the definition of $F(h)$ and the estimates above, if we choose $|t_0|$ sufficiently large, we obtain the bound 
$$\|B(h^1) - B(h^2)\|^{\mu,1+\mu}_{1,\sigma,t_0} \le \frac 12\,  \|h^1 - h^2\|^{\mu,1+\mu}_{1,\sigma,t_0}$$
which finishes the proof of part (b). 

\medskip
By the fixed point theorem applied to the operator $B$, there exists an $h\in K_0$ so that $B(h) = h$, or in other words, $h$  solves (\ref{eq-h}). This concludes the proof of the Proposition. 
\end{proof}

We conclude section \ref{sec-xieta} by the proof of Proposition \ref{prop-etaxi}.

\begin{proof}[Proof of Proposition \ref{prop-etaxi}]
By Proposition \ref{prop-eta-contr}, for every $h\in K$, there exists an $\eta = \eta(h)$ so that \eqref{eqn-Epsio1} is satisfied. By Proposition \ref{prop-contr-h}, if we take $\eta = \eta(h)$, there exists an $h$, so that $\|h\|^{\mu,\mu+1}_{1,\sigma,t_0} \le 1$ and \eqref{eqn-Epsio2} holds. Take this pair of functions $(h,\eta) \in K$, for which both \eqref{eqn-Epsio1} and \eqref{eqn-Epsio2} are satisfied.  Then by Proposition \ref{prop-fp} there is a solution $\psi = {\bf \Psi}(h,\eta)$ of \eqref{eqn-Psi}, satisfying the orthogonality conditions \eqref{eqn-orth1} and \eqref{eqn-orth2}. This finishes the proof of Proposition \ref{prop-etaxi}.
\end{proof}

\section{Properties of the solution}

Unlike the contracting spheres \eqref{eq-spheres}  and the King solution 
\eqref{eq-king} to the Yamabe flow (\ref{eq-YF}) which are both Type I ancient solutions with positive Ricci curvature, the  solution that we construct in Theorem \ref{thm-main} is of Type II and its  Ricci curvature   changes its sign. More precisely we have the following Proposition.

\begin{prop}
\label{prop-prop}
The solution constructed in Theorem \ref{thm-main} is a Type II ancient solution in the sense of Definition \ref{defn-ancient}. Its Ricci curvature changes its sign. 
\end{prop}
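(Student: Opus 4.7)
The plan is to use the dumbbell structure of the solution $u(x,t)=(1+\eta(t))z(x,t)+\psi(x,t)$ with $z(x,t)=w(x-\xi(t))+w(x+\xi(t))$ and $\xi(t)=\tfrac12\log(2b|t|)+o(1)$. As $t\to-\infty$ the two bumps separate and the neck at $x=0$ pinches, forcing a large negative Ricci eigenvalue there (giving both Type~II and the sign change), while near the bumps the metric remains $C^2$-close to the round spherical metric (giving positive Ricci there).

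First I would express the Ricci of the metric $g(t) = u(\cdot,t)^{4/(n-2)}(dx^2+g_{S^{n-1}})$ in terms of $u$. Applying the standard conformal-change formula on the cylinder with $f=\tfrac{2}{n-2}\log u$ (radial in $x$), the two distinct eigenvalues of $\mathrm{Ric}(g)$ as an endomorphism are
\[
\lambda_x = -\frac{n-1}{u^{4/(n-2)}}\,f_{xx},\qquad
\lambda_S = \frac{(n-2)-f_{xx}-(n-2)f_x^2}{u^{4/(n-2)}},
\]
with $f_x=\tfrac{2}{n-2}u_x/u$ and $f_{xx}=\tfrac{2}{n-2}(u_{xx}/u - u_x^2/u^2)$. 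Since $S^n$ is conformally flat the Weyl tensor vanishes, so $|\mathrm{Rm}|$ is comparable to $|\lambda_x|+|\lambda_S|$; in particular $|\mathrm{Rm}|\ge c|\lambda_x|$ for an absolute constant $c$.

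Next I would analyze the neck $x=0$. Evenness gives $u_x(0,t)=0$, so $f_x(0,t)=0$. From \eqref{eqn-w1} one reads $w(\xi)\sim C_1 e^{-\xi}$ as $\xi\to+\infty$, so together with $\xi(t)=\tfrac12\log(2b|t|)+o(1)$ we have $w(\xi(t))\sim C'\,|t|^{-1/2}$ and $z(0,t)\sim 2C'|t|^{-1/2}$. Using $w''=w-w^p$ we get $z_{xx}(0,t)/z(0,t) = 1+O(w(\xi(t))^{p-1})\to 1$. Combined with the pointwise $C^2$ control on $\psi$ discussed below this yields $u_{xx}(0,t)/u(0,t)\to 1$, hence $f_{xx}(0,t)\to 2/(n-2)$, and therefore
\[
\lambda_x(0,t)\;\sim\; -\frac{2(n-1)}{n-2}\,u(0,t)^{-4/(n-2)} \;\sim\; -C_n|t|^{2/(n-2)} \longrightarrow -\infty.
\]
This is a negative eigenvalue of $\mathrm{Ric}$; moreover $|t|\,\max|\mathrm{Rm}|\ge c|t|\cdot|t|^{2/(n-2)}=c|t|^{n/(n-2)}\to\infty$, so the solution fails the Type~I condition of Definition \ref{defn-ancient} and is of Type~II.

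Then I would evaluate at a bubble top $x=\xi(t)$. After the shift $y=x-\xi(t)$, $z(y+\xi(t),t)=w(y)+w(y+2\xi(t))$, and $w(y+2\xi(t))$ together with its first two $x$-derivatives tends to $0$ locally uniformly as $t\to-\infty$. Together with $\eta(t)\to 0$ and the $C^2$-smallness of $\psi$ on a compact neighborhood of $y=0$, the shifted profile $u(\cdot+\xi(t),t)$ converges locally in $C^2$ to $w$. Hence $g$ converges locally in $C^2$ to $w^{4/(n-2)}(dx^2+g_{S^{n-1}})$, which is the standard round metric on $S^n$ and has Ricci eigenvalues equal to $n-1$. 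By $C^2$ continuity, $\lambda_x(\xi(t),t),\lambda_S(\xi(t),t)\ge(n-1)/2$ for $|t|$ large; combined with the previous paragraph this gives the claimed Ricci sign change.

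The main obstacle is turning the weighted integral and $L^\infty$ norms that control $\psi$ into the pointwise $C^2$ information used above. Away from the zero set of $z$, the weights $\alpha_\sigma$ of \eqref{eqn-weight} are bounded above and below on any fixed compact $(x,t)$-box, so Proposition \ref{prop-w2p} combined with the one-dimensional Sobolev embedding $W^{2,\sigma}(I)\hookrightarrow C^{1,\alpha}(I)$ (valid since $\sigma=n+2\ge 5$) and parabolic interior regularity applied to \eqref{eqn-Psi} yield uniform $C^2$ bounds on $\psi$ on such boxes. The smallness required at the neck is not absolute smallness of $\psi$ but smallness of $\psi/z$, which is the content of Corollary \ref{coro-linfty} together with $\nu>1/2$ (giving $|\psi(0,t)|/z(0,t)\le C|t|^{1/2-\nu}\to 0$); at the bubble tops the required smallness is absolute and follows again from Corollary \ref{coro-linfty} and $\nu>0$.
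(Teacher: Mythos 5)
Your route is genuinely different from the paper's: the paper establishes both conclusions by integrating curvature quantities against $z^{p-1}$ over the neck region $R_\tau=[-1,1]\times[\tau,\tau+1]$ and invoking only the weighted $L^2/H^2$ energy estimates, while you try to evaluate the Ricci eigenvalue $\lambda_x$ pointwise at $x=0$ and $x=\xi(t)$. Your curvature formulas and the bubble-top part are fine (near $x=\xi(t)$ the coefficient $z^{p-1}$ is bounded away from zero, so \eqref{eqn-Psi} is uniformly parabolic and a standard bootstrap upgrades the $W^{2,\sigma}$ smallness to local $C^2$ smallness). The neck part, however, has a real gap.

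To get $u_{xx}(0,t)/u(0,t)\to 1$ you need $\psi_{xx}(0,t)/z(0,t)\to 0$, but nothing in the paper gives pointwise control of $\psi_{xx}$ at the neck as $t\to -\infty$. Claim~5.1 controls only $\psi/z$. Your justification --- ``$\alpha_\sigma$ bounded on fixed compact $(x,t)$-boxes'' plus Sobolev plus parabolic interior regularity --- fails here: the boxes you actually need are $R_\tau$ with $\tau\to-\infty$, which are not fixed; on them the weight $\alpha_\sigma=z^{(2\beta+\theta)\sigma}\sim|\tau|^{-(2\beta+\theta)\sigma/2}$ degenerates; and \eqref{eqn-Psi} itself degenerates at the neck since $z^{p-1}\sim|\tau|^{-(p-1)/2}\to 0$, so the Schauder/$W^{2,p}$ constants are not uniform in $\tau$. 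Moreover the parabolic Sobolev embedding of the $W^{2,1}_\sigma$ class only yields $C^{1+\alpha}$ control (pointwise $\psi,\psi_x$), never $\psi_{xx}$ pointwise; extracting $\psi_{xx}$ from the equation instead requires pointwise $\psi_t$ and the nonlinear error at $x=0$, which are also not available. The paper avoids all of this by never needing pointwise $\psi_{xx}$: the weighted bound $\iint_{R_\tau}\psi_{xx}^2\,z^{p-1}\,dx\,dt\lesssim|\tau|^{-2\nu-(p-1)/2}$ from the $H^2_{t_0}$ norm, together with $\iint_{R_\tau}Q_1z^{p-1}\sim -C|\tau|^{-(p+1)/2}$ and $2\nu>1$, gives negativity of $R_{11}$ somewhere in $R_\tau$; likewise the Type~II conclusion comes from an integral lower bound on $|R_{11}g^{11}|$, not from evaluating at $x=0$. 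Since both of your claims rest on the pointwise neck estimate, the proposal as written has a hole that would need substantial additional work (or a reformulation in the paper's integral style) to close.
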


\begin{proof}
We recall  that under the conformal change  of the metric
$g = e^{2 f}\, g_{_{\R^n}}$, the Ricci tensor changes as follows 
$$R_{ij} = -(n-2)(f_{ij} - f_i\, f_j) + (\Delta f - (n-2) |\nabla f|^2)\delta_{ij}.$$
In particular, if $f$ is a radially symmetric function and we denote by $R_{11}$ the Ricci tensor in the radial direction and by $R_{jj}$, with $j \ge 2$ the Ricci tensor in the spherical direction, then 
$$R_{11} = -(n-3)\, f_{rr} + \frac{n-1}{r}\, f_r$$
and for $j \geq 2$, 
$$R_{jj} = f_{rr} + \frac{n-1}{r}\, f_r - (n-2) f_r^2.$$
Observe that $R_{ij} = 0$ for $i\neq j$. 
We will next   express $R_{11}$  with respect to 
our conformal factor $u(x,t)$, expressed in cylindrical coordinates. 
Let $u(x,t)$ be the solution of \eqref{eqn-v} constructed in Theorem \ref{thm-main}. Recall that $u(x,t)$ 
represents the conformal factor of the rescaled flow  in cylindrical coordinates. Using the change of
variables 
$$f(r) = \frac{2}{n-2}\log u(x,\tau) - x, \,\,\, r = e^x$$
we find that   $R_{11}$ (the Ricci curvature of the rescaled metric in the radial direction)  in terms of $u$ is given by 
$$R_{11} = -\frac{(n-2)^2\, u^2 - (n-3)\, u_x^2 + u\, [(n-3)\, u_{xx} - 2(n-2)\, u_x]}{e^{2x}\, u^2}.$$

\medskip 
\no {\em The Ricci curvature changes its sign:} We will show that $R_{11}$ changes sign. 
Note that the sign of $R_{11}$ is determined by the sign of 
$$Q := -\big(\, (n-2)^2\, u^2 - (n-3)\, u_x^2 + u\, [(n-3)\, u_{xx} - 2(n-2)\, u_x]\, \big ).$$
Recall that our solution $u$ is given by
$$u = (1+\eta) \, z + \psi = z + \tilde{\psi}, \qquad \tilde{\psi} := \eta\, z + \psi.$$
Let $Q := Q_1 + Q_2$, where
$$Q_1 := (n-3)\, z_x^2 - (n-2)^2\, z^2 - z\, [(n-3) z_{xx} - 2(n-2) z_x]$$
and $Q_2$ is the error term that is a linear combination of $\tilde{\psi}_x^2$, $\tilde{\psi}_x\, z_x$, $\tilde{\psi}^2$, $\tilde{\psi} z$, $\tilde{\psi}\, \tilde{\psi}_{xx}$, 
$\tilde{\psi}\, z_{xx}$, $\tilde{\psi}\, z_x$, $\tilde{\psi}_{xx} z$.  An easy computation shows that for $\tau < t_0 -1$, 
\begin{equation}
\label{eq-Q1}
\int_{\tau}^{\tau + 1}\int_{-1}^1 \,Q_1\, z^{p-1}\, dx\, dt \sim  -\frac{C(n)}{|\tau|^{\frac{p+1}{2}}}  <0, \qquad \mbox{as} \,\, \tau \to -\infty.
\end{equation}
Hence, it will be sufficient to show that  
$$\int_{\tau}^{\tau+1}\int_{-1}^1 \, Q_2\, z^{p-1}\, dx\, dt = o  ( \, |\tau|^{-\frac{p+1}{2}}   )\qquad \mbox{as} \,\, \tau \to -\infty.$$
Let us  check that that is the case for some of the terms that enter in the expression for $Q_2$. All the other terms can be checked similarly. 
To simplify the notation set $R_\tau:= [-1,1]\times [\tau,\tau+1]$. 
Using the energy estimate (\ref{eqn-l224}) and the fact that $z^{p-1} \le C|\tau|^{-(p-1)/2}$ on $R_\tau$  we obtain the bound 
\begin{equation}
\begin{split}
\label{eq-oneterm}
\iint_{R_\tau} (\,\tilde{\psi}^2  + \tilde{\psi}_x^2\,)\, z^{p-1}\, dx\, dt  & \le \, |\tau|^{-\frac{p-1}{2}}\, \left(\big(\,\|\eta\|_{\infty,t_0}^1\, \big)^2\, \iint_{R_\tau}  z^2\, dx\, dt + \iint_{R_\tau} (\,\psi^2 + \psi_x^2\,)\, dx\, dt\,\right)  \\
&\le \, |\tau|^{-\frac{p-1}{2}}\, \big(\, |\tau|^{-2} + |\tau|^{-2\nu}\,\big) \le C\, |\tau|^{-\frac{p+3}{2}}
\end{split}
\end{equation}
and also 
\begin{equation}
\begin{split}
\label{eq-twoterm}
 \iint_{R_\tau} \tilde{\psi}\tilde{\psi}_{xx} &z^{p-1}\, dx\, dt \\
&\le C\, |\tau|^{-\frac{p-1}{2}}\, \left(\, (\|\eta\|_{\infty,t_0}^1)^2  \iint_{R_\tau} \, z^2\, dx\, dt + \|\eta\|_{\infty,t_0}^1\, \big(\, \iint_{R_\tau} z\psi_{xx} + \psi z_{xx}\, dx\, dt  \right) \\
&+ \iint_{R_\tau} \psi\psi_{xx}\, z^{p-1}\, dx\, dt \le  C|\tau|^{-\frac{p+2}{2}-\nu} + \iint_{R_\tau} \psi\psi_{xx}\,z^{p-1}\,  dx\, dt. 
\end{split}
\end{equation}
For the last term in (\ref{eq-twoterm}), using the energy estimates (\ref{eqn-l224}) again  we have
\begin{equation}
\begin{split}
\label{eq-threeterm}
\iint_{R_\tau} \psi\psi_{xx} z^{p-1}\, dx\, dt &\le \left(\, \iint_{R_\tau} \psi^2 z^{p-1}\, dx\, dt\, \right)^{1/2}\, \left(\iint_{R_\tau} \psi_{xx}^2 z^{p-1}\, dx\, dt\, \right)^{1/2} \\
&\le C\|\psi\|_{L^{\infty}(\Lambda_{\tau})} |\tau|^{-\frac{p-1}{2}}\,\left(\, \iint_{R_\tau} \psi_{xx}^2\, dx\, dt \right)^{1/2} \le C|\tau|^{-\frac{p-1}{2} - 2\nu}.
\end{split}
\end{equation}
We see that $(p-1)/2 + 2\nu > (p+1)/2$ is equivalent to $2\nu > 1$, which is true. Using (\ref{eqn-l224}) we have
\begin{equation}
\begin{split}
\label{eq-fourterm}
 \iint_{R_\tau} \tilde{\psi}_{xx}\, z^p \, dx\, dt \
&\le C\, \|\eta\|_{\infty,t_0}^1\, |\tau|^{-(p+1)/2} + \iint_{R_\tau} \psi_{xx}\, z^p\, dx\, dt  \\
&\le C\, (\, |\tau|^{- \frac{p+3}2} + |\tau|^{-\frac p2-\nu})
\end{split}
\end{equation}
Combining (\ref{eq-Q1}), (\ref{eq-oneterm}), (\ref{eq-twoterm}), (\ref{eq-threeterm}), (\ref{eq-fourterm}) yields 
$$\iint_{R_\tau} \, Q\,z^{p-1}\, dx\, dt \le - C(n)\, |\tau|^{-\frac{p+1}{2}}$$
 if $\tau$ is sufficiently close to $-\infty$. 
Hence   $R_{11}$  has to be negative somewhere on $R_\tau$,  if $\tau$ is sufficiently close to $-\infty$. On the other hand, since the scalar curvature 
$R $ of our ancient metric is positive, the Ricci curvature must be positive somewhere. 
The conclusion is that the Ricci curvature of our ancient solution changes its sign all the way to $t \to -\infty$.

\medskip

\no {\em The solution is of   Type II:}
We observe that our  rescaled ancient solution is of Type II   if 
$$
\limsup_{t\to -\infty} |\mbox{Rm}| (\cdot,t) = + \infty
$$
Hence, it  will be sufficient to show that 
\begin{equation}
\label{eq-typeII}
\limsup_{t\to -\infty} |\mbox{Ric}|(\cdot,t) = +\infty.
\end{equation}
Since, as we have noticed above,   the Ricci curvature of our radially symmetric solution is diagonal, we have,
$$| \mbox{\mbox{Ric}} |^2 = R_{11}^2 (g^{11})^2 + \sum_{i\ge 2} R_{ii}^2(g^{ii})^2.$$
Look at the $R_{11}g^{11} = R_{11}\, u^{-\frac{4}{n-2}}$, which is,
$$R_{11}g^{11} = \frac{(n-3) u_x^2 - (n-2)^2 u^2 - u\, [(n-3) u_{xx} - 2(n-2) u_x]}{e^{2x} u^{\frac{2n}{n-2}}}.$$
As in part (a), write $u = z + \tilde{\psi}$. Since $\|\psi\|_{L^{\infty}(\Lambda_{\tau})} \le C|\tau|^{-\nu}$, and $z \sim \tilde{C}\, |\tau|^{-1/2}$, on  $R_\tau$, we have ${z}/{2} \le u \le 2z$ on $R_\tau$. Let us  write
$$R_{11}g^{11} = J_1 + J_2,$$
where the term $J_1$ we obtain from  $R_{11}g^{11} $, after replacing $u$ by $z$, and $J_2$ is the difference of those two terms. An easy computation shows that 
\begin{equation}
\label{eq-J1}
\left | \iint_{R_\tau} \, J_1\, z^{p-1}\, dx\, dt \right| \sim C|\tau|^{\frac{2}{n-2}}\, \iint_{R_\tau}  z^{p-1}\, dx\, dt  \sim C, \qquad \mbox{as} \,\, \tau \to -\infty.
\end{equation}
Using the energy estimate (\ref{eqn-l224}), the fact that $z \sim |\tau|^{-1/2}$ on $R_\tau$,   very similar estimates to those in part (a) show that
\begin{equation}
\label{eq-J2}
\left |  \iint_{R_\tau} J_2\, z^{p-1}\, dx\, dt\right| \le C\, |\tau|^{-q},
\end{equation}
for some $q > 0$. Combining (\ref{eq-J1}) and (\ref{eq-J2}) we see that
$$\left|  \iint_{R_\tau}  R_{11}g^{11}\, z^{p-1}\, dx\, dt\right| \ge c > 0,$$
for all $\tau$ sufficiently close to $-\infty$.   Since $z^{p-1} \sim |\tau|^{-2/(n-2)}$ on $ R_\tau$,  the last estimate  implies there exists a uniform constant $\delta > 0$ so that for every $\tau \le \tau_0$, 
with $\tau_0$ sufficiently close to $-\infty$, there exists an $(x_{\tau}, \tau)  \in R_\tau$, so that 
$$R_{11} g^{11} (x_{\tau},\tau) \ge \delta \,  |\tau|^{\frac 2{n-2}}.$$ We conclude that  (\ref{eq-typeII}) holds and  our solution is of   Type II.
\end{proof}

We conclude this section with a final remark on our shape of our solution, as $t \to -\infty$.  
\begin{remark}
The  ancient solution $u(x,t)$  constructed  in Theorem \ref{thm-main} looks like a tower of two bubbles as $t\to -\infty$. 
\end{remark}

More precisely, for any $\delta \in (0,1)$ it is easy to check that we have the following:
\begin{enumerate}
\item[(a)]
For $x < \xi (t) \,  (1-\delta)$ we have $|u(x,t) - w(x+\xi(t))| < C\, |t|^{-\delta/2}$, which means that in this considered region we are close to one of the spheres (bubbles).
\item[(b)]
For $x > -\xi (t) \, (1-\delta)$ we have $|u(x,t) - w(x-\xi(t))| < C \, |t|^{-\delta/2}$,
 which means that in this considered region we are close to the other sphere (bubble).
\item[(c)]
For any $x_1, x_2 \in [-\xi (t) \, (1-\delta), \xi (t)  (1-\delta)]$ and any two corresponding points $p_1, p_2 \in \R^n$, whose radial variables correspond to $x_1, x_2$ in cylindrical coordinates, respectively,  we have that the $$\dist_{g(t)}(p_1,p_2) \le C \, |t|^{-\frac{\delta}{n-2}} \, \log|t|$$
 which means that in the region where the two spheres interfere we have a short and narrow neck connecting the two bubbles and as $t\to -\infty$ this neck becomes shorter and narrower. We denoted by $g(\cdot)$ the rescaled metric.
\end{enumerate}

To check (c) observe that
\begin{equation*}
\begin{split}
\dist_{g(t)}(p_1, p_2) &\le \int_{-\xi (t) \, (1-\delta)}^{\xi(t)  (1-\delta)} (\,w(x-\xi (t)) + w(x+\xi(t) ) + \psi(x,t)\,)^{\frac 2{n-2}}\, dx \\
&\le C\, \big(\, |t|^{-\delta/2} + |t|^{-\nu}\,\big)^{2/(n-2)}\, \log|t| \le C\, |t|^{-\frac \delta{n-2}} \,\log |t|. 
\end{split}
\end{equation*}

We finish the paper with the proof of the main Theorem \ref{thm-main}, which we restate below.

\begin{theorem}
Let $p:=(n+2)/(n-2)$  with  $n \geq 3$. There exists a  constant $t_0=t_0(n)$ and  a radially symmetric solution $u(x,t)$ to (\ref{eqn-v})
defined on   $R \times (-\infty, t_0]$, 
of the form (\ref{eq-form100})-(\ref{eq-z100}), where the functions    $\psi:=\psi(x,t)$, $\xi:= \frac{1}{2}\log(2b\,|t|)+h(t)$ and $\eta:=\eta(t)$   satisfy 
$\|\psi\|_{*,2,\sigma,t_0}^{\nu}  < \infty$, $\|h\|^{\mu,1+\mu}_{1,\sigma,t_0} < \infty$ and $\|\eta\|^{\mu,}_{1,\sigma,t_0} < \infty$  (according to the Definitions \ref{defn-norm} and \ref{dfn-heta}). The constants $\sigma, \mu, \nu$ and $b$ are
all positive and depend only on the dimension $n$. 

It follows that the solution $u$ defines a  radially symmetric ancient solution to  the Yamabe flow \eqref{eq-YF}
on $S^n$ 
which is of type II   (in the sense of Definition \ref{defn-ancient}) and its  Ricci curvature changes its sign. 

\end{theorem}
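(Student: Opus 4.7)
My plan is that at this stage the theorem is essentially a synthesis of the constructions already performed in Sections \ref{sec-linear}, \ref{sec-np}, \ref{sec-xieta}, together with the curvature analysis of Proposition \ref{prop-prop}, so the proof should be organized as a careful bookkeeping of which earlier result supplies each claim. First I would fix the parameters $p = (n+2)/(n-2)$, $\sigma = n+2$, $\nu \in (1/2, \nu_0)$ as supplied by Lemma \ref{lem-M}, $\mu \in (0, \min\{2\nu-1, \gamma\})$ with $\gamma$ from Lemma \ref{lem-projection}, and $b$ as in (\ref{eqn-ab}), and then choose $|t_0|$ large enough so that all the smallness conditions invoked by Propositions \ref{prop-linear}, \ref{prop-fp}, \ref{prop-eta-contr}, \ref{prop-contr-h} and \ref{prop-etaxi} are simultaneously satisfied; this is possible because each of those statements requires only finitely many largeness conditions depending on the dimension.

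Next I would invoke Proposition \ref{prop-etaxi} to obtain a pair $(h,\eta)\in K$ satisfying (\ref{eqn-Epsio1})--(\ref{eqn-Epsio2}). By the definition of $c_1(t),c_2(t)$ in Section \ref{sec-xieta} and equation (\ref{eqn-Epsi3}), the vanishing of these projections is equivalent to $c_1(t) \equiv c_2(t) \equiv 0$ on $(-\infty, t_0]$. Setting $\psi := \Psi(h,\eta)$ as given by Proposition \ref{prop-fp}, the auxiliary equation (\ref{eqn-system}) then collapses to (\ref{eqn-Psi}), and retracing the derivation leading to (\ref{eqn-Psi}) we conclude that $u(x,t) := (1+\eta(t))\,z(x,t) + \psi(x,t)$ solves (\ref{eqn-v}) on $\mathbb{R}\times(-\infty,t_0]$; by construction $u$ is even in $x$, hence radially symmetric after identification with the radial profile on $S^n$ via the change of variables of the Introduction. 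The required norm bounds $\|\psi\|_{*,2,\sigma,t_0}^\nu\leq 1$, $\|h\|_{1,\sigma,t_0}^{\mu,\mu+1}\leq 1$, $\|\eta\|_{1,\sigma,t_0}^1 \leq C_0$ are exactly the statements $\psi\in\Lambda$ and $(h,\eta)\in K$, which the construction delivers by design.

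Once $u$ is produced, Remark \ref{rem-small} and (\ref{eqn-xi0}) guarantee that $\xi(t)=\tfrac12\log(2b|t|)+o(1)$ as $t\to-\infty$, so $u$ is genuinely an ancient solution defined for all $t \leq t_0$ with $t_0$ finite, and therefore, after reversing the cylindrical change of variables, $g := u^{4/(n-2)}g_{S^n}$ is an ancient radially symmetric solution of the unnormalized Yamabe flow \eqref{eq-YF} on $S^n$ up to a finite extinction time. The Type II property and the sign change of the Ricci curvature are then immediate from Proposition \ref{prop-prop}, whose proof relies only on the decomposition $u=(1+\eta)z+\psi$ with the decay estimates $\|\psi\|_{L^\infty(\Lambda_\tau)}\leq C|\tau|^{-\nu}$, $\|\eta\|_{\infty,t_0}^1 \leq C$ that our $\psi,\eta$ already satisfy.

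The main obstacle in the overall proof was of course Proposition \ref{prop-etaxi} (and in particular the choice of norms and the sharp projection expansion in Lemma \ref{lem-projection}), since it required both a contraction argument for $\eta$ in terms of a fixed $h$ and then a nested contraction argument for $h$ whose driving term is obtained after plugging back $\eta(h)$; at the level of the present theorem, however, no further difficulty arises, and the only thing I would double-check is that the constants $\nu_0, \gamma, C_0, b$ invoked from the various lemmas are mutually compatible with a single choice of $t_0 = t_0(n)$, which is guaranteed by taking $|t_0|$ larger than the finitely many thresholds appearing in Sections \ref{sec-linear}--\ref{sec-xieta}.
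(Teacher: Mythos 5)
Your proposal is correct and follows essentially the same route as the paper's own proof, which simply cites Proposition \ref{prop-etaxi} for the existence of $(h,\eta)\in K$ and $\psi=\Psi(h,\eta)$ solving \eqref{eqn-Psi} (hence $u=(1+\eta)z+\psi$ solving \eqref{eqn-v}), and Proposition \ref{prop-prop} for the Type II property and the sign change of the Ricci curvature. Your version spells out the intermediate bookkeeping (the role of $c_1\equiv c_2\equiv 0$, the compatibility of the parameter choices, the membership in $K$ and $\Lambda$ delivering the stated norm bounds, and the reversal of the cylindrical change of variables) more explicitly than the paper does, but the logical structure is identical.
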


\begin{proof}
Proposition \ref{prop-etaxi} gives us, for $|t_0|$ sufficiently large,   the existence of a radially symmetric solution $\psi$ to \eqref{eqn-Psi}
on $\R \times (-\infty, t_0]$, which is equivalent to the existence of a radially symmetric solution $u$ to \eqref{eqn-v}, defined on $\R\times (-\infty,t_0]$. This finishes the proof of the first part, that is, the existence part of Theorem \ref{thm-main}. Furthermore, by Proposition \ref{prop-prop} our constructed solution   is a Type II ancient solution to the Yamabe flow, with the Ricci curvature that changes its sign.
\end{proof}

\end{document}